\documentclass{article}
\usepackage{graphicx} 
\usepackage{amsthm}
\usepackage{amsfonts, amsmath, amssymb, amsthm}
\usepackage{hyperref}
\usepackage{cleveref}
\usepackage[utf8x]{inputenc}
\usepackage{url}
\usepackage{multicol}
\usepackage{enumitem}
\usepackage{tikz}

\usetikzlibrary{shapes,arrows}
\usetikzlibrary{decorations.markings}
\usetikzlibrary{arrows.meta}

\def\msf{\mathsf}

\def\mcal{\mathcal}
\def\h{\hat}
\def\t{\tilde}

\def\uhr{\upharpoonright}

\def\w{\mathsf{WSub}}
\def\wsamelong{\mathsf{WSub}^{=}}
\def\wshorter{\mathsf{WSub}^{*}}
\def\vw{\mathsf{OSub}}
\def\vwsamelong{\mathsf{OSub}^=}
\def\vwshorter{\mathsf{OSub}^{*}}
\def\sectionallymonochromatic{level-homogeneous}
\def\sufficient{matching}
\def\hjsufficient{$A^{<\omega}\rightarrow\ell$-matching}
\def\potentiallyforces{\operatorname{{?}{\vdash}}}

\def\forces{\Vdash}

\def\nonprogressable{non-progressable}
\def\progressable{progressable}
\def\wc{WitCol}

\def\wHJ{$\exists^\infty$-$\msf{IHJ}$}

\def\GRtree{Graham-Rothschild tree}
\def\msflow{\mathsf{LOW}}

\def\wOVW{$\exists^\infty$-$\msf{OVW}^0$}

\def\PP{\mathbb{P}}

\def\QQ{\mathbb{Q}}

\def\HH{\mathbb{H}}
\def\RR{\mathbb{R}}
\def\GG{\mathbb{G}}

\newcommand{\Psf}{\mathsf{P}}
\newcommand{\Qsf}{\mathsf{Q}}

\def\F{\mathcal{F}}

\def\C{\mathcal{C}}
\def\I{\mathcal{I}}

\def\L{\mathcal{L}}

\def\T{\mathcal{T}}
\def\Sc{\mathcal{S}}
\def\M{\mathcal{M}}
\def\Nc{\mathcal{N}}

\def\R{\mathcal{R}}

\newcommand{\uh}{\upharpoonright}

\newcommand{\qvdash}{\operatorname{{?}{\vdash}}}
\newcommand{\nqvdash}{\operatorname{{?}{\nvdash}}}

\newcommand{\BSig}{\mathsf{B}\Sigma^0}

\newcommand{\RCA}[0]{\mathsf{RCA}}
\newcommand{\WKL}[0]{\mathsf{WKL}}
\newcommand{\ACA}[0]{\mathsf{ACA}}

\newcommand{\PIOOCA}[0]{\Pi^1_1\mathsf{-CA}}

\newcommand{\RT}[0]{\mathsf{RT}}

\newcommand{\CCSL}[0]{\mathsf{CCSL}}

\newcommand{\SADS}[0]{\mathsf{SADS}}

\newcommand{\CSL}[0]{\mathsf{CSL}}
\newcommand{\OVW}[0]{\mathsf{OVW}}
\newcommand{\DRT}[0]{\mathsf{DRT}}
\newcommand{\ODRT}[0]{\mathsf{ODRT}}
\newcommand{\CDRT}[0]{\mathsf{CDRT}}

\newcommand{\IOOVW}[0]{\exists^\infty\mbox{-}\mathsf{OVW}}
\newcommand{\IOIHJ}[0]{\exists^\infty\mbox{-}\mathsf{IHJ}}
\newcommand{\LOVW}[0]{\mathsf{LOVW}}
\newcommand{\LCSL}[0]{\mathsf{LCSL}}

\newcommand{\NN}[0]{\mathbb{N}}

\newcommand{\card}{\operatorname{card}} 

\newcommand{\OSub}{\mathsf{OSub}}
\newcommand{\USub}{\mathsf{USub}}
\newcommand{\WSub}{\mathsf{WSub}}
\newcommand{\SubW}{\mathsf{Sub}}

\def\qt#1{``#1''}%

\title{The reverse mathematics of the Ordered Variable Word theorem}
\date{\today}

\newtheorem{theorem}{Theorem}
\numberwithin{theorem}{section}
\newtheorem{maintheorem}[theorem]{Main Theorem}
\newtheorem{lemma}[theorem]{Lemma}

\newtheorem{proposition}[theorem]{Proposition}

\newtheorem{corollary}[theorem]{Corollary}

\newtheorem{claim}{Claim}
\newtheorem{subclaim}{Subclaim}[claim] 

\AtBeginEnvironment{theorem}{\setcounter{claim}{0}}
\AtBeginEnvironment{maintheorem}{\setcounter{claim}{0}}
\AtBeginEnvironment{lemma}{\setcounter{claim}{0}}
\AtBeginEnvironment{proposition}{\setcounter{claim}{0}}
\AtBeginEnvironment{corollary}{\setcounter{claim}{0}}
\AtBeginEnvironment{question}{\setcounter{claim}{0}}
\AtBeginEnvironment{example}{\setcounter{claim}{0}}

\theoremstyle{definition}
\newtheorem{remark}[theorem]{Remark}
\newtheorem{definition}[theorem]{Definition}
\makeatletter
\newtheorem*{rep@theorem}{\rep@title}
\newcommand{\newreptheorem}[2]{%
  \newenvironment{rep#1}[2][]{%
    \def\optarg{##1}%
    \ifx\optarg\@empty
      \def\rep@title{#2 \ref{##2}}%
    \else
      \def\rep@title{#2 \ref{##2} (##1)}%
    \fi
    \begin{rep@theorem}}%
    {\end{rep@theorem}}}
\makeatother

\makeatletter
\newtheorem*{rep@proposition}{\rep@title}

\newcommand{\newrepproposition}[2]{%
  \newenvironment{rep#1}[1]{%
    \def\rep@title{#2 \ref{##1}}%
    \begin{rep@proposition}}%
  {\end{rep@proposition}}%
}
\makeatother

\newcommand{\upupharpoonright}{%
  \upharpoonright
  \mathrel{\mkern-4.5mu}%
  \upharpoonright
}
\newcommand{\uuh}{\upupharpoonright}

\newreptheorem{theorem}{Theorem}
\newreptheorem{maintheorem}{Main Theorem}
\newreptheorem{lemma}{Lemma}

\usepackage{xcolor}

\author{Lu Liu \and Ludovic Patey}

\begin{document}

\maketitle

\begin{abstract}
In this article, we study the reverse mathematics of variable word theorems used in the proof of the Dual Ramsey theorem. We prove that the Ordered Variable Word theorem does not imply Ramsey's theorem for pairs and that every computable instance admits a solution of low${}_2$ degree. This is used to prove the Carlson-Simpson Lemma and the Open Dual Ramsey theorem over~$\ACA_0$, thereby answering some 40-years old open questions. These results have consequences in the reverse mathematics of structural Ramsey theory.
\end{abstract}

\section{Introduction}

Celebrated Ramsey's theorem~\cite{ramsey1929problem} ($\RT^n$) states the existence, for every $k$-coloring of $[\NN]^n$,
of an infinite set $H \subseteq \NN$ on which $[H]^n$ is monochromatic, for $k, n \geq 1$. Here, we write $[X]^n$ for the collection of all $n$-subsets of~$X$. In 1984, Carlson and Simpson~\cite{carlson1984dual} proved a dual version of Ramsey's theorem, in which $k$-colorings of $\NN$ are colored instead of $[\NN]^n$.
Given $\alpha \leq \omega$, let $(\NN)^\alpha$ denotes the class of partitions of $\NN$ into exactly $\alpha$ non-empty parts, and given $p \in (\NN)^\omega$, let $(p)^\alpha$ denotes the class of coarsenings of $p$ into exactly $\alpha$ many parts, that is, given $q \in (p)^\alpha$ and $x, y \in \NN$, if $p(x) = p(y)$, then $q(x) = q(y)$.

\begin{theorem}[Dual Ramsey Theorem~\cite{carlson1984dual}]
For all finite $k, \ell \geq 1$, for every Borel $\ell$-coloring of $(\NN)^k$,
there exists some $p \in (\NN)^\omega$ such that $(p)^k$ is monochromatic.
\end{theorem}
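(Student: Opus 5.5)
We prove the Dual Ramsey Theorem by the classical Carlson--Simpson route. We code partitions as infinite variable words and deduce the result from a Ramsey-type theorem for infinite-dimensional subspaces of words, combined with a Galvin--Prikry/Ellentuck style argument for Borel sets.

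\textbf{Coding.} A partition $p \in (\NN)^\omega$ is coded by listing its parts in order of their least elements. Then $p$ corresponds to an infinite sequence $w_0 < w_1 < \dots$ of finite variable words. Each $w_i$ is a word over the alphabet of previously used part indices together with a variable $x$, and it begins with $x$, so that the new part starts there. A coarsening $q \in (p)^k$ is then obtained by substituting, for each variable in turn, either a new part index or an old one, subject to exactly $k$ parts appearing. Colorings of $(\NN)^k$ thus become colorings of the reduced words of the ``space'' spanned by $p$, and $(p)^k$ is the set of such substitution instances.

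\textbf{Finite-word step.} First prove the ordered variable word theorem and its Carlson--Simpson strengthening. For every finite coloring of $A^{<\omega}$, with $A$ finite, there is an infinite sequence of ordered variable words all of whose substitution instances of a given length type are monochromatic. The base case is the Hales--Jewett theorem. The infinite version follows by iterating Hales--Jewett on blocks with a diagonal/fusion argument, most cleanly via idempotent ultrafilters on the semigroup of variable words (Carlson's approach). Let $u$ be a minimal idempotent in $\beta(A^{<\omega})$ that is compatible with substitution. Pick $w_0$ so that the color class is $u$-large for all substitutions, and then choose $w_{n+1}$ by the usual Galvin--Glazer induction. By induction on $k$ this yields monochromatic $(p)^k$ for \emph{clopen}, i.e.\ finitely determined, colorings. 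Each $q \in (p)^k$ is determined by finite data: the first $k-1$ variable substitutions that introduce new parts, after which everything collapses onto existing parts, whose colour is determined by a finite initial segment in the clopen case.

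\textbf{Borel case.} Equip the set of infinite partitions with the dual Ellentuck topology, whose basic open sets are $[s,p] = \{q \in (p)^\omega : s \sqsubseteq q\}$. Prove the dual Galvin--Prikry/Ellentuck theorem: every set with the Baire property in this topology, in particular every metrically Borel set, is completely Ramsey. The argument follows Ellentuck. First show that open sets are completely Ramsey using the combinatorial ``accept/reject'' fusion lemma, with the finite-word theorem above playing the role of the pigeonhole principle. Then show that complements and countable unions are handled via nowhere dense sets being Ramsey null. Finally, transfer from $(\NN)^\omega$ to $(\NN)^k$ via the continuous map $q \mapsto$ (its first $k-1$ parts, with the rest merged into one). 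The preimage of a Borel $\ell$-coloring is Borel, so applying complete Ramseyness finitely many times gives $p$ with $(p)^k$ monochromatic.

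\textbf{Main obstacle.} I expect the main obstacle to be the fusion lemma for open sets in the dual Ellentuck topology. Carrying out the accept/reject diagonalization requires, at each finite stage, homogenizing over all finitely many ``old'' substitutions simultaneously. This is exactly where the Carlson--Simpson lemma, rather than plain Hales--Jewett, is needed.
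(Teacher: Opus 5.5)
The paper does not prove this statement. It quotes it from Carlson and Simpson as background and only proves the open case: $\ACA_0\vdash\ODRT^{n+2}$, via the induction $\CSL^n\to\CSL^{n+1}$ through pre-homogeneous words (with $\OVW^0$ as the base), the coding $\CDRT^{n+2}=\CSL^{n+1}(0)$, and the Dzhafarov--Flood--Solomon--Westrick equivalence $\ODRT^n\leftrightarrow\CDRT^n$. Your infinite-dimensional route through a dual Ellentuck theorem is the classical strategy and the right one for Borel colourings. As written, though, it has a gap at the last step and two sub-arguments that do not work.

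\textbf{The transfer to $(\NN)^k$.} Put $\phi(X)=(X_0,\dots,X_{k-2},\bigcup_{j\ge k-1}X_j)$. Homogeneity of $(p)^\omega$ for $c\circ\phi$ does not give homogeneity of $(p)^k$, because $\phi[(p)^\omega]$ is a proper subset of $(p)^k$. Take $k=2$ and let $p$ have parts $p_0,p_1,p_2,\dots$ listed by least element. The coarsening with parts $p_0\cup p_2\cup p_3\cup\cdots$ and $p_1$ lies in $(p)^2$. Writing it as $\phi(X)$ would require the infinitely many parts $X_1,X_2,\dots$ to partition the single part $p_1$ into unions of parts of $p$, which is impossible. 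The repair is to pass to a coarsening $p'\in(p)^\omega$ each of whose parts is an infinite union of parts of $p$. Then for $q\in(p')^k$ the last part of $q$ is an infinite union of parts of $p$. Splitting it into those parts, in order of least element, gives $X\in(p)^\omega$ with $\phi(X)=q$, so $(p')^k$ is monochromatic.

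\textbf{Other points.} Your claim that each $q\in(p)^k$ is determined by the first $k-1$ substitutions introducing new parts is false. Once all $k$ parts have appeared, each of the infinitely many remaining parts of $p$ can still go to any of the $k$ parts, and a clopen colouring depends on an initial segment of unbounded length. Reducing even open colourings of $(\NN)^k$ to colourings of finite words takes real work; that is what $\ODRT^n\leftrightarrow\CDRT^n$ accomplishes. Your Borel argument never uses this paragraph, so delete it.

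Your idempotent-ultrafilter sketch is the Galvin--Glazer proof of the infinite Hales--Jewett theorem, and it produces arbitrary variable words. The accept/reject lemma needs the \qt{cut before the next variable} form, because the one-step extensions of a stem with $n$ parts are exactly the words $W[u]$ with $u$ a word over the $n$ old parts. In that form every block after the first begins with its new variable, i.e.\ left-variable words as in $\OVW^0$ or $\CSL^0$. You cannot get this by restricting the sketch to left-variable words. Any ultrafilter on nonempty words over $A$ concentrates on words starting with one fixed letter, while $w[a]$ starts with $a$ when $w$ is left-variable. So once $|A|\ge 2$, no ultrafilter on left-variable words has the same image $p$ under every substitution $x\mapsto a$. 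Cite Carlson--Simpson's ordered variable word theorem instead, or use the paper's Section~2 proof: uniform half-matches from Hales--Jewett via the Graham--Rothschild tree, a compactness argument producing full-matches, and induction on the number of colours.
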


We write $\DRT^k$ and $\ODRT^k$ for the Dual Ramsey theorem restricted to Borel and open colorings of $(\NN)^k$, respectively. The core combinatorial content of the Open Dual Ramsey theorem is known as the \emph{Carlson-Simpson Lemma} and can be formulated in terms of variable word theorems.

A finite or infinite \emph{word} over a finite alphabet~$A$ is a map $w : \alpha \to A$ for some~$\alpha \leq \omega$.
We write $|w|$ for its \emph{length}~$\alpha$. Given $\beta \leq \omega$, a $\beta$-variable word over~$A$ is a word $w$ over the alphabet $A \sqcup \{ x_i : i < \beta \}$ where each $x_i$ appears at least once, and the first occurrence of $x_i$ appears before the first occurrence of $x_{i+1}$. We write $A^{\alpha, \beta}$ and $A^{<\alpha, \beta}$ for the sets of all $\beta$-variable word of length~$\alpha$, and smaller than~$\alpha$, respectively. Given $\gamma \leq \beta \leq \omega$, a $\beta$-variable word~$w$ over~$A$ and a $\gamma$-variable word~$u$ of length~$|u| \leq \beta$, we write $w[u]$ for the $\gamma$-variable word over~$A$ where each occurrence of $x_j$ is replaced with $u(j)$, and cut before the first occurrence of $x_{|u|}$. If $|u| = \beta$, then $|w[u]| = |w|$.

\begin{lemma}[Carlson-Simpson Lemma~\cite{carlson1984dual}]
Fix a finite alphabet~$A$ and some $n, \ell \in \NN$. For every $\ell$-coloring of $A^{<\omega, n}$,
there exists an $\omega$-variable word~$W$ over~$A$ such that $\{ W[u] : u \in A^{<\omega, n} \}$ is monochromatic.
\end{lemma}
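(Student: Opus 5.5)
The plan is to prove the lemma in two stages. First we obtain an $\omega$-variable word $W$ that is \emph{level-homogeneous}: the color of $W[u]$, for $u \in A^{<\omega,n}$, depends only on the length $|u|$. Then we remove the dependence on the length with a pigeonhole argument followed by a contraction of $W$.

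\textbf{Step 2 (from level-homogeneous to monochromatic).} Suppose $W$ is level-homogeneous, and let $c(m)$ be the common color of the words $W[u]$ with $|u| = m$. By the pigeonhole principle ($\RT^1_\ell$) there is an infinite set $S = \{s_0 < s_1 < \dots\}$ with $s_0 \geq n$ on which $c$ is constant.
\begin{itemize}
\item Define an $\omega$-variable word $V$ of length $\omega$ as follows: place $x_i$ at position $s_i$, put $x_0,\dots,x_{n-1}$ once each in positions below $s_0$ (in order), and fill every other position with a fixed letter $a \in A$. This requires $s_0 \geq n$; for $n = 0$ nothing is placed below $s_0$ besides letters.
\item Let $u \in A^{<\omega,n}$. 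Then $V[u]$ is an $n$-variable word, because $x_0,\dots,x_{n-1}$ survive in order below $s_0$. Its length is $s_{|u|}$, because it is cut just before the first occurrence of $x_{|u|}$.
\item Substitution composes: $(W[V])[u] = W[V[u]]$. Hence $W[V]$ is an $\omega$-variable word all of whose instances have color $c(s_{|u|})$, which is constant on $S$.
\end{itemize}
This step is routine once the bookkeeping of the truncation convention is checked.

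\textbf{Step 1 (level-homogeneity).} This is the main obstacle, and it is essentially the Ordered Variable Word theorem for $n$-variable instances. I would build $W = w_0 w_1 w_2 \cdots$ by a fusion argument, where each $w_k$ is a finite block containing exactly the variable $x_k$ (plus letters). The invariant is that, after choosing $w_0,\dots,w_{k-1}$, every $u$ of length $< k$ already has its color determined, uniformly in $u$ within each length.

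To choose the next block we use the finite Graham--Rothschild theorem for parameter words over $A$. Here the constraints coming from all $n$-variable words $u$ of length $k$ and their extensions are packaged into a single finite coloring. This coloring lives on candidate blocks, and the color of a block records the colors of $W[u]$ for all relevant $u$. Graham--Rothschild yields a large enough combinatorial subspace of blocks that is homogeneous for this packaged coloring, and we take $w_k$ from that subspace. The ordering condition on variables (first occurrence of $x_i$ before that of $x_{i+1}$) is respected automatically by concatenating blocks in increasing variable order.

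Since every color is decided at a finite stage, the construction yields an infinite $W$ on which the coloring is level-homogeneous. I expect the difficulty to be in setting up the packaged finite coloring correctly. The color of $W[u]$ depends only on a finite initial segment of $W$, namely the prefix up to the first occurrence of $x_{|u|}$. So it suffices to arrange homogeneity one length at a time, with a fresh application of Graham--Rothschild at each stage rather than a single global one. If Graham--Rothschild for variable words with the ordering constraint proves awkward, I would first try to reduce to the case $n=0$ by treating $x_0,\dots,x_{n-1}$ as extra fixed letters of the alphabet $A \sqcup \{x_0,\dots,x_{n-1}\}$. Then I would restrict to substitutions $u$ that use these letters in the required order.
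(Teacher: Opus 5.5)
\textbf{Verdict: genuine gap.} Your Step~2 is fine. The description of $V$ is muddled: all you need is that the $i$th variable of $V$ first occurs at position $s_i$, with letters elsewhere, so that $|V[u]|=s_{|u|}$. The problem is Step~1, which carries the whole content of the lemma and which the fusion you describe cannot deliver.

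\textbf{Ordered blocks fail for $n\ge 2$.} Each block $w_k$ contains only $x_k$, so your $W$ is an \emph{ordered} $\omega$-variable word. Let $f(v)=1$ iff every occurrence of $x_0$ in $v$ precedes the first occurrence of $x_1$. For any ordered $W$, $W[x_0x_1x_0]$ has an $x_0$ (from the block of $x_2$) after its first $x_1$, whereas $W[x_0x_1a]$ does not. Both $u$ have length $3$, so the color is not a function of $|u|$. The unordered word $x_0\,x_1x_0\,x_2x_1x_0\cdots$ is monochromatic for this $f$, so solutions must genuinely be unordered.

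The same example defeats your fallback. An $\OVW^0$-solution $U$ over $A\sqcup\{x_0,\dots,x_{n-1}\}$ that avoids the constants $x_i$ is an ordered word over $A$, so the example applies. One that uses them (e.g.\ an $x_1$ before any variable) is not an $\omega$-variable word over $A$, and $\{U[u]\}$ is not of the form $\{W[u]\}$.

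\textbf{Committing to one block per stage fails even for $n=0$.} Here Step~1 is exactly the level version of $\OVW^0$. Take $A=\{0,1\}$, $f(v)=v(0)$ for $|v|\ge 2$ and $f(v)=0$ otherwise. The blocks $w_0=x_0$, $w_1=x_1$ satisfy your invariant, yet every level-$2$ word $b_0b_1\cdots$ has color $b_0$ whatever $w_2$ is.

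Extensions cannot be ``packaged'' into a finite coloring, because the blocks that produce them do not exist yet, and no finite look-ahead suffices. Suppose each block were found by a finite search guaranteed to succeed, even relative to $\emptyset'$. Then for computable $f$ you would get a $\Delta^0_2$ level-homogeneous $W$, and $f\oplus W$ computes an $\OVW^0$-solution: the level-color map is $f\oplus W$-computable, so keep a color occurring infinitely often and merge variables. This contradicts the computable $2$-coloring of $\{0,1\}^{<\omega}$ with no $\Delta^0_2$ $\OVW^0$-solution cited in the paper.

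\textbf{What is missing.} Two ingredients, as in the paper.
\begin{enumerate}
\item A genuinely infinitary proof of $\OVW^0$ itself. The paper gets half-matches uniformly from the Graham--Rothschild tree, full-matches via K\"onig's lemma and induction on the number of colors, and finishes with a compactness argument over nested full-matches.
\item An induction on $n$ through \emph{pre-homogeneous} words, rather than a direct length-only homogenization.
\end{enumerate}
For the induction, to pass from $\CSL^n$ to $\CSL^{n+1}$, for each prefix $u\in A^{<\omega,n}$ apply $\OVW^0$ over $A\sqcup\{x_0,\dots,x_n\}$ to $w\mapsto f(W[u\cdot x_n\cdot w])$. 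Treating the $x_i$ as letters is legitimate precisely here: their first occurrences already lie in the fixed prefix $u\cdot x_n$, so constant occurrences of $x_i$ in the solution become later occurrences of genuine variables of an unordered word. Fusing along a shortlex enumeration of prefixes yields $W$ such that the color of $W[v]$ depends only on the part of $v$ before the first occurrence of $x_n$. Applying $\CSL^n$ to the induced coloring of $n$-variable words finishes the step, and $\CSL^0$ follows directly from $\OVW^0$.
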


We write $\CSL^n$ for the Carlson-Simpson Lemma restricted to $\ell$-colorings of $A^{<\omega, n}$ for any finite alphabet~$A$ and any $\ell \geq 1$. Since then, the Carlson-Simpson Lemma found other applications in proving the existence of finite big Ramsey degree in structural Ramsey theory~\cite{hubivcka2020big}, such as reproving a result by Dobrinen~\cite{dobrinen2020ramsey} for the triangle-free Henson graph.
The Carlson-Simpson Lemma admits an inductive proof along~$n$ which is similar to the proof of Ramsey's theorem. For the base case, Carlson and Simpson~\cite[Theorem 6.3]{carlson1984dual} proved a stronger statement in terms of \emph{ordered} variable words. Given $\beta \leq \omega$, an ordered $\beta$-variable word over~$A$ is a word $w$ over the alphabet $A \sqcup \{ x_i : i < \beta \}$ where each $x_i$ appears at least once, and the \emph{last} occurrence of $x_i$ appears before the first occurrence of $x_{i+1}$. Note that contrary to the unordered version, each variable kind appears only finitely often in an ordered $\omega$-variable word. 

\begin{theorem}[Ordered Variable Word theorem~\cite{carlson1984dual}]
Fix a finite alphabet~$A$ and some $\ell \in \NN$. For every $\ell$-coloring of $A^{<\omega, 0}$,
there exists an ordered $\omega$-variable word~$W$ over~$A$ such that $\{ W[u] : u \in A^{<\omega, 0} \}$ is monochromatic.
\end{theorem}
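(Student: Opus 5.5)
We construct the ordered $\omega$-variable word $W = w_0 w_1 w_2 \cdots$ block by block, where each $w_i$ is a finite word over $A \sqcup \{x_i\}$ in which $x_i$ actually occurs. Then $W$ is automatically ordered, and a substitution $W[u]$ with $u \in A^{<\omega,0}$ of length $n$ is the concatenation $w_0[u(0)]\cdots w_{n-1}[u(n-1)]$. The plan is to make the colour of such a word depend only on how the blocks are arranged, and then to homogenize. Two tools carry this out: an infinitary Hales--Jewett theorem for \emph{located} variable words, taken as a black box in the form of Carlson's theorem or Furstenberg--Katznelson's infinite Hales--Jewett theorem, and a diagonal/pigeonhole argument along a tree of finite approximations. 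The combinatorial core is a one-step lemma.

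\textbf{One-step lemma.} Fix a finite set $S$ of finite words (the already built prefixes $W_n[u]$, $u \in A^{n}$) and a colouring $c$ of $A^{<\omega}$. We want a word $v$ over $A\sqcup\{x\}$ containing $x$ such that, for every $s\in S$, the colour of $s\,v[a]$ does not depend on $a\in A$.

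Consider the product colouring $\vec c(t) = (c(st))_{s\in S}$ of $A^{<\omega}$. It uses finitely many colours, namely $\ell^{|S|}$. The Hales--Jewett theorem (finite form, with length large enough) gives a variable word $v$ of fixed length $N$ on which $\vec c$ is constant over the substitutions of $x$. This proves the lemma.

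\textbf{Iterating the lemma.} Applying the lemma iteratively produces $W$ such that $c(W[u])$ depends only on $u\uh(|u|-1)$ for nonempty $u$.

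We then strengthen this to dependence on $|u|$ alone. At stage $n$ there are $|A|^n$ prefixes, and we want them all to receive the same colour after one further block. To arrange this, we apply Hales--Jewett to the $(n+1)$-tuple of blocks at once, building the blocks $w_0,\dots,w_n$ simultaneously. The right tool here is the multidimensional form: the Graham--Rothschild/Hales--Jewett theorem for $m$-parameter words. For each $m$, a finite colouring of $A^N$ with $N$ large admits an $m$-variable word, which here we may take ordered since we are free to choose the blocks, all of whose instances are monochromatic. Finite compactness supplies a finite ordered $m$-variable word $V_m$ such that every $V_m[u]$ with $|u|=m$ has a colour depending only on $m$, say $\chi(m)$.

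\textbf{Passing to the infinite word.} We make these finite words cohere into a single infinite $W$, and then choose infinitely many $m$ with the same $\chi(m)$ by pigeonhole. We must then also make the shorter instances $W[u]$, with $|u|<$ the number of variables, monochromatic. This is the main obstacle: the finite theorem gives no coherence between different lengths. To handle it, we would instead run a Galvin--Glazer style argument. We take an idempotent ultrafilter on the semigroup of located variable words, or more concretely use Carlson's theorem via the Koppelberg/Bergelson--Blass--Hindman proof with a minimal idempotent $p$ in $\beta A^{<\omega}$ that is also $\sigma_a$-invariant for every $a\in A$. Choose the colour class $C\in p$. Inductively pick blocks $w_i$ so that every word $W_i[u]$ lies in $C$ and in $C^\star$, where $C^\star = \{t : t^{-1}C^\star\in p\}$. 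This is done by choosing $w_i$ with $w_i[a]\in\bigcap_{s}s^{-1}C^\star$ for all $a$ at once, which is possible because the finite intersection is in $p$ and $p$ is $\sigma_a$-invariant. The resulting $W$ is ordered, and all $W[u]$ lie in $C$, which gives the monochromatic family required by the statement.
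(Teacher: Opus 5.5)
There is a genuine gap. Your first route (Graham--Rothschild plus finite compactness) you abandon yourself for lack of coherence, so everything rests on the ultrafilter step. That step never controls the constant prefix of a block, and this prefix is exactly what separates $\OVW^0$ from the infinite Hales--Jewett theorem.

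By definition, $W[u]$ is cut before the \emph{first occurrence} of $x_{|u|}$. So for $W=w_0w_1\cdots$ and $|u|=n$ one has $W[u]=w_0[u(0)]\cdots w_{n-1}[u(n-1)]\cdot w_n[\epsilon]$, where $w_n[\epsilon]$ is the part of $w_n$ before $x_n$. Your formula, which omits $w_n[\epsilon]$, is correct only if every block $w_n$ with $n\geq 1$ begins with its variable. Neither Hales--Jewett nor your ultrafilter selection says anything about where the variable first occurs.

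What your construction actually yields is a $W$ whose full substitutions at block boundaries all lie in $C$. That is an infinite Hales--Jewett solution, and it does not give $\OVW^0$. For example, let $c(s)=1$ iff $s$ ends in $1$, and take blocks $w_i=0\,x_i\,1$: every block-boundary substitution has colour $1$, but every $W[u]$ ends in $0$.

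In the paper's language, your one-step lemma plays the role of a half-match: Hales--Jewett makes $f(v[a]\cdot u)$ independent of $a\in A$. The real content of the proof is upgrading this to a full-match, where in addition $f(v[a]\cdot u)=f(v[\epsilon])$. The paper does this via the $\WKL$ dichotomy of Lemma~\ref{lem:ovw0-towsner-full-step} (either some $x_0\cdots x_n$ full-matches the tail, or some $\omega$-variable word omits a colour), an induction on the number of colours (Lemma~\ref{lem:ovw0-towsner-full}), and compactness along nested full-matches.

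Restricting your ultrafilter step to left-variable blocks does not repair it. Two side remarks first. Calling some $p\in\beta A^{<\omega}$ $\sigma_a$-invariant is vacuous, since $\sigma_a$ fixes constant words; the usual object is an ultrafilter $q$ on variable words with $\tilde\sigma_a(q)=p$. Also, $C^\star$ should be $\{t\in C: t^{-1}C\in p\}$, not the circular definition you wrote.

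The real obstruction is this. If $w$ is left-variable, then $w[a]$ begins with $a$. So when $|A|\geq 2$ and $q$ is concentrated on left-variable words, the ultrafilters $\tilde\sigma_a(q)$ contain the pairwise disjoint sets $D_a$ of words beginning with $a$, and are therefore pairwise distinct. Equivalently, every ultrafilter $p$ on nonempty words contains some $D_b$, and $D_b$ contains no set $\{w[a]:a\in A\}$ with $w$ left-variable. Hence knowing $\bigcap_s s^{-1}C^\star\in p$ can never by itself produce the next block.

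An ultrafilter proof is possible, but it needs an extra idea that makes these distinct images cohere. Let $S$ be the nonempty words and $V$ the variable words, and fix a minimal idempotent $e\in\beta S$. Take $q$ to be an idempotent of $\beta(S\cup V)$ lying in a minimal right ideal inside the closure of the left-variable words and in a minimal left ideal inside $\beta(S\cup V)e$. Then the $p_a=\tilde\sigma_a(q)$ are idempotents in the single minimal left ideal $\beta S\,e$, so $p_ap_b=p_a$ and $ep_a=e$. Choosing $C\in e$, the set $C^\dagger=\{s\in C: s^{-1}C\in p_a \text{ for all } a\in A\}$ lies in $e$ and satisfies $s^{-1}C^\dagger\in p_b$ for all $s\in C^\dagger$ and $b\in A$. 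This is what lets you pick $v_0\in C^\dagger$ and then left-variable blocks $v_1,v_2,\dots$ with every $v_0v_1[a_1]\cdots v_n[a_n]\in C^\dagger$.

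Your proposal contains neither this step nor a substitute for the paper's full-match argument. As written, it proves the infinite Hales--Jewett theorem, not the Ordered Variable Word theorem.
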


We write $\OVW^0$ for the Ordered Variable Word theorem restricted to $\ell$-colorings of $A^{<\omega, 0}$ for any finite alphabet~$A$ and any $\ell \geq 1$. There exists actually an ordered counterpart of the Carlson-Simpson Lemma proven by Carlson~\cite{carlson1988unifying} but for which no elementary proof is known. 

We are interested in the meta-mathematics of the Dual Ramsey theorem and its core lemmas from the viewpoint of computability theory and reverse mathematics.

\subsection{Reverse mathematics}

Reverse mathematics is a foundational program started by Harvey Friedman, whose goal is to find optimal axioms to prove ordinary theorems. It uses the framework of subsystems of second-order arithmetic, with a base theory, $\RCA_0$, capturing \emph{computable mathematics}. More precisely, $\RCA_0$ consists of the axioms of a discrete ordered commutative semiring, together with the $\Sigma^0_1$-induction scheme and the $\Delta^0_1$-comprehension scheme. See Simpson~\cite{simpson2009subsystems} or Dzhafarov and Mummert~\cite{dzhafarov2022reverse} for a reference in reverse mathematics. Overall this article, we shall be interested in the following subsystems, which are listed in increasing axiomatic strength:
\begin{itemize}
    \item $\WKL_0$, standing for \emph{Weak K\"onig's lemma}, is $\RCA_0$ augmented with the statement \qt{Every infinite binary tree admits an infinite path}. It can be thought of as capturing compactness arguments.
    \item $\ACA_0$, standing for \emph{Arithmetic Comprehension Axiom}, is $\RCA_0$ together with the comprehension scheme for arithmetic formulas (where set parameters are allowed). It is a $\Pi^1_1$-conservative extension of  Peano arithmetic. From a computability-theoretic viewpoint, it can be equivalently defined as $\RCA_0$ augmented with the statement \qt{$\forall X \exists Y (Y = X')$}, where $X' = \{ e : \Phi^X_e(e)\downarrow \}$ is the Turing jump of~$X$.
    \item $\ACA_0'$ is $\RCA_0$ augmented with the statement $\forall n \forall X \exists Y (Y = X^{(n)})$,
    where $X^{(n)}$ is the $n$th iterate of the Turing jump, defined by $X^{(0)} = X$ and $X^{(n+1)} = (X^{(n)})'$.
    By Yokoyama~\cite{yokoyama2023paris} (see also~\cite{pacheco2022determinacy}), $\ACA_0'$ is equivalent to the $\Pi^1_2$-reflection principle for $\ACA_0$, which is the statement \qt{every $\Pi^1_2$-formula provable in~$\ACA_0$ is true}.
    \item $\ACA_0^+$ is $\RCA_0$ augmented with the statement $\forall X \exists Y (Y = X^{(\omega)})$, where $X^{(\omega)} = \bigoplus_n X^{(n)}$ is the $\omega$-jump of~$X$. Given the Cantor pairing function $\langle \cdot, \cdot \rangle : \NN^2 \to \NN$, the operator $\bigoplus_n X_n = \{ \langle a, b \rangle : a \in X_n \rangle \}$ is the effective union of the sequence $(X_n)_{n \in \NN}$. The system $\ACA_0^+$ was popularized as being the weakest system known to prove Hindman's theorem.
    \item $\PIOOCA_0$ is $\RCA_0$ augmented with the comprehension scheme for $\Pi^1_1$-formulas. As such, it is an impredicative system of extremely high axiomatic strength compared to the systems above.
\end{itemize}

\noindent
We shall mostly study \emph{$\Pi^1_2$-problems}, which are $\Pi^1_2$-sentences of the form
$$
\Psf = \forall X[\Phi(X) \to \exists Y \Psi(X,Y)]
$$
where $\Phi$ and $\Psi$ are arithmetic formulas. A \emph{$\Psf$-instance} is any set~$X$ such that $\Phi(X)$ holds,
and a \emph{$\Psf$-solution to~$X$} is a set~$Y$ such that $\Psi(X,Y)$ holds. For example, an \emph{$\OVW^0$-instance} is a coloring $f : A^{<\omega, 0} \to \ell$ for a finite alphabet~$A$ and some~$\ell \geq 1$. An \emph{$\OVW^0$-solution} to~$f$ is an ordered $\omega$-variable word~$W$ such that $\{ W[u] : u \in A^{<\omega,0} \}$ is monochromatic.
\smallskip

In their article, Carlson and Simpson~\cite{carlson1984dual} asked about the proof-theoretic and computability-theoretic strength of the Dual Ramsey theorem and its combinatorial lemmas. In an unpublished paper, Slaman~\cite{slamanNoteDualRamsey1997} noticed that $\DRT^k_\ell$ holds over~$\PIOOCA_0$. Miller and Solomon~\cite{miller2004effectiveness} proved that $\ODRT^{k+1}$ implies $\RT^k$ over~$\RCA_0$, hence that $\ODRT^4$ implies $\ACA_0$ over~$\RCA_0$. They also proved that $\OVW^0$ is not provable over~$\WKL_0$, even for binary alphabets and 2-colorings. Erhard~\cite{erhard2013carlson} proved that $\OVW^0$ is not provable by Ramsey's theorem for pairs restricted to stable colorings over~$\RCA_0$. Dzhafarov, Flood, Solomon and Westrick \cite{dzhafarov2021effectiveness} studied extensively the Dual Ramsey theorem and its variable word lemmas.
They showed in particular that the full statement of the Dual Ramsey theorem is sensitive to representation, in terms of Borel or Baire codes. Liu, Monin and Patey~\cite{liu2019computable} proved $\OVW^0$ over~$\ACA_0$ for binary alphabets and constructed a computable 2-coloring of $\{0,1\}^{<\omega,0}$ with no $\Delta^0_2$ $\OVW^0$-solution. More recently, Anglès d’Auriac, Mignoty, Liu and Patey~\cite{angles2023carlson} proved two versions of $\OVW^0$: a piecewise syndetic and a density version, the latter being based on Dodos, Kanellopoulos and Tyros~\cite{dodos2014density}. They deduced that the Carlson-Simpson Lemma holds over~$\ACA_0^+$.

\subsection{Main contributions}

In this article, we give a new proof of $\OVW^0$ in the style of Towsner~\cite{towsner2012simple}, and derive from its combinatorics a new parametric notion of effective forcing for building solutions to $\OVW^0$ with a good first-jump control. Thanks to this notion of forcing, we prove the following theorem:

\begin{maintheorem}\label[maintheorem]{mthm:ovw0-rt22}
$\OVW^0$ implies neither $\RT^2_2$ nor $\ACA_0$ over~$\RCA_0$.
\end{maintheorem}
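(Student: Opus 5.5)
The plan is to build an $\omega$-model of $\RCA_0 + \OVW^0$ that is not a model of $\RT^2_2$. Since $\RT^2_2$ implies $\ACA_0$ fails in such a model only if $\ACA_0$ fails as well ($\ACA_0 \vdash \RT^2_2$), this gives both non-implications at once. The standard route is to prove a \emph{preservation} property for $\OVW^0$ that $\RT^2_2$ does not enjoy. I will use preservation of $2$ hyperimmunities; one could instead try cone avoidance combined with something else. The statement needed is the following. For every set $Z$, every pair of $Z$-hyperimmune sets $B_0,B_1$ (or, more robustly, every countable family of $Z$-hyperimmune sets), and every $Z$-computable instance $f : A^{<\omega,0}\to \ell$, there is a solution $W$ such that $B_0,B_1$ remain $Z\oplus W$-hyperimmune. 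It is known (Patey) that $\RT^2_2$ fails preservation of $2$ hyperimmunities: there is a $\Delta^0_2$ partition $B_0\sqcup B_1=\NN$ with both sides hyperimmune, and every infinite subset of either side computes a function dominating the principal function of the other side. Iterating the preservation lemma, and dovetailing over all instances, yields a Turing ideal containing $\emptyset$ and all these solutions, in which $B_0,B_1$ stay hyperimmune relative to every member. That ideal is an $\omega$-model of $\OVW^0$ but not of $\RT^2_2$.

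The core work is the relativized preservation lemma, which I will prove with the new Towsner-style forcing. Conditions are pairs $(\sigma, \mathcal{C})$. Here $\sigma$ is a finite ordered $n$-variable word, the stem. The reservoir $\mathcal{C}$ is a $Z$-computable (or $\Pi^0_1(Z)$) description of a large set of possible continuations, namely ordered variable words $V$ such that all of $\sigma V[u]$ stay within a ``good'' color. The key move, following Towsner's proof of $\OVW^0$, is to reduce the choice of the final color to a largeness notion. For each color $i$, say that $i$ is \emph{progressable} below a condition if for every finite partial description there is an extension adding a new variable block all of whose instantiations have color $i$. Towsner's combinatorics gives that some color is progressable in a hereditary sense; this plays the role of the pigeonhole principle. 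Since we cannot compute which color works, the forcing will run all $\ell$ colors in parallel, in the style of the Cholak–Jockusch–Slaman / Patey first-jump control. Each condition carries $\ell$ candidate stems, and the color that must succeed is determined by the progressability analysis.

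For the density step, fix a Turing functional $\Phi_e$ and a hyperimmune $B_j$. We want to force that $\Phi_e^{Z\oplus W}$ is not a total function dominating $p_{B_j}$. Define the $\Sigma^0_1(Z)$ set $U$ of finite sets $F$ such that, for every $\ell$-coloring-compatible way to extend, some candidate stem extension $\tau$ produces $\Phi_e^{Z\oplus\tau}(x)\downarrow \in F$ for some $x$. Hyperimmunity of $B_j$ relative to $Z$ yields an $F$ in this array disjoint from $B_j$, or else we get a condition forcing partiality. The compactness here uses that only finitely many variables are added at each step, exactly as in Towsner's argument.

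The main obstacle I expect is making the forcing question $\Sigma^0_1(Z)$, and not merely $\Sigma^0_2$. This means formulating progressability so that the statement ``every color-extension admits a $\Phi_e$-convergent extension'' is a compactness-bounded, finitely checkable property over the finitely many colors and finitely many new letters. I would first try to reduce it to a finite Hales–Jewett/Graham–Rothschild-style statement over $A^{<N,0}$ and use a $Z$-computable search for $N$. The hard point is ensuring the choice of color in the disjunction is coherent across all requirements. I would handle this with the parametric, hereditary notion of progressable color, and argue by contradiction that for some color all requirements are met, as in Patey's preservation proofs.
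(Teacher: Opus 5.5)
\textbf{Verdict: there is a genuine gap.} Your outer reduction is the paper's, but the core preservation lemma is only sketched, and the sketched design does not go through as stated.

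\textbf{What matches.} You use preservation of $2$ hyperimmunities for $\OVW^0$ (the paper proves it for $\omega$), Jockusch's $\Delta^0_2$ partition into two hyperimmune sides showing that $\RT^2_2$ lacks this property, an $\omega$-model built as an iterated Turing ideal, and $\ACA_0\vdash\RT^2_2$. One slip: a homogeneous set contained in $B_i$ has principal function dominating $p_{B_i}$, the same side, not the other one.

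\textbf{Where it breaks.} You yourself leave the two hard points open: getting a $\Sigma^0_1$ forcing question, and keeping the colour coherent across requirements. The proposed mechanism also fails at the positive-extension step.
\begin{itemize}
\item With one candidate stem per colour, whether a stem can be continued depends on the stem itself, not on the colour class. In the $\RT^1$ case, any finite $F_i\subseteq C_i$ extends as long as $C_i$ meets the reservoir infinitely. Here that fails. Take the colouring of $\{a,b\}^{<\omega}$ by the parity of the number of $b$'s, and the stem $\sigma=x_0x_0x_1$. Then $\WSub^{\star}(\sigma)=\{\epsilon,aa,bb\}$ is monochromatic, yet $aaa\cdot u$ and $aab\cdot u$ always get different colours. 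So no $u$ makes $\WSub^=(\sigma\cdot u)$ monochromatic.
\item Consequently, when a positive answer gives you a convergence witness $\tau$ for $\sigma_i$, nothing in your invariant guarantees that $\sigma_i\tau$ still has a large set of monochromatic continuations. The natural way to demand it, ``$\sigma_i\tau$ still extends to a colour-$i$ solution'', is far from $\Sigma^0_1$.
\item Towsner-style combinatorics does not supply a ``hereditarily progressable colour''. What half- and full-matches supply is a finite set of $1$-variable words, $\OSub^{1,=}(w)$, that \emph{collectively} match every continuation.
\end{itemize}

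\textbf{The paper's fix.} It builds the forcing around that collective invariant.
\begin{itemize}
\item A condition is a finite set $I$ of stems of equal length sharing one reservoir $W$ of hyperimmune-free degree. It is only required to be $f$-matching: every $w\in\WSub^{\star}(W)$ has some $v\in I$ with $\WSub^=(v\cdot w)$ monochromatic. Individual stems may be dead, and $|I|$ is unbounded.
\item The forcing question $(I,W)\qvdash_v\varphi(G)$ asks for a level $n$ of the computable Graham--Rothschild tree $\T_{A,\ell\times I}$ such that every node $u$ at that level has some $\hat u\in\OSub^=(u)$ with $\varphi(v\cdot W[\hat u])$. For $\Sigma^0_1$ formulas $\varphi$ this is $\Sigma^0_1(W)$.
\item A negative answer yields a nonempty $\Pi^0_1(W)$ class of paths through the tree. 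A member is taken that keeps the reservoir of hyperimmune-free degree.
\item A positive answer replaces each $v\in J$ by \emph{all} the words $v\cdot W[\hat u]$, for witnesses $\hat u$ across the whole level. Matching survives because, by Hales--Jewett, each level of $\T_{A,\ell\times I}$ is universal for shifts of the $\ell\times I$-colouring $\bar f$ that records which stem matches.
\item The colour problem is sidestepped rather than solved. One builds only an $\IOOVW^0$-solution: infinitely many monochromatic levels, with colours allowed to vary. Stems forced unable to produce a new monochromatic level are deleted, which preserves matching. The colour is then chosen at the end by pigeonhole, computably in $f\oplus G$, so no pairing argument over colours is needed.
\end{itemize}
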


Moreover, such a separation is witnessed by an \emph{$\omega$-model}, that is, a structure $\M = (M, S, 0, 1, +, \cdots, <)$ in which the first-order part consists of the standard integers $\omega$, equipped with the natural operations and order. This answers partially~\cite[Question 11]{montalban2011open} and \cite[Problem 2]{miller2004effectiveness} and answers negatively \cite[Problem 1]{miller2004effectiveness} and \cite[Question 4.9]{liu2019computable}.

Second, we prove that every computable instance of $\OVW^0$ admits a solution of low${}_2$ degree
by constructing a $\emptyset'$-computable 1-generic filter for this notion of forcing. Formalizing the construction over $\ACA_0$, we use this as the base case of an inductive proof $\CSL^n$ over~$\ACA_0$. We therefore obtain an exact characterization of the strength of the Carlson-Simpson Lemma and the Open Dual Ramsey Theorem.

\begin{maintheorem}\label[maintheorem]{mthm:csl-aca}
For every~$n \in \omega$, $\ACA_0 \vdash \CSL^n$ and $\ACA_0 \vdash \ODRT^{n+2}$.
Moreover, a reversal holds for every~$n \geq 2$.
\end{maintheorem}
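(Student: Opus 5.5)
The plan has three parts, proved in order: $\ACA_0 \vdash \OVW^0$ (the base case), then $\ACA_0 \vdash \CSL^n$ by external induction on~$n$, then $\ODRT^{n+2}$ from the appropriate $\CSL$ instance. The reversal will come from the earlier Miller--Solomon result.

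\emph{Base case.} The key input is the low${}_2$ basis result announced above: every computable instance of $\OVW^0$ admits a solution of low${}_2$ degree. It is obtained from a $\emptyset'$-computable 1-generic filter for the parametric forcing notion adapted to the Towsner-style proof of $\OVW^0$. I would relativize this. Given an instance $f : A^{<\omega,0} \to \ell$, the construction builds, computably in $f'$, a generic filter whose induced ordered $\omega$-variable word $W$ is a solution. Since $\ACA_0$ proves the existence of $f'$, the solution $W$ is $\Delta^0_1$-definable from $f'$ and exists by $\Delta^0_1$-comprehension. The work lies in checking that the combinatorics of the forcing can be formalized in $\ACA_0$. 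First, the density and extension lemmas must involve only arithmetic properties of finite approximations, so that they can be decided by $f'$ or handled by arithmetic induction. Second, the Towsner-style proof must not use an iterated-jump or ultrafilter argument. In particular, the choice of a color that is \qt{progressable}, i.e.\ one that can be forced indefinitely, must be made via a $\Sigma^0_2$ question answerable from $f'$, with arithmetic induction ensuring the search terminates. I expect this formalization, especially showing that the genericity needed for monochromaticity is only $\Sigma^0_1$ relative to $f$ and so is met by an $f'$-computable filter, to be the main obstacle.

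\emph{Inductive step.} I would follow the Ramsey-style induction of Carlson--Simpson. Given $c : A^{<\omega,n+1} \to \ell$, build a sequence of approximations to an $\omega$-variable word. At each stage, fix the finite prefix and define an induced coloring of $n$-variable words, namely the color of the word obtained by appending one fresh variable, which reduces the problem to $\CSL^n$ (ultimately $\OVW^0$) applied to the tail. Because each step uses only a jump, the whole construction is a sequence of length~$\omega$ of applications of a uniform, $f'$-computable procedure. This is the advantage of the low${}_2$ construction over the earlier $\ACA_0^+$ proof: the composed procedure is computable in a fixed finite jump $c^{(m)}$ with $m$ depending only on $n$, and $\ACA_0$ proves that this jump exists. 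A final pigeonhole over the $\ell$ colors, which is finitary and so available in $\RCA_0$, yields an $\omega$-variable word $W$ with $\{W[u]\}$ monochromatic. Since $n$ is standard, this is an external induction producing a separate proof in $\ACA_0$ for each~$n$.

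\emph{$\ODRT$ and the reversal.} For $\ODRT^{n+2}$ I would use the standard Carlson--Simpson translation. A partition into $n+2$ parts restricted to an initial segment is coded by an $n$-variable word over a two-letter alphabet, with the two extra parts absorbing the constants. An open coloring of $(\NN)^{n+2}$ is determined on sufficiently long finite approximations, and the relevant modulus can be computed from the jump. So $\CSL^n$, applied to the induced coloring with a thinning argument to handle the open condition, gives $p \in (\NN)^\omega$ with $(p)^{n+2}$ monochromatic. For the reversal, Miller--Solomon give $\ODRT^{k+1} \vdash \RT^k$, so $\ODRT^{n+2}$ implies $\RT^{n+1}$, which implies $\ACA_0$ for $n \geq 2$. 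Similarly $\CSL^n$ implies $\ODRT$ of a suitable arity by the same coding, and hence also $\ACA_0$ for $n \geq 2$.
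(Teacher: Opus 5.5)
Your architecture matches the paper's: $\OVW^0$ with low${}_2$ solutions, the Carlson--Simpson pre-homogeneity induction run in $\ACA_0$ with an external induction on $n$, and Miller--Solomon for the reversal. The reversal part is fine. However, the complexity bookkeeping, which is the real content of the theorem, is wrong as you state it.

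First, the $\OVW^0$ solution is not $\Delta^0_1$ in $f'$: the paper cites a computable instance with no $\Delta^0_2$ solution. What $f'$ computes is the decreasing sequence of $\PP$-conditions, i.e.\ a finitely branching tree of stems. The solution is a path through that tree, chosen low over $f'$ by the formalized low basis theorem, and this is what gives $(W\oplus f)''\leq_T f''$. Likewise, a $\Sigma^0_2$ question is not answered by $f'$. The paper never chooses a color during the construction: its forcing question, defined through the Graham--Rothschild tree, is $\Sigma^0_1$ in the reservoir, and the color is fixed only at the end by the pigeonhole step from $\IOOVW^0$ to $\OVW^0$.

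Second, low${}_2$-ness is not needed for the base case, since $\ACA_0\vdash\OVW^0$ was already known. It is needed for the inductive step. There, your justification \qt{each step uses only a jump} is exactly the estimate that, iterated along the $\omega$-sequence $W_0\geq W_1\geq\cdots$, yields $f^{(\omega)}$, which is the old $\ACA_0^+$ bound. What keeps the sequence arithmetic is that each application of $\OVW^0$ does not raise the double jump: $(W_{k+1}\oplus f)''\leq_T (W_k\oplus f)''\leq_T f''$, uniformly. Hence every stage is computable from one fixed jump of $f$.

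The combinatorics of the inductive step also needs repair. \qt{The color of the word obtained by appending one fresh variable} is only well defined once you have a pre-homogeneous word, and building one is exactly where $\OVW^0$ is used:
enumerate $u\in A^{<\omega,n}$ in shortlex order.
At stage $k$, apply $\OVW^0$ to $w\mapsto f(W_k[u\cdot x_n\cdot w])$ over the enlarged alphabet $A\sqcup\{x_0,\dots,x_n\}$. The old variables must be treated as letters, because in an unordered word the tail after $u\cdot x_n$ may reuse them.
Fold the solution back into $W_k$ without touching its first $|u|+1$ variables, so that the $W_k$ converge to a pre-homogeneous $W$.
Only then is $\CSL^n$ applied, to the induced coloring of $\USub^{n,\star}_A(W)$; there is no final pigeonhole.

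For $\ODRT^{n+2}$, your count is off. Below the least element of its last block, a partition into $n+2$ parts is an $(n+1)$-variable word over the empty alphabet, i.e.\ an instance of $\CDRT^{n+2}=\CSL^{n+1}(0)$. Only the block of $0$ can be absorbed into a letter: the paper's reduction from $\CSL^n(1)$ sends the letter $0$ to $x_0$ and prepends $x_0$. A block coded by a second letter would be neither forced to be non-empty nor forced to begin before the variable blocks. The passage from $\CDRT^{n+2}$ to $\ODRT^{n+2}$ then needs no thinning argument of your own: it is the $\RCA_0$-equivalence of Dzhafarov, Flood, Solomon and Westrick.
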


This therefore gives a partial answer to the original question of Carlson and Simpson~\cite{carlson1984dual}
and a complete answer to~\cite[Problem 3]{miller2004effectiveness}. Based on the work of
Hubi\v{c}ka~\cite{hubivcka2020big},
formalized in reverse mathematics by Anglès d’Auriac et al.~\cite{angles2023carlson}, \Cref{mthm:csl-aca} shows over $\ACA_0$ that the triangle-free Henson graph has finite big Ramsey degree. Note that this fact was recently proven over $\ACA_0$ by Cholak, Dobrinen, and Towsner~\cite{cholak2026hensonupper} for every Henson graph, using a Milliken-style proof for coding trees, yielding exact bounds for finite big Ramsey degree contrary to the approach of
Hubi\v{c}ka~\cite{hubivcka2020big}.

Last, we complete the picture by showing that $\CSL^1$ and $\ODRT^3$ are logically weak, with the following theorem:

\begin{maintheorem}\label[maintheorem]{mthm:csl1-avoidance}
Neither $\CSL^1$, nor $\ODRT^3$ implies $\ACA_0$ over~$\RCA_0$.
\end{maintheorem}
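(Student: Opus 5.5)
We prove the statement for $\CSL^1$ and then transfer it to $\ODRT^3$. The standard route to ``does not imply $\ACA_0$'' is cone avoidance, so we aim to show that $\CSL^1$ admits cone avoidance. That is, for every set $Z$, every non-$Z$-computable set $C$, and every $Z$-computable instance $f : A^{<\omega,1} \to \ell$, there is a solution $W$ with $C \not\leq_T Z \oplus W$. Iterating this and taking the union yields an $\omega$-model of $\RCA_0 + \CSL^1$ containing only sets that avoid $\emptyset'$, so the model does not satisfy $\ACA_0$. For $\ODRT^3$, recall the standard reduction between open colorings of $(\NN)^{n+2}$ and colorings of variable words. An open coloring is determined by the finite initial segments of partitions, and a partition into $3$ parts corresponds to a $1$-variable word over a two-letter alphabet, with the first two blocks fixed. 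This reduction (via Miller–Solomon / Dzhafarov et al.) is computable, so each $\ODRT^3$ instance computes a $\CSL^1$-like instance whose solutions compute $\ODRT^3$-solutions. Cone avoidance therefore transfers.

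To prove cone avoidance for $\CSL^1$, we reduce it to $\OVW^0$-type combinatorics. A $1$-variable word $u$ is $v\, x_0\, w$, with $v$ variable-free. Given $f$ on $A^{<\omega,1}$, we consider the word $W[u]$ for an $\omega$-variable $W$. Its value splits into a prefix over $A$ up to the first occurrence of $x_0$, followed by a tail where $x_0$ is inherited from a variable of $W$. The plan is the usual inductive step. First we use a cone-avoiding version of the base case, $\CSL^0$ (equivalently a Hales–Jewett/Carlson–Simpson for $0$-variable words), applied to the induced colorings. We then thin out by a stabilization argument analogous to $\RT^1 \Rightarrow \RT^2$ via cohesiveness. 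The cohesiveness-like step is stable: for each prefix we ask whether the color eventually stabilizes along extensions. We handle it by Mathias-style forcing whose conditions are pairs (finite ordered variable word $\sigma$, infinite $\omega$-variable reservoir $W$ in the ground model), with $W$ low relative to $Z$-avoidance.

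The core is forcing cone avoidance for $\Sigma^0_1$ questions. We need a combinatorial lemma: given a condition $(\sigma, W)$ and a Turing functional $\Phi_e$, either there is an extension forcing $\Phi_e^{Z\oplus G}(n) \neq C(n)$ or divergent for some $n$, or we can find a reservoir refinement on which no finite extension decides conflicting values. This step needs the ``either find a split or thin'' dichotomy to be decidable uniformly enough. The reservoir refinement must be a solution to some instance of $\CSL^0$/Hales–Jewett computable from $Z$, chosen to avoid $C$. For the $0$-variable case we invoke compactness: the finite Hales–Jewett theorem gives bounds, and $\WKL$ with cone avoidance (Jockusch–Soare) supplies the reservoir.

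We expect the main obstacle to be this dichotomy for the $1$-variable substitution. The extensions $W[u]$ involve the arbitrary finite word $v$ before $x_0$, so the forcing question must quantify over all possible prefixes. The $\Pi^0_1$ class of ``reservoirs with no splitting pair'' must then be shown nonempty via a Hales–Jewett compactness argument. We would first try to define the forcing question as ``for every $2$-coloring of a large enough finite variable-word cube there is a monochromatic sub-cube all of whose extensions converge to some value''. This is $\Sigma^0_1$ in $Z$ by compactness. Both of its outcomes then yield a valid extension, following Towsner-style proofs.
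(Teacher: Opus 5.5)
Your plan has the right shape, a stabilization (cohesive) step followed by a base case. The paper does prove cone avoidance, as preservation of one hyperimmunity, for $\CSL^1$ via $\CCSL^1$, $\LOVW^0$ and $\RT^2$. It then transfers to $\ODRT^3$ through $\CSL^1(1)\to\CSL^2(0)=\CDRT^3\leftrightarrow\ODRT^3$, where the single letter is the block of $0$, not your two-letter encoding. But you miss the idea everything hinges on. After stabilizing $f$ on a reservoir $W$, the base case must be applied to the limit coloring $\hat f$ on $\WSub^\star_A(W)$, and $\hat f$ is only $\Delta^0_2$ in $W\oplus Z$. Plain cone avoidance of $\CSL^0$ says nothing about such an instance. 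You need \emph{strong} cone avoidance (equivalently, strong preservation of one hyperimmunity) for the base case. This is the same situation as the $\mathsf{COH}$-plus-$\RT^1$ route to cone avoidance for $\RT^2_2$, which needs strong cone avoidance of $\RT^1$.

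For variable words this is the hardest part of the paper. With $f$ arbitrary, the requirement ``add a monochromatic level'' is only $\Sigma^{0,f}_1$, so the $\Pi^0_1$/$\Sigma^0_1$-extension lemmas no longer apply. Also, conditions with level-homogeneous stems need not be able to satisfy $\R^h_\Psi$. The paper handles this with three tools:
\begin{itemize}
\item witnesses for progress, resting on the lemma that non-progressable stems never form an $A^{<\omega}\to\ell$-matching set, proved by a case analysis on the $\Pi^0_1$ classes $Q_n$ of colorings agreeing with $f$ below $t$;
\item reservoirs that maximize non-progressable initial segments at infinitely many levels;
\item strong preservation of $\omega$ hyperimmunities for $\IOIHJ$.
\end{itemize}
These give strong preservation for $\LOVW^0$. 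Your Seetapun-style question ``for every 2-coloring of a large cube \dots'' gestures at quantifying over colorings. However, nothing shows that either outcome yields an extension preserving the universality invariant of conditions, and ``following Towsner-style proofs'' is exactly the step that is not routine.

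The non-strong ingredients are not compactness arguments either. In the stabilization forcing, a negative answer can be handled by an avoiding path through a subtree of a universal tree. The paper builds the stability tree, using Hales--Jewett over the enlarged alphabets $B_{t_n}$, precisely so that every level stabilizes every coloring of $A^{t_n,1}$; you would need such an object to keep conditions stabilizing as the stem grows. A positive answer is different. It requires shrinking the reservoir to a homogeneous solution of a reservoir-computable $\OVW^0$-instance $g$, so that colors stabilize above the witnessing level. $\WKL$ does not supply such solutions: $\OVW^0$ has computable instances with no $\Delta^0_2$ solution, hence no nonempty instance-computable $\Pi^0_1$ class of solutions. This is why the paper first proves preservation of $\omega$ hyperimmunities for $\OVW^0$ by a genuine forcing with finite $f$-matching sets of stems over the Graham--Rothschild tree. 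Finally, lowness is the wrong invariant on reservoirs for avoiding an arbitrary cone, since $C$ could itself be low. You want $C\not\le_T Z\oplus W$, or hyperimmune-free reservoirs as in the paper.
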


\Cref{mthm:csl1-avoidance} is by far the most complicated part of the article, involving three non-trivial steps: we first prove that an infinite variant of the Hales-Jewett theorem admits so-called strong preservation of hyperimmunity, then use it to prove that the Ordered Variable Word theorem admits the same property, and last, we prove that a cohesive version of $\CSL^1$ admits preservation of hyperimmunity. 

\Cref{mthm:csl1-avoidance} shows that the statement of the indivisibility of the triangle-free Henson graph does not imply $\ACA_0$, answering a question of Gill~\cite{gill2023note} and Cholak, Dobrinen and McCoy~\cite[Question 48]{cholak2026henson}.

\subsection{Organization of the paper}

We start with \Cref{sec:towsner-ovw0}, which provides a simple proof of $\OVW^0$ over $\ACA_0$ in the style of Towsner~\cite{towsner2012simple}. This settles the combinatorial objects which are then used in \Cref{sec:weakly-low} to design a parametric notion of effective forcing with good first-jump control. In particular, we prove a low${}_2$ basis theorem for $\OVW^0$. Then, in \Cref{sec:preservation-hyp}, we prove that $\OVW^0$ admits so-called preservation of $\omega$ hyperimmunities, and deduce \Cref{mthm:ovw0-rt22}. In \Cref{scaOVW-subsec1}, we prove that an infinite version of the hales-jewett theorem admits so-called strong preservation of $\omega$ hyperimmunities. We use this result in \Cref{scaOVW-sec3} to prove that $\OVW^0$ admits strong preservation of 1 hyperimmunity.  In \Cref{sec:csl-aca}, a formalization of the low${}_2$ basis theorem for $\OVW^0$ from \Cref{sec:weakly-low} is used to prove \Cref{mthm:csl-aca}. In \Cref{sec:weakness-csl1}, we prove that a cohesive version of $\CSL^1$ admits preservation of $\omega$ hyperimmunities and, combined with \Cref{scaOVW-sec3}, we deduce \Cref{mthm:csl1-avoidance}. Last, in \Cref{sec:applications}, we derive consequences of \Cref{mthm:csl-aca} and \Cref{mthm:csl1-avoidance} to structural Ramsey theory.

\subsection{Notation}

\emph{Computability theory.} We shall use the standard notations of computability theory. See Soare~\cite{soareTuringComputability2016} for a reference. In particular, we let $\Phi_0, \Phi_1, \dots$ be an enumeration of all Turing functionals, and write $\Phi^A_e(x)\downarrow = y$ if the $e$th functional with oracle~$A$ halts on input~$x$ and outputs~$y$. Otherwise, we say it diverges on input~$x$, and write $\Phi^A_e(x)\uparrow$.
\smallskip

\noindent
\emph{Words.} Given two (finite or infinite) words~$w_0, w_1$, we write $w_0 \preceq w_1$ if $w_0$ is a prefix of~$w_1$.
Given a finite or infinite word~$W$ and some length~$t \leq |W|$, we write $W \uh_t$ for the initial segment of~$W$ of length~$t$. More generally, given $t_0 \leq t_1 \leq |W|$, we write $W \uh_{t_0}^{t_1}$ for the sequence of length~$t_1 - t_0$ such that $W \uh_{t_0} \cdot\ W \uh_{t_0}^{t_1} = W \uh_{t_1}$. We write $\epsilon$ for the empty word. In particular, given a variable word~$w$, $w[\epsilon]$ is the initial segment of~$w$ cut before the first occurrence of~$x_0$.
\smallskip

\noindent
\emph{Combinatorial spaces.} 
Throughout this article, we shall work with two different combinatorial spaces: ordered variable words (in \Cref{sec:towsner-ovw0,sec:weakly-low,sec:preservation-hyp,scaOVW-subsec1,scaOVW-sec3}) for $\OVW^0$, and unordered variable words (in \Cref{sec:csl-aca,sec:weakness-csl1}) for $\CSL$. To avoid introducing too many notations at once, we shall introduce the specific definitions for the corresponding spaces in \Cref{sec:notation-ovw} and \Cref{sec:notation-csl}, respectively, when they are first used.


\section{A simple proof of $\OVW^0$}\label[section]{sec:towsner-ovw0}

Anglès d'Auriac et al.~\cite{angles2023carlson} gave proofs of the piecewise syndetic and the density versions of the zero-dimensional Ordered Variable Word theorem, respectively, yielding two proofs of~$\OVW^0$ over~$\ACA_0$. Both proofs were based on finitary versions of the respective statements, one of which was proven by Dodos et al~\cite{dodos2014density}. In this section, we give an arguably more natural proof of $\OVW^0$ over~$\ACA_0$, using the Hales-Jewett theorem below as a bootstrap. The combinatorics will be then reused in later sections to show that $\OVW^0$ admits a weakly low basis (\Cref{sec:weakly-low}) and preservation of $\omega$ hyperimmunities (\Cref{sec:preservation-hyp}).

Towsner~\cite{towsner2012simple} gave a simple combinatorial proof of the Finite Union Theorem over~$\ACA_0^+$ by introducing the concepts of half-match and full-match. His technique was then used to give a combinatorial proof  Carlson's theorem for words~\cite{bompardReverseMathematicsCarlson2022} over~$\ACA_0^+$. We will give a Towsner-style proof of~$\OVW^0$. Before this, we introduce some notation.

\subsection{Notation}\label[section]{sec:notation-ovw}

Given $\beta \leq \alpha \leq \omega$, we write $A^{\alpha, \beta}_{<}$ and $A^{\leq\alpha, \beta}_{<}$ for the sets of all ordered $\beta$-variable word of length~$\alpha$, or at most~$\alpha$, respectively. When $\beta = 0$, we simply write $A^\alpha$ and $A^{\leq\alpha}$. Given ordered variable words~$u$ and $v$, when doing the concatenation $u \cdot v$, we will always assume that the variable kinds of~$u$ and $v$ are from disjoint spaces, so that there is no variable kind clash. In particular, if $u$ is an ordered $n$-variable word and $v$ is a ordered $\beta$-variable word, then $u \cdot v$ is an ordered $\beta+n$-variable word.
Given $\gamma \leq \beta \leq \omega$ and an ordered $\beta$-variable word~$w$, we let $\OSub^\gamma_{A}(w)$ be the set of all ordered $\gamma$-variable words generated by~$w$, that is,
$$
\OSub^\gamma_{A}(w) =  \{ w[u] : u \in A^{\leq\beta, \gamma}_{<} \}
$$
Note that if $\beta = \omega$ and $\gamma < \omega$, then $\OSub^\gamma_{A}(w)$ contains ordered $\gamma$-variable words of finite and of infinite length. On the other hand, $\OSub^\omega_A(w)$ contains only ordered $\omega$-variable words, hence of infinite length. Most of the time, the alphabet~$A$ will be clear from context, and we shall simply write $\OSub^\gamma(w)$ for $\OSub^\gamma_A(w)$.
Let
\begin{align}
 \vw(w)&:= \bigcup\limits_{0\leq \gamma\leq \beta}\vw^\gamma(w).
 \end{align}
Note that $\vw(w)$ includes in particular $\vw^0(w)$, that is, the set of words generated by~$w$. Since most theorems we shall consider are colorings of words, we will write $\w(w)$ for $\vw^0(w)$.
Last, we define
\begin{align}\nonumber
\vwsamelong(w) &:= \big\{ v\in \vw(w):|v| = |w|  \big\}\\ \nonumber
\vwshorter(w) &:=
\big\{
v\in\vw(w): |v|<|w|
\big\}.
\end{align}
The sets $\OSub^{\gamma,=}(w)$, $\OSub^{\gamma,\star}(w)$, $\wsamelong(w)$ and $\wshorter(w)$ are defined accordingly.

\subsection{Hales-Jewett theorem}

The following theorem is known as the Hales-Jewett theorem~\cite{hales1963regularity}. The original proof used double induction and yielded a bound $HJ$ growing as fast as the Ackermann function. Shelah~\cite{shelah1988primitive} gave a new proof yielding primitive recursive bounds:

\begin{theorem}[Hales-Jewett, $\RCA_0$]\label[theorem]{thm:hales-jewett}
There exists a primitive recursive function~$HJ(k, \ell)$ such that for every $n \geq HJ(k, \ell)$, every alphabet~$A$ of size~$k$ and every coloring $f : A^n \to \ell$, there exists some 1-variable word $v$ over~$A$ of length~$n$ such that $\WSub^{=}(v) = \{ v[a] : a \in A \}$ is $f$-homogeneous.
\end{theorem}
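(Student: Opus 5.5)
We follow Shelah's proof of the Hales-Jewett theorem, which is the standard route to primitive recursive bounds. The argument is a finite combinatorial one, so it only needs to be checked in $\RCA_0$: every object is finite, and all the inductions are on $\Sigma^0_0$ (bounded) statements about finite objects. The proof is by induction on the alphabet size $k$. For $k=1$ we take $HJ(1,\ell)=1$, and any 1-variable word works. Monotonicity in $n$ is easy: if $n \geq HJ(k,\ell)$, we restrict the coloring to words extending a fixed prefix, apply the bound on the last $HJ(k,\ell)$ coordinates, and pad with the constant prefix.

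For the inductive step, let $A=\{a_0,\dots,a_{k}\}$ be of size $k+1$ and set $m=HJ(k,\ell)$, the bound for the sub-alphabet without $a_k$. The key tool is \emph{Shelah's insertion lemma}. Consider words of length $N=N_1+\dots+N_m$ split into $m$ blocks. Using iterated pigeonhole arguments (Graham--Rothschild style), we choose within each block $i$ a pair of positions $s_i<t_i$. The block's pattern is then $a_{k-1}$ up to $s_i$, a "switch" segment on $(s_i,t_i]$, and $a_k$ after that. We arrange the choices so that swapping $a_{k-1}$ and $a_k$ on the switch segment of block $i$ does not change the color, whatever the other blocks contain. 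The choice is made block by block, from the last block to the first, over the $\ell^{(\text{number of configurations of the other blocks})}$ induced colorings. A pigeonhole on $N_i+1 > \ell^{M_i}$ positions gives $s_i<t_i$ in which the induced coloring agrees, where $M_i$ counts the choices for the other blocks. This makes $N_i$ a tower (iterated exponential) of the previous sizes, which is primitive recursive.

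Next we use these pairs to define a coloring $g$ of $B^m$, where $B=A\setminus\{a_k\}$. Each letter in $B$ is mapped to a block pattern of the form above: letters $a_j$ with $j<k-1$ fill the whole block uniformly with $a_j$, and $a_{k-1}$ corresponds to the "collapsed" switch position. By the induction hypothesis, $g$ has a monochromatic combinatorial line over $B$. Its variable positions are replaced by the corresponding block segments, which yields a 1-variable word $v$ over $A$. For this $v$, the points $v[a_j]$ with $j<k$ receive the common color, and the insertion property ensures that $v[a_k]$ has the same color as $v[a_{k-1}]$. So $\{v[a]:a\in A\}$ is $f$-homogeneous.

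The main obstacle is the insertion step: setting up the block encoding precisely and checking that the iterated pigeonhole gives the needed invariance for all the other blocks simultaneously. One also has to check that the resulting recursion $HJ(k+1,\ell)=F(HJ(k,\ell),\ell)$, with $F$ built from iterated exponentials, is primitive recursive. Formalizing this in $\RCA_0$ is routine once the definition is explicit. Alternatively, since the statement is $\Pi^0_2$ (indeed $\Pi^0_1$ for a fixed primitive recursive $HJ$), we could simply cite Shelah~\cite{shelah1988primitive} together with the fact that $\RCA_0$ proves the totality of primitive recursive functions and bounded combinatorial facts verified by $\Sigma^0_1$-induction.
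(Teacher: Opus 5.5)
The paper gives no proof of this theorem; it cites Shelah's primitive recursive proof and relies on $\RCA_0$ proving primitive recursive functions total. So reconstructing Shelah's argument is the intended route. Your insertion step is also the right one: switch segments $(s_i,t_i]$, chosen from the last block to the first by a pigeonhole on $N_i+1>\ell^{M_i}$ positions.

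However, the encoding step, which you correctly call the crux, fails as written. Write $\theta_i(b)$ for the block-$i$ pattern assigned to $b\in B$, and $\theta(b_1\cdots b_m)=\theta_1(b_1)\cdots\theta_m(b_m)$. Suppose, for $j<k-1$, the letter $a_j$ fills the whole block $i$. Take a block where the $g$-monochromatic line $u$ has its variable. There $\theta_i(a_j)=a_j^{N_i}$ differs at every position from $\theta_i(a_{k-1})$, which consists only of $a_{k-1}$'s and $a_k$'s. So no position of that block can be a constant of $v$ if you want both $v[a_j]=\theta(u[a_j])$ and $v[a_{k-1}]=\theta(u[a_{k-1}])$. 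The variable then fills the whole block. This forces $\theta_i(a_{k-1})=a_{k-1}^{N_i}$ and makes $v[a_k]$ carry $a_k^{N_i}$ there. You would thus need the insertion property for the pair $(s_i,t_i)=(0,N_i)$, which the pigeonhole does not provide. So as soon as $k\geq 2$, there is in general no 1-variable word $v$ with the properties you claim.

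The fix is to let every letter $b\in B$, not only $a_{k-1}$, fill only the switch segment:
\begin{itemize}
\item Set $\theta_i(b)=a_{k-1}^{s_i}\,b^{t_i-s_i}\,a_k^{N_i-t_i}$ and $g=f\circ\theta$.
\item Let $v$ have block $\theta_i(u_i)$ where $u_i\in B$, and $a_{k-1}^{s_i}\,x_0^{t_i-s_i}\,a_k^{N_i-t_i}$ where $u_i$ is the variable.
\item Then $v[b]=\theta(u[b])$ for all $b\in B$.
\item $v[a_k]$ is obtained from $v[a_{k-1}]$ by replacing, one variable block at a time, $a_{k-1}^{t_i}a_k^{N_i-t_i}$ with $a_{k-1}^{s_i}a_k^{N_i-s_i}$.
\end{itemize}
To cover all the intermediate words, the insertion property at block $i$ must hold for arbitrary words in the blocks $j<i$. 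It must also hold for each of the $k+1$ fillings $a_{k-1}^{s_j}c^{t_j-s_j}a_k^{N_j-t_j}$, $c\in A$, of the blocks $j>i$. This gives $M_i=(k+1)^{N_1+\cdots+N_{i-1}+m-i}$, which is exactly what your back-to-front choice and tower bound handle. With this correction the argument goes through.

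On the formalization: the induction on $k$ is on a $\Pi^0_1$ statement, not a bounded one. After using monotonicity to fix $n=HJ(k,\ell)$, it is a $\Delta^0_1$ statement. $\RCA_0$ proves both forms of induction.
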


For simplicity, given a finite alphabet~$A$ of size~$k$ and some~$\ell \geq 1$, we might simply write $HJ(A,\ell)$ for $HJ(\card A, \ell)$.

\subsection{Proof of $\OVW^0$}

We now turn to the proof of $\OVW^0$ over~$\ACA_0$. In what follows, fix an finite alphabet~$A$ of size~$k \geq 1$ and a number of colors~$\ell \geq 1$. Unless mentioned, the ordered variable words will be over~$A$.

\begin{definition}
Let $w$ be an ordered $n$-variable word, $W$ be an ordered $\omega$-variable word,
and $f : \WSub^{\star}(w \cdot W) \to \ell$ be a coloring.
\begin{itemize}
    \item[1.] $w$ \emph{half-matches} $W$ if for every~$u \in \WSub^{\star}(W)$, there is some~$v \in \OSub^{1,=}(w)$ such that $\WSub^{=}(v \cdot u)$ is $f$-monochromatic (in other words, there is some color~$c < \ell$ such that for every~$a \in A$, $f(v[a] \cdot u) = c$).
    \item[2.] $w$ \emph{full-matches} $W$ if for every~$u \in \WSub^{\star}(W)$, there is some~$v \in \OSub^{1,=}(w)$ such that $\WSub(v \cdot u)$ is $f$-monochromatic (in other words, for every~$a \in A$, $f(v[a] \cdot u) = f(v[\epsilon])$, where $v[\epsilon]$ is the initial segment of~$v$ up to the first occurrence of the variable).
\end{itemize}
\end{definition}

The idea behind full-matches is to be able, given an $\OVW^0$-solution within~$W$, to merge it with an element of~$\OSub^{1,=}(w)$ to obtain a larger $\OVW^0$-solution. It follows that if $w$ full-matches~$W$, then at least one element of $\OSub^{1,=}(w)$ can be extended into an infinite $\OVW^0$-solution. The following lemma will not be used in our proof of $\OVW^0$ as it already assumes that $\OVW^0$ holds, but is useful to give an intuition:

\begin{lemma}[$\RCA_0 + \OVW^0$]\label[lemma]{lem:full-match-is-extensible}
Let $w$ be an ordered $n$-variable word, $W$ be an ordered $\omega$-variable word, and $f : \WSub^{\star}(w \cdot W) \to \ell$ be a coloring.
Suppose that $w$ full-matches $W$, then there is some~$u \in \OSub^{1,=}(w)$ extensible into an infinite $\OVW^0$-solution to~$f$.
\end{lemma}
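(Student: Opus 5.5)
The idea is to apply $\OVW^0$ inside $W$, using a coloring of $\WSub^{\star}(W)$ that records which witness from $\OSub^{1,=}(w)$ works and with which color. Then we glue the homogeneous solution to that witness.

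\emph{The induced coloring.} The set $\OSub^{1,=}(w)$ is finite, since $w$ has finite length. Fix a computable enumeration $v_0, \dots, v_{m-1}$ of it. For $u \in \WSub^{\star}(W)$, let $g(u) = (i, c)$, where $i$ is least such that $\WSub(v_i \cdot u)$ is $f$-monochromatic and $c = f(v_i[\epsilon])$ is the common color. Since $w$ full-matches $W$, such an $i$ always exists, so $g$ is a total coloring with $m \cdot \ell$ colors. It is computable in $f \oplus W$, hence exists in $\RCA_0$.

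\emph{Transfer to words over $A$.} The domain of $g$ must be recast as a coloring of $A^{<\omega,0}$. Every $u \in \WSub^{\star}(W)$ has the form $W[s]$ with $s \in A^{<\omega}$, which gives the translation. Set $h(s) = g(W[s])$. Apply $\OVW^0$ to $h$ to obtain an ordered $\omega$-variable word $V$ over $A$ such that $\{ V[s] : s \in A^{<\omega,0} \}$ is $h$-homogeneous, say with color $(i, c)$.

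\emph{Gluing.} Put $U = v_i \cdot W[V]$. This is an ordered $\omega$-variable word: composition preserves orderedness, and concatenation shifts the variables of $W[V]$ after the single variable of $v_i$. I claim that $\WSub^{\star}(U)$ is $f$-monochromatic of color $c$, so that $U$ is an $\OVW^0$-solution extending $v_i$. Elements of $\WSub^{\star}(U)$ are of three kinds.
\begin{itemize}
\item An initial segment of $v_i[\epsilon]$. These are the cutoffs before the first variable. Here some care about the definition of $w[u]$ is needed. The intended reading is that $U[s]$ for $|s| \ge 1$ is $v_i[s(0)] \cdot W[V][s']$, while $U[\epsilon] = v_i[\epsilon]$, which has color $c$.
\item Words $v_i[a] \cdot W[V[s']]$. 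Since $W[V[s']] \in \WSub^{\star}(W)$ and $h(V[s']) = (i,c)$, the word $\WSub(v_i \cdot W[V[s']])$ is monochromatic with color $f(v_i[\epsilon]) = c$.
\item The word $v_i[\epsilon]$ itself, already handled in the first case.
\end{itemize}
Thus every element has color $c$.

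\emph{Main obstacle.} The delicate point is bookkeeping rather than combinatorics. One must check that $W[V[s']]$ genuinely lies in $\WSub^{\star}(W)$, including finite cutoffs. One must also check that the domain of $f$, namely $\WSub^{\star}(w \cdot W)$, contains all the words colored, so that $U$ lives inside $w \cdot W$. This holds because $v_i \in \OSub^{1,=}(w)$ and substitution composes: $(w \cdot W)[t \cdot V] = w[t] \cdot W[V]$.
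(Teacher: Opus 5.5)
Your proof is correct and follows the paper's argument essentially verbatim: color each $s \in A^{<\omega}$ by the pair (witness in $\OSub^{1,=}(w)$, color) that full-matching provides for $W[s]$, apply $\OVW^0$ to get $V$, and verify that $v_i \cdot W[V]$ is $f$-monochromatic by splitting on whether the substituted word is empty or of the form $a \cdot s'$. One small correction: the only element of $\WSub^{\star}(U)$ cut before the first variable is $U[\epsilon] = v_i[\epsilon]$ itself, not its proper initial segments, so your first and third bullets collapse to that single case (which is how you in effect treat it).
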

\begin{proof}
Let $g : A^{<\omega} \to \ell \times \card \OSub^{1,=}(w)$ be defined by $g(v) = \langle c, u\rangle$ where $u \in \OSub^{1,=}(w)$ is such that $\WSub(u \cdot W[v])$ is $f$-monochromatic for color~$c$.
By $\OVW^0$, there is some ordered $\omega$-variable word~$U$ such that $\WSub^{\star}(U)$ is $g$-monochromatic for some color~$\langle c, u\rangle$. By definition of~$g$, for every~$v \in A^{<\omega}$ and every~$a \in A$,
\begin{equation}\label{eq:full-match-eqn0}
    f(u[\epsilon]) = f(u[a] \cdot W[U[v]]) = c
\end{equation}

We claim that $u \cdot W[U]$ is an $\OVW^0$-solution to~$f$. Indeed, given~$v \in \WSub^{\star}(u \cdot W[U])$, we have two cases. Case 1: $|v| = 0$. Then, $f((u \cdot W[U])[v]) = f(u[\epsilon]) = c$ by \Cref{eq:full-match-eqn0}.
Case 2: $|v| > 1$. Say $v = a \cdot v'$ with $a \in A$. Then $f((u \cdot W[U])[v]) = f(u[a] \cdot W[U[v']]) = c$ still by \Cref{eq:full-match-eqn0}.
\end{proof}

Once proven the existence, for every coloring, of a full-match, then one can define an $\omega$-sequence of nested full-matches to produce a tree containing at least one path forming an infinite solution. Towsner proved the existence of full-matches over~$\ACA_0$ by defining an $\omega$-sequence of nested half-matches. The existence of half-matches itself was proven over~$\RCA$, an extension of $\RCA_0$ will the full induction scheme. However, the proof was non-uniform. Because of this, the $\omega$-sequence of nested half-matches was only shown to exist over~$\ACA_0$, and the $\omega$-sequence of nested full-matches needed $\ACA_0^+$. In our case, the existence of half-matches is an immediate consequence of the Hales-Jewett theorem, and yields a uniform bound. Indeed, Shelah~\cite{shelah1988primitive} proved that the function $HJ$ is primitive recursive, and therefore provably total over~$\RCA_0$.

\begin{lemma}[$\RCA_0$]
Fix~$A$ and $\ell \geq 1$.
Let $w$ be an ordered $HJ(A, \ell)$-variable word, $W$ be an ordered $\omega$-variable word,
and $f : \WSub^{\star}(w \cdot W) \to \ell$ be a coloring for some~$\ell \geq 1$.
Then $w$ half-matches $W$.
\end{lemma}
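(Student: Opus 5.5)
Fix $u \in \WSub^{\star}(W)$; we must find some $v \in \OSub^{1,=}(w)$ such that $\{v[a]\cdot u : a \in A\}$ is $f$-monochromatic. Let $n = HJ(A,\ell)$ and write the ordered $n$-variable word $w$ as $w = w_0 x_0 \cdots x_0\, w_1 x_1 \cdots x_1 \cdots w_{n-1} x_{n-1}\cdots x_{n-1}\, w_n$. Here the blocks $x_i\cdots x_i$ are meant schematically: all occurrences of $x_i$ lie strictly between the last occurrence of $x_{i-1}$ and the first occurrence of $x_{i+1}$, possibly interleaved with letters of $A$. The plan is to reduce to \Cref{thm:hales-jewett} via the map $s \mapsto w[s] \cdot u$ from $A^n$ to words. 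Since $u \in \WSub^{\star}(W)$ is a finite word and $s \in A^n$ substitutes all $n$ variables of $w$, the word $w[s]\cdot u$ is a word of the form $(w\cdot W)[s \cdot u']$ for a suitable $u'$ with $W[u'] = u$. It has length shorter than $w\cdot W$, so it lies in $\WSub^{\star}(w\cdot W)$ and $f$ is defined on it. We therefore define the coloring $g : A^n \to \ell$ by $g(s) = f(w[s]\cdot u)$. This is a finite object, and its existence needs only $\Delta^0_1$-comprehension.

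By \Cref{thm:hales-jewett}, applied with length exactly $n = HJ(A,\ell)$, there is a 1-variable word $t$ over $A$ of length $n$ such that $\{t[a] : a\in A\}$ is $g$-homogeneous. We then set $v = w[t]$. The main thing to check, and the only real obstacle, is that $v \in \OSub^{1,=}(w)$. That requires $t$ to be an \emph{ordered} 1-variable word of length $n = |w|_{\text{vars}}$, so that $w[t]$ has full length $|w|$.

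For the length: since $|t| = n$ equals the number of variables of $w$, substitution does not truncate, so $|w[t]| = |w|$. For orderedness: a 1-variable word is trivially ordered, because the condition only concerns the last occurrence of $x_i$ versus the first occurrence of $x_{i+1}$, and there is only $x_0$. Then $w[t]$ is an ordered 1-variable word, since the variable occurs at least once and there is no second variable to order against. So $t \in A^{\leq n,1}_<$ and $v = w[t] \in \OSub^{1,=}(w)$.

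Finally, for each $a\in A$ we have $v[a] = w[t][a] = w[t[a]]$, since substitutions compose. Hence $f(v[a]\cdot u) = g(t[a])$, which is constant in $a$, and so $\WSub^{=}(v\cdot u)$ is $f$-monochromatic as required. Since $u$ was arbitrary, $w$ half-matches $W$. Everything except the appeal to \Cref{thm:hales-jewett} is a finite verification, so the proof goes through in $\RCA_0$. The argument is uniform in $u$ because the bound $HJ$ is primitive recursive and hence provably total in $\RCA_0$.
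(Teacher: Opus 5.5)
Your proof is correct and matches the paper's argument: color $A^{HJ(A,\ell)}$ by $s \mapsto f(w[s]\cdot u)$, apply \Cref{thm:hales-jewett} to get a 1-variable word $t$, and take $v = w[t] \in \OSub^{1,=}(w)$, using $v[a] = w[t[a]]$. Your extra checks are details the paper leaves implicit: that $w[s]\cdot u \in \WSub^{\star}(w\cdot W)$, and that $w[t]$ is an ordered 1-variable word of full length. The schematic block decomposition of $w$ is never used and can be dropped.
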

\begin{proof}
Fix some~$u \in \WSub^{\star}(W)$. Let $g : A^{HJ(A, \ell),0} \to \ell$ be defined by $g(v) = f(w[v] \cdot u)$.
By \Cref{thm:hales-jewett}, there is some 1-variable word~$v \in A^{HJ(A,\ell),1}_{<}$ such that $\WSub^{=}(v)$ is $g$-homogeneous for some color~$c$. In particular, $w[v] \in \OSub^{1,=}(w)$ is such that $\WSub^{=}(w[v] \cdot u)$ is $f$-homogeneous for color~$c$.
\end{proof}

Before proving the existence of a full-match over~$\WKL$, we define the notion of \emph{Graham-Rothschild tree} which corresponds to an $\omega$-sequence of nested half-matches, and will be of fundamental importance over the article.
Let $(\ell_n)_{n \in \NN}$ be inductively defined by
$$\ell_0 = \ell \mbox{ and } \ell_{n+1} = \ell_n \times \card A^{HJ(A, \ell_n),1}_<$$
Here, $\card A^{HJ(A, \ell_n),1}_<$ is the number of ordered 1-variable words of length $HJ(A, \ell_n)$ over~$A$. Then, let $(t_n)_{n \in \NN}$ be inductively defined by
$$t_0 = 0 \mbox{ and } t_{n+1} = t_n + HJ(A, \ell_n)$$

The \emph{Graham-Rothschild tree for $\ell$-colorings over alphabet~$A$} is the tree $\T_{A,\ell}$ whose level~$n$ consists of all ordered $n$-variable words~$w$ of length~$t_n$, such that for every~$s < n$, the variable~$x_s$ appears only within the positions $[t_n, t_{n+1})$. This tree is partially ordered by the prefix relation. Note that this tree is computably bounded, and every infinite path is an ordered $\omega$-variable word over~$A$. The fundamental property of this Graham-Rothschild tree is the following:

\begin{lemma}[$\RCA_0$]\label[lemma]{lem:graham-rothschild-tree}
Fix~$A$ and $\ell \geq 1$.
Let $f : A^{<\omega} \to \ell$ be a coloring.
For every~$u \in A^{<\omega}$ and $n \in \NN$, there is some ordered $n$-variable word~$w$ at level~$n$ in~$\T_{A,\ell}$ such that $\WSub^{=}(w \cdot u)$ is $f$-monochromatic.
\end{lemma}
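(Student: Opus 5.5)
The plan is to argue by induction on~$n$, proving the statement simultaneously for all numbers of colors. (I read the level-$n$ condition of $\T_{A,\ell}$ as: $x_s$ occurs only within positions $[t_s, t_{s+1})$.) The key structural observation is a shift property. Write $(\ell'_k)$ and $(t'_k)$ for the sequences defining $\T_{A,\ell_1}$. Then $\ell'_k = \ell_{k+1}$ and $t'_k = t_{k+1} - t_1$. Consequently, a word at level $n+1$ of $\T_{A,\ell}$ is exactly a concatenation $v \cdot w'$, where $v$ is an ordered $1$-variable word of length $HJ(A,\ell)$ and $w'$ is a word at level~$n$ of $\T_{A,\ell_1}$ with its variables renamed $x_1,\dots,x_n$. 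The base case $n=0$ is trivial: the empty word lies at level~$0$, and $\WSub^{=}(\epsilon\cdot u)=\{u\}$ is monochromatic.

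For the inductive step, fix $f : A^{<\omega}\to \ell$ and $u$. I define a derived coloring $g : A^{<\omega} \to \ell_1$ as follows. Given $y$, apply \Cref{thm:hales-jewett} to the coloring $a \mapsto f(a\cdot y)$ of $A^{HJ(A,\ell)}$. Take the least (in a fixed computable order) $1$-variable word $v\in A^{HJ(A,\ell),1}_<$ such that $\{v[a] : a\in A\}$ is homogeneous, say with color~$c$, and set $g(y)=\langle c, v\rangle$. Since the search is bounded, $g$ is computable from~$f$, and it takes at most $\ell\times\card A^{HJ(A,\ell),1}_< = \ell_1$ values.

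Applying the inductive hypothesis to $g$ and $u$ in $\T_{A,\ell_1}$ gives $w'$ at level~$n$ with $\WSub^{=}(w'\cdot u)$ $g$-monochromatic, say of color $\langle c,v\rangle$. I then set $w = v\cdot w'$, which lies at level $n+1$ of $\T_{A,\ell}$ by the shift observation. Every element of $\WSub^{=}(w\cdot u)$ has the form $v[a]\cdot w'[z]\cdot u$ with $a\in A$ and $z\in A^n$. Since $g(w'[z]\cdot u)=\langle c,v\rangle$, the definition of $g$ gives $f(v[a]\cdot w'[z]\cdot u)=c$ for every such element, so $w\cdot u$ is as required.

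The main obstacle is formalizing the induction in $\RCA_0$, since the naive statement quantifies over all colorings $f$. I would avoid this by fixing $f$ and $\ell$ and defining, uniformly computably in~$k$, the iterated derived colorings $f_0=f$, $f_{k+1}=g_{f_k}$ with $f_k$ into $\ell_k$ colors; this is possible because $HJ$ is primitive recursive and hence provably total. I would then induct on~$m$ over the statement \qt{for all $k$ and all $u$, some word at level $m$ of $\T_{A,\ell_k}$ works for $f_k$ and $u$.} For fixed $k$ and $u$ the existential over~$w$ is bounded, since the tree levels are finite and computably bounded. Hence the induction formula is $\Pi^0_1$, and $\Pi^0_1$-induction is available in $\RCA_0$.
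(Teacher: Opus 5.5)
Your proof is correct and is essentially the paper's argument. It uses the same iterated derived colorings, with $f_{k+1}$ built from $f_k$ by taking the least Hales--Jewett-homogeneous $1$-variable word of length $HJ(A,\ell_k)$, and the same $\Pi^0_1$-induction. The only difference is bookkeeping: you peel off the first block and apply the hypothesis to the shifted tree $\T_{A,\ell_{k+1}}$, inducting on the level uniformly in $k$. The paper instead inducts on $n$ directly on the decoded value $f_n(u)=\langle c,w_0,\dots,w_{n-1}\rangle$, with each new block inserted just before $u$.
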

\begin{proof}
For every~$n \in \NN$, let $f_n : A^{<\omega} \to \ell_n$ be defined inductively as follows:
First, $f_0 : A^{<\omega} \to \ell_0$ is simply $f$.
Having defined $f_n$, let $f_{n+1} : A^{<\omega} \to \ell_{n+1}$ be defined by $f_{n+1}(u) = \langle f_n(w[a] \cdot u), w \rangle$ for the least (in any fixed order) $w \in A^{HJ(A, \ell_n),1}_{<}$ such that $f_n$ is monochromatic on $\{ w[a] \cdot u : a \in A \}$. Note that such a $w$ exists by definition of $HJ(A, \ell_n)$.

By $\Pi^0_1$-induction, for any~$n$ and $u \in A^{<\omega}$, $f_n(u)$ can be written of the form $\langle c, w_0, \dots, w_{n-1} \rangle$, where $c < \ell$, $w_s \in A^{HJ(A, \ell_s),1}_{<}$, and
$$
    S^u_n = \{ w_0[a_0] \cdots w_{n-1}[a_{n-1}] \cdot u : a_0, \dots, a_{n-1} \in A \}
$$
is $f$-monochromatic for color~$c$. Let $v_n$ be the ordered $n$-variable word
$$w_0[x_0] \cdots w_{n-1}[x_{n-1}]$$
Note that $S^u_n = \WSub^{=}(v_n \cdot u)$ and $v_n$ is at level~$n$ in~$\T_{A,\ell}$.
\end{proof}

Note that by the uniformity of the notion of half-match for $\OVW^0$, the Graham-Rothschild tree does not depend on the coloring $f : A^{<\omega} \to \ell$. This has to be put in contrast with Towsner's proof of Hindman's theorem, where the nested sequence of half-matches depends on the coloring. However, we shall not use this uniformity in this article, except for the computability of the Graham-Rothschild tree, while its Hindman counterpart is computable in~$\emptyset''$.

\begin{remark}
The Graham-Rothschild tree~$\T_{A,\ell}$ actually satisfies the following stronger universality property, provable using only $\Delta^0_0$-induction on~$n$:
\begin{quote}
For every~$n \in \NN$ and every coloring $g : A^{t_n} \to \ell$, there is an ordered $n$-variable word~$w$ at level~$n$ in $\T_{A,\ell}$ such that $\WSub^=(w)$ is $g$-monochromatic.
\end{quote}
The statement of \Cref{lem:graham-rothschild-tree} is the restriction of this universality property to the sub-class $(f^n_u)_{n \in \NN, u \in A^{<\omega}}$ of \emph{shifts} of~$f$, defined for each $n \in \NN$ and $u \in A^{<\omega}$ by
$$
f^n_u : A^{t_n} \ni v \mapsto f(v \cdot u).
$$
However, to emphasize the general translation from Towsner-style proofs to notions of forcing with good first-jump control, we shall only use the statement of \Cref{lem:graham-rothschild-tree}.
\end{remark}

We now turn to the proof of the existence of full-matches over~$\WKL$, an extension of~$\WKL_0$ with the full induction scheme. Given a function $h : \NN \to \NN$, we write $h^{<\omega}$ for the set of finite words $w$ such that for every~$i < |w|$, $w(i) < h(i)$. A tree is \emph{bounded} if it is a sub-tree of $h^{<\omega}$ for some function $h : \NN \to \NN$. $\WKL_0$ proves K\"onig"s lemma for bounded infinite trees (see Simpson~\cite[Lemma IV.1.4]{simpson2009subsystems}). As in Towsner's proof, we need the following intermediary step:

\begin{lemma}[$\WKL_0$]\label[lemma]{lem:ovw0-towsner-full-step}
Fix~$A$ and $\ell \geq 1$.
Let $f : A^{<\omega} \to \ell$ be a coloring. One of the following holds:
\begin{itemize}
    \item[(1)] There is some~$n \in \NN$ such that $x_0\cdots x_n$ full-matches $x_{n+1}x_{n+2}\cdots$;
    \item[(2)] There is some ordered $\omega$-variable word~$W$ such that $\WSub^{\star}(W)$ $f$-avoids a color.
\end{itemize}
\end{lemma}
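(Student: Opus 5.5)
Assume that (1) fails and construct $W$ for (2) as an infinite path through a computable subtree of the Graham-Rothschild tree~$\T_{A,\ell}$, obtained with weak K\"onig's lemma.

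First, unpack the failure of (1). Let $W_n = x_{n+1}x_{n+2}\cdots$. Then $\WSub^{\star}(W_n)$ is just $A^{<\omega}$, since $W_n[u]$ is $u$ itself. So if $x_0\cdots x_n$ does not full-match $W_n$, there is some $u \in A^{<\omega}$ with the following property. For every ordered $1$-variable word $v$ of length $n+1$, there is some $a \in A$ with $f(v[a]\cdot u) \neq f(v[\epsilon])$.

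Next, fix a level $m$ of $\T_{A,\ell}$. I apply the failure of (1) with $n+1 = t_m$ to get such a $u$. Then I apply \Cref{lem:graham-rothschild-tree} to $u$ and $m$. This gives $w$ at level $m$ of $\T_{A,\ell}$ and a color $c$ such that $\WSub^{=}(w\cdot u)$ is $f$-monochromatic of color~$c$.

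The key claim is that every element of $\WSub^{\star}(w)$ avoids color~$c$. Such an element has the form $w[s]$ with $s \in A^{j}$ for some $j<m$. It is the prefix of $w$ cut before the first occurrence of $x_j$. Let
$$v = w[s\cdot x_0\cdot b_{j+1}\cdots b_{m-1}]$$
for arbitrary letters $b_i \in A$. This $v$ is an ordered $1$-variable word of length $t_m = n+1$, and $v[\epsilon] = w[s]$. For every $a\in A$, the word $v[a]\cdot u$ lies in $\WSub^{=}(w\cdot u)$, so it has color~$c$. By the choice of~$u$, some $a$ satisfies $f(v[a]\cdot u) \neq f(v[\epsilon])$. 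Hence $f(w[s]) \neq c$.

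Thus for every level $m$ there is a node $w$ at level $m$ and a color $c<\ell$ such that $\WSub^{\star}(w)$ avoids~$c$. For each $c<\ell$, let
$$\T_c = \{w \in \T_{A,\ell} : \WSub^{\star}(w) \mbox{ avoids } c\}.$$
This is a $\Delta^0_1$ set, since $\WSub^{\star}(w)$ is finite and computable from~$w$.

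Each $\T_c$ is closed under prefixes within $\T_{A,\ell}$. Indeed, if $w'$ is the prefix of $w$ at level $m'<m$, then $w'[s] = w[s]$ for every $s$ of length $<m'$. This uses the fact that in $\T_{A,\ell}$ the variable $x_s$ occurs only inside the block $[t_s,t_{s+1})$. So each $\T_c$ is a computably bounded tree.

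One of the $\T_c$ must have nodes at every level. Otherwise each $\T_c$ has an empty level, and that is a $\Sigma^0_1$ fact with a $\Delta^0_0$ matrix, because levels are finite. Bounded $\Sigma^0_1$-collection in $\RCA_0$ then gives a single level $m$ empty in all $\T_c$, contradicting the previous paragraph. I expect this finite-pigeonhole step to be the only point needing care in $\WKL_0$, since we cannot use infinite pigeonhole $\RT^1_\ell$ directly.

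Finally, by K\"onig's lemma for bounded trees in $\WKL_0$, the infinite tree $\T_c$ has an infinite path~$W$. This $W$ is an ordered $\omega$-variable word. Every element of $\WSub^{\star}(W)$ is some $W[s]$ with $s$ finite, which lies in $\WSub^{\star}$ of a long enough prefix of $W$ in $\T_c$. Hence $\WSub^{\star}(W)$ $f$-avoids~$c$, which is (2).
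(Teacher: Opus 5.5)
Your proof is correct and follows the paper's argument. You unpack the failure of (1) at length $t_m$, apply \Cref{lem:graham-rothschild-tree} to obtain a level-$m$ node $w$ with $\WSub^{=}(w\cdot u)$ monochromatic, write each $w[s]\in\WSub^{\star}(w)$ as $v[\epsilon]$ for some $v\in\OSub^{1,=}(w)$ to see that it avoids that color, and then apply $\WKL_0$ to a subtree of $\T_{A,\ell}$; the only difference is that you split this subtree into the trees $\T_c$ and use $\BSig_1$ to fix the color before taking a path, whereas the paper takes a path through the union of these trees and leaves implicit that a single color is avoided along the whole path, which requires the same collection argument.
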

\begin{proof}
Suppose that (1) does not hold.
Let $\T_{A,\ell}$ be the Graham-Rothschild tree for $\ell$-colorings and alphabet~$A$.
\smallskip

\textbf{Claim 1}: For every $n \in \NN$, there is some~$w$ at level~$n$ in~$\T_{A,\ell}$ such that
$\WSub^{\star}(w)$ $f$-avoids some color. Because (1) does not hold, there is some~$u_0 \in A^{<\omega}$ such that for every~$v \in A^{t_n,1}_{<}$, $\WSub(v \cdot  u_0 )$ is not $f$-monochromatic. By \Cref{lem:graham-rothschild-tree} applied to~$f$ and $u_0$, there is some~$w$ at level~$n$ in~$\T_{A,\ell}$ such that
$\WSub^{=}(w \cdot u_0)$ is $f$-monochromatic for some color~$c$. Given $u \in \WSub^{\star}(w)$, there is some $v \in \OSub^{1,=}(w)$ such that $v[\epsilon] = u$. In particular, $v \in A^{t_n,1}_{<}$, so $\WSub(v \cdot u_0)$ is not $f$-monochromatic. So $c \neq f(v[\epsilon]) = f(u)$. So $\WSub^{\star}(w)$ $f$-avoids~$c$. This proves our Claim~1.

Let $\Sc$ be the sub-tree of~$\T_{A,\ell}$ of all~$w$ witnessing Claim~1.
By $\WKL_0$, there is an infinite path~$U$ through~$\Sc$. In particular, $U$ is an ordered $\omega$-variable word such that $\WSub^{\star}(U)$ $f$-avoids some color~$c$.
\end{proof}

\begin{lemma}[$\WKL$]\label[lemma]{lem:ovw0-towsner-full}
Fix~$A$ and $\ell \geq 1$.
Let $W$ be an ordered $\omega$-variable word and $f : \WSub^{\star}(W) \to \ell$ be a coloring.
There is some ordered $n$-variable word~$w_1$ for some~$n \geq 1$, and some ordered $\omega$-variable word~$W_1$ such that
$w_1 \cdot W_1 \in \OSub^\omega(W)$ and $w_1$ full-matches~$W_1$.
\end{lemma}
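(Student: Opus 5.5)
The plan is to reduce to \Cref{lem:ovw0-towsner-full-step} by pulling the coloring back along~$W$, and then to induct on the number of colors~$\ell$. Full induction is available in~$\WKL$, and we need it: the statement being proven by induction is $\forall W \forall f \exists w_1, W_1 (\dots)$, which is not arithmetic in~$\ell$. So we prove, by induction on~$\ell \geq 1$, that for every alphabet~$A$, every ordered $\omega$-variable word~$W$ and every $f : \WSub^{\star}(W) \to \ell$, the conclusion of the lemma holds.

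\emph{Base case $\ell = 1$.} Every coloring is constant. So we take $w_1$ to be the prefix of~$W$ up to the end of the block containing the last occurrence of~$x_0$, renamed as a $1$-variable word. We let $W_1$ be the remaining suffix. Then $w_1 \cdot W_1 = W \in \OSub^\omega(W)$. Moreover, any $v \in \OSub^{1,=}(w_1)$ witnesses the full-match trivially, since all the relevant words receive the same color.

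\emph{Inductive step.} Given $f : \WSub^{\star}(W) \to \ell$, define $g : A^{<\omega} \to \ell$ by $g(v) = f(W[v])$. This is well defined since $W[v] \in \WSub^{\star}(W)$. Apply \Cref{lem:ovw0-towsner-full-step} to~$g$.

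In case (1), some $x_0\cdots x_n$ full-matches $x_{n+1}x_{n+2}\cdots$ for~$g$. Let $w_1$ be the initial segment of~$W$ cut right after the last occurrence of~$x_n$, which is an ordered $(n+1)$-variable word, and let $W_1$ be the rest of~$W$ with its variables renamed. Then $w_1 \cdot W_1 = W$. We check that the full-match transfers from~$g$ to~$f$. Let $u \in \WSub^{\star}(W_1)$, say $u = W_1[u']$. Then $u' \in \WSub^{\star}(x_{n+1}x_{n+2}\cdots)$, and the witness $v' \in \OSub^{1,=}(x_0\cdots x_n)$ given by the hypothesis yields $v = w_1[v'] \in \OSub^{1,=}(w_1)$. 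This works because $f(v[a]\cdot u) = g(v'[a]\cdot u')$ and $f(v[\epsilon]) = g(v'[\epsilon])$. This bookkeeping of substitutions is routine.

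In case (2), there is an ordered $\omega$-variable word~$U$ and a color~$c$ such that $\WSub^{\star}(U)$ $g$-avoids~$c$. Set $W' = W[U] \in \OSub^\omega(W)$. Then $\WSub^{\star}(W') = \{W[U[v]]\}$ is $f$-colored with values in $\ell \setminus \{c\}$. Re-indexing these values gives a coloring $f' : \WSub^{\star}(W') \to \ell-1$ with exactly the same monochromatic sets as~$f$. By the induction hypothesis applied to~$W'$ and~$f'$, we get $w_1 \cdot W_1 \in \OSub^\omega(W') \subseteq \OSub^\omega(W)$ with $w_1$ full-matching~$W_1$ for~$f'$, hence for~$f$. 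Here we use transitivity of $\OSub^\omega$, namely that $W[U][V] = W[U[V]]$.

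The main obstacle is not combinatorial but foundational. We must make sure the induction on~$\ell$ is legitimate, which it is in~$\WKL$ with full induction but not obviously in~$\WKL_0$. We must also check that \Cref{lem:ovw0-towsner-full-step} is applied uniformly to a coloring of all of~$A^{<\omega}$, which is why we pull~$f$ back to~$g$ rather than working on~$\WSub^{\star}(W)$ directly.
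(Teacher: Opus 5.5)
Your proposal is correct and essentially matches the paper's proof: induction on $\ell$ (the paper notes that $\Pi^1_2$-induction suffices), pulling $f$ back to $g(u)=f(W[u])$, applying \Cref{lem:ovw0-towsner-full-step}, transferring the full-match to $W=w_1\cdot W_1$ in case (1), and applying the induction hypothesis to $W[U]$ with one fewer color in case (2). The explicit base case, the substitution check for the full-match transfer, and the identity $W[U][V]=W[U[V]]$ are details the paper leaves implicit.
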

\begin{proof}
By $\Pi^1_2$-induction over~$\ell$.
Let $g : A^{<\omega} \to \ell$ be defined by $g(u) = f(W[u])$.
By \Cref{lem:ovw0-towsner-full-step}, there are two cases:
Case 1: there is some~$n \in \NN$ such that $x_0\cdots x_n$ full-matches $x_{n+1}x_{n+2}\cdots$ for~$g$. Let $w_1 = W[x_0\dots x_n]$ and $W_1$ be such that $W = w_1 \cdot W_1$. Then $w_1$ full-matches $W_1$ for~$f$.
Case 2: there is some ordered $\omega$-variable word~$U$ such that $\WSub^{\star}(U)$ $g$-avoids a color. Unfolding the definition, $\WSub^{\star}(W[U])$ $f$-avoids a color. By induction hypothesis, there is some ordered $n$-variable word~$w_1$ for some~$n \geq 1$ and some ordered $\omega$-variable word $W_1$ such that $w_1 \cdot W_1 \in \OSub^\omega(W[U])$ and $w_1$ full-matches~$W_1$.
\end{proof}

Note that \Cref{lem:ovw0-towsner-full} is proven over $\WKL_0$ augmented with the full induction scheme. The proof shows that actually, $\Pi^1_2$-induction suffices. The following conservation theorems states that not only $\WKL$ is $\Pi^1_2$-conservative over~$\ACA_0$, but one can even bound the computational complexity of the existential witness of the statement. It appears to be part of the folklore.

\begin{proposition}\label[proposition]{prop:wkl-aca0-conservation}
Let $\Phi(X, Y)$ be an arithmetic statement.
Suppose that $\WKL \vdash \forall X \exists Y \Phi(X,Y)$,
then $\ACA_0 \vdash \forall X \exists Y \mbox{ low over } X \mbox{ such that } \Phi(X,Y)$.
\end{proposition}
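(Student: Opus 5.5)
We argue inside $\ACA_0$: fix a set $X$ and a model $\M$ of $\ACA_0$ containing it, and we aim to produce $Y$ low over $X$ with $\Phi(X,Y)$. The plan is to build, inside $\ACA_0$, a countable coded $\omega$-model of $\WKL$ whose second-order part consists of sets low over $X$, and then to apply the hypothesis inside that model.

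\textbf{Step 1: a low-basis sequence.} Formalizing the low basis theorem in $\ACA_0$, we get from $X'$ a set $Z$ with $Z' \leq_T X'$ such that $Z$ codes an $\omega$-model of $\WKL_0$ containing $X$. Concretely, we iterate the low basis theorem along a $\Delta^0_1(X')$ construction. Let $Z_0 = X$. Given $Z_i$ with $Z_i' \leq_T X'$ uniformly, pick the next infinite binary tree $T$ computable in $Z_i$ in a dovetailed enumeration, and use $X'$ to find a path $P$ with $(Z_i \oplus P)' \leq_T X'$, uniformly. This works because the low basis construction of a path is computable in $Z_i' \leq_T X'$ and yields an index for its jump. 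Then set $Z_{i+1} = Z_i \oplus P$. Every element $Z_i$ is low over $X$, since $Z_i' \leq_T X' $. The sequence $(Z_i)_i$ is $X'$-computable, so it exists in $\M$ by arithmetic comprehension. Its union $\mathcal{N}=\{Z_i\}$ is closed under $\oplus$ and Turing reducibility and is a Turing ideal satisfying $\WKL$.

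\textbf{Step 2: full induction in $\mathcal{N}$.} This is the main obstacle. $\WKL$ includes the full induction scheme, and the coded model $\mathcal{N}$ has the same first-order part as $\M$. A model of $\ACA_0$ only guarantees arithmetic induction with set parameters from $\M$, and each $Z_i$ is in $\M$. Second-order formulas over $\mathcal{N}$ become arithmetic formulas in the code $(Z_i)_i$, which is a set in $\M$. Hence the induction instances for arbitrary $\L_2$-formulas relativized to $\mathcal{N}$ are arithmetic induction instances in $\M$, and they hold. So $\M$ proves $\mathcal{N} \models \WKL$, as a formalized satisfaction claim for each fixed axiom. Since the proof of $\forall X\exists Y\Phi$ in $\WKL$ uses finitely many axioms, a soundness argument for that fixed finite proof works in $\ACA_0$ through the relativized formulas. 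It goes by meta-induction on the proof, without needing a truth predicate for unbounded complexity.

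\textbf{Step 3: extraction.} Since $\mathcal{N} \models \exists Y\,\Phi(X,Y)$ and $\Phi$ is arithmetic, arithmetic formulas are absolute between $\mathcal{N}$ and $\M$, because they share the first-order part. So some $Z_i$ satisfies $\Phi(X,Z_i)$ in $\M$, and $Z_i$ is low over $X$.

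The delicate point is Step 2, namely keeping the coded model's jump uniformly bounded by $X'$. This is what makes "low" rather than merely "arithmetic" achievable. It also means checking that $\Sigma^0_1$ comprehension over $X'$ suffices to define the sequence in $\ACA_0$.
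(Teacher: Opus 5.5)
Your proposal is correct and follows essentially the same route as the paper: inside an arbitrary model $\M$ of $\ACA_0$ you build a coded $\omega$-model $\Nc$ of $\WKL_0$ containing $X$ whose members are low over $X$. You then get the needed induction in $\Nc$ from arithmetic induction in $\M$ by translating formulas over $\Nc$ into arithmetic formulas in the code, apply the finitely many axioms of the fixed $\WKL$-proof, and extract a witness by absoluteness of arithmetic formulas. The differences are only presentational: you build $\Nc$ by an explicit uniform iteration of the low basis theorem, which directly makes each member low over $X$ (all the conclusion needs), whereas the paper cites a code $Z$ with $Z'\le_T X'$; and you phrase the soundness step as a relativization of the fixed proof rather than via models and completeness.
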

\begin{proof}
Fix~$\Phi(X,Y)$. By the compactness theorem, since $\WKL \vdash \forall X \exists Y \Phi(X,Y)$, then there is a finite set of axioms~$T$ of $\WKL$ such that $T \vdash \forall X \exists Y \Phi(X,Y)$.
Let $\M = (M, S)$ be a model of~$\ACA_0$, and $X \in S$. By Simpson~\cite[Chapter VIII]{simpson2009subsystems} and the formalized low basis theorem, there exists a set $Z = \bigoplus_n Z_n$ with $Z' \leq_T X'$, such that $\Nc = (M, \{ Z_n : n \in M \}) \models \WKL_0$ and $X = Z_0$. Note that any second-order formula over~$\Nc$ can be transformed into a first-order formula over~$\M$, so since $\ACA_0$ contains arithmetic induction, letting $\Gamma$ be the finite fragment of formulas such that $T \subseteq \WKL_0 + \mathsf{I}\Gamma$, we have $\Nc \models \WKL_0 + \mathsf{I}\Gamma$. It follows that $\Nc \models \exists Y \Phi(X,Y)$, so there exists some~$n \in Z_n$ such that $\Nc \models \Phi(X, Z_n)$. Since $\Phi$ is arithmetic, then $\M \models \Phi(X,Z_n)$. Note that $(X \oplus Z_n)' \leq_T Z' \leq_T X'$.
\end{proof}

Combining \Cref{prop:wkl-aca0-conservation} with \Cref{lem:ovw0-towsner-full}, we obtain:

\begin{lemma}[$\ACA_0$]\label[lemma]{lem:ovw0-towsner-full-low}
Fix~$A$ and $\ell \geq 1$.
Let $W$ be an ordered $\omega$-variable word and $f : \WSub^{\star}(W) \to \ell$ be a coloring.
There is some ordered $n$-variable word~$w_1$ for some~$n \geq 1$, and some ordered $\omega$-variable word~$W_1$ such that
$w_1 \cdot W_1 \in \OSub^\omega(W)$, $w_1$ full-matches~$W_1$ and $(W_1 \oplus W \oplus f)' \leq_T (W \oplus f)'$.
\end{lemma}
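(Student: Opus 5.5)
The plan is to obtain the statement as a direct instance of \Cref{prop:wkl-aca0-conservation} applied to \Cref{lem:ovw0-towsner-full}. The only real work is to phrase the conclusion of \Cref{lem:ovw0-towsner-full} as an arithmetic formula $\Phi(X,Y)$. We then check that the lowness guarantee of \Cref{prop:wkl-aca0-conservation} gives exactly the jump bound $(W_1 \oplus W \oplus f)' \leq_T (W \oplus f)'$.

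\textbf{Step 1: the formula $\Phi$.} Take $X = W \oplus f$ together with the finite parameters $A$ and $\ell$. These can be coded into $X$, or the argument can be run uniformly in them, since \Cref{prop:wkl-aca0-conservation} relativizes. Let $\Phi(X,Y)$ say the following. If $X$ codes an ordered $\omega$-variable word $W$ and a coloring $f : \WSub^{\star}(W) \to \ell$, then $Y$ codes a finite ordered $n$-variable word $w_1$ with $n \geq 1$ and an ordered $\omega$-variable word $W_1$, such that:
\begin{itemize}
\item[(a)] $w_1 \cdot W_1 \in \OSub^\omega(W)$, witnessed by some ordered $\omega$-variable word $U$ with $W[U] = w_1 \cdot W_1$; and
\item[(b)] $w_1$ full-matches $W_1$.
\end{itemize}
To keep (a) arithmetic, I will put the witness $U$ inside $Y$ as well, so (a) becomes a $\Pi^0_1$ condition in $Y \oplus X$. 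Condition (b) quantifies over $u \in \WSub^{\star}(W_1)$ and over the finitely many $v \in \OSub^{1,=}(w_1)$, so it is arithmetic in $Y \oplus X$. Being an ordered $\omega$-variable word is also arithmetic. Hence $\Phi$ is arithmetic.

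\textbf{Step 2: provability in $\WKL$.} By \Cref{lem:ovw0-towsner-full}, $\WKL \vdash \forall X \exists Y \Phi(X,Y)$. Its proof uses $\Pi^1_2$-induction on $\ell$ to go through \Cref{lem:ovw0-towsner-full-step}. This is not a concern: \Cref{prop:wkl-aca0-conservation} is stated for full $\WKL$, and compactness reduces to a finite fragment anyway.

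\textbf{Step 3: applying conservation.} \Cref{prop:wkl-aca0-conservation} gives, in $\ACA_0$, a set $Y$ with $(X \oplus Y)' \leq_T X'$ and $\Phi(X,Y)$. Then $W_1 \leq_T Y$, so $(W_1 \oplus W \oplus f)' \leq_T (Y \oplus X)' \leq_T X' = (W \oplus f)'$.

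The main obstacle is only a bookkeeping point: making sure $\Phi$ is genuinely arithmetic. This amounts to including the witness $U$ in $Y$ and noting that full-matching only quantifies over finitely many $v$, so no second-order quantifier remains.
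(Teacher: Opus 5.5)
Your proposal is correct and is exactly the paper's argument: the paper obtains this lemma in one line by combining \Cref{prop:wkl-aca0-conservation} with \Cref{lem:ovw0-towsner-full}. Your extra bookkeeping makes explicit what the paper leaves implicit. Putting the witness $U$ into $Y$ keeps $\Phi$ arithmetic, and the chain $(W_1 \oplus W \oplus f)' \leq_T (Y \oplus X)' \leq_T X'$ then gives the jump bound.
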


We are now ready to prove $\OVW^0$ over~$\ACA_0$.

\begin{theorem}[$\ACA_0$]
Fix~$A$ and $\ell \geq 1$.
For every coloring $f : A^{<\omega} \to \ell$, there is an ordered $\omega$-variable word~$W$ over~$A$
such that $\WSub^{\star}(W)$ is $f$-monochromatic.
\end{theorem}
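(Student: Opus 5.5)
We iterate \Cref{lem:ovw0-towsner-full-low} to build an $\omega$-sequence of nested full-matches and then extract a solution by compactness. The lowness clause in \Cref{lem:ovw0-towsner-full-low} is what keeps the iteration inside $\ACA_0$.

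\textbf{Construction.} Set $W_0 = x_0x_1x_2\cdots$. Given $W_s$, apply \Cref{lem:ovw0-towsner-full-low} to $W_s$ and to $f$ restricted to $\WSub^{\star}(W_s)$. This yields $w_{s+1}$ and $W_{s+1}$ with $w_{s+1}\cdot W_{s+1}\in\OSub^\omega(W_s)$ and $w_{s+1}$ full-matching $W_{s+1}$. Moreover, $(W_{s+1}\oplus f)' \leq_T (W_s\oplus f)'$, and the reduction is found uniformly. By induction, every $W_s\oplus f$ is then computable in $f'$, with indices given by a sequence that is arithmetic in $f$. Hence $\ACA_0$ proves that the sequence $(w_s, W_s)_{s\in\NN}$ exists: we search arithmetically in $f'$ for the low witnesses at each stage, always choosing the least index. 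This requires formalizing \Cref{prop:wkl-aca0-conservation} uniformly. I expect this to be the main obstacle: one must check that the conservation argument produces the witness uniformly from $f'$ and an index for $W_s$, so that the recursion is $\Delta^0_2$-definable in $f$ rather than requiring $\ACA_0^+$.

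\textbf{Tree of solutions.} Write $W_s = w_{s+1}\cdot W_{s+1}$ after re-indexing, so that $W_0[\,\cdot\,]$ expresses everything inside one ambient word. Define the tree $\T$ of finite sequences $(v_1,\dots,v_m)$ with each $v_i\in\OSub^{1,=}(w_i)$, considered relative to the composition of the earlier substitutions. A sequence belongs to $\T$ if there is a colour $c$ such that, for all $i\leq m$, the word $v_1\cdots v_i[\epsilon]$ has colour $c$, and every word $v_1[a_1]\cdots v_i[a_i]\cdots$ reachable so far is $f$-coloured $c$. Full-matching says that for every $u\in\WSub^{\star}(W_i)$ there is $v\in\OSub^{1,=}(w_i)$ with $\WSub(v\cdot u)$ monochromatic. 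Applying this downward, as in the proof of \Cref{lem:full-match-is-extensible} but using the colouring $g$ on the finite sets of choices at level $m$ together with the finite pigeonhole principle, shows that $\T$ has nodes at every level.

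\textbf{Path and conclusion.} $\T$ is finitely branching with computably bounded branching in the sequence, so a path exists in $\ACA_0$ by König's lemma, which is available there. A path $(v_i)_i$ gives the ordered $\omega$-variable word $W = v_1\cdot v_2\cdots$ (with composed substitutions). Each finite $u\in A^{<\omega}$ yields $W[u]$ of the form $v_1[a_1]\cdots v_j[a_j]\cdot v_{j+1}[\epsilon]$, which has colour $c$ by the level-$(j+1)$ condition. So $\WSub^{\star}(W)$ is monochromatic.

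The delicate combinatorial point is making the levels of $\T$ nonempty. Full-matching of $w_m$ against $W_m$ only yields, for each tail, some $v_m$; the colour must be propagated backwards through $v_{m-1},\dots,v_1$. I would handle this by defining, as in \Cref{lem:full-match-is-extensible}, auxiliary colourings $g_m$ that record the pair (colour, choice of $v$) and pulling them back level by level, using finiteness of each $\OSub^{1,=}(w_i)$.
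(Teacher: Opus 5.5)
Your overall plan matches the paper's. You iterate \Cref{lem:ovw0-towsner-full-low} while keeping everything low over $f$, then take a path through an $f'$-computable, finitely branching tree of choices $v_i\in\OSub^{1,=}(w_i)$. The uniformity issue you flag is a routine formalization point.

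The genuine gap is the colouring you feed to the lemma. At stage $s+1$ you use $f$ restricted to $\WSub^{\star}(W_s)$. So $w_{s+1}$ only full-matches $W_{s+1}$ for $f$ evaluated on tail words. That says nothing about the words $v_1[a_1]\cdots v_s[a_s]\cdot u$ the tree must control. Above all, it cannot make the choice at an early block independent of the letters substituted into later blocks.

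Already at level~$3$ you need a single $v_1$ with $f(v_1[\epsilon])=f(v_1[a]\cdot v_2[\epsilon])=f(v_1[a]\cdot v_2[b]\cdot v_3[\epsilon])$ for all $a,b\in A$. Full-matching of $w_1$ against the tail $v_2[b]\cdot v_3[\epsilon]$ only returns a $v_1$ that depends on~$b$. Once the $w_i$ are fixed, defining auxiliary colourings $g_m$ afterwards and applying the finite pigeonhole principle yields a $v_1$ shared by some letters~$b$, not by all of them. Nothing in the choice of $w_2$ forces such sharing. So your claim that every level of $\T$ is nonempty is unsupported.

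The missing idea is that the pulled-back colouring must be the \emph{input} of the next application of the lemma, just as $g$ is the input of $\OVW^0$ in \Cref{lem:full-match-is-extensible}. The paper proceeds as follows:
\begin{itemize}
\item[(1)] Set $f_0=f$.
\item[(2)] Once $w_{s+1}$ full-matches $W_{s+1}$ for $f_s$, define $f_{s+1}(u)=\langle c,v\rangle$ for some $v\in\OSub^{1,=}(w_{s+1})$ such that $\WSub(v\cdot u)$ is $f_s$-monochromatic of colour $c$. This is an $\ell_s\times\card\OSub^{1,=}(w_{s+1})$-colouring of $\WSub^{\star}(W_{s+1})$.
\item[(3)] Apply the lemma to $(W_{s+1},f_{s+1})$, keeping $\bigoplus_{s\leq n}f_s\oplus W_n$ low over~$f$.
\end{itemize}
Full-matching for $f_{s+1}$ forces the recorded choice $v$ to be the same for every letter substituted into $w_{s+2}$. $\Pi^0_1$-induction then gives $f_n(u)=\langle c,v_1,\dots,v_n\rangle$ with $\WSub(v_1\cdots v_n\cdot u)$ $f$-monochromatic of colour~$c$. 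Hence every level of the tree is nonempty, with no pigeonhole argument.

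Alternatively, you could feed the lemma the vector colouring $u\mapsto(f(p\cdot u))_{p\in\WSub^{=}(w_1\cdots w_s)}$. Then the $v$ chosen at block $s+1$ is good for all prefixes simultaneously, and your backward propagation goes through.
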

\begin{proof}
We will define a sequence of tuples $(\ell_n, f_n, w_{n+1}, W_n)_{n \in \omega}$ such that
\begin{itemize}
    \item[(1)] $f_n : \WSub^{\star}(W_n) \to \ell_n$;
    \item[(2)] $w_{n+1}$ full-matches $W_{n+1}$ for $f_n$;
    \item[(3)] $w_{n+1} \cdot W_{n+1} \in \OSub^\omega(W_n)$;
    \item[(4)] $(\bigoplus_{s \leq n} f_n \oplus W_n)' \leq_T f'$.
\end{itemize}
First, $\ell_0 = \ell$, $f_0 = f$ and $W_0 = x_0x_1\cdots$.
Assume we have defined~$\ell_n, f_n$ and $W_n$ satisfying items (1) and (4).
By \Cref{lem:ovw0-towsner-full-low}, there is some pair $(w_{n+1}, W_{n+1})$ satisfying items (2-4).
Let $\ell_{n+1} = \ell_n \times \card \OSub^{1,=}(w_{n+1})$ and let $f_{n+1} : \WSub^{\star}(W_{n+1}) \to \ell_{n+1}$ be defined by $f_{n+1}(u) = \langle c, v \rangle$ for some~$v \in \OSub^{1,=}(w_{n+1})$ such that $\WSub(v \cdot u)$ is $f_n$-monochromatic for some color~$c < \ell_n$.

By $\Pi^0_1$-induction, for any~$n \in \NN$ and $u \in A^{<\omega}$, $f_n(u)$ can be written in the form $\langle c, v_0, \dots, v_{n-1} \rangle$, where $c < \ell$, $v_s \in \OSub^{1,=}(w_s)$, and
$\WSub(v_0 \cdots v_{s-1} \cdot u)$ is $f$-monochromatic for color~$c$.

Let $\Sc$ be the tree whose level~$n$ consists of all ordered $n$-variable words~$w \in \OSub^{n,=}(w_1 \cdots w_n)$ such that $\WSub^{\star}(w)$ is $f$-monochromatic. Note that $\Sc$ is $f'$-computably bounded. Moreover, $\Sc$ is infinite. By $\WKL_0$, there is an infinite path $U$ through~$\Sc$. Such a path~$U$ is an ordered $\omega$-variable word such that $\WSub^{\star}(U)$ is $f$-monochromatic.
\end{proof}

\section{Weakly low basis theorem for $\msf{OVW}$}\label[section]{sec:weakly-low}

The levels of the arithmetic hierarchy are not closed under relativization, in the sense that for $n \geq 2$, the relation\qt{$X$ is $\Delta^0_n(Y)$} is not transitive. It is therefore useful to work with notions of lowness.
A set~$X$ is of \emph{low${}_n$ degree} over~$Y$ if $(X \oplus Y)^{(n)} \leq_T Y^{(n)}$. In particular, the relation \qt{$X$ is of low${}_n$ degree over $Y$} is transitive. When $n = 1$, we simply say low for low${}_1$.

\begin{definition}
A $\Pi^1_2$-problem~$\Psf$ admits a \emph{low${}_n$ basis} if for every set~$Z$ and every $Z$-computable $\Psf$-instance~$X$,
there is a $\Psf$-solution~$Y$ to $X$ such that $(Y \oplus Z)^{(n)} \leq Z^{(n)}$.
\end{definition}

For instance, Jockusch and Soare~\cite{jockusch1972pi} proved that $\WKL$ admits a low basis. Some $\Pi^1_2$-problems are \qt{close} to admit a low basis, in that any PA degree over~$\emptyset'$ can decide the jump of a solution. We then say that it admits a \emph{weakly low basis}.

\begin{definition}
A $\Pi^1_2$-problem~$\Psf$ admits a \emph{weakly low${}_n$ basis} if for every set~$Z$, every set~$P$ of PA degree over~$Z^{(n)}$ and every $Z$-computable $\Psf$-instance~$X$,
there is a $\Psf$-solution~$Y$ to $X$ such that $(Y \oplus Z)^{(n)} \leq P$.
\end{definition}

For any~$Z$, by the low basis theorem~\cite{jockusch1972pi} relative to $Z^{(n)}$, there is a set~$P$ of PA degree over $Z^{(n)}$ such that $P' \leq_T Z^{(n+1)}$. It follows that if $(Y \oplus Z)^{(n)} \leq P$, then $(Y \oplus Z)^{(n+1)} \leq_T P' \leq_T Z^{(n+1)}$, so if $\Psf$ admits a weakly low${}_n$ basis, then it admits a low${}_{n+1}$ basis.
The most famous theorem admitting a weakly low basis~\cite{cholak2001strength}, but no low basis~\cite{jockusch1972ramsey}, is Ramsey's theorem for pairs.

Liu, Monin and Patey~\cite[Corollary 4.5]{liu2019computable} constructed a computable instance of~$\OVW^0$ with no $\Delta^0_2$ solutions, so \emph{a fortiori}, $\OVW^0$ does not admit a low basis.
The goal of this section is to prove the following theorem:

\begin{theorem}\label[theorem]{thm:ovw0-weakly-low}
$\OVW^0$ admits a weakly low basis.
\end{theorem}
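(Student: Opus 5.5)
We prove the weakly low basis theorem with $n=1$. Fix~$Z$, a set~$P$ of PA degree over~$Z'$, and a $Z$-computable coloring $f : A^{<\omega} \to \ell$; relativizing, we take $Z = \emptyset$. The plan is to turn the Towsner-style proof of \Cref{sec:towsner-ovw0} into a notion of forcing whose conditions are finite approximations together with an infinite reservoir. We then build a sufficiently generic filter using~$P$ as oracle, so that the jump of the generic solution is $P$-computable.

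\textbf{Conditions.} A condition is a pair $(\sigma, W)$ with the following properties:
\begin{itemize}[label={}]
\item $\sigma$ is a finite ordered $n$-variable word whose $0$-variable substitutions $\WSub(\sigma)$ are all $f$-monochromatic of a fixed color~$c$.
\item $W$ is an ordered $\omega$-variable word with $\sigma$ full-matching~$W$ in a strengthened, color-preserving sense: for every $u \in \WSub^\star(W)$ there is some $v \in \OSub^{1,=}(\sigma)$ with $\WSub(v\cdot u)$ monochromatic of color~$c$.
\item $W$ is low over~$f$.
\end{itemize}
Since full-matches depend only on $f$-behaviour, we keep $W$ inside a low, or at least $P$-describable, reservoir, in the spirit of Mathias forcing for $\RT^2_2$ as in Cholak, Jockusch and Slaman. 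Extension is defined by $\sigma' \in \OSub^{\cdot,=}(\sigma\cdot \text{prefix of } W)$ and $W' \in \OSub^\omega(W)$. \Cref{lem:ovw0-towsner-full-low} is the tool for producing extensions while keeping reservoirs low. The main-theorem construction shows that iterating full-matches yields a $\emptyset'$-computably bounded tree of monochromatic solutions, so density of progress (lengthening $\sigma$) follows. As in the final proof of \Cref{sec:towsner-ovw0}, we in fact carry finitely many colorings $f_n$ encoding the choices of $v$ and keep a bounded tree rather than a single $\sigma$. Accordingly, a condition is really a finite tree of candidate stems, and the generic path is selected at the end.

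\textbf{Forcing the jump.} To decide $\Phi_e^{G}(e)\downarrow$ we need a $\Sigma^0_1$ question about a condition that $P$ can answer. The question is: is the class of ordered $\omega$-variable words $U$ over the reservoir, such that no finite extension of the stems inside $\OSub(W[U])$ makes $\Phi_e(e)$ halt, nonempty? This class is $\Pi^0_1(W\oplus f)$, so its nonemptiness is $\Pi^0_1$ relative to a low set and hence $\emptyset'$-decidable. If it is nonempty, the low basis theorem, or $P$ itself, yields a new reservoir forcing divergence, although the full-match property must be reestablished inside $U$ by reapplying \Cref{lem:ovw0-towsner-full-low}. If it is empty, compactness gives a finite level at which every choice of substitution forces halting. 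A \Cref{lem:graham-rothschild-tree}-style Hales--Jewett argument is then needed to ensure that one of the extensions keeping the color intact halts.

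\textbf{Main obstacle.} The hard step is the following combinatorial dichotomy: from ``every $U$ admits a halting extension'' we must obtain an extension that both halts and preserves monochromaticity with the committed color~$c$. The color in force may not be the color that the halting extensions realize. We expect to handle this, as in Cholak--Jockusch--Slaman, by running the requirement in parallel for all $\ell$ colors. Either some color yields a halting monochromatic extension for every $U$, or, combining the failures via \Cref{lem:ovw0-towsner-full-step}, we find a reservoir $f$-avoiding a color. The latter reduces $\ell$ and allows an induction on the number of colors, mirroring the $\Pi^1_2$-induction in \Cref{lem:ovw0-towsner-full}. Because $P$ is PA over $\emptyset'$, it can choose members of the nonempty $\Pi^0_1(\emptyset')$ classes of good reservoirs and answer each question uniformly. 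The resulting generic $G$ then satisfies $G' \le_T P$.
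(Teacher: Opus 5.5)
\textbf{There is a genuine gap: the $\Sigma^0_1$ extension step is left open, and the tools you plan to use for it are not effective enough.}

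To get $G' \le_T P$ for an arbitrary $P$ of PA degree over $\emptyset'$, the whole sequence of conditions, including lowness indices of the reservoirs, has to be $\emptyset'$-computable. Your conditions require the stem to full-match the reservoir in a committed color~$c$. Producing such full-matches goes through \Cref{lem:ovw0-towsner-full-step} and \Cref{lem:ovw0-towsner-full-low}, and neither is uniform.
\begin{itemize}
\item \Cref{lem:ovw0-towsner-full-step} branches on whether some $x_0\cdots x_n$ full-matches $x_{n+1}x_{n+2}\cdots$. Full-matching is $\Pi^0_1(f\oplus W)$, so this case split is $\Sigma^0_2(f\oplus W)$. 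That is not $\emptyset'$-decidable even for low~$W$.
\item \Cref{lem:ovw0-towsner-full-low} yields a low reservoir only through the non-uniform conservation result \Cref{prop:wkl-aca0-conservation}. It gives no lowness index computable from $\emptyset'$.
\item Your induction on~$\ell$ iterates this non-uniformity.
\end{itemize}
The same problem affects your claim that the iterated full-match tree is $\emptyset'$-computably bounded. The sequence $(w_n,W_n)$ that defines that tree is only arithmetical, not $\emptyset'$-computable.

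Your proposed dichotomy is also not an argument. It says that either some color yields halting monochromatic extensions, or the failures combine via \Cref{lem:ovw0-towsner-full-step} into a color-avoiding reservoir. Nothing connects a failure of $\Phi_e(e)$ to halt with a reservoir avoiding a color. You also never say how a halting extension of~$\sigma$ would again full-match some new reservoir in color~$c$, and that is exactly where the difficulty lies. How $c$ is chosen, or how $\ell$ parallel candidates would be maintained effectively, is likewise left unspecified. A small point in the other direction: shrinking the reservoir preserves full-matching, so the divergence case needs no re-establishing.

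\textbf{The missing idea is to drop full-matches and color commitment altogether, as the paper does.}
\begin{itemize}
\item \emph{Target.} Build only an $\IOOVW^0$-solution: infinitely many levels $G[A^n]$ are monochromatic, possibly in different colors. A final pigeonhole step, computable in $f\oplus G$, then turns it into an $\OVW^0$-solution.
\item \emph{Conditions.} Use pairs $(I,W)$ where $I$ is a finite set of equal-length stems with the half-match-like $f$-matching property: every $w\in\WSub^\star(W)$ has some $v\in I$ with $\WSub^=(v\cdot w)$ monochromatic.
\item \emph{Forcing question.} Ask whether some level~$n$ of the coloring-independent Graham--Rothschild tree $\T_{A,\ell\times I}$ has, for every node~$u$, some $\hat u\in\OSub^=(u)$ with $\varphi(v\cdot W[\hat u])$. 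This is $\Sigma^0_1(W)$.
\item \emph{Positive answer.} Extend every stem in~$J$ by all such witnesses $W[\hat u]$, and every other stem by all of $A^{t_n}$. Matching survives by \Cref{lem:graham-rothschild-tree} applied to the coloring $w\mapsto\langle c,v\rangle$, as in \Cref{caOVW-lemextSigma1}.
\item \emph{Negative answer.} Take a low path through the tree and shrink the reservoir, which preserves matching trivially.
\item \emph{Progress.} Prune the stems that force $\neg\varphi_t$; this does not destroy matching (\Cref{caOVW-lem-positiveext}).
\end{itemize}
Every step is uniform in~$W'$. So the stems form an $\emptyset'$-computable finitely branching tree, and $P$ can pick a path and decide its jump.
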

As mentioned, if a $\Pi^1_2$-problem admits a weakly low basis, then its admits a low${}_2$ basis.

\begin{corollary}
 $\OVW^0$ admits a low${}_2$ basis theorem.
\end{corollary}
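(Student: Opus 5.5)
The corollary follows directly from \Cref{thm:ovw0-weakly-low}, through the general observation made after the definition of a weakly low${}_n$ basis. I would not rework any of the forcing from this section. I would only spell out the relativized argument with $n = 1$, since \qt{weakly low basis} means weakly low${}_1$ basis and \qt{low${}_2$ basis} means $(Y \oplus Z)'' \leq_T Z''$.

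Fix a set~$Z$ and a $Z$-computable $\OVW^0$-instance $f : A^{<\omega} \to \ell$. The first step is to apply the Jockusch--Soare low basis theorem~\cite{jockusch1972pi} relative to~$Z'$. The class of sets of PA degree over~$Z'$ is a non-empty $\Pi^0_1(Z')$ class, for instance the class of completions of PA relative to~$Z'$. It therefore has a member~$P$ that is low over~$Z'$, meaning $P' \leq_T Z''$. The second step is to apply \Cref{thm:ovw0-weakly-low} to~$Z$, to this~$P$ and to the instance~$f$. This gives an ordered $\omega$-variable word~$Y$ with $\WSub^{\star}(Y)$ $f$-monochromatic and $(Y \oplus Z)' \leq_T P$.

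The last step is to take jumps, using that the jump is monotone under~$\leq_T$: $(Y \oplus Z)'' \leq_T P' \leq_T Z''$. So $Y$ is a solution of low${}_2$ degree over~$Z$, which is exactly the low${}_2$ basis property. There is no real obstacle at the level of the corollary. All of the difficulty sits in \Cref{thm:ovw0-weakly-low}, which I expect to be proved by building a sufficiently generic filter for a forcing notion derived from the Graham--Rothschild tree and the full-match combinatorics of \Cref{sec:towsner-ovw0}, with the jump of the generic decided by a PA degree over~$Z'$. The only point to check here is that the low basis theorem is used relative to~$Z'$ rather than to~$Z$.
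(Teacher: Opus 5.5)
Your proof is correct and is exactly the paper's argument: use the Jockusch--Soare low basis theorem relative to $Z'$ to get $P$ of PA degree over $Z'$ with $P' \leq_T Z''$, apply \Cref{thm:ovw0-weakly-low} to get a solution $Y$ with $(Y \oplus Z)' \leq_T P$, and take one more jump to conclude $(Y \oplus Z)'' \leq_T Z''$. One wording nit: the class of sets of PA degree over $Z'$ is not itself a $\Pi^0_1(Z')$ class, but it contains one (e.g.\ the completions of PA relative to $Z'$), and that is all you need.
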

\begin{proof}
Immediate by \Cref{thm:ovw0-weakly-low} and by the low basis theorem~\cite{jockusch1972pi}.
\end{proof}
Last, note that the construction only uses arithmetical induction, and can be entirely formalized within $\ACA_0$:

\begin{theorem}[$\ACA_0$]\label[theorem]{ovw0-low2-formalized}
Fix a finite alphabet~$A$ and $\ell \geq 1$.
For every set~$Z$ and every $Z$-computable coloring $f : A^{<\omega} \to \ell$,
there is an ordered $\omega$-variable word~$W$ such that $\WSub^{\star}(W)$ is $f$-monochromatic,
and $(W \oplus Z)'' \leq_T Z''$.
\end{theorem}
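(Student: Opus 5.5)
We formalize in $\ACA_0$ the weakly low basis construction of \Cref{thm:ovw0-weakly-low}, run relative to~$Z$ with~$P$ replaced by a low-over-$Z'$ PA degree, and check that every step uses only arithmetical comprehension and arithmetical induction. Concretely, work in a model $\M = (M,S) \models \ACA_0$ with $Z \in S$. By $\ACA_0$, $Z' \in S$. By the formalized low basis theorem relative to~$Z'$ (Simpson, Chapter VIII, as in the proof of \Cref{prop:wkl-aca0-conservation}), we get a set $P \in S$ of PA degree over~$Z'$ with $P' \leq_T Z''$. Note that $P$ exists in~$S$ because $Z''$ does.

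We then perform the forcing construction of \Cref{thm:ovw0-weakly-low}. It is a $P$-computable construction of a sufficiently generic filter for the parametric notion of forcing, built from Graham-Rothschild trees and full-matches as in \Cref{sec:towsner-ovw0}. Along the way it decides $\Sigma^0_1(W\oplus Z)$ questions, so that the jump $(W\oplus Z)'$ is $P$-computable. The construction uses the following ingredients:
\begin{itemize}
\item the computable bounded tree $\T_{A,\ell}$ together with \Cref{lem:graham-rothschild-tree} ($\RCA_0$);
\item $\Pi^0_1(Z')$ classes, whose members $P$ can select by its PA-ness over~$Z'$;
\item the $\Pi^1_2$ induction on colors of \Cref{lem:ovw0-towsner-full}, replaced by a finite case analysis, since $\ell$ is a fixed standard parameter of the statement.
\end{itemize}
Once $\M$ contains $P$, the whole sequence of conditions is $\Delta^0_1(P)$ and therefore exists in~$S$. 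Its properties are verified by induction on the stage, for formulas that are arithmetical in $P\oplus Z$, and $\ACA_0$ provides that induction.

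From the construction we read off $W \in S$ with $\WSub^\star(W)$ $f$-monochromatic and $(W\oplus Z)' \leq_T P$. Hence
$$(W\oplus Z)'' \leq_T P' \leq_T Z''.$$
To state this reduction inside $\M$, we express Turing reductions via indices, all of which exist in~$S$ by arithmetical comprehension.

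The main obstacle is the induction bookkeeping. In the standard proof, genericity and the existence of extensions at each stage may rely on reasoning about full-match trees that are $\Pi^1_1$-looking, namely the existence of infinite paths. Each such appeal must be replaced by the existence of a path computable in~$P$, provided by PA-ness over $Z'$, with the relevant trees $Z'$-computable. We then need the inductive invariant ``the current condition is extendible'' to be an arithmetical property of $P\oplus Z$, so that arithmetical induction suffices. I would first check that each condition's reservoir is a $P$-computable path in a $Z'$-computable tree, indexed by a number, which makes the invariant $\Pi^0_n(P\oplus Z)$ for a fixed standard~$n$.
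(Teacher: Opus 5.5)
Your overall plan matches the paper's. Run the weakly low basis construction of \Cref{thm:ovw0-weakly-low} relative to $Z$ inside $\ACA_0$, take $P \gg Z'$ with $P'\le_T Z''$ from the formalized low basis theorem, and conclude $(W\oplus Z)''\le_T P'\le_T Z''$. The gap is in how you handle the path-existence steps. You replace each of them by a path chosen through the PA-ness of $P$ over $Z'$, keeping the invariant that reservoirs are $P$-computable paths through $Z'$-computable trees. This does not iterate. In the $\Pi^0_1$-extension step (\Cref{caOVW-lemextPi1}), the class $Q$ is $\Pi^0_1(W_s\oplus Z)$ for the current reservoir $W_s$, and the forcing question $(I_s,W_s)\qvdash_v\varphi(G)$ is $\Sigma^0_1(W_s\oplus Z)$. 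Both are within reach of $Z'$ only because $W_s$ is low over $Z$. Once a reservoir is merely $P$-computable, the next tree is only $P$-computable, so PA-ness over $Z'$ no longer guarantees a path. Deciding the forcing question then requires $(W_s\oplus Z)'$, which need not be below $P$. So neither the next extension nor the final reduction $(G\oplus Z)'\le_T P$ goes through. The missing idea is the restriction to $\PP^{\msflow}$-conditions. Each new reservoir comes from the (formalized) low basis theorem relative to the current one, so it is low over $Z$, and a lowness index is computed uniformly from $Z'$ (the ``Moreover'' clauses of \Cref{caOVW-lemextPi1-2,caOVW-lemextSigma1,caOVW-lemextsummary,caOVW-lem-positiveextrephrase}). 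The sequence of codes is then $\Delta^0_1(Z')$, exists by arithmetic comprehension, and is verified by arithmetic induction. $P$ is used exactly once, to choose a path $G$ through the $Z'$-computable, $Z'$-computably bounded tree of stems. You also omit the last step: $G$ is only an $\IOOVW^0$-solution. The word $W\le_T G\oplus Z$ with $\WSub^\star(W)$ monochromatic comes from the $\RCA_0+\BSig_2$ lemma, and $\BSig_2$ is available in $\ACA_0$.

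Your third ingredient is unneeded and rests on a misreading. The weakly low construction uses neither full-matches nor \Cref{lem:ovw0-towsner-full}. The trees $\T_{A,\ell\times I}$ and \Cref{lem:graham-rothschild-tree} are uniform in $A$ and $\ell$ because $HJ$ is primitive recursive. Each $I_s$ is non-empty simply because $(I_s,W_s)$ is $f$-matching and $\WSub^\star(W_s)\neq\emptyset$. Alternatively, use \Cref{rem:ovw0-matching-has-solutions} with $\OVW^0$, which $\ACA_0$ proves for all $\ell$ via \Cref{prop:wkl-aca0-conservation}. Moreover, $A$ and $\ell$ are not standard parameters: the statement is a single $\ACA_0$-sentence quantifying over them internally. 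This matters in \Cref{lem:csl-prehomogeneous-step}, which applies the theorem to the alphabet $A_{n+1}$ with $n$ bound inside $\forall n(\CSL^n\to\CSL^{n+1})$. A case analysis over standard $\ell$ would only give a scheme, which is not enough there.
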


The rest of this section is dedicated to the proof of \Cref{thm:ovw0-weakly-low}. 
For simplicity, instead of constructing an $\OVW^0$-solution, we shall work with a weakening of the Ordered Variable Word theorem where only infinitely levels are required to be monochromatic.
 
\begin{definition}
Let $\IOOVW^0$ be the $\Pi^1_2$-problem whose instances are colorings $f : A^{<\omega} \to \ell$ for a finite alphabet~$A$ and some number of colors~$\ell \geq 1$. A \emph{$\IOOVW^0$-solution} to~$f$ is an ordered $\omega$-variable word~$W$ such that for infinitely many $n\in\omega$, $W[A^n] = \{ W[u] : u \in A^n \}$ is monochromatic for $f$.
\end{definition}

This weaker variant is actually computably equivalent to $\OVW^0$, and this provably over~$\RCA_0 + \BSig_2$:

\begin{lemma}[$\RCA_0 + \BSig_2$]
Let $f : A^{<\omega} \to \ell$ be a coloring.
For every $\IOOVW^0$-solution~$W$ to~$f$, $f \oplus W$ computes an $\OVW^0$-solution to~$f$.
\end{lemma}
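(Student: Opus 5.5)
The plan is to turn the infinitely many monochromatic levels of~$W$ into an ordered $\omega$-variable word~$V$ whose $i$th level is sent by~$W$ into the $i$th chosen monochromatic level of~$W$. The candidate solution is then the composition $W[V]$.

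\textbf{Step 1: the set of good levels.} For each~$n$, the finite set $W[A^n] = \{W[u] : u \in A^n\}$ can be computed uniformly from~$W$. Whether $f$ is constant on it, and which color it takes, is therefore decided by $f \oplus W$. So the set
$$M_c = \{ n : W[A^n] \mbox{ is } f\mbox{-monochromatic for color } c\}$$
is $\Delta^0_1(f \oplus W)$ for each $c < \ell$, and the union $\bigcup_{c<\ell} M_c$ is infinite by hypothesis.

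\textbf{Step 2: pick a color.} We need some $c < \ell$ with $M_c$ infinite. This is an instance of the infinite pigeonhole principle $\RT^1_\ell$ applied to the colors of the good levels. That principle is provable in $\RCA_0 + \BSig_2$, and this is exactly where $\BSig_2$ is used. The choice of~$c$ is non-uniform, but only a finite parameter is chosen, so the solution constructed below is still computable in $f \oplus W$.

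\textbf{Step 3: build $V$.} Fix such a~$c$ and let $n_0 < n_1 < \cdots$ be the increasing enumeration of~$M_c$. It is $f\oplus W$-computable, and it is total by $\Sigma^0_1$-induction since $M_c$ is infinite. Define the ordered $\omega$-variable word~$V$ as follows:
\begin{itemize}
\item the first $n_0$ positions of~$V$ carry a fixed letter $a \in A$;
\item for each~$i$, the positions in $[n_i, n_{i+1})$ all carry the variable~$x_i$.
\end{itemize}
Each $x_i$ occurs, and all its occurrences precede those of~$x_{i+1}$, so $V$ is an ordered $\omega$-variable word. Moreover, for every $u \in A^i$, $V[u]$ is cut before the first occurrence of~$x_i$, that is, at position~$n_i$. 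Hence $V[u] \in A^{n_i}$.

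\textbf{Step 4: verify $W[V]$.} The word $U = W[V]$ is an ordered $\omega$-variable word computable from $f \oplus W$. The point to verify with the paper's conventions on substitution and cutting is the identity $U[u] = W[V[u]]$ for every $u \in A^{<\omega}$. Granting it, for $u \in A^i$ we get $U[u] \in W[A^{n_i}]$, and therefore $f(U[u]) = c$. So $\WSub^{\star}(U)$ is $f$-monochromatic, as required.

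The main obstacle is purely bookkeeping: checking that substitution composes correctly, including the truncation behavior, and that only $\BSig_2$ (for the pigeonhole in Step~2) and $\Sigma^0_1$-induction (for Step~3) are used. The combinatorics itself is trivial.
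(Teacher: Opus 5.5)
Your proof is correct and follows the paper's argument. You choose a color $c$ with infinitely many $c$-monochromatic levels via the infinite pigeonhole principle (this is where $\BSig_2$ is used), enumerate those levels as $n_0 < n_1 < \cdots$, and merge the variable kinds $x_{n_i}, \dots, x_{n_{i+1}-1}$ of $W$ into one kind; your composition $W[V]$ is exactly this merging. Two of your details are left implicit in the paper: you fill the positions of $V$ below $n_0$ with a constant letter (the paper never says what happens to $x_0, \dots, x_{n_0-1}$), and you check that $U[u] \in W[A^{n_i}]$ for $u \in A^i$ (the paper writes $|u| \in L_c$ where it means $u \in W[A^n]$ for some $n \in L_c$).
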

\begin{proof}
By the infinite pigeonhole principle (equivalent to $\BSig_2$ over~$\RCA_0$),
there is a color~$c < \ell$ such that the $f\oplus W$-computable set 
$$
L_c = \{ n \in \NN : W[A^n] \mbox{ is } f\mbox{-monochromatic for color } c \} 
$$
is infinite. Say $L_c = \{ n_0 < n_1 < \dots \}$.
Let $\hat W$ be obtained from~$W$ by merging the variable kinds $\{ x_m : n_i \leq m < n_{i+1} \}$ into one variable kind, for each~$i \in \NN$. Then, $\hat W \in \OSub^\omega(W)$, and for every~$u \in \WSub^\star(\hat W)$, $|u| \in L_c$, hence $f(u) = c$. It follows that $\WSub^\star(\hat W)$ is $f$-monochromatic for color~$c$, so $\hat W$ is an $\OVW^0$-solution to~$f$.
\end{proof}

For notation convenience, we shall prove \Cref{thm:ovw0-weakly-low} in an unrelativized form.
Throughout this section, fix a computable coloring $f:A^{<\omega}\rightarrow\ell$
and a set $P$ of PA degree over~$\emptyset'$.
We will construct an $\IOOVW^0$-solution~$G$ such that $G' \leq_T P$ using an effectivization of a notion of forcing.

The idea is to construct an infinite $\emptyset'$-computable sequence of conditions
whose upward-closure forms a 1-generic filter. The set of stems of this decreasing sequence will form an infinite $\emptyset'$-computable, $\emptyset'$-computably bounded tree~$T$, and any PA degree~$P$ over~$\emptyset'$ will compute an infinite path~$U$. By 1-genericity of the filter, any path through~$T$ will be a generalized low $\IOOVW^0$-solution to~$f$, so $P \geq_T U'$.



\subsection{Notion of forcing}\label[section]{sec:forcing-notion}

We now use the combinatorics of \Cref{sec:towsner-ovw0} to design a parametric notion of forcing to build solutions to instances of~$\IOOVW^0$, with a good first-jump control.

\begin{definition}\label[definition]{caOVW-def-precondition}
A \emph{$\PP$-branch} is a pair $(v,W)$ where $v$ is a finite ordered variable word and
$W$ an ordered $\omega$-variable word.
\end{definition}

One can think of a $\PP$-branch as a Mathias-like condition, where the \emph{stem} $v$ is an initial segment of the ordered variable word we are building, and the \emph{reservoir} $W$ puts some constraints of the future extensions of the stem. We can associate to a $\PP$-branch~$(v, W)$ its \emph{denotation} $[v, W]$, representing the class of candidates of the solution~$G$ we are constructing, namely, 
\begin{align}\label{caOVW-eq14}
[v, W] = \big\{
G\in \vw(v\cdot W) :\ & v\preceq G
\big\}
\end{align}
Note that, although an $\IOOVW^0$-solution to~$f$ needs to contain infinitely many variable kinds, and have infinitely many levels of $f$-monochromaticity, this is not necessarily the case for every~$G \in [v, W]$. In particular, some~$G \in [v, W]$ will only contain the variable kinds of~$v$, or will have no level of $f$-monochromaticity. We will therefore need to choose an appropriate $G \in [v, W]$ by satisfying some positive requirements infinitely often (see Lemma \ref{caOVW-lem-positiveext}).
We also implicitly require that the variable kinds in $G\uhr_{|v|}^{|G|-1}$ and $v$ are disjoint.

We now define the notion of $\PP$-branch extension, such that $(\h v,\h W)$ extends $(v,W)$ if $(\h v,\h W)$ is \qt{more precise} than $(v,W)$. In particular, it will satisfy that if $(\h v,\h W)$ extends $(v,W)$, then
$[\h v,\h W] \subseteq [v,W]$.

\begin{definition}[$\mathbb P$-branch extension]\label[definition]{caOVW-def-preconditionextension}
A $\PP$-branch\ $(\h v,\h W)$ \emph{extends}
a $\PP$-branch\ $(v,W)$
(written $(\h v,\h W)\leq (v,W)$)
iff:
$\h v\succeq v$
and $\h v\cdot \h W\in \OSub^\omega(v\cdot W)$.
Or equivalently, for some ordered $\omega$-variable word
 $U\in \OSub^\omega(W)$ and some $t\in\omega$,
 $\h v = v\cdot (U\uhr t)$ and $\h W = U\uhr_t^\infty$
 where $t$ must be in between variables of $U$.

\end{definition}
Clearly the extension relation is transitive.
We emphasize that it is not necessary the case
 that $\h v \in \vwshorter(v\cdot W)$ (therefore not necessary
 $\h v\in [v,W]$)
 since the truncating point $t$ is somewhat more flexible.
 But $\h v$ must be a prefix of some member in $[v,W]$.

Last, note that a $\PP$-branch\ may not
even contain a \wOVW\ solution to $f$.
Therefore, we work with a finite set of $\PP$-branches satisfying some universality constraint.

\begin{definition}[Forcing condition]\label[definition]{caOVW-def-condition}
 A \emph{$\mathbb P$-precondition}
is a pair $(I,W)$ where
\begin{itemize}
\item $I$ is a finite set of ordered variable word
that is length-homogeneous. i.e., For some $t\in\omega$ (denoted as $t(I)$)
 $|v| = t$ for all $v\in I$;
\item For every $v\in I$, $(v,W)$
is a $\PP$-branch.
\end{itemize}
A $\PP$-precondition $(I, W)$ is \emph{$f$-matching} if for every word $w\in \wshorter(W)$,
there exists $v\in I$
such that $\wsamelong(v\cdot w)$ is monochromatic for $f$.
A \emph{$\PP$-condition} is an $f$-matching $\PP$-precondition.
\end{definition}

One can think of a $\PP$-precondition as a finite set of $\PP$-branches sharing the same reservoir. Accordingly, any $\PP$-precondition $(I, W)$ is given a denotation $[I,W] = \cup_{v\in I} [v,W]$

\begin{definition}[$\PP$-precondition extension]
A $\PP$-precondition $(\h I,\h W)$ \emph{extends} $\PP$-precondition
$(I,W)$ iff for every $\h v\in \h I$,
there exists a $v\in I$ such that
$(\h v,\h W)\leq (v,W)$.
\end{definition}

Intuitively, $f$-matching is a universality constraint which ensures that every $\PP$-condition contains an \wOVW-solution to $f$. To support this intuition, as we did in \Cref{lem:full-match-is-extensible}, we shall assume $\OVW^0$, and prove the following fact:

\begin{lemma}[$\RCA_0 + \OVW^0$]\label[lemma]{rem:ovw0-matching-has-solutions}
Let $(I, W)$ be a $\PP$-condition. Then $[I,W]$ contains an \wOVW-solution to $f$.
\end{lemma}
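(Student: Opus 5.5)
The plan is to mimic the proof of \Cref{lem:full-match-is-extensible}: encode the matching witnesses into an auxiliary coloring, apply $\OVW^0$ to it, and read off a solution inside one branch. Let $t = t(I)$ and fix an enumeration of the finite set $I$. Define $g : A^{<\omega} \to \ell \times \card I$ by $g(u) = \langle c, v\rangle$, where $v \in I$ is the least element such that $\wsamelong(v \cdot W[u])$ is $f$-monochromatic, and $c$ is the common color. This is well defined because $W[u] \in \wshorter(W)$ for every $u \in A^{<\omega}$, and $(I,W)$ is $f$-matching. The coloring $g$ is $\Delta^0_1$ in $f \oplus W \oplus I$ (deciding monochromaticity of a finite set is bounded), so it exists in $\RCA_0$.

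Next, apply $\OVW^0$ to $g$ to obtain an ordered $\omega$-variable word $U$ such that $\WSub^\star(U)$ is $g$-monochromatic, say with color $\langle c, v\rangle$. Set $G = v \cdot W[U]$. Since $W[U] \in \OSub^\omega(W)$, we have $G \in \vw(v \cdot W)$ and $v \preceq G$, hence $G \in [v,W] \subseteq [I,W]$.

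It then remains to check that $G$ is an $\IOOVW^0$-solution. Here I need the convention for the lengths of levels, namely that $G[A^n]$ substitutes all variables of $v$ and the first $n - (\text{number of variables of } v)$ variables of $W[U]$. Let $m$ be the number of variables of $v$. For $n \geq m$, write $n = m + k$. Every element of $G[A^{n}]$ has the form $v[a] \cdot W[U[u']]$ with $a \in A^m$ and $u' \in A^k$. Since $U[u'] \in \WSub^\star(U)$, we get $g(U[u']) = \langle c, v\rangle$, so $\wsamelong(v \cdot W[U[u']])$ is $f$-monochromatic of color $c$. Hence $f$ takes the value $c$ on all of $G[A^n]$ for every $n \geq m$. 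So infinitely many levels, and in fact all levels from $m$ on, are monochromatic, and $G$ has infinitely many variable kinds because $U$ does.

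The main obstacle is purely bookkeeping: matching the indexing of levels of $G = v\cdot W[U]$ with the words $u'$, and making sure that the variables of $v$ are counted as the first variable kinds. This is exactly what makes the $\wsamelong$ clause (monochromaticity over all fillings of $v$) line up with a full level of $G$.
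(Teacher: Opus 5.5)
Your proof is correct and is essentially the paper's argument. The paper also defines an auxiliary coloring $\bar f : A^{<\omega} \to \ell \times I$ recording a matching stem and its color, and applies $\mathsf{OVW}^0$ to obtain $U$ with $\mathsf{WSub}^\star(U)$ monochromatic of color $\langle c, v\rangle$. It then checks that $v \cdot W[U] \in [v,W]$ is $f$-monochromatic of color $c$ past $v$, via the decomposition $w = \hat v \cdot W[\hat w]$. Your choice of the least $v \in I$, which makes $g$ well defined in $\mathsf{RCA}_0$, and your level-by-level bookkeeping are only cosmetic refinements.
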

\begin{proof}
Since $(I, W)$ is $f$-matching, let $\bar f : A^{<\omega} \to \ell \times I$ be defined as follows:
\begin{align}\label{eq:ovw-matching-sol-eq0}
\bar f: &\ A^{<\omega}\ni w \mapsto \langle c, v\rangle
\text{ such that }\\ \nonumber
&\text{$\wsamelong(v\cdot W[w])$ is $f$-monochromatic for color~$c$}.
\end{align}
Let $U$ be an $\OVW^0$-solution to $\bar f$, that is, $U$ is an ordered $\omega$-variable word such that 
\begin{align}\label{eq:ovw-matching-sol-eq1}
\text{$\WSub^\star(U)$ is monochromatic for some color~$\langle c, v\rangle \in \ell \times I$.}
\end{align}
Let $\hat W = v \cdot W[U]$. Note that $\hat W \in [v, W] \subseteq [I,W]$.
We claim that $\hat W$ is an $\IOOVW^0$-solution to~$f$. We shall actually prove the following stronger property:

\begin{claim}
For every $w \in \WSub^{\star}(\hat W)$ with $|w| > |v|$, $f(w) = c$.
\end{claim}
\begin{proof}
Since $w \in \WSub^{\star}(\hat W) = \WSub^{\star}(v \cdot W[U])$,
there is some~$\hat v \in \WSub^=(v)$ and $\hat w \in \WSub^{\star}(U)$ such that
\begin{align}\label{eq:ovw-matching-sol-eq2}
w = \hat v \cdot W[\hat w]
\end{align}
By (\ref{eq:ovw-matching-sol-eq1}), $\bar f(\hat w) = \langle c, v\rangle$,
so by (\ref{eq:ovw-matching-sol-eq0}), $\WSub^=(v \cdot W[\hat w])$ is $f$-monochromatic for color~$c$.
Since $w \overset{(\ref{eq:ovw-matching-sol-eq2})}= \hat v \cdot W[\hat w] \in \WSub^=(v \cdot W[\hat w])$, we have
$f(w) = c$. This completes our claim and the proof of \Cref{rem:ovw0-matching-has-solutions}.
\end{proof}

\end{proof}

\subsection{Effective forcing}

It is common in effective forcing to work with a restricted set of $\PP$-conditions with computability-theoretic constraints. In our situation, we shall restrict ourselves to $\PP$-preconditions $(I, W)$ such that $W$ belongs to a non-empty class of sets~$\C$ satisfying the following closure properties:
\begin{itemize}
    \item[(1)] $\C$ is downward-closed under Turing reducibility: $\forall X \in \C \forall Y \leq_T X, Y \in \C$;
    \item[(2)] $\C$ is closed under PA degrees: $\forall X \in \C \exists Y \in \C, Y $ is of PA degree over~$X$.
\end{itemize}
Any such class is called a \emph{Scott class}. If one furthermore requires that $\C$ is closed under the effective join, then we obtain the well-known notion of Scott ideal.
Any basis theorem for $\Pi^0_1$ classes induces a Scott class. For instance, by the computably dominated and the low basis theorems~\cite{jockusch1972pi}, the classes of all sets of computably dominated degree or of all low degree form Scott classes, respectively. None of the above examples form Scott ideals.

\begin{definition}
Given a Scott class~$\C$, a $\PP^\C$-(pre)condition is a $\PP$-(pre)condition $(I, W)$ such that $W \in \C$.
\end{definition}

To effectivize the construction of a filter, we shall be particularly interested in the class $\mathsf{LOW}$ of all sets of low degree. Any $\PP^{\mathsf{LOW}}$-precondition can be represented as a finite object as follows:

\begin{definition}
A \emph{lowness index} of a set~$X$ of low degree is an integer~$e$ such that $\Phi_e^{\emptyset'} = X'$.
A \emph{code} for a $\PP^{\mathsf{LOW}}$-precondition $(I, W)$ is a pair $\langle I, e\rangle$ where~$e$ is a lowness index of~$W$.
\end{definition}

Note that $\langle I, e\rangle$ is a finitary object, hence can be coded by an integer using any reasonable G\"odel numbering. We shall prove the core lemmas of the notion of $\PP$-forcing in the general case, but one should keep in mind that they will all hold even when restricting $\PP^\C$ for any fixed Scott class. Furthermore, when proving a density lemma, we will always specify the amount of computational power needed to transform a code of a $\PP^{\mathsf{LOW}}$-precondition into a code of the desired $\PP^{\mathsf{LOW}}$-extension.

\subsection{First-jump control}\label[section]{sec:ovw0-forcing-first-jump}

Given a notion of forcing, there exists a general forcing relation defined as follows: $p$ \emph{forces} a property $\varphi(G)$ if $\varphi(G_\F)$ holds for every sufficiently generic filter~$\F$ containing~$p$, where $G_\F$ is the set induced by the filter~$\F$. Although this forcing relation also admits a simple inductive and syntactical definition, the complexity of the forcing relation often remains too high with respect to the complexity of the forced formula.

For our purpose, it will be sufficient to force $\Sigma^0_1$ and $\Pi^0_1$ formulas. At this level of the arithmetic hierarchy, there is often a stronger forcing relation which does not only hold for any sufficiently generic filter, but for \emph{every} filter containing~$p$. We now give a formal definition of a forcing relation for $\Sigma^0_1$ and $\Pi^0_1$-formulas.

By Kleene's normal form theorem, every $\Sigma^0_1$-formula $\varphi(G)$ can be written of the form $\exists x \psi(G \uh_x)$, where $\psi(\sigma)$ is a \emph{monotonous} $\Delta^0_0$-formula, that is, if $\psi(\sigma)$ holds and $\sigma \preceq \tau$, then $\psi(\tau)$ also holds. We will always assume that a $\Sigma^0_1$-formula is in normal form.

\begin{definition}[Forcing relation]\label[definition]{caOVW-def-forcing}
Let $(v,W)$ be a $\PP$-branch and $\varphi(G):= \exists n\psi(G\uhr n)$ be a $\Sigma^0_1$-formula. Let
\begin{enumerate}[label=(\arabic*)]
\item $(v,W)\Vdash \varphi(G)$
iff $\psi(v\uhr n)$ holds for some $n$;

\item $(v,W)\Vdash \neg\varphi(G)$ iff
for every $\h v\in [v,W]$, every $n\in\omega$, $\neg\psi(\h v\uhr n)$ holds.
\end{enumerate}

We also say $(v, W)$ \emph{forces} $\varphi(G)$ for $(v, W) \Vdash \varphi(G)$.
A $\PP$-precondition $(I,W)$ \emph{decides} a $\Sigma^0_1$-formula $\varphi(G)$ iff
for every $v\in I$, either $(v,W)\Vdash\varphi(G)$ or $(v,W)\Vdash\neg\varphi(G)$.
\end{definition}

Suppose $(v,W)\Vdash \varphi(G)$ for some $\Sigma^0_1$ or $\Pi^0_1$-formula $\varphi(G)$, then $\varphi(G)$ holds for every $G\in [v,W]$.
It is also clear that if $(v,W) \Vdash \varphi(G)$, then for every $(\h v,\h W) \leq (v, W)$, we also have $(\h v, \h W) \Vdash \varphi(G)$. Indeed, $[\h v,\h W]\subseteq [v,W]$.

A $\PP$-filter~$\F$ is \emph{1-generic} if for every $\Sigma^0_1$-formula $\varphi(G)$, there is a condition~$p \in \F$ deciding~$\varphi(G)$. It is not clear at first sight that every sufficiently generic $\PP$-filter is 1-generic. We shall however see that this is the case. For this, we define the so-called \emph{forcing question} $\qvdash$ for $\Sigma^0_1$-formulas, which is a central concept in effective forcing.

In general, a forcing question is a relation $p \qvdash \varphi(G)$ between a condition~$p$ and a formula~$\varphi(G)$, such that if $p \qvdash \varphi(G)$, then there is an extension~$q \leq p$ forcing $\varphi(G)$, and if $p \nqvdash \varphi(G)$, then there is an extension~$q \leq p$ forcing $\neg \varphi(G)$. The computability-theoretic features of a generic set are closely related to the existence of a forcing question with appropriate definability and combinatorial properties~\cite{patey2025lowness}. In our case, since we work with a tree-like notion of forcing, we need to adapt the definition.

Recall that $\mcal T_{A, \ell \times I}$ is the \GRtree\
for $\ell \times I$-colorings over alphabet~$A$ (see \Cref{sec:towsner-ovw0}).

\begin{definition}[Forcing question]\label[definition]{caOVW-def-forcingquestion}
Let $(I,W)$ be a $\PP$-precondition and $v\in I$.
We write $(I,W)\qvdash_v\varphi(G)$
iff there exists a level~$n \in \NN$ such that for every $u$ at level~$n$ in~$\T_{A,\ell\times I}$,
 there exists some $\hat u\in \OSub^=(u)$ such that
$\varphi(v\cdot W[\hat u])$ holds.
\end{definition}

The idea is that if $(I,W)\qvdash_v\varphi(G)$, then there is an extension $(\hat I, \hat W)$ such that $(\hat v, \hat W) \Vdash \varphi(G)$ for all the stems $\hat v \in I$ extending~$v$. If on the other hand, $(I,W)\nqvdash_v \varphi(G)$, then there is an extension $(I, \hat W)$ with the same set of stems~$I$, such that $(v, \hat W) \Vdash \neg \varphi(G)$. The issue is that in the positive case, $\hat I$ might increase the number of stems incompatible with~$v$, and therefore one might never obtain an extension deciding $\varphi(G)$ on all its stems. We shall therefore force $\neg \varphi(G)$ on as many stems as possible, and then force $\varphi(G)$ on all the remaining stems at once.

The easiest case to consider is a negative answer:

\begin{lemma}[$\Pi_1^0$-Extension]\label[lemma]{caOVW-lemextPi1}
Let $(I,W)$ be a $\PP$-condition
and $v\in I$.
If $(I,W)\nqvdash_v \varphi(G)$,
then there exists a $\PP$-condition
$(I,\h W)\leq (I,W)$ such that
$(v,\h W)\Vdash\neg\varphi(G)$.
Moreover, we can choose $\h W$ so that $\h W'\leq_T W'$
and the index of $\h W'\leq_T W'$ can be obtained $W'$-computably.
\end{lemma}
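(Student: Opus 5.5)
The plan is to extract $\h W$ as a path through a $\Pi^0_1(W)$ subtree of the Graham-Rothschild tree $\T_{A,\ell\times I}$, using the low basis theorem relative to~$W$. Unfolding $(I,W)\nqvdash_v \varphi(G)$, for every level~$n$ there is some $u$ at level~$n$ in $\T_{A,\ell\times I}$ such that $\neg\varphi(v\cdot W[\h u])$ holds for every $\h u\in\OSub^=(u)$. Let $\Sc\subseteq \T_{A,\ell\times I}$ be the set of all such~$u$.

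First I check that $\Sc$ is a tree, i.e.\ closed under taking the prefix at a lower level. Let $u'$ be the prefix at level $m<n$ of some $u\in \Sc$, and let $\h u'\in\OSub^=(u')$. Then $\h u'$ is a prefix of some $\h u\in\OSub^=(u)$: substitute the variables $x_m,\dots,x_{n-1}$ of $u$ arbitrarily, in a way compatible with the substitution producing $\h u'$. Hence $v\cdot W[\h u']$ is a prefix of $v\cdot W[\h u]$. Since $\varphi$ is in normal form with $\psi$ monotonous, $\varphi(v\cdot W[\h u'])$ would imply $\varphi(v\cdot W[\h u])$. So $\neg\varphi$ transfers down and $u'\in\Sc$.

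By hypothesis $\Sc$ has nodes at every level, so it is infinite. Membership in $\Sc$ is a $\Pi^0_1(W)$ predicate: $\OSub^=(u)$ is finite, and $\neg\varphi(v\cdot W[\h u])$ is $\Pi^0_1$ in~$W$. Moreover $\Sc$ lives inside the computable, computably bounded tree $\T_{A,\ell\times I}$. By the low basis theorem relative to~$W$, there is an infinite path~$U$ through~$\Sc$ with $(U\oplus W)'\leq_T W'$. A lowness index for it is obtained uniformly $W'$-computably from an index of the $\Pi^0_1(W)$ tree, hence from~$W'$. Every path through $\T_{A,\ell\times I}$ is an ordered $\omega$-variable word, so we set $\h W = W[U]$, which gives $\h W' \leq_T (U\oplus W)'\leq_T W'$.

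It remains to verify three things.
\begin{itemize}
\item \emph{$(I,\h W)$ is a precondition extending $(I,W)$.} Since $\h W\in\OSub^\omega(W)$, we get $v'\cdot\h W\in\OSub^\omega(v'\cdot W)$ for every $v'\in I$, so $(v',\h W)\leq(v',W)$.
\item \emph{$(I,\h W)$ is $f$-matching.} Here $\wshorter(\h W)\subseteq\wshorter(W)$, so the matching witnesses for $(I,W)$ still work.
\item \emph{$(v,\h W)\Vdash\neg\varphi(G)$.} Let $G\in[v,\h W]$ and $n\in\omega$. The finite word $G\uh n$ is a prefix of some $v\cdot W[\h u]$, where $\h u\in\OSub^=(U\uh_{t_k})$ for a sufficiently large level~$k$. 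To see this, take the substitution defining $G$, restrict it to the first $k$ variables of~$U$, and rename or instantiate the remaining variables as needed. Since $U\uh_{t_k}\in\Sc$, we have $\neg\varphi(v\cdot W[\h u])$. By monotonicity $\neg\psi(G\uh n)$ follows.
\end{itemize}

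I expect the main obstacle to be this last bookkeeping step: showing precisely that every finite prefix of every $G\in[v,\h W]$ is a prefix of a word of the form $v\cdot W[\h u]$ with $\h u\in\OSub^=(u)$ for a node $u$ of~$U$. This matters especially when $G$ is finite or truncated mid-way, and it relies on the convention that the variables of $G$ beyond~$v$ are disjoint from those of~$v$. Note that this negative case does not use the $\ell\times I$ parameter of the tree; that parameter only matters in the positive case.
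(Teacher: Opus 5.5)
Your proposal is correct and follows the paper's proof. Like the paper, you take a path $U$ that is low over $W$ through the nonempty $\Pi^0_1(W)$ class of paths of $\T_{A,\ell\times I}$ that avoid $\varphi$, set $\h W = W[U]$, and note that $f$-matching survives because only the reservoir shrinks. The only difference is presentational: the paper defines the class of paths $U$ with $\neg\varphi(v\cdot W[u])$ for all $u\in\OSub^\star(U)$ and invokes compactness, whereas you build the subtree of good nodes and check prefix-closure via monotonicity of $\psi$, which makes explicit the bookkeeping the paper leaves implicit.
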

\begin{proof}
By definition of $(I,W)\nqvdash_v\varphi(G)$, for every
level~$n \in \NN$ there is some $u$ at level~$n$ in~$\T_{A, \ell \times I}$,
such that for every~$\hat u \in \OSub^=(u)$, $\varphi(v \cdot W[\hat u])$ does not hold.
Let $Q$ be the $\Pi^{0,W}_1$ class of all~$U \in [\T_{A, \ell \times I}]$ such that
\begin{align}\label{caOVW-eq2}
&\text{for every $u\in\vwshorter(U) $,
we have $\neg\varphi(v\cdot W[u])$}.
\end{align}
By compactness, the class~$Q$ is non-empty.
By construction of the \GRtree, every $U\in Q$ is an ordered $\omega$-variable word.
By the low basis theorem \cite{jockusch1972pi},
there is some $U\in Q$ be such that $(W\oplus U)'\leq_T W'$.
Clearly, $$(I,W[U])\leq (I,W)$$ is a $\PP$-precondition. Furthermore, it is $f$-matching since only the reservoir is restricted, so $(I,W[U])$ is a $\PP$-condition.
It remains to show that $(v,W[U])\Vdash\neg\varphi(G)$,
which follows directly from (\ref{caOVW-eq2}).
Thus, we are done.

\end{proof}

Repeatedly applying Lemma \ref{caOVW-lemextPi1}, we obtain:

\begin{lemma}[$\Pi_1^0$-Extension rephrased]\label[lemma]{caOVW-lemextPi1-2}
Let $(I,W)$ be a $\PP$-condition.
There exists a $\PP$-condition $(I,U)\leq (I,W)$ and a $J\subseteq I$
such that
\begin{enumerate}[label=(\arabic*)]
\item  For every $v\in I\setminus J$,
$(v,U)\Vdash\neg\varphi(G)$.
\item For every $v\in J$,
$(I,U)\qvdash_v \varphi(G)$.
\end{enumerate}
Moreover, we can choose $U$ so that $U'\leq_T W'$
and the index of  $U'\leq_T W'$ and $J$ can be obtained $W'$-computably.
\end{lemma}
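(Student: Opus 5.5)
The plan is to iterate \Cref{caOVW-lemextPi1} over the finitely many stems of~$I$, keeping the stem set~$I$ fixed and shrinking only the reservoir. Enumerate $I = \{v_0, \dots, v_{m-1}\}$ and build $W_0 = W$, $W_1, \dots, W_m = U$ together with sets $J_0 = \emptyset \subseteq \dots$ as follows. At step~$i$, given the $\PP$-condition $(I, W_i)$, ask whether $(I, W_i) \qvdash_{v_i} \varphi(G)$. If the answer is negative, apply \Cref{caOVW-lemextPi1} to obtain $(I, W_{i+1}) \leq (I, W_i)$ with $(v_i, W_{i+1}) \Vdash \neg\varphi(G)$. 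If the answer is positive, set $W_{i+1} = W_i$ and tentatively put $v_i$ in~$J$.

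Since the $\Pi^0_1$-forcing is inherited by extensions (because $[\h v, \h W] \subseteq [v,W]$), each $v \in I \setminus J$ still has $(v, U) \Vdash \neg\varphi(G)$ at the end, which gives item~(1). For item~(2), observe that a positive answer at stage~$i$ for $W_i$ need not persist for the smaller reservoir $U = W_i[U']$. To handle this, I would not fix $J$ during a single pass but instead iterate the pass. After a full pass, recheck every $v \in J$ against the current reservoir; any $v$ that now gets a negative answer is handled by one more application of \Cref{caOVW-lemextPi1} and moved out of~$J$. Since $J$ only shrinks and $I$ is finite, after at most $|I|$ passes we reach a reservoir~$U$ at which every $v \in J$ satisfies $(I, U) \qvdash_v \varphi(G)$. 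Every $v \in I \setminus J$ has been forced to $\neg\varphi(G)$ at some earlier reservoir, and this forcing persists to~$U$. Throughout, $(I, W_i)$ remains $f$-matching, since only the reservoir is restricted, exactly as in the proof of \Cref{caOVW-lemextPi1}. So $(I, U)$ is a $\PP$-condition and it extends $(I,W)$ by transitivity.

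Alternatively, persistence might be obtained directly. If $(I,W)\qvdash_v\varphi(G)$ and $U\in\OSub^\omega(W)$ comes from a path through the Graham-Rothschild tree, it is not clear that every $u$ at level~$n$ composed with~$U$ stays inside the right space. For that reason I would rely on the shrinking-$J$ iteration above, which avoids needing any monotonicity of $\qvdash$.

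For the effectivity clause, note that the relation $(I,W_i)\qvdash_{v}\varphi(G)$ is $\Sigma^0_1$ in~$W_i$. It asks for the existence of a level~$n$ such that, for each of the finitely many $u$ at level~$n$ of the computable, computably bounded tree $\T_{A,\ell\times I}$, there is some $\hat u\in\OSub^=(u)$ with $\varphi(v\cdot W_i[\hat u])$, and $\varphi$ is $\Sigma^0_1$. It is therefore decidable from $W_i'$. By \Cref{caOVW-lemextPi1}, $W_{i+1}'\leq_T W_i'$ with an index obtained $W_i'$-computably. Composing these finitely many uniform reductions, at most $|I|^2$ steps, yields $U' \leq_T W'$, with the index and the set~$J$ computed from~$W'$.

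The main obstacle is the bookkeeping in the second paragraph: the positive answer to the forcing question is not obviously preserved under shrinking the reservoir. The finite iteration with monotonically shrinking~$J$ is what resolves it.
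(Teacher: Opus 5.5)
Your proposal is correct and is essentially the paper's proof. Both arguments keep the stem set $I$ fixed and apply \Cref{caOVW-lemextPi1} to stems with a negative answer, shrinking only the reservoir, until every stem not yet forced to $\neg\varphi(G)$ gets a positive answer at the current reservoir. Your re-checking passes spell out what the paper's ``until we achieve'' leaves implicit, namely that a positive answer need not survive later shrinkings. Termination and the $W'$-effectivity follow as you say: each application permanently removes a stem from $J$, so there are at most $|I|$ applications and hence only finitely many reductions to compose.
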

\begin{proof}
This is done by applying \Cref{caOVW-lemextPi1} successively on each~$v \in I$ for which the forcing question has a negative answer, until we achieve a $\PP$-condition~$(I, U) \leq (I, W)$ such that $(I, U) \qvdash_v \varphi(G)$ on every $v \in I$ such that $(v, U) \not \Vdash \neg \varphi(G)$. Let $J$ be the set of such stems.
\end{proof}

Now comes the hard case, namely forcing $\varphi(G)$:

\begin{lemma}[$\Sigma_1^0$-Extension]\label[lemma]{caOVW-lemextSigma1}
Let $(I,W)$ be a $\PP$-condition and $J\subseteq I$.
Suppose for every $v\in J$,
$(I,W)\qvdash_v \varphi$.
Then there exists a $\PP$-condition $(\h I,\h W)\leq (I,W)$
such that for every $\h v\in \h I$ with $\h v$ extending
some $v\in J$, we have
$(\h v,\h W)\Vdash\varphi$.
Moreover, we can choose $(\h I,\h W)$
so that $\h W = W\uhr_{t(\h I)-t(I)}^\infty$  and the index of  $\h I $ can be obtained $W'$-computably.
\end{lemma}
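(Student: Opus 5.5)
The plan is to make one uniform jump along the reservoir~$W$, long enough that every stem in~$J$ can be made to satisfy~$\varphi$, while keeping the new condition $f$-matching through \Cref{lem:graham-rothschild-tree}.

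\textbf{Step 1: choose a common level.} For each $v \in J$, fix a level $n_v$ witnessing $(I,W)\qvdash_v\varphi(G)$. I expect the property to be upward-closed in the level. Any $u'$ at a higher level of $\T_{A,\ell\times I}$ has a prefix~$u$ at level~$n_v$. A witness $\hat u \in \OSub^=(u)$ extends to some $\hat u' \in \OSub^=(u')$ by instantiating the remaining positions of~$u'$, and $v\cdot W[\hat u]$ is then a prefix of $v\cdot W[\hat u']$. Since $\psi$ is monotone, $\varphi$ still holds. So we may take a single $n = \max_{v\in J} n_v$.

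Let $m$ be the position in~$W$ just before the first occurrence of $x_{t_n}$, and set $\hat W = W\uhr_m^\infty$ with its variables re-indexed. Then $W\uhr m$ is an ordered $t_n$-variable word.

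\textbf{Step 2: define the new stems.} Let $\hat I$ be the union of two sets, all of whose words have length $t(I)+m$:
\begin{itemize}
\item $\{ v\cdot (W\uhr m)[u] : v \in I\setminus J,\ u \text{ at level } n \text{ in } \T_{A,\ell\times I}\}$;
\item $\{ v\cdot (W\uhr m)[\hat u] : v\in J,\ \hat u\in\OSub^=(u) \text{ for some } u \text{ at level } n,\ \varphi(v\cdot W[\hat u]) \}$.
\end{itemize}
The required properties then follow directly:
\begin{itemize}
\item $\hat I$ is length-homogeneous.
\item Each branch extends a branch of $(I,W)$, since $v\cdot(W\uhr m)[\hat u]\cdot \hat W \in \OSub^\omega(v\cdot W)$.
\item For stems coming from~$J$, $\varphi$ is witnessed by a prefix of the stem (a prefix of $v\cdot W[\hat u]$ is a prefix of $v\cdot(W\uhr m)[\hat u]$). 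So $(\hat v,\hat W)\Vdash\varphi$ by clause~(1) of \Cref{caOVW-def-forcing}.
\item $\hat W = W\uhr_{t(\hat I)-t(I)}^\infty$, as required.
\item Computing~$\hat I$: finding~$n$ is a $\Sigma^0_1(W)$ search, and deciding which $\hat u$ satisfy $\varphi(v\cdot W[\hat u])$ is $\Sigma^0_1(W)$. Hence $W'$ computes an index for~$\hat I$.
\end{itemize}

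\textbf{Step 3 (main obstacle): $(\hat I,\hat W)$ is $f$-matching.} Define $F : A^{<\omega}\to \ell\times I$ by $F(y) = \langle c, v\rangle$ for some $v \in I$ with $\wsamelong(v\cdot W[y])$ $f$-monochromatic of color~$c$. This exists because $(I,W)$ is $f$-matching.

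Fix $w \in \wshorter(\hat W)$, say $w = \hat W[y_0]$. Apply \Cref{lem:graham-rothschild-tree} to~$F$ and the shift~$y_0$. This gives $u$ at level~$n$ in $\T_{A,\ell\times I}$ and a pair $\langle c,v\rangle$ such that $F(u[a]\cdot y_0)=\langle c,v\rangle$ for all $a \in A^{n}$. Hence every word $v'\cdot (W\uhr m)[u[a]]\cdot w$, with $v'\in\wsamelong(v)$, has $f$-color~$c$.
\begin{itemize}
\item If $v\notin J$, the stem $v\cdot (W\uhr m)[u]\in\hat I$ works.
\item If $v\in J$, the forcing question supplies $\hat u\in \OSub^=(u)$ with $\varphi(v\cdot W[\hat u])$. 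The stem $v\cdot(W\uhr m)[\hat u]\in \hat I$ works, since $\wsamelong(v\cdot(W\uhr m)[\hat u]\cdot w)$ is contained in the monochromatic set above.
\end{itemize}
This is where the Graham-Rothschild tree $\T_{A,\ell\times I}$ (rather than $\T_{A,\ell}$) is essential: the tree must absorb the choice of branch~$v$ as part of the color.

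The delicate bookkeeping I expect is checking that $\OSub^=(u)$-substitutions commute with substitution into~$W$ and with concatenation by~$y_0$, so that the needed set inclusions hold.
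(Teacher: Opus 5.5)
Your proposal is correct and follows the paper's proof: you take the same jump $\hat W = W\uhr_m^\infty$ with $m$ the first occurrence of $x_{t_n}$, put the forcing-question witnesses into the stems extending $J$, and get $f$-matching by applying \Cref{lem:graham-rothschild-tree} in $\T_{A,\ell\times I}$ to the induced coloring into $\ell\times I$. The differences are cosmetic. You extend the stems of $I\setminus J$ by the level-$n$ nodes themselves rather than by all constant words in $A^{t_n}$, and you justify a single common level $n$ for all $v\in J$ by upward closure, a quantifier swap the paper leaves implicit.
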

\begin{proof}
 The main difficulty is to preserve
 the $f$-matching property while extending stems in $I$.
 This is possible since each $v\in J$ is extended
 to sufficiently many $\h v$. Indeed, we concatenate each such $v$ with some
 ordered variable word generated by nodes in some level of $\mcal T_{A, \ell \times I}$
 for \emph{every} node in that level.
 For the remaining stems $v\in I\setminus J$, we extend $v$ to some
 $v\cdot w$ where $w$ ranges over a set of words.
 The concrete proof is as follows.

Unfolding the definition of
$(I,W)\qvdash_v\varphi(G)$, there is a level $n \in \NN$
such that
for every $v\in J$,
every $u$ in level $n$ of  $\mcal T_{A, \ell \times I}$,
\begin{align}
\label{caOVW-eq16}
&\text{there exists some $\h u\in \vwsamelong(u) $
}\\ \nonumber
&\text{such that
$\varphi(v\cdot W[\h u])$ holds.}
\end{align}

In this proof,
let $witness(v)$ denote the set of all  $\h u$ satisfying (\ref{caOVW-eq16}).
Recall that by construction of the Graham-Rothschild tree, all the nodes at level $n$ in $\mcal T_{A, \ell \times I}$
are of length $t_n$.
Let
\begin{align}\label{caOVW-eq17}
\h I & := \left\{ v\cdot W[\hat u] : \begin{array}{l}
    \text{ either }v\in J\text{ and } \h u\in witness(v)\\
  \text{ or } v\in I\setminus J\text{ and }\hat u\in A^{t_n}
  \end{array} \right\} \\
\h W & :=  W \uhr_{t(\h I)-t(I)}^\omega. \nonumber
\end{align}
We show that $(\h I,\h W)$ is the desired extension.
It is obviously a $\PP$-precondition.
By definition of $\h I$ (see also (\ref{caOVW-eq16})),
for every $\h v\in \h I$ with $\h v$ extending
some member of $J$, $\varphi(\h v)
$ holds (so $(\h v,\h W)\Vdash\varphi(G)$).
Thus, it remains to show that
\begin{claim}\label[claim]{caOVW-claim4}
$(\h I,\h W)$ is $f$-\sufficient.
\end{claim}
\begin{proof}

Let $$w\in \wshorter(W\uhr_{t(\h I)-t(I)}^\infty).$$
It suffices to find $\h v\in \h I$ such that
\begin{align}\label{caOVW-eq20}
\wsamelong(\h v\cdot w)\text{ is monochromatic for $f$.}
\end{align}
Since $(I, W)$ is $f$-matching, let $\bar f : A^{<\omega} \to \ell \times I$ be defined as follows:
\begin{align}\label{newcaOVW-eq1}
\bar f: &\ A^{<\omega}\ni w \mapsto \langle c, v\rangle
\text{ such that }\\ \nonumber
&\text{$\wsamelong(v\cdot W[w])$ is $f$-monochromatic for color~$c$}.
\end{align}
Let $\hat w \in A^{<\omega}$ be such that
$$w = W\uhr_{t(\h I)-t(I)}^\omega[\hat w].$$
By the fundamental property of the Graham-Rothschild tree $\T_{A,\ell \times I}$ (\Cref{lem:graham-rothschild-tree}) applied to the coloring~$\bar f$ and the word $\hat w$, there is some~$u$ at level~$n$ in $\T_{A,\ell \times I}$ such that
\begin{align}\label{newcaOVW-eq2}
\text{$\WSub^=( u \cdot \hat w)$ is $\bar f$-monochromatic for some color $\langle c, v\rangle \in \ell \times I$} 
\end{align}
By (\ref{newcaOVW-eq1}),
\begin{align}\label{newcaOVW-eq3}
\text{$\WSub^=(v \cdot W[u \cdot \hat w])$ is $f$-monochromatic for color~$c$} 
\end{align}
Note that
$$
W[u \cdot \hat w] = W[u] \cdot W\uhr_{t(\h I)-t(I)}^\omega[\hat w] = W[u] \cdot w
$$
So by (\ref{newcaOVW-eq3}), we have
\begin{align}\label{newcaOVW-eq4}
\text{$\WSub^=(v \cdot W[u] \cdot w)$ is $f$-monochromatic for color~$c$} 
\end{align}
Suppose first that $v \in I \setminus J$. In particular, letting $\hat u \in \SubW^=(u)$,
we have in particular $\hat u \in A^{t_n}$, so by (\ref{caOVW-eq17}), $v \cdot W[\hat u] \in \hat I$. Moreover, by (\ref{newcaOVW-eq4}), we have
$$
\text{$\WSub^=(v \cdot W[\hat u] \cdot w)$ is $f$-monochromatic for color~$c$.} 
$$
Suppose now that $v \in J$. By (\ref{caOVW-eq16}), there is some $\hat u \in witness(v)$ such that $\hat u \in \OSub^=(u)$. By (\ref{caOVW-eq17}), $v \cdot W[\hat u] \in \hat I$. Moreover, by (\ref{newcaOVW-eq4}), we have
$$
\text{$\WSub^=(v \cdot W[\hat u] \cdot w)$ is $f$-monochromatic for color~$c$.} 
$$
In both cases we are done. This completes the proof of \Cref{caOVW-claim4}.

\end{proof}
The ``Moreover" part follows by analyzing  the effectiveness of above proof.
This completes the proof of \Cref{caOVW-lemextSigma1}.
\end{proof}

In summary of $\Pi_1^0$ extension \Cref{caOVW-lemextPi1-2}
and $\Sigma_1^0$ extension \Cref{caOVW-lemextSigma1},
we have
\begin{lemma}\label[lemma]{caOVW-lemextsummary}
Let $(I,W)$ be a $\PP$-condition
and $\varphi(G)$ a $\Sigma_1^0$ formula.
There exists a $\PP$-condition $(\h I,\h W)\leq (I,W)$
deciding $\varphi(G)$.
Moreover, we can choose $(\h I,\h W)$ so that $\h W'\leq_T W'$ and the index of $\h W'\leq_T W'$
and $\h I $ can be obtained $W'$-computably.
\end{lemma}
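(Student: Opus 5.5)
The plan is to combine the two preceding lemmas in sequence. First I apply \Cref{caOVW-lemextPi1-2} to $(I,W)$ and $\varphi(G)$. This yields a $\PP$-condition $(I,U)\leq (I,W)$ and a set $J\subseteq I$ with three properties:
\begin{itemize}
\item $(v,U)\Vdash\neg\varphi(G)$ for every $v\in I\setminus J$;
\item $(I,U)\qvdash_v\varphi(G)$ for every $v\in J$;
\item $U'\leq_T W'$, with both the lowness index of $U$ and the set $J$ obtained $W'$-computably.
\end{itemize}

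If $J=\emptyset$, then $(I,U)$ already decides $\varphi(G)$ and we are done. Otherwise, I apply \Cref{caOVW-lemextSigma1} to the condition $(I,U)$ and the set $J$. This gives a $\PP$-condition $(\h I,\h W)\leq(I,U)$ with $\h W=U\uhr_{t(\h I)-t(I)}^\infty$, such that $(\h v,\h W)\Vdash\varphi(G)$ for every $\h v\in\h I$ extending a member of $J$.

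Next I check that $(\h I,\h W)$ decides $\varphi(G)$. By the definition of precondition extension, each $\h v\in\h I$ satisfies $(\h v,\h W)\leq(v,U)$ for some $v\in I$. Since $I$ is length-homogeneous and $\h v\succeq v$, this $v$ is exactly $\h v\uhr t(I)$, so it is unique.
\begin{itemize}
\item If $v\in J$, then $(\h v,\h W)\Vdash\varphi(G)$ by the $\Sigma^0_1$-extension lemma.
\item If $v\in I\setminus J$, then $(v,U)\Vdash\neg\varphi(G)$. Negative forcing is preserved under branch extension because $[\h v,\h W]\subseteq[v,U]$, so $(\h v,\h W)\Vdash\neg\varphi(G)$.
\end{itemize}
Transitivity of extension gives $(\h I,\h W)\leq(I,W)$.

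For the effectivity claims, $\h W$ is a tail of $U$, hence Turing equivalent to $U$. Therefore $\h W'\equiv_T U'\leq_T W'$, and an index for $\h W'\leq_T W'$ is computed from the index for $U'$ together with the shift $t(\h I)-t(I)$. The index of $\h I$ is obtained $U'$-computably by \Cref{caOVW-lemextSigma1}. Since $U'\leq_T W'$ with an index found $W'$-computably, it is also obtained $W'$-computably.

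The only delicate point is the uniqueness of the ancestor $v$ of each $\h v$, which relies on length-homogeneity. This guarantees that no stem falls simultaneously under a positive and a negative case, and that every stem falls under one of them.
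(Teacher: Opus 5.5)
Your proposal is correct and is essentially the paper's argument: apply the rephrased $\Pi^0_1$-extension lemma to get $(I,U)$ and $J$, then the $\Sigma^0_1$-extension lemma on $J$. The effectivity comes from the $W'$-decidability of the forcing question and from $\hat W$ being a tail of $U$. Your extra checks only spell out what the paper calls immediate: that each new stem descends from a member of $J$ or of $I\setminus J$, and that negative forcing persists under extension. Uniqueness of that ancestor is harmless but not actually needed, since deciding only requires each stem to fall under at least one of the two cases.
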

\begin{proof}
Immediate by \Cref{caOVW-lemextPi1-2} and \Cref{caOVW-lemextSigma1}.
The ``Moreover" part follows by also noting that the predicate
$(I,W)\qvdash_v\varphi(G)$ is $W'$-decidable (uniformly in $I,v$).
\end{proof}

\subsection{Positive requirements}\label[section]{sec:ovw0-forcing-positive}

Recall that the notion of forcing does not structurally ensures that the resulting set~$G$
will contain infinitely many variable kinds, and have infinitely many levels of $f$-monochromaticity.
We therefore need to force these requirements explicitly.
For this, given~$t \in \NN$, consider the following $\Sigma_1^0$-formula $\varphi_t(G)$:
\begin{align}\label{caOVW-defvarphi}
&\text{ for some $n$ with $t\leq |G[0^n]|<|G|$, $G[A^n]$ is $f$-monochromatic.}
\end{align}

Note that $\varphi_t(G)$ implies that $G$ contains more variable kinds than $G\uhr t$.
In order to ensure that $G$ is an $\IOOVW^0$-solution to~$f$, it suffices to force $\varphi_t(G)$ for infinitely many $t$. In case $(I,W)\qvdash_v \varphi_{t(I)}$ for all $v\in I$,
we can simply apply $\Sigma_1^0$-extension \Cref{caOVW-lemextSigma1}
to extend $(I,W)$ to force $\varphi_{t(I)}(G)$.
If on the other hand, $(I,W)\nqvdash_v \varphi_{t(I)}$, we will show that we can delete $v$ without sabotaging the $f$-matching property.

\begin{lemma}
[Positive requirement]
\label[lemma]{caOVW-lem-positiveext}
Let $(I,W)$ be a $\PP$-condition
and $v\in I$.
If $(v,W)\Vdash\neg\varphi_{t(I)}(G)$,
then $(I\setminus\{v\},W)$ is a $\PP$-condition.
\end{lemma}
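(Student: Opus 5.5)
The plan is to show something stronger than needed: if $(v,W)\Vdash\neg\varphi_{t(I)}(G)$, then $v$ serves as the matching stem for no word at all. That is, for every $w\in\wshorter(W)$, the set $\wsamelong(v\cdot w)$ is not $f$-monochromatic. Granting this, $f$-matching of $(I,W)$ says every $w\in\wshorter(W)$ has some matching stem in $I$, and that stem must then lie in $I\setminus\{v\}$. So $(I\setminus\{v\},W)$ is $f$-matching. The remaining precondition clauses (finiteness, length-homogeneity with the same $t$, each $(v',W)$ a $\PP$-branch) are inherited trivially from $(I,W)$.

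I would prove the stronger claim by contradiction. Suppose $w\in\wshorter(W)$ is such that $\wsamelong(v\cdot w)$ is $f$-monochromatic, and write $w=W[u]$ with $u\in A^{<\omega}$ finite.

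\textbf{Construction.} Let $n$ be the number of variable kinds of $v$, and set
$$G = v\cdot W[u\cdot x_0x_1x_2\cdots],$$
reindexing the new variables after those of $v$. Then $G\in\OSub^\omega(v\cdot W)$ and $v\preceq G$, so $G\in[v,W]$.

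\textbf{Verification.} For $s\in A^n$, substituting $s$ into $G$ replaces the variables of $v$ and cuts $G$ just before the first occurrence of $x_n$. That occurrence comes right after the block $W[u]$, since $W$ is an ordered $\omega$-variable word and the next variable kind of $W$ appears after the segment producing $w$. Hence $G[s]=v[s]\cdot w$, and so $G[A^n]=\wsamelong(v\cdot w)$, which is $f$-monochromatic. Moreover,
$$|G[0^n]|=|v|+|w|\ge |v|=t(I),$$
and $|G[0^n]|<|G|=\omega$. So $\varphi_{t(I)}(G)$ holds, witnessed at level $n$.

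\textbf{Contradiction.} Since $\varphi_{t(I)}$ is $\Sigma^0_1$ in normal form, $\psi(G\uh_m)$ holds for some finite $m$. But $(v,W)\Vdash\neg\varphi_{t(I)}(G)$ says $\neg\psi(\h v\uh_m)$ for every $\h v\in[v,W]$ and every $m$, and $G\in[v,W]$. This is a contradiction.

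The only delicate point is bookkeeping. One must check that the cut $G[s]$ lands exactly after $w$, i.e.\ that $w=W[u]$ means $W$ is cut just before the next variable kind. One must also check the edge case $w=\epsilon$, where the bound $|G[0^n]|\ge t(I)$ holds with equality. Neither is a real obstacle; the content is just the choice of $G$ that realizes the level $\wsamelong(v\cdot w)$ inside $[v,W]$.
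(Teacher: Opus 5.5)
Your proposal is correct and follows the paper's proof. The paper also shows that $v$ can never be the matching stem for any $w\in\wshorter(W)$: it extends $v\cdot w$ to an element of $[v,W]$ whose first new variable occurs at position $|v\cdot w|$, so that this element satisfies $\varphi_{t(I)}$, a contradiction. The differences are cosmetic: you use the infinite word $v\cdot W[u\cdot x_0x_1\cdots]$ where the paper uses a finite extension $v_1$, and you write out the cut-point bookkeeping that the paper calls obvious.
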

\begin{proof}
$(I\setminus\{v\}, W)$ is obviously a $\PP$-precondition.
To verify it is $f$-matching,
 let $w\in \wshorter( W)$.
By $f$-\sufficient\ of $(I,W)$,
there exists $\t v\in  I$ such that
\begin{align}\label{caOVW-weak-eq21}
\wsamelong(\t v\cdot w)\text{ is monochromatic for $f$.}
\end{align}
It suffices to show that $\t v\ne v$.
Suppose otherwise. Substitute $\t v $ by $v$
in (\ref{caOVW-weak-eq21}), we get
\begin{align}\label{caOVW-weak-eq211}
\wsamelong( v\cdot w)\text{ is monochromatic for $f$.}
\end{align}
But obviously, we can extend $  v\cdot w$ to some
$  v_1\in (v,W)$
with
\begin{align}\label{caOVW-weak-eq51}
\text{$ v_1$ containing more variable kind
(than $v$) and}
\end{align}
$| v\cdot w|$ is the first occurrence of a new variable kind
in $v_1$,
which means: let $n$ be the number of variables in $v$
\begin{align}\label{caOVW-weak-eq5}
 v_1[A^n] = \wsamelong(v\cdot w)\text{
 (by (\ref{caOVW-weak-eq211})) is monochromatic for $f$.}
\end{align}
Clearly
$$
t(I)= |v|\leq |v\cdot w|= |v_1[A^n]|\overset{(\ref{caOVW-weak-eq51})}{<}|v_1|.
$$
Thus, combining with (\ref{caOVW-weak-eq5}),
we have $\varphi_{t(I)}(v_1)$ holds.
This is a contradiction to $(v,W)\Vdash \neg\varphi_{t(I)}(G)$.
Thus, we are done.

 \end{proof}

 Repeatedly applying Lemma \ref{caOVW-lem-positiveext},
 we have:
 \begin{lemma}[Positive requirement rephrased]
 \label[lemma]{caOVW-lem-positiveextrephrase}
Let $(I,W)$ be a $\PP$-condition.
There is a $\PP$-condition
$(\h I,\h W)\leq (I,W)$ such that
for every $\h v\in \h I $, $\varphi_{t(I)}(\h v)$ holds.
Moreover, we can choose $(\h I,\h W)$
so that  $\h W = W\uhr_m^\omega$ for some $m$ and the index of
 $\h I $ can be obtained $W'$-computably.
 \end{lemma}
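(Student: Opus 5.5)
The plan is to combine the $\Pi^0_1$-extension, the $\Sigma^0_1$-extension and the positive requirement lemma, applied to the formula $\varphi_{t(I)}(G)$ with $t = t(I)$ fixed from the start.

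First, apply \Cref{caOVW-lemextPi1-2} to $(I,W)$ and $\varphi_{t(I)}$. This yields a $\PP$-condition $(I,U)\leq(I,W)$ and a set $J\subseteq I$ with the following properties:
\begin{itemize}
\item $(v,U)\Vdash\neg\varphi_{t(I)}(G)$ for every $v\in I\setminus J$;
\item $(I,U)\qvdash_v\varphi_{t(I)}(G)$ for every $v\in J$.
\end{itemize}
Since the stems are unchanged, $t(I)$ is the same for $(I,U)$. Next, apply \Cref{caOVW-lem-positiveext} repeatedly, once for each $v\in I\setminus J$. Each deletion keeps the length-homogeneous set at length $t(I)$, so the hypothesis $(v,U)\Vdash\neg\varphi_{t(I)}(G)$ applies verbatim at each step. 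We obtain that $(J,U)$ is a $\PP$-condition, and it extends $(I,W)$. The forcing question $(J,U)\qvdash_v\varphi_{t(I)}$ must still hold for $v\in J$. One must check that the forcing question is phrased with the Graham-Rothschild tree $\T_{A,\ell\times I}$, which depends on $I$, so shrinking $I$ to $J$ changes the tree. To avoid this, I would instead apply \Cref{caOVW-lemextSigma1} directly to $(I,U)$ with this $J$. It produces $(\h I,\h U)\leq(I,U)$ in which every stem extending a member of $J$ forces $\varphi_{t(I)}$. Every stem extending a member of $I\setminus J$ still satisfies $(\h v,\h U)\Vdash\neg\varphi_{t(I)}(G)$, by downward persistence of forcing.

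Now $t(\h I)\ge t(I)$, but \Cref{caOVW-lem-positiveext} is stated with $\varphi_{t(\h I)}$, not $\varphi_{t(I)}$. This is the main obstacle. I would handle it by rereading the proof of \Cref{caOVW-lem-positiveext}, which in fact shows more: if $(v,W)\Vdash\neg\varphi_s(G)$ for some $s\le t(I)$, then $v$ can be removed. The contradiction derived there is $\varphi_{t(I)}(v_1)$, and since $s\le t(I)$, $\varphi_{t(I)}$ implies $\varphi_s$; the argument $s\le |v|\le|v\cdot w|<|v_1|$ goes through unchanged. Using this strengthened form, delete from $\h I$ all stems extending members of $I\setminus J$. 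The resulting $(\h I_0,\h U)$ is a $\PP$-condition extending $(I,W)$ in which every stem forces $\varphi_{t(I)}$. By the definition of forcing a $\Sigma^0_1$ formula, forcing it for a stem $\h v$ means $\psi$ holds on a prefix of $\h v$, so $\varphi_{t(I)}(\h v)$ holds (monotonicity of the normal form).

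Effectivity follows from the ``moreover'' clauses. The reservoir $\h U$ equals $U\uhr_m^\omega$, where $U$ is low over $W$ with index obtained $W'$-computably, and $J$ and $\h I$ are $W'$-computable. Deleting the stems is a finite operation. The statement requires $\h W=W\uhr_m^\omega$, that is, a tail of $W$ itself rather than of $U$. This suggests the intended proof skips the $\Pi^0_1$ step. In that version we use the fact, immediate from \Cref{caOVW-lem-positiveext}, that a stem on which the forcing question fails would let us pass to a reservoir forcing $\neg\varphi$ and then delete it. If the tail-of-$W$ form is genuinely needed, I would try to show that the forcing question $\qvdash_v\varphi_{t(I)}$ in fact holds for every $v\in I$ already over $W$. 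The argument would use the matching property together with \Cref{lem:graham-rothschild-tree} to produce, at a suitable level, witnesses whose substitutions extend $v$ by a monochromatic level. In that case \Cref{caOVW-lemextSigma1} is applied with $J=I$ and gives $\h W=W\uhr_m^\omega$ directly. Otherwise I would settle for $\h W$ being a tail of a low $U\in\OSub^\omega(W)$.
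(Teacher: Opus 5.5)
Your argument is correct. It uses the same three ingredients as the paper (\Cref{caOVW-lemextPi1-2}, \Cref{caOVW-lem-positiveext}, \Cref{caOVW-lemextSigma1}), only in a different order.

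The paper deletes first. It notes that $(J,U)$ is a $\PP$-condition and then applies \Cref{caOVW-lemextSigma1} to $(J,U)$, although what was actually established is the forcing question relative to $\T_{A,\ell\times I}$, not $\T_{A,\ell\times J}$. This gloss is harmless: the proof of \Cref{caOVW-lemextSigma1} runs for $(J,U)$ with the larger tree, since an $\ell\times J$-coloring is in particular an $\ell\times I$-coloring. Your order avoids the issue entirely, because you apply \Cref{caOVW-lemextSigma1} to $(I,U)$ with the subset $J$ exactly as stated and delete afterwards.

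The resulting level mismatch is easier than you make it. Since $\varphi_{t(\h I)}(G)$ implies $\varphi_{t(I)}(G)$, any $(\h v,\h U)\Vdash\neg\varphi_{t(I)}(G)$ already gives $(\h v,\h U)\Vdash\neg\varphi_{t(\h I)}(G)$. So \Cref{caOVW-lem-positiveext} applies verbatim, and there is no need to reopen its proof.

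As for the ``moreover'' clause, the paper's own argument also ends with a reservoir that is a tail of the $U$ produced by the $\Pi^0_1$ step, not a tail of $W$. The version you settle for is a tail of a low $U\in\OSub^\omega(W)$ whose lowness index and $\h I$ are obtained $W'$-computably. That is exactly what the paper's proof delivers, and it is all that the $\emptyset'$-computable construction of codes in the proof of \Cref{thm:ovw0-weakly-low} needs.

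You should drop the speculative alternative, because the forcing question can fail at some $v\in I$ over $W$. For instance, adding to a condition a stem $v$ of length $t(I)$ with $(v,W)\Vdash\neg\varphi_{t(I)}(G)$ preserves matching, and the forcing question fails at such a stem.
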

 \begin{proof}
 By $\Pi_1^0$-extension \Cref{caOVW-lemextPi1-2},
there is a set $J\subseteq I$ and a $\PP$-condition $(I,U)\leq (I,W)$
 such that
\begin{align}\nonumber
&\text{for every
$v\in I\setminus J$,
$(v,U)\Vdash \neg\varphi_{t(I)}(G);$} \\ \nonumber
&\text{for every $v\in J$,
$(I,U)\qvdash_v \varphi_{t(I)}(G)$.}
\end{align}
By Lemma \ref{caOVW-lem-positiveext},
$(J,U)$ is a $\PP$-condition.
By the $\Sigma_1^0$-extension \Cref{caOVW-lemextSigma1},
there is a $\PP$-condition  $(\h I,\h W)\leq (J,U)$
such that
\begin{align}
\nonumber
&\text{for every $\h v\in \h I$,
$\varphi_{t(I)}(v)$ holds.}
\end{align}
Thus, we are done.
 \end{proof}

Now we are ready to prove Theorem \ref{thm:ovw0-weakly-low}.

\subsection{Proof of \Cref{thm:ovw0-weakly-low}}

Recall that a $\PP^{\msflow}$-condition is a $\PP$-condition $(I,W)$ such that $W$ is low.
Start with the following  $\PP^{\msflow}$-condition $(I_0,W_0)$ where
$I_0:= \{\varepsilon\}$, $W_0:= x_0x_1x_2\cdots$
By Lemma \ref{caOVW-lemextsummary} and Lemma \ref{caOVW-lem-positiveext},
we can $\emptyset'$-compute a sequence
of codes of $\PP^{\msflow}$-conditions $$
(I_0,W_0)\geq (I_1,W_1)\geq\cdots
$$
such that
\begin{enumerate}[label=(\arabic*)]
\item for every $\Sigma_1^0$ formula $\varphi$,
there is some $s\in\omega$ such that $(I_s,W_s)$ decides $\varphi$;

\item for infinitely many $t\in\omega$, there exists $s\in\omega$
such that $t(I_s)\geq t$ and for every $v\in I_s$, $\varphi_t(v)$ holds
(recall (\ref{caOVW-defvarphi}) for definition of $\varphi_t$).
\end{enumerate}
Note that the set of stems $I = \cup_s I_s$ forms an infinite, finitely  branching
tree. To see that $I$ is infinite, note that by \Cref{rem:ovw0-matching-has-solutions}, $I_s \neq \emptyset$ for every~$s \in \NN$. Since this tree is $\emptyset'$-computable, for any set~$P$ of PA degree over~$\emptyset'$, $P$ computes an infinite path through this tree,
 say $G = \cup_s v_s$ where $v_s\in I_s$.

By item (2),
\begin{align}\nonumber
\text{$G$ is a  \wOVW\ solution to $f$.}
\end{align}

To see $P$ computes $G'$, for each $\Sigma_1^0$ formula $\varphi$,
$\emptyset'$-computably find $s$ so that $(I_s,W_s)$ decides $\varphi(G)$
and $P$-computably
decide whether $(v_s,W_s)\Vdash\varphi(G)$ or $(v_s,W_s)\Vdash \neg\varphi(G)$.
Note that we need $P$ to know which stem in $I_s$ is a prefix of~$G$.
Also note that $G\in [v_s,W_s]$ for all $s$.
Thus,
$(v_s,W_s)\Vdash\varphi(G)$ ($(v_s,W_s)\Vdash \neg\varphi(G)$ respectively)
implies $\varphi(G)$ ($\neg\varphi(G)$ respectively) holds.
i.e.,
\begin{align}\nonumber
\text{ whether $\varphi(G)$
holds is $P$-decidable (so $G'\leq_T P$).}
\end{align}
 Thus, $G$ is as desired and we are done.
This completes the proof of \Cref{thm:ovw0-weakly-low}.

\section{Preservation of hyperimmunities}\label[section]{sec:preservation-hyp}

The halting set being $\Sigma^0_1$-definable, every $\omega$-model of~$\ACA_0$ contains~$\emptyset'$.
It follows that to prove that a $\Pi^1_2$-problem~$\Psf$ does not imply $\ACA_0$ over~$\RCA_0$, it suffices
to show the existence of an $\omega$-model of~$\RCA_0 + \Psf$ which does not contain~$\emptyset'$.
Actually, most statements $\Psf$ such that $\RCA_0 + \Psf \nvdash \ACA_0$ are shown to satisfy a stronger property, called \emph{cone avoidance}.

\begin{definition}
A $\Pi^1_2$-problem~$\Psf$ admits \emph{cone avoidance} if for every set~$Z$, every set $C \not \leq_T Z$ and every $Z$-computable $\Psf$-instance~$X$, there is a $\Psf$-solution~$Y$ such that $C \not \leq_T Y \oplus Z$.

\end{definition}

If a $\Pi^1_2$-problem~$\Psf$ admits cone avoidance, then for every non-computable set~$C$, there is an $\omega$-model of $\RCA_0 + \Psf$ which does not contain~$C$. This technique was used, for instance, by Seetapun and Slaman~\cite{seetapun1995strength} to prove that Ramsey's theorem for pairs is strictly weaker than~$\ACA_0$.

Later, the author~\cite{patey2017iterative} introduced the notion of \emph{preservation of hyperimmunities}, as a refinement of cone avoidance to prove separations from other statements such as $\RT^2_2$, where $\RT^n_k$ is the restriction of Ramsey's theorem for $k$-colorings of the $n$-tuples. A function $h : \NN \to \NN$ is \emph{$Z$-hyperimmune} if it is not dominated by any $Z$-computable function, where $g$ \emph{dominates} $h$ means that $\forall x g(x) \geq h(x)$.

\begin{definition}
Let $k \leq \omega$.
A $\Pi^1_2$-problem~$\Psf$ admits \emph{preservation of $k$ hyperimmunities} if for every set~$Z$, every sequence $\langle h_i : i < k \rangle$ of $Z$-hyperimmune functions and every $Z$-computable $\Psf$-instance~$X$, there is a $\Psf$-solution~$Y$ such that for every~$i < k$, $h_i$ is $Y \oplus Z$-hyperimmune.
\end{definition}

Both notions are related: Downey et al~\cite{downey2022relationships} proved that cone avoidance and preservation of 1 hyperimmunity coincide. On the other hand, for every $k$, there is a $\Pi^1_2$-problem which admits preservation of $k$, but not $k+1$ hyperimmunities. In particular, $\RT^2_2$ admits preservation of 1 hyperimmunity, but the following classical argument shows that the bound is optimal:

\begin{lemma}[\cite{jockusch1972ramsey}]\label[lemma]{lem:rt22-not-2-hyp}
$\RT^2_2$ does not admit preservation of 2 hyperimmunities.
\end{lemma}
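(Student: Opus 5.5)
We will exhibit a computable instance of $\RT^2_2$ (or rather of its stable version, which suffices) together with two hyperimmune functions $h_0, h_1$ such that every solution computes a function dominating one of them. The natural choice is to work with a $\Delta^0_2$ set $A$ such that both $A$ and $\overline{A}$ are infinite and hyperimmune, i.e.\ $A$ is bi-hyperimmune. Such a set exists: by the Limit Lemma it suffices to build a $\emptyset'$-computable bi-immune set whose principal functions escape domination. Concretely, $\emptyset'$ can enumerate the total computable functions up to the usual $\Pi^0_2$ issue, but the standard finite-extension construction of a $\Delta^0_2$ bi-hyperimmune set is more robust. That construction proceeds as follows: at stage $e$, $\emptyset'$ asks whether $\Phi_e$ is defined on a long enough initial interval, and if so appends a long block of $0$s followed by a long block of $1$s beyond $\Phi_e(x)$, defeating the strong-array condition for $\Phi_e$ on both sides.

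We then take $h_0 = p_A$ and $h_1 = p_{\overline{A}}$, the principal functions of $A$ and $\overline{A}$ (listing their elements in increasing order). Both are hyperimmune because $A$ and $\overline{A}$ are hyperimmune sets.

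By the Limit Lemma, fix a computable approximation $g(x,s)$ with $\lim_s g(x,s) = A(x)$. Define the computable coloring $f(\{x,y\}) = g(x,y)$ for $x<y$. This coloring is stable, and its limit color on $x$ is $A(x)$. Now let $H$ be an infinite $f$-homogeneous set, say of color $c$. Every $x \in H$ has limit color $c$, so $H \subseteq A$ if $c=1$ and $H \subseteq \overline{A}$ if $c = 0$. An infinite subset of $A$ computes a function dominating $p_A$: namely $p_H$, since the $n$-th element of $H$ is at least the $n$-th element of $A$. Hence $p_A$ is not $H$-hyperimmune, and symmetrically for $\overline{A}$. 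Therefore $h_0$ and $h_1$ cannot both remain $H$-hyperimmune, which shows that $\RT^2_2$ does not admit preservation of 2 hyperimmunities (with $Z = \emptyset$).

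The main obstacle is the existence of a $\Delta^0_2$ bi-hyperimmune set. My first attempt would be the $\emptyset'$-oracle finite-extension argument: at stage $e$, with current finite string $\sigma$, use $\emptyset'$ to find an $x$ and enough values of $\Phi_e$ on which it converges, extend $\sigma$ with $0$s up past the needed bound and then with $1$s, and move on if $\Phi_e$ is partial. If this becomes delicate, I would instead cite the known fact that there is a $\Delta^0_2$ bi-hyperimmune set, which is in fact what \cite{jockusch1972ramsey} uses. An alternative is to avoid stability altogether: take any bi-hyperimmune $A \le_T \emptyset'$ and appeal to the fact that every $\RT^2_2$ solution of the associated stable coloring lies in $A$ or in $\overline{A}$, as above.
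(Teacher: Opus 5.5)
Your proof is correct and follows the paper's argument: a $\Delta^0_2$ set $A$ with $p_A$ and $p_{\overline{A}}$ both hyperimmune, the limit-lemma stable coloring, and the observation that any infinite homogeneous $H$ lies inside $A$ or $\overline{A}$, so that $p_H$ dominates $p_A$ or $p_{\overline{A}}$. The only difference is that you sketch the finite-extension construction of $A$, whereas the paper cites Jockusch's Theorem 5.2. That sketch works provided each $\emptyset'$-query asks about convergence of $\Phi_e$ at the single argument equal to the number of $1$s (resp.\ $0$s) already placed; it must not attempt to decide whether $\Phi_e$ is partial, which is a $\Pi^0_2$ question that $\emptyset'$ cannot answer.
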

\begin{proof}
The \emph{principal function} $p_X$ of an infinite set~$X = \{ x_0 < x_1 \dots \}$ is the map $n \mapsto x_n$.
By \cite[Theorem 5.2]{jockusch1968semirecursive}, there exists a $\Delta^0_2$ set $A$ such that $p_A$ and $p_{\overline{A}}$ are both hyperimmune. By Shoenfield's limit lemma~\cite[Theorem 2]{shoenfield1959degrees}, there exists a computable coloring $f : [\NN]^2 \to 2$ such that for every~$x$, $\lim_y f(x,y)$ exists and equals $A(x)$.
Seeing $f$ as a computable instance of~$\RT^2_2$, every infinite $f$-homogeneous set~$H$ is either a subset of~$A$ or of~$\overline{A}$, hence $p_H$ dominates either $p_A$ or $p_{\overline{A}}$.
\end{proof}

Preservation of $k$ hyperimmunities is a particular case of a general notion of preservation of a weakness property.
One can show that if a $\Pi^1_2$-problem $\Psf$ preserves some weakness property, but another $\Pi^1_2$-problem~$\Qsf$ does not, then $\RCA_0 + \Psf \nvdash \Qsf$, as witnessed by an $\omega$-model (see \cite[Section 3.4]{patey2016reverse}). We reprove the separation in the case of preservation of hyperimmunities for the sake of containment:

\begin{lemma}[\cite{patey2017iterative}]\label[lemma]{lem:pres-hyp-sep}
Suppose that $\Psf$ admits preservation of $k$ hyperimmunities, but $\Qsf$ does not.
Then there is an $\omega$-model of $\RCA_0 + \Psf$ which is not a model of~$\Qsf$
\end{lemma}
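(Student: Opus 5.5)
Since $\Qsf$ does not admit preservation of $k$ hyperimmunities, unfolding the definition gives a set~$Z$, a sequence $\langle h_i : i < k\rangle$ of $Z$-hyperimmune functions and a $Z$-computable $\Qsf$-instance~$X$ such that for every $\Qsf$-solution~$Y$ to~$X$, some $h_i$ fails to be $Y \oplus Z$-hyperimmune. We then build an $\omega$-model $\M = (\omega, \Sc)$ of $\RCA_0 + \Psf$ with $Z \in \Sc$ such that every $h_i$ is $Y$-hyperimmune for every $Y \in \Sc$. Such a model cannot contain a $\Qsf$-solution to~$X$: if $Y \in \Sc$ were one, then $Y \oplus Z \in \Sc$ and some $h_i$ would be dominated by a $Y\oplus Z$-computable function. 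Since $\Qsf$ is a $\Pi^1_2$-problem whose solution condition is arithmetic, and $\omega$-models compute arithmetic formulas correctly, $X$ is a $\Qsf$-instance in~$\M$ with no solution in~$\M$, so $\M \not\models \Qsf$.

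The model is built as a Turing ideal $\Sc = \{ Y : \exists n\, Y \leq_T Z_n\}$ for an increasing sequence $Z_0 \leq_T Z_1 \leq_T \cdots$, maintaining the invariant that every $h_i$ is $Z_n$-hyperimmune. Set $Z_0 = Z$. Fix a bookkeeping enumeration of pairs $(m, e)$ so that each pair appears infinitely often. At stage~$n$, with the current pair $(m,e)$ where $m \leq n$, if $\Phi_e^{Z_m}$ is a $\Psf$-instance, apply preservation of $k$ hyperimmunities for~$\Psf$ to the set~$Z_n$ (which computes the instance since $Z_m \leq_T Z_n$) to get a solution~$Y$ with every $h_i$ being $Y \oplus Z_n$-hyperimmune, and set $Z_{n+1} = Y \oplus Z_n$. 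Otherwise set $Z_{n+1} = Z_n$. The invariant is preserved by construction.

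We then verify the model. $\Sc$ is closed under $\oplus$ and $\leq_T$, so it is a Turing ideal, and hence $(\omega,\Sc) \models \RCA_0$. Every $\Psf$-instance in~$\Sc$ is $\Phi_e^{Z_m}$ for some $m,e$; by bookkeeping this pair is treated at some stage $n \geq m$, and a solution is then added to~$\Sc$. Since solutions are arithmetic properties, this solution is one in~$\M$, so $\M \models \Psf$. Finally, each $Y \in \Sc$ is computable from some $Z_n$, and $Y$-hyperimmunity follows from $Z_n$-hyperimmunity because every $Y$-computable function is $Z_n$-computable.

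The argument is routine. The only care needed is to apply the preservation property relative to the accumulated set~$Z_n$ rather than to~$Z_m$. This is what makes all hyperimmunities survive every stage simultaneously, and the definition of preservation allows it since the instance is $Z_n$-computable.
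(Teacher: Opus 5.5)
Your proof is correct and follows essentially the same route as the paper. Both build the Turing ideal generated by a sequence of sets obtained by repeatedly applying preservation of $k$ hyperimmunities for $\Psf$ relative to the accumulated join; every $h_i$ then stays hyperimmune relative to every member of the ideal, so the ideal contains no $\Qsf$-solution to~$X$. Yours is slightly more explicit than the paper's, since you start from $Z_0 = Z$ so that $X$ belongs to the model and you spell out the bookkeeping, but the argument is the same.
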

\begin{proof}
Since $\Qsf$ does not admit preservation of $k$ hyperimmunities, there is some set~$Z$, some sequence $(h_i : i < k)$ of $Z$-hyperimmune functions and a $Z$-computable $\Qsf$-instance~$X$ such that for every $\Qsf$-solution~$Y$ to~$X$,
at least one $h_i$ is not $Y \oplus Z$-hyperimmune. Since $\Psf$ admits preservation of $k$ hyperimmunities, one can define an infinite sequence of sets $Z_0, Z_1, \dots$ such that
\begin{itemize}
    \item[(1)] for every $n \in \omega$ and every $X_0 \oplus \cdots \oplus X_n$-computable $\Psf$-instance~$I$,
    $Z_m$ is a $\Psf$-solution to~$I$ for some~$m \in \omega$.
    \item[(2)] for every~$i < k$ and $n \in \omega$, $h_i$ is $Z_0 \oplus \dots \oplus Z_n$-hyperimmune.
\end{itemize}
Let $\M$ be the $\omega$-structure whose second-order part is the Turing ideal $\I = \{ A : \exists n (A \leq_T Z_0 \oplus \cdots \oplus Z_n) \}$. Since $\I$ is a Turing ideal, then by Friedman~\cite{friedmanSystemsSecondOrder1975}, $\M \models \RCA_0$.
By (1), $\M \models \Psf$, and by (2), $\M \not \models \Qsf$.
\end{proof}

It follows in particular from \Cref{lem:pres-hyp-sep} that if $\Psf$ admits preservation of $k$ hyperimmunities, and
$\Psf$ implies $\Qsf$ over $\omega$-models, then so does~$\Qsf$.
The goal of this section is to prove the following theorem:

\begin{theorem}\label[theorem]{thm:ovw0-omega-hyp}
$\OVW^0$ admits preservation of $\omega$ hyperimmunities.
\end{theorem}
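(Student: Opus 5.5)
We will reuse the $\PP$-forcing of \Cref{sec:forcing-notion}, now relativized to $Z$, taking $\C$ to be a Scott class of sets~$X \geq_T Z$ that preserve the hyperimmunity of every $h_i$. Such a class exists because $\WKL$ admits preservation of $\omega$ hyperimmunities via the computably dominated basis theorem relativized (more precisely, hyperimmune-free-relative basis: every nonempty $\Pi^{0,X}_1$ class has a member $Y$ with every $X$-hyperimmune function $Y\oplus X$-hyperimmune). As before, it suffices to build an $\IOOVW^0$-solution $G$ such that each $h_i$ is $G \oplus Z$-hyperimmune, since the $\BSig_2$ lemma turns it into an $\OVW^0$-solution computable in $G\oplus f$. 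The construction takes a sufficiently generic decreasing sequence of $\PP^\C$-conditions. The stems form a finitely branching tree, and its paths are not determined by genericity alone. So we must ensure that \emph{every} path $G$ through the tree of stems satisfies the requirements. This is handled by $I$ being finite: we meet each requirement on all stems simultaneously, exactly as in \Cref{caOVW-lemextsummary}. Positive requirements are met by \Cref{caOVW-lem-positiveextrephrase}, whose proof only shrinks the reservoir within $\C$.

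The key density lemma is the following. Given a condition $(I,W)$, an index $e$ and $i\in\omega$, we want an extension $(\hat I,\hat W)$ such that for every $\hat v\in\hat I$, $(\hat v,\hat W)$ forces either that $\Phi_e^{G\oplus Z}$ is partial, or that there is some $x$ with $\Phi_e^{G\oplus Z}(x) < h_i(x)$. We define a $W\oplus Z$-computable partial function $g$ as follows. On input $x$, search for a level $n$ of $\T_{A,\ell\times I}$ and a finite set $F$ such that for every $v\in I$ and every $u$ at level $n$, some $\hat u\in\OSub^=(u)$ satisfies $\Phi_e^{(v\cdot W[\hat u])\oplus Z}(x)\downarrow\in F$. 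If such a pair is found, output $\max F$.

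There are two cases. If $g$ is total, then since $h_i$ is $W\oplus Z$-hyperimmune, there is some $x$ with $g(x)<h_i(x)$. We then run the construction of \Cref{caOVW-lemextSigma1} using the witnesses for that $x$, with $J=I$. The resulting $\hat I$ remains $f$-matching by the same Graham--Rothschild argument, and every stem forces $\Phi_e(x)\downarrow<h_i(x)$. If $g(x)$ is undefined for some $x$, we want to force divergence on at least one stem. Let $J$ be the set of stems $v$ for which, at every level, some node $u$ has no $\hat u\in\OSub^=(u)$ making $\Phi_e(x)$ converge. For each $v\in J$, a compactness argument as in \Cref{caOVW-lemextPi1} gives a $\Pi^{0,W\oplus Z}_1$ class of paths $U$. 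We choose $U\in\C$ in this class, so that $(v,W[U])$ forces $\Phi_e^{G\oplus Z}(x)\uparrow$. We iterate this over the stems of $J$, reusing the earlier reservoir each time.

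The main obstacle is this second case. The failure of $g(x)$ only says that no single level works uniformly for all stems. Divergence on one stem does not exclude convergence, with arbitrarily large values, on the other stems. The intended fix is to run the dichotomy stem by stem, as in \Cref{caOVW-lemextPi1-2}. First we force divergence, via the $\Pi^0_1$ extension, on every stem where the local question fails. The remaining stems $J'$ then each have some level where convergence is guaranteed. We modify $g$ to quantify only over $J'$ and to take the maximum level, which is possible since the Graham--Rothschild levels are nested via $\OSub^=$. This makes $g$ total, and we conclude as in the first case. The subtle point is that the reservoir changes during the first step, so $J'$ and $g$ depend on $x$. We resolve this by first choosing, for each of the finitely many subsets $J'\subseteq I$, whether the associated function is total. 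This is a finite case split over $2^{|I|}$ possibilities, handled by induction on $|I|$.
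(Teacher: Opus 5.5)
Your proposal is essentially the paper's proof: $\PP$-forcing with reservoirs from a hyperimmunity-preserving Scott class (the paper uses $\mathsf{HIF}$ in the unrelativized setting), stems killed by the $\Pi^0_1$-extension, the reservoir-computable bound function over the surviving stems diagonalized against $h$, and the $\Sigma^0_1$-extension with the killed stems in $I\setminus J$, together with the positive requirements and the $\BSig_2$ conversion.

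Your Case 2 bookkeeping is more convoluted than needed. As in the paper, repeat the $\Pi^0_1$-extension, relative to the current reservoir, on any live stem that has a negative answer for some input $n$. Forced divergence survives shrinking the reservoir, so each round permanently kills a stem. After at most $|I|$ rounds the bound function over the survivors is total. No advance case split over subsets $J'\subseteq I$ is needed, and it would not be well defined anyway, since that function depends on the current reservoir.
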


Before proving \Cref{thm:ovw0-omega-hyp}, we derive multiple consequences:
Miller and Solomon~\cite[Problem 1]{miller2004effectiveness} asked whether $\OVW^0$ implies~$\ACA_0$ over~$\RCA_0$.
We answer negatively:

\begin{repmaintheorem}{mthm:ovw0-rt22}
$\OVW^0$ implies neither $\RT^2_2$ nor $\ACA_0$ over $\RCA_0$.
\end{repmaintheorem}
\begin{proof}
By \Cref{thm:ovw0-omega-hyp}, $\OVW^0$ admits preservation of $\omega$ (and a fortiori 2) hyperimmunities,
while by \Cref{lem:rt22-not-2-hyp}, $\RT^2_2$ does not. By \Cref{lem:pres-hyp-sep}, it follows that $\RCA_0 + \OVW^0 \nvdash \RT^2_2$. Since $\ACA_0 \vdash \RT^2_2$ (see Simpson~\cite[Lemma III.7.4]{simpson2009subsystems}), then $\RCA_0 + \OVW^0 \nvdash \RT^2_2$.
\end{proof}

Weaker consequences of $\RT^2_2$ are known not to admit preservation of 2 hyperimmunities.
For instance, the \emph{Ascending Descending Sequence} for linear orders of order type $\omega + \omega^*$ ($\SADS$)
states that every such linear orders admit an infinite ascending or descending sequence (see Hirschfeldt and Shore~\cite{hirschfeldt2007combinatorial}).

\begin{lemma}[\cite{hirschfeldt2009atomic}]\label[lemma]{lem:sads-not-2-hyp}
$\SADS$ does not admit preservation of 2 hyperimmunities.
\end{lemma}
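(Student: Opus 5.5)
The plan is to copy the proof of \Cref{lem:rt22-not-2-hyp}: build a computable instance of $\SADS$ whose two halves are sparse in the sense of hyperimmunity, so that any solution computes a function dominating one of two fixed hyperimmune functions. By \cite[Theorem 5.2]{jockusch1968semirecursive}, take a $\Delta^0_2$ set $A$ such that both principal functions $p_A$ and $p_{\overline{A}}$ are hyperimmune. In particular, both $A$ and $\overline{A}$ are infinite. By Shoenfield's limit lemma, fix a computable approximation $A_s$ with $\lim_s A_s(x) = A(x)$.

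Next I will build a computable linear order $L$ on $\NN$ of type $\omega + \omega^*$ such that the $\omega$-part is exactly $A$ and the $\omega^*$-part is exactly $\overline{A}$. The standard construction from Hirschfeldt--Shore works as follows. At stage $s$, insert the new element $s$ (and possibly re-place older ones) so that every $x$ with $A_s(x)=1$ lies in the ascending part and every $x$ with $A_s(x)=0$ lies in the descending part. Since the order must be computable, placement cannot literally be revised. The usual trick is to decide at stage $s$ the relative position of $s$ with respect to all $y<s$ using the current guesses $A_s(y)$ and $A_s(s)$: put $s$ above all $y<s$ guessed in $A$ and below all guessed in $\overline{A}$, with a suitable tie-breaking rule. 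The order between the two blocks is determined by comparing at the later stage.

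I expect this step to be the main obstacle: checking that the resulting relation is a genuine transitive linear order whose initial $\omega$-segment is $A$ and whose $\omega^*$-tail is $\overline{A}$. If this direct construction proves too delicate, the fallback is to cite the existing proof in \cite{hirschfeldt2009atomic}, or the fact that every $\Delta^0_2$ set whose complement is also infinite is the $\omega$-part of some computable $\omega+\omega^*$ order (Hirschfeldt--Shore, stable ADS). The sets $A$ produced by Jockusch's construction can be taken to satisfy this.

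Finally, let $S$ be any solution. If $S$ is an infinite ascending sequence, then $S \subseteq A$ up to finitely many elements, because an ascending sequence in an $\omega+\omega^*$ order can contain only finitely many elements of the $\omega^*$-part. Since $L$ is computable, $S$ computes an infinite subset of $A$, hence a function dominating $p_A$. Symmetrically, a descending sequence computes a function dominating $p_{\overline{A}}$. Taking $Z=\emptyset$ and $h_0 = p_A$, $h_1 = p_{\overline{A}}$, no solution preserves both hyperimmunities, which proves the lemma.
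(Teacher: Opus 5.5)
\textbf{There is a genuine gap in your second step.} You flag it as the main obstacle, but it is worse than delicate: a prescribed $\Delta^0_2$ set $A$ in general cannot be the $\omega$-part of a computable order of type $\omega+\omega^*$.

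\emph{Your construction does not work.} In such an order the $\omega$-part is an initial segment, so every element of $A$ must lie $<_L$-below every element of $\overline{A}$. In your construction, however, the relative position of $s$ and each $y<s$ is fixed forever at stage $s$ using the guesses $A_s(y)$ and $A_s(s)$. Any wrong guess therefore creates a permanent inversion that no later stage can repair. Once guesses have changed, the rule ``above all $y$ guessed in $A$, below all $y$ guessed in $\overline{A}$'' need not even be satisfiable.

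\emph{The fallback is also false.} Suppose every bi-infinite $\Delta^0_2$ set were such an $\omega$-part. An infinite ascending (descending) sequence lies entirely inside the $\omega$-part ($\omega^*$-part), so every solution to that $\SADS$-instance would be an infinite subset of $A$ or of $\overline{A}$. But Hirschfeldt and Shore showed that every computable instance of $\SADS$ has a low solution. Downey, Hirschfeldt, Lempp and Solomon built a $\Delta^0_2$ set such that neither it nor its complement has an infinite low subset. So your claim that Jockusch's set ``can be taken to satisfy this'' is not a side remark: it is the whole content of the lemma, and it is left unproved.

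\textbf{The fix is to reverse the direction of the construction, as the paper does.} Start from the Tennenbaum--Denisov computable order $L$ of type $\omega+\omega^*$ with no computable infinite ascending or descending sequence. Let $U$ and $V$ be its two parts, and prove directly that $p_U$ and $p_V$ are hyperimmune.

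Suppose a computable $g$ dominated $p_U$. Then $[0,g(n)]$ contains at least $n+1$ elements of $U$. Since $U$ is an $L$-initial segment, the $n+1$ $L$-least elements of $[0,g(n)]$ all lie in $U$. Let $y_n$ be the $L$-largest of them; it lies in $U$ and has at least $n$ $L$-predecessors. Now build a sequence computably: set $z_0=y_0$, and let $z_{k+1}$ be the first $y_n$ found with $y_n>_L z_k$. Such an $n$ exists because $z_k\in U$ has only finitely many $L$-predecessors. This gives a computable infinite ascending sequence, a contradiction. The argument for $V$ is symmetric, using the $L$-largest elements of $[0,g(n)]$.

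Your final paragraph then applies verbatim with $h_0=p_U$ and $h_1=p_V$.
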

\begin{proof}
Tennenbaum (see Rosenstein~\cite{rosenstein1982linear}) and Denisov (see Goncharov and Nurtazin~\cite{goncharov1973constructive}) constructed a computable linear ordering $\L = (\omega, <_\L)$ of order type $\omega+\omega^*$ with no computable infinite ascending or descending sequence. Let $U$ and $V$ be the $\omega$ and $\omega^*$ parts, respectively. By assumption, $U \sqcup V = \omega$. We claim that their principal functions $p_U$ and $p_V$ are hyperimmune (see \Cref{lem:rt22-not-2-hyp} for the notion of principal function). Indeed, suppose that a computable function $f : \omega \to \omega$ dominates $p_U$. Then, one can compute an infinite $\L$-ascending sequence by letting $x_0$ be the $<_\L$-least element $<_\omega$-smaller than $f(0)$, and having defined $x_n$, letting $x_{n+1} >_\L x_n$ be the $<_\L$-least element $<_\omega$-smaller than~$f(n+1)$. Similarly, if $f$ dominates $p_V$, one can compute an infinite $\L$-descending sequence. On the other hand, any infinite ascending or descending sequence $H$ is a subset of~$U$ or $V$, in which case $p_H$ dominates $p_U$ or $p_V$.
\end{proof}

\begin{corollary}
$\OVW^0$ does not imply $\SADS$ over $\RCA_0$.
\end{corollary}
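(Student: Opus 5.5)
This corollary follows from the preservation machinery, in the same way as \Cref{mthm:ovw0-rt22} with $\SADS$ in place of $\RT^2_2$. By \Cref{thm:ovw0-omega-hyp}, $\OVW^0$ admits preservation of $\omega$ hyperimmunities. In particular it admits preservation of $2$ hyperimmunities: given $Z$-hyperimmune functions $h_0, h_1$, pad them to an $\omega$-sequence by repeating $h_1$. By \Cref{lem:sads-not-2-hyp}, $\SADS$ does not admit preservation of $2$ hyperimmunities.

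We then apply \Cref{lem:pres-hyp-sep} with $k = 2$, $\Psf = \OVW^0$ and $\Qsf = \SADS$. This yields an $\omega$-model $\M$ of $\RCA_0 + \OVW^0$ in which $\SADS$ fails. Concretely, $\M$ contains the computable linear order $\L$ of type $\omega+\omega^*$ from the proof of \Cref{lem:sads-not-2-hyp}, but no infinite ascending or descending sequence for $\L$. Indeed, any such sequence $H \in \M$ would make one of $p_U, p_V$ dominated by a function computable in some $Z_0\oplus\cdots\oplus Z_n$, contradicting item (2) of the construction. By soundness, $\RCA_0 + \OVW^0 \nvdash \SADS$.

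The only point requiring care is the reduction from $\omega$ to $2$ hyperimmunities. The definition of preservation of $k$ hyperimmunities quantifies over all length-$k$ sequences of hyperimmune functions, so a length-$2$ sequence is handled by duplicating entries. All of the real work lies in \Cref{thm:ovw0-omega-hyp}, which is taken as given here, so I expect no genuine obstacle in this step.
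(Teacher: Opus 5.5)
Your proposal is correct and matches the paper's proof, which derives the corollary immediately from \Cref{thm:ovw0-omega-hyp}, \Cref{lem:sads-not-2-hyp} and \Cref{lem:pres-hyp-sep} (with $k=2$), exactly as you do. Your padding remark, which passes from preservation of $\omega$ hyperimmunities to preservation of $2$, is the step the paper leaves implicit.
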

\begin{proof}
Immediate by \Cref{thm:ovw0-omega-hyp}, \Cref{lem:sads-not-2-hyp} and \Cref{lem:pres-hyp-sep}.
\end{proof}

We now turn to the proof of \Cref{thm:ovw0-omega-hyp}.
Again, we will prove the theorem in its unrelativized form. Fix a  computable coloring $f : A^{<\omega} \to \ell$ and a countable sequence $\vec{h} = (h_i)_{i \in \NN}$ of hyperimmune functions.

A set $X$ is of \emph{computably dominated} (or \emph{hyperimmune-free}) degree if it does not compute any hyperimmune function. In other words, $X$ is of computably dominated degree if every $X$-computable function is dominated by a computable function. Note that any hyperimmune function is $X$-hyperimmune for every set~$X$ of computably dominated degree.
By the computably dominated basis theorem~\cite[Theorem 2.4]{jockusch1972pi}, the class $\mathsf{HIF}$ of all computably dominated degrees is a Scott class.
Recall that a $\PP^{\mathsf{HIF}}$-condition
is a $\PP$-condition\ $(v,W)$ such that $W$ is of hyperimmune-free degree.

\subsection{Diagonalization requirements}\label[section]{sec:ovw0-omega-hyp-diag}

In order to force preservation of $\omega$ hyperimmunities, one needs to satisfy the following requirements, for every hyperimmune function $h \in \vec{h}$ and Turing functional~$\Psi$:
\begin{quote}
$\R_\Psi^h$: Either $\Psi^G$ is non-total or $\Psi^G $ does not dominate $h$.
\end{quote}

The following density lemma shows how to satisfy the $\R_\Psi^h$-requirements.
Since $\mathsf{HIF}$ forms a Scott class, all the abstract lemmas from \Cref{sec:ovw0-forcing-first-jump} and \Cref{sec:ovw0-forcing-positive} hold when restricting to $\PP^{\mathsf{HIF}}$-conditions.

\begin{lemma}[Diagonalization requirement]
\label[lemma]{lem:ovw0-omega-hyp-diag}
Let $h : \NN \to \NN$ be a hyperimmune function, $(I, W)$ be a $\PP^{\mathsf{HIF}}$-condition
and $\Psi$ be a Turing functional. There is a $\PP^{\mathsf{HIF}}$-condition $(\h I,\h W)$ of $ (I, W)$ such that
for every $v \in \h I$,
\begin{enumerate}[label=(\arabic*)]
\item either $(v, \h W)\forces \Psi^G(n)\uparrow$ for some~$n \in \NN$;
\item or $(v, \h W)\forces \Psi^G(n) < h(n)$ for some~$n \in \NN$.
\end{enumerate}
\end{lemma}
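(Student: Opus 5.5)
Fix $(I,W)$, $h$ and $\Psi$. The plan is to mimic the $\Sigma^0_1$/$\Pi^0_1$ decision from \Cref{sec:ovw0-forcing-first-jump}, but with a $W$-computable function replacing the $W'$-decidable forcing question. Since $W$ has computably dominated degree, hyperimmunity of $h$ relative to computable functions transfers to $W$-computable functions.

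First, for each $v \in I$ and $n \in \NN$, consider the $\Sigma^0_1(W)$ statement $(I,W)\qvdash_v \Psi^G(n)\downarrow$. Concretely, there is a level $m$ of $\T_{A,\ell\times I}$ such that for every $u$ at level $m$ there is $\hat u \in \OSub^=(u)$ with $\Psi^{v\cdot W[\hat u]}(n)\downarrow$. There are two cases.

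\textbf{Case 1:} there is some $v \in I$ and some $n$ with $(I,W)\nqvdash_v \Psi^G(n)\downarrow$. Apply \Cref{caOVW-lemextPi1}, within the Scott class $\mathsf{HIF}$ (using the computably dominated basis theorem instead of the low basis theorem), to get $(I,\hat W)$ with $(v,\hat W)\Vdash \Psi^G(n)\uparrow$. Iterating over stems, we reach a condition $(I,U)$ and $J\subseteq I$ as in \Cref{caOVW-lemextPi1-2}. Every $v \in I\setminus J$ satisfies item (1). Every $v \in J$ satisfies $(I,U)\qvdash_v \Psi^G(n)\downarrow$ for all $n$; note that the finitely many applications can be arranged, and $J$ depends on the choice, but this is only a finite search.

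\textbf{Case 2:} the stems in $J$ now totally answer yes. Define a $U$-computable function $g$ as follows. Given $n$, search for a level $m$ witnessing $(I,U)\qvdash_v\Psi^G(n)\downarrow$ simultaneously for all $v\in J$ (take the maximum level, using that a witness at one level propagates upward via prefixes in the Graham--Rothschild tree, or simply search for a common level). Let $g(n)$ be the maximum of all values $\Psi^{v\cdot U[\hat u]}(n)$ over $v\in J$, all $u$ at that level, and the chosen witnesses $\hat u$. Since $U$ is computably dominated, $g$ is dominated by a computable function, and as $h$ is hyperimmune there is some $n$ with $g(n)<h(n)$. Then apply \Cref{caOVW-lemextSigma1} to the $\Sigma^0_1$ formula $\varphi(G)\equiv \Psi^G(n)\downarrow<h(n)$; here $h(n)$ is a fixed constant, so this is a genuine $\Sigma^0_1$ formula. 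Its forcing question holds for every $v\in J$ at the same level, by the choice of $n$, because every witness $\hat u$ produces an output at most $g(n)<h(n)$. This yields $(\hat I,\hat W)$ with $\hat W$ a tail of $U$, hence still in $\mathsf{HIF}$. Stems extending $J$ force (2), and stems extending $I\setminus J$ keep forcing (1), since forcing is preserved under extension.

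The main obstacle I expect is the upward propagation of the forcing question across levels, together with ensuring that a single level works for all $v\in J$ and all $u$, so that $g$ is well defined and total. If propagation is not literally monotone, I will instead define $g(n)$ using the least level working for each $v$ separately, and modify \Cref{caOVW-lemextSigma1} to allow different levels per stem by padding with arbitrary words. This mirrors how stems in $I\setminus J$ are already padded there, which keeps the lengths homogeneous and the condition $f$-matching.
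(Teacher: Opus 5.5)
Your proposal is correct and follows the paper's proof. You apply the $\Pi^0_1$-extension iteratively to obtain $(I,U)$ and $J$. You then use that the bounds (your $g(n)$, the paper's $b_n$) are $U$-computable while $U$ is computably dominated, which yields some $n$ with $g(n)<h(n)$, and you finish with the $\Sigma^0_1$-extension. Your concern about a common level goes away because the forcing question is upward-monotone in the level: a witness $\hat u$ for $u'\uh_{t_m}$ extends to a witness for any level-$(m+1)$ node $u'$, since $\psi$ is monotone. So taking the maximum level over $v\in J$ works, and the padding fallback is unnecessary.
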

\begin{proof}
By \Cref{caOVW-lemextPi1-2}, there is a $\PP^{\mathsf{HIF}}$-condition $(I,U)\leq (I,W)$ and $J\subseteq I$
such that
\begin{enumerate}[label=(\arabic*)]
\item for every $v\in J$ and every $n\in\NN$,
$(I,U)\potentiallyforces_v \Psi^G(n)\downarrow$;
\item for every $v\in I\setminus J$, there is some $n\in\NN$ such that
$(v,U)\forces \Psi^G(n)\uparrow$.
\end{enumerate}
Note that, unfolding the definition of the forcing question, by item (1), for every $n\in\NN$,
there exists $b_n\in\NN$ such that
\begin{align}
\text{for every $v\in J$,
$(I,U)\potentiallyforces_v \Psi^G(n)\downarrow<b_n$.}
\end{align}
Moreover, since the forcing question is $\Sigma^0_1(U)$, the sequence $(b_n)_{n\in\NN}$ can be chosen to be
$U$-computable. Since $U$ is of hyperimmune-free degree and $h$ is hyperimmune, $h$ is $U$-hyperimmune, so
there is some $n\in\NN$ such that $b_n<h(n)$.
By \Cref{caOVW-lemextSigma1},
there is a $\PP^{\mathsf{HIF}}$-condition
$(\h I,\h W)\leq (I,U)$ such that
\begin{align}\nonumber
\text{for every $\h v\in \h I$ extending some stem in $ J$,
$(\h v,\h W)\forces \Psi^G(n)\downarrow<b_n$.}
\end{align}
This completes the proof of \Cref{lem:ovw0-omega-hyp-diag}.
\end{proof}

We are now ready to prove \Cref{thm:ovw0-omega-hyp}.

\subsection{Proof of \Cref{thm:ovw0-omega-hyp}}

Let $f : A^{<\omega} \to \ell$  be a computable coloring, for a finite alphabet~$A$ and some~$\ell \geq 1$.
Let $\vec{h} = (h_i)_{i \in \NN}$ be a countable sequence of hyperimmune functions.
Let 
$$
(I_0,W_0)\geq (I_1,W_1)\geq\cdots
$$
be a sufficiently generic decreasing sequence of $\PP^{\mathsf{HIF}}$-conditions.
By \Cref{lem:ovw0-omega-hyp-diag}, for every $h \in \vec{h}$
 and every Turing functional $\Psi$, there is some $s\in\NN$
 such that for every $v\in I_s$,
\begin{enumerate}[label=(\arabic*)]
\item either $(v,W_s)\forces\Psi^G(n)\uparrow$ for some $n$;
\item or $(v,W_s)\forces\Psi^G(n)\downarrow<h(n)$ for some $n$.
\end{enumerate}
As in the proof of \Cref{thm:ovw0-weakly-low}, let $G\in \cap_s [I_s,W_s]$.
By \Cref{caOVW-lem-positiveext}, $G[A^n]$ is $f$-monochromatic for infinitely many $n$.
By items (1,2), $h$ is $G$-hyperimmune for each $h \in \vec{h}$.
This completes the proof of \Cref{thm:ovw0-omega-hyp}.

\section{Strong weakness of \wHJ\ }\label[section]{scaOVW-subsec1}

Preservation of 1 hyperimmunity shows the inability, for a $\Pi^1_2$-problem~$\Psf$, to encode in its computable instances sufficiently fast-growing functions that dominate some fixed hyperimmune function. There are some $\Pi^1_2$-problems that are so weak that even arbitrary instances have this inability. This yields the notion of strong preservation of 1 hyperimmunity.

\begin{definition}
Let $k \leq \omega$.
A $\Pi^1_2$-problem~$\Psf$ admits \emph{strong preservation of $k$ hyperimmunities} if for every set~$Z$, every sequence $\langle h_i : i < k \rangle$ of $Z$-hyperimmune functions and every \emph{arbitrary} $\Psf$-instance~$X$, there is a $\Psf$-solution~$Y$ such that for every~$i < k$, $h_i$ is $Y \oplus Z$-hyperimmune.
\end{definition}

The equivalence from Downey et al~\cite{downey2022relationships} between preservation of 1 hyperimmunity and cone avoidance extends to their strong counterparts. The most famous example of $\Pi^1_2$-problem admitting strong cone avoidance is Ramsey's theorem for singletons ($\RT^1$), also known as the infinite pigeonhole principle (see Dzhafarov and Jockusch~\cite{Dzhafarov2009Ramseys}). For many hierarchies of partition theorems $(\Psf^n)_{n \in \NN}$, there exists a deep link between strong preservation of properties for $\Psf^n$ and preservation of properties for $\Psf^{n+1}$. This will be exemplified in \Cref{sec:weakness-csl1} to show that $\CSL^1$ admits preservation of 1 hyperimmunity, and therefore does not imply $\ACA_0$ over~$\RCA_0$.

It is natural to ask whether $\OVW^0$ admits strong preservation of 1 hyperimmunity. While it is the case, the proof happens to be surprisingly intricate, and will be decomposed into two important steps. In this section, we will prove that an infinite version of the Hales-Jewett theorem admits strong preservation of $\omega$ hyperimmunities. Then, in \Cref{scaOVW-sec3}, this fact will be used to prove that $\OVW^0$ admits strong preservation of 1 hyperimmunity.

\subsection{Infinite Hales-Jewett theorem}

A \emph{left-variable word} over an alphabet~$A$ is a 1-variable word~$v$ over~$A$ such that $v(0)$ is the first occurrence of the unique variable kind. $\OVW^0$ can be reformulated as a theorem about left-variable words~\cite{hindman2004partition} as follows:

\begin{theorem}[Ordered Variable Word theorem, rephrased]
Fix an alphabet~$A$ and some~$\ell \in \NN$. For every $\ell$-coloring of $A^{<\omega}$, there exists an infinite sequence $(v_n)_{n \in \NN}$ of 1-variable words over~$A$ such that for every~$n > 0$, $v_n$ is a left-variable word,
and the following set is monochromatic:
$$\{ v_0[a_0] \cdot v_1[a_1] \cdots v_s[a_s] : s \in \NN, a_0, \dots, a_s \in A \}
$$
\end{theorem}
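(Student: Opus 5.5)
The plan is to show that this rephrased statement is equivalent to $\OVW^0$ as stated in the introduction, by translating between ordered $\omega$-variable words and sequences of 1-variable words whose later members are left-variable words. Given an ordered $\omega$-variable word $W$ over $A$, the variable kinds appear in disjoint consecutive blocks, since the last occurrence of $x_i$ precedes the first occurrence of $x_{i+1}$. Accordingly, cut $W$ at the first occurrence of each variable kind:
\begin{itemize}
    \item $v_0$ is the segment from position $0$ up to (but excluding) the first occurrence of $x_1$;
    \item for $n>0$, $v_n$ is the segment starting at the first occurrence of $x_n$ and ending just before the first occurrence of $x_{n+1}$.
\end{itemize}
Each $v_n$ contains only the variable $x_n$, renamed to a single variable. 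Hence $v_0$ is a 1-variable word, since it contains $x_0$, and for $n>0$ each $v_n$ is a left-variable word, since its first letter is $x_n$.

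Next I check that the generated sets coincide. For $u = a_0\cdots a_{s}\in A^{s+1}$, the word $W[u]$ substitutes $a_i$ for $x_i$ for $i\le s$ and is cut before the first occurrence of $x_{s+1}$. This gives exactly $v_0[a_0]\cdots v_s[a_s]$. The empty word $u=\epsilon$ gives $W[\epsilon]=v_0[\epsilon]$, the prefix before $x_0$. This word is not of the displayed form, but it is harmless: a monochromatic $\WSub^{\star}(W)$ gives monochromaticity of the displayed set directly, so $\OVW^0$ implies the rephrased statement.

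For the converse, given a sequence $(v_n)$ as in the rephrased statement, the concatenation $W=v_0[x_0]\cdot v_1[x_1]\cdots$ is an ordered $\omega$-variable word. I must also handle the length-$0$ substitution $W[\epsilon]$. My first attempt is to reorganize: merge $v_0$ and $v_1$ differently, or drop $v_0$'s variable by substituting a fixed letter. Take $W' = v_0[a]\cdot v_1[x_0]\cdot v_2[x_1]\cdots$ for a fixed $a\in A$. Then $W'[\epsilon]=v_0[a]$, since $v_1$ starts with its variable, and $W'[a_1\cdots a_s]=v_0[a]v_1[a_1]\cdots v_s[a_s]$. All of these lie in the monochromatic set, so $\WSub^{\star}(W')$ is monochromatic. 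This is where the left-variable condition is essential.

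The only delicate point, and the one I expect to need care, is this bookkeeping at the empty substitution and the off-by-one between the number of letters substituted and the number of blocks. The trick of fixing $v_0$'s variable and using that $v_1$ begins with a variable resolves it. Once both directions are in place, the statement follows from the Ordered Variable Word theorem of Carlson and Simpson, proven over $\ACA_0$ in \Cref{sec:towsner-ovw0}.
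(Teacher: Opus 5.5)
Your proof is correct and matches the paper's intended approach. The paper gives no separate argument and only presents the statement as a known reformulation of $\OVW^0$; cutting an $\OVW^0$-solution $W$ at the first occurrence of each variable kind makes the displayed set exactly $\{W[u] : u \in A^{<\omega}, |u| \geq 1\} \subseteq \WSub^{\star}(W)$, which is that reformulation. Only this direction is needed for the statement; your converse, which uses $v_0[a]$ and the left-variable property of $v_1$ to handle $W'[\epsilon]$, is a correct extra.
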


Under this formulation, it is a natural strengthening the infinite version of the Hales-Jewett theorem where none of $v_n$ are required to be left-variable words. We shall actually consider an even weaker statement, which can be thought of as a Hales-Jewett counterpart of $\IOOVW$:

\begin{definition}
Let $\IOIHJ$ be the $\Pi^1_2$-problem whose instances are colorings $f : A^{<\omega} \to \ell$ for a finite alphabet~$A$ and some~$\ell \geq 1$. A \emph{$\IOIHJ^0$-solution} to~$f$ is an ordered $\omega$-variable word~$W$ such that for infinitely many $n\in\NN$ in between variable kinds of~$W$, $\WSub^{=}(W \uh_n)$ is $f$-monochromatic.
\end{definition}

Here, an integer~$n \in \NN$ is \emph{in between variable kinds of~$W$} if no variable kind occurs both in ~$W \uh_n$ and in $W \uh_n^\omega$. Note that an $\IOIHJ^0$-solution gives no information about the positions of monochromaticity.
The remainder of this section is devoted to the proof of the following theorem:

\begin{theorem}\label[theorem]{scaOVW-prop3}
$\IOIHJ$ admits strong preservation of $\omega$ hyperimmunities.
\end{theorem}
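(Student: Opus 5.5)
We will prove the unrelativized statement: fix an arbitrary (non-effective) coloring $f : A^{<\omega} \to \ell$ and a sequence $\vec h = (h_i)_{i\in\NN}$ of hyperimmune functions. We build an $\IOIHJ$-solution $G$ such that each $h_i$ is $G$-hyperimmune. Since $f$ is arbitrary, we cannot put~$f$ into the reservoirs or into any forcing question. Instead we use a Mathias-style forcing whose conditions are pairs $(I, W)$, exactly as in \Cref{sec:forcing-notion}, with $W$ of computably dominated degree. We replace $f$-matching by a \emph{largeness} requirement that does not mention~$f$: every level of the Graham-Rothschild tree $\T_{A,\ell\times I}$ lies inside $I$'s combinatorial span. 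Concretely, we take conditions to be $(n, \sigma, W)$ where $\sigma$ is a finite ordered variable word with $n$ positions reserved at the end, and $W$ is a computably dominated ordered $\omega$-variable word. The key invariant is that for every coloring, in particular for~$f$, some future extension has a monochromatic $\WSub^=$ level. This invariant follows from the Hales-Jewett theorem (\Cref{thm:hales-jewett}) with its primitive recursive bound. Indeed, any block of $HJ(A,\ell)$ variable kinds of $W$ contains a 1-variable subword $v$ with $\WSub^=(\sigma\cdot v)$ $f$-monochromatic, and this holds regardless of the complexity of~$f$.

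\textbf{Positive requirements.} Given $\sigma$ with $\WSub^=(\sigma)$ $f$-monochromatic, use the universality of the Graham-Rothschild tree (the Remark after \Cref{lem:graham-rothschild-tree}). For every~$n$, some level-$n$ node $w$ makes $\WSub^=(\sigma\cdot W[w])$ $f$-monochromatic. Choosing such a node is non-effective in~$f$, but it is only a finite choice. Hence the reservoir stays unchanged, and extending the stem this way costs nothing computationally. This gives infinitely many monochromatic cut points.

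\textbf{Diagonalization.} For a functional $\Psi$ and $h\in\vec h$, ask the $W$-$\Sigma^0_1$ question: is there, for each~$x$, a level $n$ such that for \emph{every} node $u$ at level~$n$ of $\T_{A,\ell}$ (with $\T$ taken large enough to absorb all $\ell^{\card A^{t}}$ possible colorings of the relevant prefixes) some $\hat u\in\OSub^=(u)$ with $\Psi^{\sigma\cdot W[\hat u]}(x)\downarrow$? The quantification over all nodes plays the role that $f$ played in \Cref{caOVW-def-forcingquestion}.

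If the answer is yes for all~$x$, the bounds $b_x$ are $W$-computable. Since $W$ is computably dominated, some $x$ has $b_x<h(x)$. We then extend the stem through a level-$n$ node chosen by the universality property to be $f$-monochromatic, and pick its witness $\hat u$. The output is below $h(x)$ and monochromaticity is kept.

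If the answer fails at some~$x$, the class of paths $U$ through $\T$ along which $\Psi^{\sigma\cdot W[\hat u]}(x)$ never converges is a nonempty $\Pi^0_1(W)$ class, by compactness. By the computably dominated basis theorem we pick $U$ with $W[U]$ computably dominated. Since we only shrank the reservoir, the Hales-Jewett largeness invariant remains.

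\textbf{Main obstacle.} The hard step is making the positive extension compatible with the forcing question. The forcing question must quantify over all nodes of a level so that whichever node $f$ selects is covered. Yet a witness $\hat u\in\OSub^=(u)$ may have fewer variables, which breaks the pattern required for a later $\WSub^=$-monochromatic cut. I would first try to build the witness as $\hat u$ followed by a fresh HJ block. I would also code tuples (node, color) into a product coloring as in \Cref{lem:graham-rothschild-tree}, using $\ell\times\card(\text{stems})$ colors, so that a single tree level handles all stems simultaneously.
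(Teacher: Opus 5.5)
Your proposal has a genuine gap at its foundation: the $f$-free ``largeness'' invariant for a single stem $\sigma$ is false. The set $\WSub^=(\sigma\cdot v)$ ranges over all substitutions into the variables of $\sigma$ as well. Hales--Jewett, even applied to the product coloring $y\mapsto (f(\sigma[u]\cdot y))_{u}$, only makes the color independent of the letter plugged into $v$, not of the substitution $u$ into $\sigma$.

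Here is a concrete counterexample. Take $A=\{0,1\}$, let $f(z)=z(0)$ if $|z|\geq 2$ and $f(z)=0$ otherwise, and let $\sigma=x_0$. Then $\WSub^=(\sigma)$ is $f$-monochromatic. Yet for every nonempty $w$, the set $\WSub^=(\sigma\cdot w)$ contains words beginning with $0$ and words beginning with $1$, so no extension of $\sigma$ ever has another monochromatic cut.

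This refutes both your Hales--Jewett invariant and your positive-requirement step. The universality property of $\mathcal{T}_{A,\ell}$ handles colorings of the block only, not all substitutions into $\sigma$. More generally, whether a committed stem stays viable depends on $f$. Viability need not survive shrinking the reservoir along a $\Pi^{0,W}_1$ class chosen without regard to $f$. Nor need it survive extending the stem by a witness $\hat u$ chosen only to make $\Psi$ converge. This is exactly the ``main obstacle'' you flag but do not resolve.

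The missing idea is that the invariant is allowed to mention $f$: only the reservoirs and the forcing question must avoid $f$. The paper keeps the conditions $(I,W)$ of \Cref{sec:forcing-notion} verbatim, with a finite set $I$ of hedged stems and the $f$-matching property. Matching is preserved under arbitrary shrinking of the reservoir, since $\WSub^{\star}(W[U])\subseteq\WSub^{\star}(W)$.

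The forcing question and the $\Pi^0_1$ class use only $\mathcal{T}_{A,\ell\times I}$, which depends on $A$, $\ell$ and $\card I$ but not on $f$. The $\Sigma^0_1$-extension (\Cref{caOVW-lemextSigma1}) extends each $v\in J$ by the witnesses for \emph{all} nodes of the level, and each $v\notin J$ by all of $A^{t_n}$. It takes a tail of $W$ as the new reservoir. The coloring $f$ enters, through $\bar f$ and \Cref{lem:graham-rothschild-tree}, only in verifying that matching survives.

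Your worry that witnesses lose variables is harmless there. Indeed, $\hat u\in\OSub^=(u)$ gives $\WSub^=(v\cdot W[\hat u]\cdot w)\subseteq\WSub^=(v\cdot W[u]\cdot w)$.

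Monochromatic cuts are then forced separately by \Cref{scaOVW-lemext-P}. For a stem $v$, there are two cases. If some $w\in\WSub^{\star}(W)$ makes $\WSub^=(v\cdot w)$ $f$-monochromatic, one appends that same $w$ to every stem and uses the reservoir $W\uh_{|w|}^\infty$. If no such $w$ exists, $v$ can be deleted without breaking matching. This pruning of dead stems is what handles examples like the one above, and a single-stem construction cannot do it.
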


The study of this seemingly artificial statement is justified by its use in the proof of strong preservation of 1 hyperimmunity of $\OVW^0$ in \Cref{scaOVW-sec3}.

\subsection{Notion of forcing}

Fix a finite alphabet~$A$ and an arbitrary coloring $f : A^{<\omega} \to \ell$ for some number of colors $\ell \geq 1$. We shall use the same notion of forcing as in \Cref{sec:preservation-hyp}, that is, we shall work with $\PP^{\mathsf{HIF}}$-conditions, where $\mathsf{HIF}$ denotes the class of all sets of hyperimmune-free degree.

The only difficulty comes from the fact that, the coloring~$f$ being arbitrary, it might be of hyperimmune degree, and therefore too complex to be \qt{directly accessed}. Thankfully, the core lemmas about first-jump control rely on the so-called Graham-Rotschild tree (see \Cref{sec:towsner-ovw0}), whose combinatorial features are universal, in the sense that they hold for every coloring, and therefore do not need to access~$f$ in particular. It follows that \Cref{caOVW-lemextPi1-2} and \Cref{caOVW-lemextSigma1} about the forcing question still hold, except for the \qt{Moreover} part about uniformity. The main diagonalization lemma of \Cref{sec:preservation-hyp} uses abstractly the lemmas about the forcing question, so it also holds:

\begin{replemma}[Diagonalization requirement]{lem:ovw0-omega-hyp-diag}
Let $h : \NN \to \NN$ be a hyperimmune function, $(I, W)$ be a $\PP^{\mathsf{HIF}}$-condition
and $\Psi$ be a Turing functional. There is an extension $(\h I,\h W)$ of $ (I, W)$ such that
for every $v \in \h I$,
\begin{enumerate}[label=(\arabic*)]
\item either $(v, \h W)\forces \Psi^G(n)\uparrow$ for some~$n \in \NN$;
\item or $(v, \h W)\forces \Psi^G(n) < h(n)$ for some~$n \in \NN$.
\end{enumerate}
\end{replemma}

\subsection{Positive requirements}

 For positive requirements, the formula $\varphi_t$ being $\Sigma_1^{0,f}$, it is no longer $\Sigma^0_1$ since $f$ is not computable, so the $\Pi_1^0$ extension \Cref{caOVW-lemextPi1} and \Cref{caOVW-lem-positiveext} do not hold. 
 We therefore split $\varphi_t$ into two formulas, one focusing on adding more variable kinds, and the 
 other on adding more monochromatic levels.
 For the ``more variable" part, it is still the same as \Cref{caOVW-lem-positiveext}
 (since the ``more variable" predicate is still $\Sigma_1^0$).
 
 \begin{lemma}[Positive requirement: variable]
 \label[lemma]{scaOVW-lemext-Pvariable}
  Let $(I,W)$ be a $\PP^{\mathsf{HIF}}$-condition.
 There exists a $\PP^{\mathsf{HIF}}$-condition
$(\h I,\h W)\leq (I,W)$ such that
 for every $\h v\in \h I$ and $v\in I$ with
$\h v\succeq v$, $\h v$ contains more
variable kinds than $v$.
 \end{lemma}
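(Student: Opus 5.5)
We mimic the proof of \Cref{caOVW-lem-positiveextrephrase}, but replace $\varphi_{t(I)}$ by the purely combinatorial $\Sigma^0_1$-formula
$$\psi_{t}(G):\ \text{$G$ contains a variable kind whose first occurrence is at a position} \geq t,$$
with $t = t(I)$. This formula does not mention~$f$, so the forcing question $(I,W)\qvdash_v \psi_t(G)$ is $\Sigma^0_1(W)$. The non-uniform versions of \Cref{caOVW-lemextPi1-2} and \Cref{caOVW-lemextSigma1} apply to it; they remain valid for arbitrary~$f$ because they only use the Graham-Rothschild tree and the $f$-matching property abstractly.

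The first step applies \Cref{caOVW-lemextPi1-2} to $(I,W)$ and $\psi_{t(I)}$. This yields a $\PP^{\mathsf{HIF}}$-condition $(I,U)\leq(I,W)$ and a set $J\subseteq I$ with two properties. For $v\in I\setminus J$ we have $(v,U)\Vdash\neg\psi_{t(I)}(G)$. For $v\in J$ we have $(I,U)\qvdash_v\psi_{t(I)}(G)$. The low basis theorem in the proof of \Cref{caOVW-lemextPi1} is replaced by the computably dominated basis theorem, so that $U\in\mathsf{HIF}$. This works because the relevant class is $\Pi^{0,W}_1$, and a $W$-computably bounded $\Pi^{0,W}_1$ class has a member $U$ with $W\oplus U$ of hyperimmune-free degree whenever $W$ is.

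The second step shows that $(J,U)$ is still $f$-matching. This is the analogue of \Cref{caOVW-lem-positiveext} and the only point needing care. Let $w\in\wshorter(U)$, and pick $\tilde v\in I$ with $\wsamelong(\tilde v\cdot w)$ monochromatic. We claim $\tilde v\in J$. Indeed, $\tilde v\cdot w$ can be extended to some $v_1\in[\tilde v,U]$ by appending a further portion of $U$ containing a fresh variable. The first occurrence of that variable lies at a position $\geq |\tilde v|=t(I)$, so $\psi_{t(I)}(v_1)$ holds. This contradicts $(\tilde v,U)\Vdash\neg\psi_{t(I)}(G)$ if $\tilde v\notin J$. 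Here the monochromaticity is not even used, which is exactly why splitting off the variable part avoids the dependence on~$f$. The existence of $v_1$ uses only that $U$ is an ordered $\omega$-variable word, so after $w$ there is a later cut point followed by a new variable kind. We also check that $J\neq\emptyset$: since $(I,U)$ is $f$-matching, some stem matches $w=\epsilon$.

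The third step applies \Cref{caOVW-lemextSigma1} to $(J,U)$ with $J$ itself as the set of positive stems. This gives $(\h I,\h W)\leq(J,U)\leq(I,W)$ with $\h W$ a tail of $U$, hence still in $\mathsf{HIF}$, and with $(\h v,\h W)\Vdash\psi_{t(I)}(G)$ for every $\h v\in\h I$. Consider $\h v\in\h I$ and $v\in I$ with $\h v\succeq v$. Then $\h v$ has length $\geq t(I)=|v|$ and contains a variable first occurring at a position $\geq t(I)$. That variable is therefore not among the variables of $v$, so $\h v$ has strictly more variable kinds than $v$.

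The main obstacle is making sure \Cref{caOVW-lemextSigma1} delivers $\psi_{t(I)}(\h v)$ literally on the finite stem $\h v$, rather than only on extensions. This follows from the definition of forcing a $\Sigma^0_1$ formula, which requires $\psi(\h v\uh n)$ to hold for some~$n$. It also requires $\psi_t$ to be monotone, which it is.
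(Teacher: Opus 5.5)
Your proof is correct and follows the paper's route: the paper's proof is simply ``same as \Cref{caOVW-lem-positiveext}'', i.e.\ rerun the positive-requirement argument with an $f$-free ``new variable'' $\Sigma^0_1$ predicate in place of $\varphi_{t(I)}$. One simplification: your deletion step actually shows $J=I$. For every stem $v$, the word $v\cdot U\in[v,U]$ already witnesses $\psi_{t(I)}$, and the forcing question holds trivially at level~$1$ of the Graham-Rothschild tree. So the $\Pi^0_1$ step is vacuous and you can apply \Cref{caOVW-lemextSigma1} directly to $(I,W)$, which also removes the small mismatch between having $(I,U)\qvdash_v\psi_{t(I)}(G)$ and needing $(J,U)\qvdash_v\psi_{t(I)}(G)$ in the hypothesis of that lemma.
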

\begin{proof}
Same as \Cref{caOVW-lem-positiveext}.
 \end{proof}

For the ``more monochromatic level" part, 
for an ordered variable word $v$,
let $p_f(v)$ denotes the number of $t\leq |v|$ in between variable kinds of~$v$
such that $\WSub^=(v\uhr t)$ is $f$-monochromatic.

\begin{lemma}[Positive requirement: monochromaticity]
\label[lemma]{scaOVW-lemext-P}
 Let $(I,W)$ be a $\PP^{\mathsf{HIF}}$-condition
 and $v\in I$.
There exists a $\PP^{\mathsf{HIF}}$-condition
$(J,U)\leq (I,W)$ such that
for every $\h v\in J$
with $\h v\succeq v$,
we have $p_f( \h v)>p_f(v)$.
Moreover, every $v\in I$ admits at most one extension in $J$.
\end{lemma}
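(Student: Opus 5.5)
The plan is a direct case analysis on whether the stem $v$ is ever used to witness the $f$-matching property of $(I,W)$. Neither case needs the forcing question or the Graham-Rothschild tree. This matters because $f$ is now arbitrary, so the $\Sigma^0_1$-based lemmas about the forcing question are unavailable for formulas mentioning $f$. We only need to produce an extension; no effectivity of the extension is required, so the case distinction may be non-effective. In both cases the new reservoir will be a tail of $W$, hence Turing equivalent to $W$, so the new condition is again a $\PP^{\mathsf{HIF}}$-precondition.

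\emph{Case 1: there is some $w \in \wshorter(W)$ such that $\wsamelong(v \cdot w)$ is $f$-monochromatic.}
\begin{itemize}
\item Write $w = W[y]$ for some $y \in A^{m}$. Then $w$ is the image of the first $m$ variable blocks of $W$ under constants, so $w = (W \uhr_{s})[y]$, where $s$ is the position in $W$ right before the first occurrence of $x_m$.
\item Set $U = W\uhr_s^\omega$ and $J = \{ u \cdot w : u \in I\}$.
\item \textbf{Extension.} For every $u \in I$ we have $u\cdot w \cdot U = (u \cdot W)[x_0\cdots x_{k-1}\, y\, x_0 x_1\cdots]$ with the variables renamed, which lies in $\OSub^\omega(u \cdot W)$, and $u \cdot w \succeq u$. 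Hence $(J,U) \le (I,W)$.
\item \textbf{Length and uniqueness.} $J$ is length-homogeneous, and each $u \in I$ has exactly one extension in $J$.
\item \textbf{Matching.} Let $w' \in \wshorter(U)$. Then $w \cdot w' \in \wshorter(W)$, since it equals $W[y \cdot y']$ where $w' = U[y']$. By $f$-matching of $(I,W)$ there is $u \in I$ with $\wsamelong(u \cdot w \cdot w')$ $f$-monochromatic. This is exactly the matching witness $u\cdot w \in J$ for $w'$.
\item \textbf{Counting.} The only extension of $v$ in $J$ is $\hat v = v \cdot w$. Every $t \le |v|$ counted in $p_f(v)$ is still between variable kinds of $\hat v$, since $w$ contains no variables, and $\hat v \uhr t = v\uhr t$. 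In addition, $t = |\hat v| > |v|$ is between variable kinds and $\wsamelong(\hat v)$ is $f$-monochromatic. Hence $p_f(\hat v) \ge p_f(v)+1$.
\end{itemize}

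\emph{Case 2: there is no such $w$.}
\begin{itemize}
\item For every $w \in \wshorter(W)$, the matching witness in $I$ is some $u \neq v$. So $(I\setminus\{v\}, W)$ is still $f$-matching, and it trivially extends $(I,W)$.
\item Take $J = I \setminus \{v\}$ and $U = W$.
\item No element of $J$ extends $v$: all stems have the same length $t(I)$, and $v$ has been removed. So the requirement on extensions of $v$ holds vacuously, and each $u\in I$ has at most one extension in $J$, namely itself.
\end{itemize}

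The main point to check carefully is the extension claim in Case~1: that appending the same constant block $w$ to every stem, and cutting the reservoir at the matching position, is a legitimate $\PP$-extension preserving $f$-matching. This works because $\wshorter(U)$ shifted by $w$ embeds into $\wshorter(W)$. A second point is that $U$ must be cut exactly between variable kinds of $W$, so that $U$ is an ordered $\omega$-variable word and the variable kinds of $u\cdot w$ and $U$ stay disjoint. Choosing $s$ as the position right before the first occurrence of $x_m$ takes care of this.
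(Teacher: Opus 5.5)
Your argument is the paper's own proof. It uses the same dichotomy on whether some $w\in\wshorter(W)$ makes $\wsamelong(v\cdot w)$ $f$-monochromatic, the same extension $J=\{u\cdot w : u\in I\}$, $U=W\uhr_{|w|}^\omega$ in the positive case, and the same deletion of $v$ in the negative case; you only list the cases in the opposite order.

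It also shares one gap with the paper, which asserts $p_f(v\cdot w)>p_f(v)$ in the same way. Your step ``$t=|\h v|>|v|$'' needs $|w|\geq 1$, and nothing guarantees this.
\begin{itemize}
\item Whenever $W$ begins with a variable, $W[\epsilon]=\epsilon$ belongs to $\wshorter(W)$. This happens for the initial reservoir $x_0x_1\cdots$. It also happens every time for the reservoir $U=W\uhr_s^\omega$ that your Case~1 produces, since $U$ starts at the first occurrence of $x_m$.
\item If $\wsamelong(v)$ is already monochromatic, as it is for every stem produced by Case~1, then $w=\epsilon$ satisfies your Case~1 hypothesis. But it yields $J=I$, $U=W$ and $\h v=v$, so $p_f(\h v)=p_f(v)$, because level $|v|$ was already counted.
\item You cannot simply demand a nonempty $w$ in Case~1. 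If $\epsilon$ is the only witness, you are not in Case~2 either, and deleting $v$ could destroy $f$-matching at $\epsilon$ when $v$ is its only witness.
\end{itemize}

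The fix is one preprocessing step. Fix $a\in A$ and replace $W$ by $W'=W[a\,x_0x_1x_2\cdots]$, that is, instantiate the first variable kind by $a$ and shift the others down.
\begin{itemize}
\item $(I,W')\leq(I,W)$ is still a $\PP^{\mathsf{HIF}}$-condition, since $\wshorter(W')\subseteq\wshorter(W)$ and $W'\leq_T W$.
\item Every element of $\wshorter(W')$ has $W'[\epsilon]$ as a prefix. Moreover $|W'[\epsilon]|\geq 1$, because the second variable kind of $W$ first occurs strictly after the first one.
\end{itemize}
Now run your dichotomy on $(I,W')$. Any $w$ found in Case~1 is nonempty, so $|\h v|$ is a genuinely new counted level and $p_f(\h v)\geq p_f(v)+1$. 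Case~2 goes through verbatim. With this adjustment, the rest of your verification is correct: the extension claim, length-homogeneity, matching via $w\cdot w'\in\wshorter(W)$, uniqueness of extensions, and $U\leq_T W$ for the $\mathsf{HIF}$ requirement.
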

\begin{proof}
We first try to find a $w\in \wshorter(W)$ such that
\begin{align}\label{scaOVW-eq8}
\wsamelong(v\cdot w)\text{ is monochromatic for $f$.}
\end{align}
We divide into two cases depending on whether such a $w$ exists.
\bigskip

\noindent\textbf{Case 1:} The word $w$ as (\ref{scaOVW-eq8}) does not exist.
We show that $(I\setminus\{v\}, W)$ is the desired extension.
In this case, there is no $\h v\in I\setminus\{v\}$ such that $\h v\succeq v$.
Therefore, it remains to verify that $(I\setminus\{v\}, W)$
is a $\PP^{\mathsf{HIF}}$-condition. It is obviously a $\PP^{\mathsf{HIF}}$-precondition.
To see that $(I\setminus \{v\},W)$ is $f$-matching,
let $w\in\wshorter(W)$. By $f$-matching of $(I,W)$,
there exists $\t v\in I$ such that
\begin{align}\label{scaOVW-eq9}
\wsamelong(\t v\cdot w)\text{ is monochromatic for $f$.}
\end{align}
Since (\ref{scaOVW-eq8}) does not hold, $\t v \in I \setminus \{v\}$.
The ``Moreover" part follows obviously. Thus we are done for Case 1.
\bigskip

\noindent\textbf{Case 2:} The word $w$ as (\ref{scaOVW-eq8}) exists.
Consider the set $$ J:= \{\h v\cdot w: \h v\in I \}\text{ and ordered variable word }
U:=W\uhr_{|w|}^\infty.$$
We show that $(J,U)$ is the desired extension.
Clearly $(J,U)$ is a $\PP^{\mathsf{HIF}}$-precondition extending $(I,W)$
and (by (\ref{scaOVW-eq8}))
 $$p_f(v\cdot w)> p_f(v).$$
We need to verify that $(J,U)$ is $f$-matching.
Let $u\in \wshorter(U)$, clearly $$w\cdot u\in \wshorter(W).$$
Therefore, by $f$-matching of $(I,W)$,
there exists $v\in I $ such that
\begin{align}
\label{scaOVW-eq10}
\wsamelong(v\cdot w\cdot u)\text{ is monochromatic for $f$.}
\end{align}
But $v\cdot w\in J$. Thus, combine with (\ref{scaOVW-eq10}),
we have $(J,U)$ is $f$-matching.
The ``Moreover" part follows obviously. Thus we are done for Case 2.

\end{proof}

We are now ready to prove  \Cref{scaOVW-prop3}.

\subsection{Proof of \Cref{scaOVW-prop3}}

The proof is the same as \Cref{thm:ovw0-weakly-low}, except that the positive requirements are forced separately:
Let $\vec{h} = (h_n)_{n \in \NN}$ be a countable sequence of hyperimmune functions.
Let 
$$
(I_0,W_0)\geq (I_1,W_1)\geq\cdots
$$
be a sufficiently generic decreasing sequence of $\PP^{\mathsf{HIF}}$-conditions.
By \Cref{lem:ovw0-omega-hyp-diag}, for every hyperimmune function $h \in \vec{h}$
 and Turing functional $\Psi$, there is a $s\in\NN$
 such that for every $v\in I_s$,
\begin{enumerate}[label=(\arabic*)]
\item either $(v,W_s)\forces\Psi^G(n)\uparrow$ for some $n$;
\item or $(v,W_s)\forces\Psi^G(n)\downarrow<h(n)$ for some $n$.
\end{enumerate}
As in the proof of \Cref{thm:ovw0-weakly-low},
let $G\in \cap_s [I_s,W_s]$. 
By items (1,2), each $h \in \vec{h}$ is $G$-hyperimmune.
By \Cref{scaOVW-lemext-Pvariable}, $G$ contains infinitely many variable kinds, and by \Cref{scaOVW-lemext-P}, there are infinitely many~$n \in \NN$ in between variable kinds of~$G$ such that $\WSub^{=}(G \uh_n)$ is $f$-monochromatic.
This completes the proof of \Cref{scaOVW-prop3}.

\section{Strong weakness of $\OVW^0$}\label[section]{scaOVW-sec3}


The goal of this section is to prove the following theorem:

\begin{theorem}\label[theorem]{sca-th1}
$\msf{OVW}^0$ admits strong preservation of 1 hyperimmunity.
\end{theorem}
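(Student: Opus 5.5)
The plan is to run the $\PP$-forcing of \Cref{sec:weakly-low} and \Cref{sec:preservation-hyp} relative to an arbitrary coloring. \Cref{scaOVW-prop3} replaces the $\Pi^0_1$-argument that \Cref{caOVW-lem-positiveext} used for the positive requirements, since that argument is no longer available.

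\textbf{Setup.} Fix a set~$Z$, a $Z$-hyperimmune function~$h$, and an arbitrary coloring $f : A^{<\omega} \to \ell$. Instead of $\PP^{\mathsf{HIF}}$-conditions, I would work with $\PP$-conditions $(I,W)$ such that $h$ is $W \oplus Z$-hyperimmune. Two closure properties are needed, and both hold:
\begin{itemize}
\item This class is downward closed under Turing reducibility.
\item It is closed under $\Pi^0_1(W\oplus Z)$-basis arguments, by the hyperimmunity-preserving basis theorem for $\Pi^0_1$ classes (relativized cone avoidance for $\WKL$).
\end{itemize}
Consequently \Cref{caOVW-lemextPi1-2} and \Cref{caOVW-lemextSigma1} still hold (without the uniformity clauses), because they only use the Graham-Rothschild tree $\T_{A,\ell\times I}$, which is computable and independent of~$f$. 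The diagonalization lemma \Cref{lem:ovw0-omega-hyp-diag} goes through verbatim. The bounds $b_n$ are now $W\oplus Z$-computable, and we choose~$n$ with $b_n < h(n)$, which exists by $W\oplus Z$-hyperimmunity of~$h$. The variable-kind requirement \Cref{scaOVW-lemext-Pvariable} is also unaffected, since it is $\Sigma^0_1$ in the reservoir only.

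\textbf{Positive requirement.} The new ingredient is forcing infinitely many $f$-monochromatic levels, that is, $\varphi_t$, which is now only $\Sigma^{0,f}_1$. Given a condition $(I,W)$, I would use $f$-matching to define the coloring $\bar f : A^{<\omega} \to \ell \times I$ exactly as in the proof of \Cref{rem:ovw0-matching-has-solutions}: $\bar f(w) = \langle c, v\rangle$ whenever $\WSub^=(v \cdot W[w])$ is $f$-monochromatic for color~$c$. This $\bar f$ is an arbitrary (non-computable) instance. By \Cref{scaOVW-prop3} applied relative to $W\oplus Z$, there is an ordered $\omega$-variable word~$U$ such that:
\begin{itemize}
\item $h$ remains $U\oplus W\oplus Z$-hyperimmune;
\item for infinitely many~$n$ in between variable kinds of~$U$, $\WSub^=(U\uh_n)$ is $\bar f$-monochromatic, for some color $\langle c_n, v_n\rangle$.
\end{itemize}
Unfolding the definition of $\bar f$, every word of $\WSub^=(v_n \cdot W[U\uh_n])$ has $f$-color~$c_n$. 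So the stem $v_n$, extended by $W[U\uh_n]$, acquires a new monochromatic level at position $|v_n| + |W[U\uh_n]|$, beyond~$t(I)$.

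\textbf{The obstacle: preserving matching when stems are extended.} The hard part is turning this into a genuine extension $(\hat I, \hat W)$ that remains $f$-matching while every surviving stem gains a monochromatic level. The monochromatic level is produced only for the single stem $v_n$ selected by~$\bar f$, whereas matching must be kept for all tail words. My first attempt would combine \Cref{scaOVW-lemext-P} with the $\Sigma^0_1$-extension trick, handling one stem at a time:
\begin{enumerate}
\item Pass first to the reservoir $W[U]$. This preserves matching, since only the reservoir shrinks.
\item For a fixed stem~$v$, ask whether some $w \in \wshorter(W[U])$ makes $\wsamelong(v\cdot w)$ $f$-monochromatic. Here $U$ guarantees that such $w$ exist in abundance for the stems that are selected infinitely often.
\item If no such $w$ exists, delete~$v$ as in Case~1 of \Cref{scaOVW-lemext-P}. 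If one exists, prefix every stem by that~$w$ as in Case~2.
\end{enumerate}
Iterating over the finitely many stems gives an extension in which every stem has strictly more monochromatic levels. Every step uses only finite modifications plus reservoirs of the form $W[U]$, so $h$ stays hyperimmune relative to the reservoirs.

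\textbf{Assembling the generic.} Take a sufficiently generic decreasing sequence of such conditions meeting the diagonalization requirements for every Turing functional~$\Psi$, together with both positive requirements infinitely often. Let $G \in \bigcap_s [I_s,W_s]$ be a path, obtained by compactness as before. Then $G$ is an $\IOOVW^0$-solution, and $h$ is $G\oplus Z$-hyperimmune. Finally, the lemma converting $\IOOVW^0$-solutions to $\OVW^0$-solutions gives a solution computable in $G \oplus f$. That lemma reintroduces~$f$, so I expect to instead merge variable kinds along the monochromatic levels noncomputably, choosing a color~$c$ occurring infinitely often and coarsening~$G$. The result is a word computable from~$G$ together with a fixed infinite set of levels. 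To make this set harmless, I would arrange during the construction that the monochromatic levels all share one color, fixed in advance by a $\Pi^0_1$-free pigeonhole choice per condition, so that the merging is $G$-computable.
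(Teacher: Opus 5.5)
There is a genuine gap. It sits exactly where the paper warns that $\PP$-forcing breaks down for an arbitrary coloring: the positive requirements.

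\textbf{Your positive requirement only yields Hales--Jewett levels.} Apart from the preliminary shrinking to $W[U]$, steps 1--3 are just Lemma~\ref{scaOVW-lemext-P}. The word $U$ plays no role: the case split (some $w\in\WSub^\star$ makes $\WSub^=(v\cdot w)$ monochromatic, or $v$ is deleted) works for any reservoir.

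What this produces is an increase of $p_f$, namely a position $t=|v\cdot w|$ in between variable kinds of the \emph{stem}. That is not a level of the generic. The set $G[A^n]$ is cut at the first occurrence of $x_n$ in $G$, and nothing forces the symbol of $G$ at position $t$ to be a new variable. Later steps routinely put a constant there:
\begin{itemize}
\item the $\Pi^0_1$-extension replaces the reservoir by $W[U]$ with $U$ a path through the Graham--Rothschild tree, whose first symbol may be a constant;
\item the $\Sigma^0_1$-extension appends constant words $W[\hat u]$, $\hat u\in A^{t_n}$, to the stems outside~$J$.
\end{itemize}
Requiring the continuation to start with a variable would need a left-variable analogue of the universality of $\T_{A,\ell}$. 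That analogue fails: colour a word by its first letter. Your $U$ has the same defect, since $n$ is only in between variable kinds of~$U$, so $U(n)$ may be a constant.

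Hence your generic is an $\IOIHJ$-solution, essentially the paper's proof of Theorem~\ref{scaOVW-prop3}, not an $\IOOVW^0$-solution. Coarsening cannot repair this, because every level of a word in $\OSub^\omega(G)$ sits at the first occurrence of some variable of~$G$. Forcing the true levels $\varphi_t$ as in Lemma~\ref{caOVW-lem-positiveext} requires forcing $\neg\varphi_t$, which is now $\Pi^0_1(f)$. That needs a basis theorem relative to~$f$, which destroys the hyperimmunity of~$h$, since $f$ may compute a function dominating it.

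\textbf{The final merging step is unsupported.} Nothing before this step depends on there being a single~$h$, yet $\OVW^0$ fails strong preservation of $2$ hyperimmunities. So the whole difference between $1$ and $2$ must be carried by a step you only sketch.
\begin{itemize}
\item Choosing a colour per condition gives no global consistency.
\item Committing to one colour~$c$ needs a case analysis on whether $c$-levels remain dense below the current condition, in the style of Dzhafarov--Jockusch, which you do not provide.
\item Merging along an infinite set $H$ of monochromatic levels yields a word computing a function above the principal function of~$H$. Nothing in your construction stops the monochromatic levels from being so sparse that this dominates~$h$. In any case $H$ is not $G$-computable.
\end{itemize}

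The paper sidesteps all of this by targeting $\LOVW^0$ (Theorem~\ref{strong-hyp-lovw}). It forces with single stems that are themselves level-homogeneous for~$f$ (the $\QQ$-conditions). Then \emph{every} level of the generic is homogeneous and no monochromaticity requirement is needed. The real difficulty becomes density of the diagonalization requirements, which it handles through:
\begin{itemize}
\item progressability and witnesses for progress (Lemma~\ref{scaOVW-lemkey});
\item maximization of non-progressable initial segments (Proposition~\ref{scaOVW-prop30});
\item Theorem~\ref{scaOVW-prop3} applied to the product of the witness colourings.
\end{itemize}
Only at the end is the total colouring sending $n$ to the colour of $W[A^n]$ fed to strong preservation of $1$ hyperimmunity for $\RT^1$. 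That is the single place where ``$1$'' is used.
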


Actually, given a coloring $f: A^{<\omega}\rightarrow\ell$, we shall construct an ordered $\omega$-variable word~$G$ with slightly weaker properties: each level of~$G$ will be $f$-monochromatic, but the color of monochromaticity will depend on the level:

\begin{definition}\label[definition]{def:ovw-level-homogeneous}
An ordered variable word $v$ is \emph{\sectionallymonochromatic} for
a coloring $f: A^{<\omega}\rightarrow\ell$ iff
for every $u_0, u_1 \in \WSub^\star(v)$ such that $|u_0| = |u_1|$, we have $f(u_0) = f(u_1)$.
\end{definition}

This weaker notion of monochromaticity yields the following statement, that we shall refer to as \emph{Level Ordered Variable Word} theorem:

\begin{definition}
Let $\LOVW^0$ be the $\Pi^1_2$-problem whose instances are colorings $f : A^{<\omega} \to \ell$ for a finite alphabet~$A$ and some number of colors $\ell \geq 1$. A \emph{$\LOVW^0$-solution} to~$f$ is an ordered $\omega$-variable word~$W$ which is level-homogeneous for~$f$.
\end{definition}

$\LOVW^0$ can be thought of as $\OVW^0$ without one last application of the infinite pigeonhole principle ($\RT^1$). Because of this, it satisfies a stronger notion of preservation:

\begin{theorem}\label[theorem]{strong-hyp-lovw}
$\LOVW^0$ admits strong preservation of $k$ hyperimmunities for every~$k \in \NN$.
\end{theorem}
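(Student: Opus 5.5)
The plan is to derive \Cref{strong-hyp-lovw} from \Cref{scaOVW-prop3}, using a Mathias-style construction of the kind used in \Cref{sec:preservation-hyp}. A condition will be a pair $(v, W)$. Here $v$ is a finite ordered variable word that is \sectionallymonochromatic{} for~$f$. The reservoir $W$ is an ordered $\omega$-variable word such that $h_0, \dots, h_{k-1}$ are all $W \oplus Z$-hyperimmune. Extension is as in \Cref{caOVW-def-preconditionextension}. Since the coloring $f$ is arbitrary, $f$ itself will never be used to define the reservoirs. Instead, every new reservoir will be produced by one application of \Cref{scaOVW-prop3}, relativized to the current reservoir. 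This is legitimate because $\IOIHJ$ admits \emph{strong} preservation, so it tolerates the arbitrary, possibly non-computable, coloring.

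\textbf{Step 1 (monochromatic cuts).} Given $(v, W)$, consider the shifted coloring $g(u) = f(v[a]\cdot W[u])$, indexed by the substitutions into $v$. More precisely, take the product coloring $u \mapsto \langle f(v[a]\cdot W[u]) : a \in A^{n_v}\rangle$, where $n_v$ is the number of variable kinds of~$v$. Apply \Cref{scaOVW-prop3} to it with the hyperimmune functions $h_0, \dots, h_{k-1}$ relativized to $W \oplus Z$. This yields $U$ with infinitely many cut points $n$ between variable kinds such that $\WSub^=(U\uh_n)$ is monochromatic for every coordinate. Then merge all variable kinds of $U$ lying between two consecutive good cuts into a single kind.

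The main obstacle appears at this point. Every element of $\WSub^\star$ of the merged word is cut just before the \emph{first occurrence} of the next merged variable. That position can lie strictly after the good cut $n_i$, because constant letters may sit between $n_i$ and the next variable. To handle this, I will first try to strengthen \Cref{scaOVW-prop3} so that the good cuts are immediately followed by a variable. I expect this to be achievable by the same forcing: in \Cref{scaOVW-lemext-P}, choose $w \in \wshorter(W)$ ending right before a variable occurrence, which costs nothing since $\wshorter(W)$ consists exactly of such words. If that fails, I will recolor by $g'(u) = f(u\cdot c_u)$, where $c_u$ is the constant block of $W$ following~$u$, and cut there instead.

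\textbf{Step 2 (level-homogeneous extension).} With good cuts aligned with variables, take the first good cut $n_0$ beyond $|v|$. Set $\hat v = v\cdot U\uh_{n_0}$, merged into a single new variable kind, and let $\hat W$ be the merged tail. Then $\hat v$ is still \sectionallymonochromatic{}: the new level $\WSub^=(\hat v)$ is monochromatic because all coordinates of the product coloring are constant there. Moreover $\hat W \le_T U$, so the hyperimmunities are preserved.

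\textbf{Step 3 (diagonalization).} Each requirement $\R^{h_i}_\Psi$ is met by the same forcing-question argument as in \Cref{lem:ovw0-omega-hyp-diag}, now for a single stem. If $\Psi^G(n)$ can be made convergent for all~$n$ with uniformly bounded search, then a $W\oplus Z$-computable bound sequence exists, and hyperimmunity of $h_i$ relative to $W\oplus Z$ gives some $n$ where the bound is below $h_i(n)$. In that case extend the stem so that it converges there. The new stem must stay \sectionallymonochromatic{}, so the convergence witnesses are searched only among stems produced by Step~1 applied inside~$W$. The search is therefore a $\Sigma^0_1(W \oplus Z)$ question about substitutions into the reservoir, handled via the Graham-Rothschild tree as in \Cref{caOVW-def-forcingquestion}. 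Otherwise, a $\Pi^0_1(W\oplus Z)$ class of reservoirs forces divergence. By the hyperimmune-free basis theorem relative to $W\oplus Z$, we pick a member preserving all $h_i$.

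A generic sequence then yields a \sectionallymonochromatic{} ordered $\omega$-variable word $G$ that preserves the $k$ hyperimmunities. The restriction to finitely many $h_i$ is only used when all of them must be preserved at once in a single application of the basis theorem for each relativized step.
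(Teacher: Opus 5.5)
Your proposal has a genuine gap, located exactly where the real difficulty of the theorem lies.

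First, the justification in Step~2 is false. Monochromaticity of $\WSub^=(U\uh_{n_0})$ for the product coloring $u\mapsto\langle f(v[a]\cdot W[u]) : a\in A^{n_v}\rangle$ only says that, for each \emph{fixed} $a$, the value $f(v[a]\cdot W[u])=c_a$ does not depend on $u$. Level-homogeneity of any later extension of $\hat v$ needs $c_a=c_{a'}$ for all $a,a'$, and nothing gives that. This cannot be repaired within single-stem conditions, because a stem can be a dead end. Take $A=\{0,1\}$, and let $f(u)=u(0)$ if $|u|\geq 2$ and $f(u)=0$ otherwise. The stem $x_0$ is level-homogeneous, and $\WSub^=(x_0)=\{0,1\}$ is even monochromatic. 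But every level of length at least $2$ above $x_0$ contains words starting with $0$ and with $1$, so no level-homogeneous extension of $x_0$ has more than two variable kinds. For this stem your product coloring is constantly $\langle 0,1\rangle$: Step~1 succeeds trivially while the conclusion of Step~2 fails. So ``add a variable'' is not dense for your conditions. This is why the paper's earlier forcings use finite $f$-matching \emph{sets} of stems, and why $\QQ$-conditions can be non-progressable.

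Second, Step~3 does not go through for an arbitrary $f$. You restrict the forcing question to level-homogeneous extensions, namely stems produced by Step~1, which come from an $\IOIHJ$-solution to an $f$-dependent coloring. That question is only $\Sigma^0_1(f\oplus W\oplus Z)$. Your bounds $b_n$ are then merely $f\oplus W\oplus Z$-computable, and $W\oplus Z$-hyperimmunity of $h_i$ gives nothing, since $f$ may compute a function dominating $h_i$. The Graham--Rothschild question is $f$-free, but a positive answer yields extensions that are not level-homogeneous, and it preserves matching only for sets of stems. So the set of $\QQ$-conditions satisfying $\R^h_\Psi$ is genuinely not dense. The proof needs the machinery you are missing:
\begin{itemize}
\item Lemma~\ref{scaOVW-lemkey}: non-progressable level-homogeneous stems of a given length never form an $A^{<\omega}\rightarrow\ell$-matching set. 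It is proved by replacing $f$ with the $\Pi^{0,W}_1$ classes $Q_n$ of \emph{all} colorings agreeing with $f$ on $A^{\leq t}$, together with preservation of hyperimmunities for $\OVW^0$.
\item The resulting witness-for-progress colorings $g_i$.
\item A reservoir that maximizes the non-progressable initial segments on an infinite set $T$ of levels (Proposition~\ref{scaOVW-prop30}).
\item Only then Theorem~\ref{scaOVW-prop3}, applied to the product of the $g_i$ rather than to $f$, to get $W^*$ below which every condition is progressable.
\end{itemize}
That product of $k$ colorings is where finiteness of $k$ is used. Your argument, if it worked, would give preservation of $\omega$ hyperimmunities, which the paper's method does not yield; this is a symptom of the missing step.

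Finally, your alignment fix does not survive later extensions. A subsequent extension by a word of $\wshorter$ of the new reservoir, or by $W[\hat u]$ with $\hat u$ from the Graham--Rothschild tree, can place a constant right after the cut. The fallback recoloring $f(u\cdot c_u)$ is ill-defined, because the constants following a cut in the final word are not determined by $W$. The paper never needs aligned cuts: its Key Claim only uses monochromaticity of $\WSub^=((v\cdot U)\uh_t)$ for the $g_i$ at levels $t\in T$.
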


Note that $\OVW^0$ does not admit strong preservation of 2 hyperimmunities. Indeed, $\RT^1$ is strongly computably reducible to $\OVW^0$, and it is known that $\RT^1$ even for 2-colorings does not admit strong preservation of 2 hyperimmunities (by \cite[Theorem 5.2]{jockusch1968semirecursive}, there exists a coloring $f : \NN \to 2$ such that $f^{-1}(0)$ and $f^{-1}(1)$ are both hyperimmune).
Before proving \Cref{strong-hyp-lovw}, let us assume it holds, and deduce \Cref{sca-th1}.

\begin{proof}[Proof of \Cref{sca-th1}]
As usual, we prove it in its unrelativized form.
Let $f : A^{<\omega} \to \ell$ be an arbitrary coloring for some finite alphabet~$A$ and $\ell \geq 1$. Let $h : \NN \to \NN$ be a hyperimmune function.
By \Cref{strong-hyp-lovw}, there is an ordered $\omega$-variable word~$W$ which is level-homogeneous for~$f$, and such that $h$ is $W$-hyperimmune.
Let $X = \{ t_0 < t_1 < \dots \}$ be such that $t_n$ is the position of the first occurrence of~$x_n$ in~$W$.
Let $g : \NN \to \ell$ be such that $g(n)$ is the color of $f$-monochromaticity of $W[A^n]$. By Patey~\cite[Theorem 28]{patey2017iterative}, $\RT^1$ admits strong preservation of 1 hyperimmunity, so there is an infinite $g$-homogeneous set~$H \subseteq \NN$ for some color~$c < \ell$ such that $h$ is $H \oplus W$-hyperimmune.
Let $\hat W \leq_T H \oplus W$ be obtained from~$W$ by merging variable kinds so that for every~$u \in \WSub^\star(\hat W)$, $|u| \in \{ t_n : n \in H \}$. In particular, $f(u) = g(n)$ for some~$n \in H$ such that $|u| = t_n$, so $f(u) = c$.
\end{proof}

Through this section, fix a coloring $f: A^{<\omega}\rightarrow\ell$ and a family $\vec{h} = (h_i)_{i < k}$ of hyperimmune functions for some~$k \in \NN$.
We shall construct $G$ through the following notion of forcing:

\begin{definition}
A \emph{$\QQ$-condition} is a pair $(v, W)$ such that
\begin{itemize}
    \item[(1)] $v$ is a finite ordered variable word and $W$ an ordered $\omega$-variable word;
    \item[(2)] $v$ is level-homogeneous for $f$;
    \item[(3)] each $h \in \vec{h}$ is $W$-hyperimmune.
\end{itemize}
\end{definition}

In other words, a $\QQ$-condition corresponds with a $\PP$-branch with some color constraints on~$v$ and some computability-theoretic weakness on~$W$. Note that this notion of forcing is parameterized by both the coloring~$f$, and the family of hyperimmune function~$\vec{h}$. The denotation $[v, W]$ of a $\QQ$-condition is exactly the same as for a $\PP$-branch.
In particular, do not impose any $G \in [v,W]$ to be level-homogeneous for $f$.
The notion of $\QQ$-extension is also the same as $\PP$-branch extension. We state it again for the sake of completeness:

\begin{definition}[$\QQ$-condition extension]\label[definition]{scaOVW-def-conditionextension}
A $\QQ$-condition\ $(\h v,\h W)$ \emph{extends}
a $\QQ$-condition\ $(v,W)$
(written $(\h v,\h W)\leq (v,W)$)
iff: $\h v\succeq v$ and $\h v\cdot \h W\in \OSub^\omega(v\cdot W)$.
\end{definition}


\subsection{Requirement and progress}

For a Turing functional $\Psi$ and some hyperimmune function~$h \in \vec{h}$, let $\R_\Psi^h$ be the requirement:
\begin{quote}
$\R_\Psi^h$: Either $\Psi^G$ is non-total or $\Psi^G $ does not dominate $h$.
\end{quote}
We now define what it means for a $\QQ$-condition to satisfy a requirement $\R^h_\Psi$.
Instead of asking $\R_\Psi^h$ to hold for every $G \in [v, W]$, we ask for a stronger property:

\begin{definition}
A $\QQ$-condition\ $(v,W)$ \emph{satisfies} $\mcal R_\Psi^h$
iff:
\begin{itemize}
\item  either for every $G\in [v,W]$,
$\Psi^G$ is non total
\item or
$\Psi^v(n)\downarrow < h(n)$ for some $n$.
\end{itemize}
\end{definition}

This notion of satisfaction is more uniform, in the sense that every $G \in [v, W]$ satisfies $\R_\Psi^h$ by the same disjunct. This stronger definition will be used in the proof of \Cref{sca-th1}.
In general, even for some fixed~$h \in \vec{h}$ and $\Psi$, the set of $\QQ$-conditions satisfying an $\R_\Psi^h$-requirement is not dense.


\begin{definition}[Non-progressable]
A $\QQ$-condition\ $(v,W)$ is \emph{non-progressable for~$h$}
iff
for some Turing functional $\Psi$,
there is no extension $(\h v,\h W)$ of $(v,W)$ such that
$(\h v,\h W)$ satisfies $\mcal R_\Psi^h$.
Otherwise, we say $(v,W)$ is \emph{progressable for $h$}
(i.e., it can be extended to satisfy $\R_\Psi^h$ for every~$\Psi$).
\end{definition}

The whole difficulty of the proof will be to show the existence of a $\QQ$-condition $(\epsilon, W^*)$ below which every $\QQ$-extension is progressable for each~$h \in \vec{h}$. In other words, the set of $\QQ$-conditions satisfying any $\R_\Psi^h$-requirement is dense below~$(\epsilon, W^*)$.

\subsection{Witness for progress}\label[section]{sec:sca-ovw-witness}

We now introduce the core concept of witness for progress which will be used to construct the initial condition $(\epsilon, W^*)$.

\begin{definition}
Let $W$ be an ordered $\omega$-variable word such that each $h \in \vec{h}$ is $W$-hyperimmune.
A \emph{$W$-witness for progress for~$h$} is a coloring $g : A^{<\omega} \to \ell$
such that for every finite ordered variable word~$v$ such that $\WSub^=(v)$ is $g$-monochromatic and level-homogeneous for~$f$, the $\QQ$-condition $(v, W \uh_{|v|}^\omega)$ is progressable for~$h$.
\end{definition}

Given such an ordered $\omega$-variable word $W$ and $h \in \vec{h}$, we write $\wc_h(W)$ for the class of all $W$-witnesses for progress for~$h$.
It is of course not clear at all that $\wc_h(W) \neq \emptyset$. Before proving the existence of such witnesses for progress, we sketch how they will be used in the proof.

\begin{remark}[How to use witnesses for progress]
\label{scaOVW-rem1}
Suppose we found some family of colorings $\vec{g} = (g_i)_{i < k}$ of type $A^{<\omega} \to \ell$ such that $g_i$ is not only a $W$-witness for progress for~$h_i$, but also simultaneously a $U$-witness for progress for~$h_i$ for every $U \in \OSub^\omega(W)$ such that each $h \in \vec{h}$ is $U$-hyperimmune.

Then, by letting $v$ staying in the monochromatic-for-$g$ world for each~$g \in \vec{g}$,
$(v,U)$ will always be progressable for each~$h \in \vec{h}$.
Thus, one may want to construct an ordered $\omega$-variable word
$W^*\in \wsamelong(W)$ that is somewhat $g$-monochromatic for each~$g \in \vec{g}$,
and consider $\QQ$-conditions of the form $(v,U)$
where $(v, U)\leq (\varepsilon, W^*)$.
Here, by \qt{somewhat $g$-monochromatic}, we shall see that it is sufficient for~$W^*$ to be a \wHJ\  solution to~$g$. We will therefore use strong preservation of $\omega$-hyperimmunity of \wHJ\ proven in \Cref{scaOVW-subsec1}.
\end{remark}

The remainder of \Cref{sec:sca-ovw-witness} is devoted to the proof of existence of such witnesses.
Consider the following notion of matching:

\begin{definition}[\hjsufficient]
A set $I$ of ordered variable words, each of length $t$,
is \emph{\hjsufficient} iff
for every coloring $g: A^t\rightarrow \ell$,
there is some  $v\in I$ such that
$\wsamelong(v)$ is monochromatic for $g$.
\end{definition}

A starting point but also a key observation
is that the set of initial segments which are non-progressable for~$h$ cannot form a \hjsufficient\ set.
\begin{lemma}
[Set of \nonprogressable\ initial segment is not \hjsufficient]
\label{scaOVW-lemkey}
 Let $W$ be an ordered $\omega$-variable word such that each $h \in \vec{h}$ is $W$-hyperimmune and fix some~$h \in \vec{h}$.
 Let $I$ be a \hjsufficient\
set of ordered variable words, with each element of $I$ of length
$t$, such that $\wshorter(v)$ is \sectionallymonochromatic\ for
$f$ for all $v\in I$.
Then there is some $v\in I$ such that $(v,W)$ is a $\QQ$-condition which is progressable for~$h$.

\end{lemma}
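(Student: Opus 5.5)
The plan is to argue by contradiction. Suppose that for every $v\in I$ the $\QQ$-condition $(v,W)$ is non-progressable for~$h$. First note that $(v,W)$ is indeed a $\QQ$-condition: $v$ is level-homogeneous for~$f$ by assumption, and $h$ is $W$-hyperimmune. So for each $v\in I$ we may fix a Turing functional $\Psi_v$ such that no $\QQ$-extension of $(v,W)$ satisfies $\R^h_{\Psi_v}$. Since $I$ is finite, the tuple $(\Psi_v)_{v\in I}$ is a single finite piece of data.

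The goal is a $W$-computable function $b$ with the following property: for every~$n$ there are some $v\in I$ and some $\hat u\in\vwshorter(W)$ such that $v\cdot W[\hat u]$ is level-homogeneous for~$f$ and $\Psi_v^{v\cdot W[\hat u]}(n)\downarrow<b(n)$. The difficulty is that $f$ is arbitrary and $W$ cannot consult it. I would therefore quantify over all colorings. Given~$n$, search $W$-computably for a length $L$ such that the following holds for \emph{every} coloring $c: A^{\le L}\to\ell$: there are $v\in I$ and $\hat u$ with $|v\cdot W[\hat u]|\le L$, $v\cdot W[\hat u]$ level-homogeneous for~$c$, and $\Psi_v^{v\cdot W[\hat u]}(n)\downarrow$. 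If the search succeeds, let $b(n)$ be the maximum value seen plus one.

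\textbf{Case 1: the search succeeds for every $n$.} Then $b\le_T W$. Since $h$ is $W$-hyperimmune, there is some $n$ with $b(n)<h(n)$. Instantiating the search with $c=f\uh_{A^{\le L}}$ gives $v\in I$ and $\hat v=v\cdot W[\hat u]$ with the following properties:
\begin{itemize}
\item $\hat v$ is level-homogeneous for~$f$;
\item $\Psi_v^{\hat v}(n)\downarrow<h(n)$.
\end{itemize}
Then $(\hat v,W\uh_{|\hat v|}^\omega)$ is a $\QQ$-extension of $(v,W)$ satisfying $\R^h_{\Psi_v}$, contradicting the choice of~$\Psi_v$.

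\textbf{Case 2: the search fails for some $n$.} By compactness, the $\Pi^{0,W}_1$ class $\C_n$ of colorings $c: A^{<\omega}\to\ell$ is non-empty, where $c\in\C_n$ means: for every $v\in I$, $\Psi_v(n)$ diverges on every $v\cdot W[\hat u]$ that is level-homogeneous for~$c$. By the computably dominated basis theorem relative to~$W$, pick $c\in\C_n$ such that $h$ remains $W\oplus c$-hyperimmune. Now bring in the $A^{<\omega}\to\ell$-matching hypothesis. Apply the universality of the Graham--Rothschild tree (\Cref{lem:graham-rothschild-tree}) to a suitable product/shift coloring built from~$c$, and extract a $\Pi^{0,W\oplus c}_1$ class of ordered $\omega$-variable words~$U$. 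Take $U$ of computably dominated degree over $W\oplus c$, and use a coloring of $A^t$ derived from~$c$ so that the matching property selects some $v\in I$. The aim is that every $G\in[v,W[U]]$ has all of its relevant finite initial segments $c$-level-homogeneous extensions of~$v$. Then $(v,W[U])$ forces $\Psi_v^G(n)\uparrow$, so this $\QQ$-extension satisfies $\R^h_{\Psi_v}$, again a contradiction.

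I expect Case~2 to be the main obstacle. The satisfaction notion demands that \emph{every} $G\in[v,W[U]]$ make $\Psi_v$ non-total, not merely the level-homogeneous ones. Shrinking the reservoir must therefore make all of its finite substitution instances level-homogeneous for the auxiliary coloring~$c$, and this is exactly where the matching hypothesis on~$I$, rather than a single stem, has to be used. If this route does not close, my first alternative is to move the quantification over colorings into the definition of~$b$ in a way that already includes the reservoir. Concretely, I would search for a level $m$ of $\T_{A,\ell\times I}$ such that every node at that level admits a witness $\hat u\in\vwsamelong(u)$, mimicking the forcing question $\qvdash$. Failure of that search would then produce, via \Cref{caOVW-lemextPi1}-style compactness, a path~$U$ along which convergence is impossible outright, independently of homogeneity.
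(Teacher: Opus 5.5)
Your Case~1 matches the paper's Case~1 and is fine. Case~2, which carries the real content, has a genuine gap. You need a single stem $v\in I$ and a reservoir $W[U]$ such that \emph{every} element of $[v,W[U]]$ is $c$-level-homogeneous. Let $\hat c(\sigma)=c_{W[\sigma]}\uh A^t$, where $c_w(\rho)=c(\rho\cdot w)$; $\hat c$ has finitely many colors, each an $\ell$-coloring of $A^t$. On words of length $\geq t$, the requirement says that for every $\sigma\in\WSub^\star(U)$, $\WSub^=(v)$ is $\hat c(\sigma)$-monochromatic, with a color depending only on $|\sigma|$.

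That is an $\OVW^0$-type homogeneity requirement on $U$. It cannot come out of the Graham--Rothschild tree plus a $\Pi^{0,W\oplus c}_1$ class and the computably dominated basis theorem:
\begin{itemize}
\item \Cref{lem:graham-rothschild-tree} only yields monochromaticity of $\WSub^=(u\cdot\hat w)$ for one suffix $\hat w$ at a time.
\item No single compactness argument produces $\OVW^0$-solutions in general. $\OVW^0$ is not provable in $\WKL_0$ and has computable instances with no $\Delta^0_2$ solution, whereas nonempty $\Pi^0_1$ classes have low members.
\end{itemize}
The missing step is to apply $\OVW^0$ itself to $\hat c$, via its preservation of $\omega$ hyperimmunities (\Cref{thm:ovw0-omega-hyp} relativized to $c\oplus W$; this is why $c$ is taken from the computably dominated basis theorem). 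This gives $U$ with $\WSub^\star(U)$ $\hat c$-monochromatic for one color $g^*:A^t\to\ell$, and each $h\in\vec h$ still $c\oplus U\oplus W$-hyperimmune. Only then does the matching hypothesis on $I$ enter, applied to the single coloring $g^*$. It yields $v^*\in I$ with $\WSub^=(v^*)$ $g^*$-monochromatic of some color $\ell^*$. Hence $c(w\cdot W[u])=\ell^*$ for all $w\in\WSub^=(v^*)$ and $u\in\WSub^\star(U)$. Given the fix below, every element of $[v^*,W[U]]$ is then $c$-level-homogeneous, so $(v^*,W[U])$ satisfies $\R^h_{\Psi_{v^*}}$.

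A second, smaller but necessary fix: $\C_n$ must be restricted to colorings $c$ that agree with $f$ on words of length $<t$ (the paper requires $g\uh A^{\leq t}=f\uh A^{\leq t}$). Every element of $[v,W']$ contains $\WSub^\star(v)$ among its short substitution instances. So if $c$ breaks the level-homogeneity of $v$, the condition defining $\C_n$ holds vacuously for $v$, and no choice of reservoir can help. Since you quantify over all $c$, your search may fail merely because some $c$ breaks every stem, and Case~2 then yields nothing. The restriction is a finite parameter, so Case~1 stays $W$-computable and $f$ still qualifies. With it, the short words of $v^*\cdot W[U]$ are handled by the $f$-level-homogeneity of $v^*$.

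Finally, your fallback via the forcing question only handles the divergence side. On the convergent side, the witnesses it finds need not be $f$-level-homogeneous, and guaranteeing that runs into the same uniformity problem.
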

\begin{proof}
Suppose otherwise. For each~$v \in I$, let $\Psi_v$ be the witness  of
$(v,W)$ being \nonprogressable\ for~$h$.
To derive a contradiction,
we firstly try to  entail $\Psi_v^G$ to be non-total.
To this end, for every $n\in\NN$,
consider the set $Q_n$ of colorings $g:A^{<\omega}\rightarrow\ell$
with $g\uhr A^{\leq t} = f\uhr A^{\leq t}$ such that
\begin{align}\label{scaOVW-defQ}
&\text{ for every $v\in I$,
every $\h v\in [v,W]$ with
}\\ \nonumber
&\text{$\h v$ \sectionallymonochromatic\ for
$g$, we have } \Psi^{\h v}(n)\uparrow.
\end{align}
Intuitively, $Q_n\ne\emptyset$ is seen as it is possible to entail $\Psi_v^G(n)\uparrow$
for some $v\in I$.
Clearly $Q_n$ is a $\Pi_1^{0,W}$ class uniformly in $n$.
We divide into two cases and will show that
in each case there is some $v\in I$ so that $(v,W)$ can be extended to satisfy
$\mcal R_{\Psi_v}$, thus a contradiction to the otherwise hypothesis.

\ \\

\noindent\textbf{Case 1:} For every $n\in\NN$, $Q_n=\emptyset$.

By  compactness, for every $n\in\NN$, there  exists  a finite set $I_n$
of ordered variable words witnessing $Q_n=\emptyset$. That is:
for every coloring $g:A^{<\omega}\rightarrow\ell$ with
 $g\uhr A^{\leq t} = f\uhr A^{\leq t}$,
 there exists   $\h v\in I_n$ such that:
 \begin{align}\label{sphypOVW-eq0}
 &\text{ for some
 $v\in I$,  we have $\h v\in [v,W]$;}\\ \nonumber
 &\text{
 $\h v$ \sectionallymonochromatic\ for
$g$ and  $\Psi^{\h n}(n)\downarrow$.}
\end{align}
Moreover, $(I_n: n\in\NN)$ could be chosen so that the map
$n \mapsto I_n$ is $W$-computable.
We may also assume $I_n$ is not redundant in the sense that
for each $\h v\in I_n$, there exists a $v\in I$ and coloring $g$ so that
$\h v,v,g$ together satisfies (\ref{sphypOVW-eq0}),
in particular $\Psi_v^{\h v}(n)\downarrow$.
Therefore, consider the following function:
\begin{align}\label{sphypOVW-eq2}
\h h: \NN\ni n\mapsto \max\{\Psi^{\h v}_v(n):\h v\in I_n, v\in I, \h v\succeq v\}.
\end{align}
Note that $\h h$ is $W$-computable, but $h$ is $W$-hyperimmune, therefore there is some
$n^*\in\NN$ such that
\begin{align}
\label{sphypOVW-eq3}
\h h(n^*)<h(n^*).
\end{align}

Since $Q_{n^*}=\emptyset$, in particular, $f\notin Q_{n^*}$,
by replacing $g$ with $f$ in  (\ref{sphypOVW-eq0}),
there exists a $v\in I$ and $\h v\in [v,W]$
 such that:
\begin{align}\nonumber
 \text{
 $\h v$ \sectionallymonochromatic\ for
$f$ and  $\Psi^{\h n}(n^*)\downarrow<h(n^*)$.}
\end{align}
The last inequality follows since
we could chose $\h v$ in $ I_{n^*}$
(see (\ref{sphypOVW-eq2})(\ref{sphypOVW-eq3})).
But now,
$(\h v,W\uhr_{|\h v|-|v|}^\omega)$ is a $\QQ$-condition\ extending
$(v,W) $ that satisfies $\mcal R^h_{\Psi_v}$.
This is a contradiction to the otherwise hypothesis.

\ \\

\noindent\textbf{Case 2:}
For some $n$, $Q_n\ne\emptyset$.

Since $Q_n$ is a $\Pi_1^{0,W}$ class and $h$ is $W$-hyperimmune,
by the computably dominated basis theorem (\cite[Theorem 2.4]{jockusch1972pi}), there exists a $g\in Q_n$ such that each $h \in \vec{h}$ is  $g\oplus W$-hyperimmune.
Let $g_w$ denotes the $w$-shift of $g$, namely the coloring $g_w(\sigma):= g(\sigma\cdot w)$.
Note that $g$ induces a coloring $\h g : A^{<\omega} \to A^t \to \ell$ defined by
$$
\h g: A^{<\omega}\ni\sigma\mapsto g_{W[\sigma]}\uhr A^t.
$$
By preservation of $\omega$ hyperimmunities of $\msf{OVW}^0$
(Theorem \ref{thm:ovw0-omega-hyp}),
let $U$ be an $\msf{OVW}^0$ solution to $\h g$
such that $h$ is $g\oplus U\oplus W$-hyperimmune;
and suppose
\begin{align}\label{scaOVW-eq22}
\text{$\wshorter(U)$ is monochromatic for $\h g$
for color $g^*$.}
\end{align}
Note that $g^*$ is an $\ell$-coloring on $A^t$.
 By \hjsufficient\ of $I$,
there exists $v^*\in I$ such that
\begin{align}\label{scaOVW-eq23}
\wsamelong(v^*)\text{ is monochromatic\ for $g^*$,
say for color $\ell^*$}.
\end{align}
Unfolding (\ref{scaOVW-eq22}),
\begin{align}\label{scaOVW-eq24}
\text{for every $u\in \wshorter(U)$,
$g_{W[u]}\uhr A^t = g^*$
}.
\end{align}
Combine with (\ref{scaOVW-eq23}),
for every $u\in \wshorter(U)$ and $w\in \wsamelong(v^*)$,
we have:
\begin{align}
g(w\cdot W[u]) &= g_{W[u]}(w)\\ \nonumber
&\overset{(\ref{scaOVW-eq24})}=
g^*(w) \overset{(\ref{scaOVW-eq23})}{ =}\ell^*.
\end{align}
In particular (and combining with $v^*$ \sectionallymonochromatic\ for $f$ and therefore
\sectionallymonochromatic\ for $g$ since $Q_n$ requires $g\uhr A^{\leq t} = f\uhr A^{\leq t}$),
for every ordered variable word $\h v\in [v^*,W[U]]$,
\begin{align}
\nonumber
\h v\text{ is \sectionallymonochromatic\ for $g$.}
\end{align}
By definition (\ref{scaOVW-defQ}) of $Q_n$
and the fact $g \in Q_n$, and since $[v^*,W[U]]\subseteq [v^*,W]$),
we have:
\begin{align}
\label{scaOVW-eq25}
\text{for every ordered variable word $\h v\in [v^*,W[U]]$, }\Psi_{v^*}^{\h v}(n)\uparrow.
\end{align}
Clearly this means
$(v^*,W[U])$ satisfies $\mcal R^h_{\Psi_{v^*}}$,
 a contradiction to the otherwise hypothesis:  non-progression of $(v^*,W)$ for~$h$.

\ \\
In either case, we derived a contradiction. This completes the proof of \Cref{scaOVW-lemkey}.
\end{proof}

Since the set of initial segments which are \nonprogressable\ for~$h$ 
are not \hjsufficient,
there is a witnessing coloring  by unfolding the definition of \hjsufficient:
\begin{lemma}[Lemma \ref{scaOVW-lemkey} rephrased]
\label{scaOVW-lemkey2}
Let~$W$ be an ordered $\omega$-variable word
such that each $h \in \vec{h}$ is $W$-hyperimmune and fix some $h \in \vec{h}$.
For every $t\in\NN$, there is a coloring
$g:A^t\rightarrow \ell$ such that
for every $\QQ$-condition\
$(v,W)$  with $|v|=t$
and with $\wsamelong(v)$ monochromatic for
 $g$, $(v,W)$ is \progressable\ for~$h$.

\end{lemma}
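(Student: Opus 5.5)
We derive this by contraposition from \Cref{scaOVW-lemkey}, unfolding the definition of \hjsufficient. Fix $W$, $h \in \vec{h}$ and $t \in \NN$. Let $I$ be the set of all finite ordered variable words $v$ of length exactly $t$ such that $(v, W)$ is a $\QQ$-condition that is \nonprogressable\ for~$h$. Each $\QQ$-condition $(v,W)$ has $v$ level-homogeneous for~$f$, so every $v \in I$ has $\wshorter(v)$ \sectionallymonochromatic\ for~$f$. Every element of $I$ has length~$t$. Since there are only finitely many ordered variable words of length $t$ over~$A$, $I$ is finite. Thus $I$ meets every hypothesis of \Cref{scaOVW-lemkey} except possibly \hjsufficient ness.

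Suppose, towards a contradiction, that $I$ were \hjsufficient. Then \Cref{scaOVW-lemkey}, applied with this $W$, $h$ and $I$, gives some $v \in I$ such that $(v, W)$ is a $\QQ$-condition progressable for~$h$. This contradicts the definition of $I$. Hence $I$ is not \hjsufficient.

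Unfolding the definition, there is a coloring $g : A^t \to \ell$ such that no $v \in I$ has $\wsamelong(v)$ monochromatic for~$g$. We claim this $g$ works. Let $(v,W)$ be a $\QQ$-condition with $|v| = t$ and $\wsamelong(v)$ monochromatic for $g$. Then $v \notin I$. Since $(v,W)$ is a $\QQ$-condition of length $t$, the only way to fail membership in $I$ is to be progressable for~$h$, which is what we want.

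The only step needing care is the degenerate case $I = \emptyset$. Here any $g$ works vacuously. This is consistent with the argument, since the empty set is never \hjsufficient.
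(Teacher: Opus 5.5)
Your proof is correct and follows the paper's argument: the paper gets this lemma in one line, observing via Lemma \ref{scaOVW-lemkey} that the set of length-$t$ initial segments $v$ with $(v,W)$ a non-progressable $\QQ$-condition cannot be matching, and unfolding the definition of matching to obtain the coloring $g$. You spell out the same contraposition and also handle the case $I=\emptyset$ explicitly.
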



Rephrasing Lemma \ref{scaOVW-lemkey2}:
Let~$W$ be an ordered $\omega$-variable word
$W$ such that each $h \in \vec{h}$ is $W$-hyperimmune. For every~$h \in \vec{h}$, we have
\begin{align}\label{scaOVW-eqclaim1}
 \wc_h(W)\ne\emptyset.
\end{align}

\subsection{Maximizing non-progressable initial segments}

In Remark \ref{scaOVW-rem1}, we assumed the existence of a $W$-witness for progress in a very strong sense:
a simultaneous witness for every $U \in \OSub^\omega(W)$ such that $h$ is $U$-hyperimmune. It is not clear that such witnesses exist. If we simply take $g \in \wc(W)$, then even if $v$ stays in the monochromatic-for-$g$ world,
we only have that the $\QQ$-condition $(v,W^*\uhr_{|v|}^\omega) $ is \progressable.
But
its extension $(v, U)$ may not be \progressable.
To settle this, we make sure by shrinking the reservoir that
the set of initial segment $v$, with $|v|=t$
and $(v,W^*\uhr_{|v|}^\omega)$ being a \nonprogressable\ $\QQ$-condition, is
somewhat stabilized at infinitely many levels $t$, in the sense
that this set can no longer be changed via shrinking the reservoir $W^*\uhr_{|v|}^\omega$.


\begin{definition}
[Maximizing the set of non-progressable initial segments]
Given an ordered $\omega$-variable word $W$
such that each $h\in\vec{h}$ is $W$-hyperimmune,
we say $W$ \emph{maximizes the set of non-progressable initial segments at level $t$} iff 
for every $h\in\vec{h}$, and every $\QQ$-condition of the form $(v,W\uhr_t^\omega)$ with $|v| = t$, 
if some extension $(v,U)$ of $(v,W\uhr_t^\omega)$
is \nonprogressable\ for $h $, then   $(v,W\uhr_t^\omega)$
is already \nonprogressable\ for
 $h $.

\end{definition}


In particular, if an ordered $\omega$-variable word $W$ maximizes the set of non-progress\-able initial segments at level
$t$, then for any $h\in\vec{h}$ if $(v,W)$ is a  $\QQ$-condition \progressable\ for $h$ with $|v| = t$,
then for any extension $(v,U) $ of $(v,W)$,
$(v,U)$ is also \progressable\ for $h$.

Next we note that there does exist an ordered $\omega$-variable word
that maximizes the set of non-progressable initial segments at infinitely many levels.

\begin{proposition}\label[proposition]{scaOVW-prop30}
There exists an ordered $\omega$-variable word $W $
with each $h\in\vec{h} $ being $W$-hyperimmune
and an infinite set $T\subseteq\omega$ with
each $t\in T$ in between variables of $W$
such that $W$ maximizes the set of non-progressable initial segments at every level $t\in T$.
\end{proposition}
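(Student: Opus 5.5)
We build $W$ as the limit of a sequence of pairs $(w_s, U_s)$, where $w_s$ is a finite ordered variable word, $U_s$ is an ordered $\omega$-variable word, each $h \in \vec{h}$ is $U_s$-hyperimmune, $w_{s+1} \succeq w_s$ is cut in between variables, and $w_{s+1}\cdot U_{s+1} \in \OSub^\omega(w_s \cdot U_s)$. Let $W = \bigcup_s w_s$. By construction, $W \in \OSub^\omega(w_s \cdot U_s)$ for every $s$. We interleave three kinds of stages. Variable stages append a new variable kind to the stem. Maximization stages put a new level $t = |w_s|$ into $T$. Diagonalization stages ensure that each $h \in \vec{h}$ is $W$-hyperimmune in the limit. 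The last point is needed because $W$ is not Turing equivalent to any single $U_s$.

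\textbf{Monotonicity.} The basic observation is that non-progressability for~$h$ passes to smaller reservoirs. Suppose $(v,U)$ is non-progressable for~$h$, witnessed by $\Psi$, and $(v,U')$ is a $\QQ$-condition with $U' \in \OSub^\omega(U)$. Every extension of $(v,U')$ is an extension of $(v,U)$, so $\Psi$ also witnesses non-progressability of $(v,U')$.

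\textbf{Maximization stage at level $t = |w_s|$.} We list the finitely many pairs $(v,h)$ with $h \in \vec{h}$ and $v$ an ordered variable word of length~$t$ that is level-homogeneous for~$f$. There are finitely many because $\vec h$ is finite and $A$ is finite. We handle the pairs one at a time. If the current reservoir $U$ admits some $U' \in \OSub^\omega(U)$ with each $h \in \vec{h}$ being $U'$-hyperimmune and $(v,U')$ non-progressable for $h$, we replace $U$ by $U'$; otherwise we leave $U$ unchanged. By monotonicity, pairs made non-progressable earlier stay non-progressable. At the end of the stage the reservoir $U_{s+1}$ has the maximizing property at level~$t$. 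This property survives all later shrinkings of the tail. Indeed, let $W \uh_t^\omega = U^* \in \OSub^\omega(U_{s+1})$ and suppose some extension of $(v,U^*)$ is non-progressable. That extension is also an extension of $(v,U_{s+1})$. Hence $(v,U_{s+1})$ is non-progressable, by maximality for pairs where a change was possible and trivially for pairs already non-progressable. By monotonicity, $(v,U^*)$ is then non-progressable. Between maximization stages, variable stages ensure infinitely many variable kinds and that the levels placed in $T$ are in between variables.

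\textbf{Diagonalization stage for $(\Phi,h)$.} Write $w = w_s$ and $U = U_s$. We ask whether, for every~$n$, there is a finite $\sigma$ with $w\cdot U[\sigma]$ cut in between variables and $\Phi^{w\cdot U[\sigma]}(n)\downarrow$.
\begin{itemize}
\item[(a)] If so, the map $n \mapsto$ (maximal output over the first such $\sigma$ found) is $U$-computable. Since $h$ is $U$-hyperimmune, some $n$ has $\Phi^{w\cdot U[\sigma]}(n) < h(n)$. We set $w_{s+1} = w\cdot U[\sigma]$ and let $U_{s+1}$ be the remaining tail of~$U$. This tail is $U$-computable, so hyperimmunity is preserved.
\item[(b)] If not, fix an $n$ with no such $\sigma$. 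Then $\Phi^{G}(n)\uparrow$ for every $G \in \OSub^\omega(w\cdot U)$ extending~$w$, in particular for $G = W$.
\end{itemize}
Hence in the limit no $W$-computable function dominates any $h\in\vec h$.

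\textbf{Main obstacle.} I expect the main obstacle to be the one just handled: the limit $W$ is only a ``diagonal'' element of the nested classes $\OSub^\omega(w_s\cdot U_s)$, and the hyperimmunity hypotheses are attached to reservoirs, not to this limit. The key check is that the search in case (a) ranges only over finite stems compatible with the current reservoir, so it is $U_s$-computable. The other checks should be routine bookkeeping: maximization is preserved when passing to the limit tail $W\uh_t^\omega \in \OSub^\omega(U_{s+1})$, and the $\QQ$-conditions considered there require only that each $h$ be hyperimmune relative to the reservoir, which holds for $W\uh_t^\omega \equiv_T W$.
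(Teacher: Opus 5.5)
Your proposal is correct and takes essentially the same route as the paper. Both build $W$ by finite extension along reservoirs relative to which each $h\in\vec{h}$ stays hyperimmune, and both interleave three kinds of steps: straightforward diagonalization steps (possible because stems need not be level-homogeneous), steps adding variables, and steps that shrink the reservoir until the set of non-progressable stems of length $t$ can no longer grow, a property that persists under all further shrinkings. You supply details the paper only sketches, namely the pair-by-pair saturation justified by monotonicity of non-progressability, and the explicit $U$-computable search in the diagonalization step.
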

\begin{proof}
This is done by a finite extension argument.
In this proof, we will use a variant of $\QQ$-forcing, where the stems are not required to be level-homogeneous for~$f$. In other words, a condition is a pair $(v, U)$ such that
\begin{itemize}
    \item[(1)] $v$ is a finite ordered variable word and $U$ is an ordered $\omega$-variable word;
    \item[(2)] each $h\in\vec{h}$ is  $U$-hyperimmune.
\end{itemize}
To make sure each $h\in\vec{h}$ is $W$-hyperimmune, given a condition $(v,U)$
and a requirement $\mcal R_\Psi^h$,
we simply find an arbitrary extension $(\h v,\h U)$
satisfying $\mcal R_\Psi^h$
Unlike $\QQ$-forcing, this is now easy since the initial segment need not
be \sectionallymonochromatic\ for a given coloring.
One can similarly ensure that $W$ contains infinitely many variable kinds.

Also note that for any condition $(v,U)$ there is an extension
$(v,\h U)$ so that for each $h\in\vec{h}$, we can no longer increase the set of
ordered variable word  of length $|v|$
that are, together with a reservoir in $\OSub^\omega(\h U)$, \nonprogressable\
for $h$
via shrinking $\h U$.
In other words,
$v\cdot\h U$ maximizes the set of non-progressable initial segments at level $|v|$.
This implies that for every ordered $\omega$-variable word $\t U\in [v,\h U]$,
$\t U$ maximizes the set of non-progressable initial segments at level $|v|$ as well.
We can therefore build a sequence of conditions 
$$
(v_0,U_0)\geq (v_1,U_1)\geq\cdots
$$
such that
$W:= \cup_n v_n$ maximizes the set of non-progressable initial segments at infinitely many levels and each $h \in \vec{h}$ is $W$-hyperimmune.
This completes the proof of \Cref{scaOVW-prop30}.
\end{proof}

We are now ready to prove Theorem \ref{sca-th1}.

\subsection{Proof of \Cref{sca-th1}}

Let $f : A^{<\omega} \to \ell$ be an arbitrary coloring, for some finite alphabet~$A$ and some~$\ell \geq 1$.
Let $\vec{h} = (h_i)_{i < k}$ be hyperimmune functions for some~$k \in \NN$.
By \Cref{scaOVW-prop30}, there is an $\omega$-variable word~$W$ (with variable set $(x_n)_{n \in \NN}$) such that 
each $h\in\vec{h}$ is $W$-hyperimmune and
\begin{align}\label{scaOVW-eqmaximalnonprogressable}
&\text{$W$  maximizes the set of non-progressable initial segments }\\ \nonumber
&\text{at levels
in an infinite set $T\subseteq\NN$ where each $t\in T $  is}\\ \nonumber
&\text{in between variables of $W$.}
\end{align}

By (\ref{scaOVW-eqclaim1}), for each~$i < k$, there is some $g_i\in\wc_{h_i}(W)$. Let $g : A^{<\omega} \to \ell^k$ be the product of $g_i$.
To stay in the monochromatic-for-$g$ world, we use
strong preservation of $\omega$-hyperimmunity for \wHJ.
\begin{claim}\label[claim]{scaOVW-claim3}
There exists an ordered $\omega$-variable word $W^*\in \vwsamelong(W)$
with each $h\in\vec{h}$ being $W^*$-hyperimmune
such that for infinitely many $t\in T$,

\noindent$\wsamelong(W^*\uhr t)$ is monochromatic for $g$.
\end{claim}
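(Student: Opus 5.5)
The plan is to reduce \Cref{scaOVW-claim3} to \Cref{scaOVW-prop3}, the strong preservation of $\omega$ hyperimmunities for $\IOIHJ$. The reduction compresses $W$ so that its variable blocks line up with the levels in~$T$.

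\textbf{Step 1: compress $W$ along $T$.} Enumerate $T = \{t_0 < t_1 < \cdots\}$; each $t_i$ lies in between variables of~$W$. Let $\bar W$ be obtained from $W$ by merging, for each~$i$, all variable kinds occurring in $W\uh_{t_i}^{t_{i+1}}$ into a single variable kind $y_i$. The segment $W\uh_{t_0}$ needs separate treatment. I would either fold it into the first block, provided $t_0$ is taken to be $0$ or we simply discard the first element of~$T$, or substitute constants for its variable kinds. Substituting constants would break membership in $\vwsamelong(W)$, so the cleaner choice is to merge $W\uh_{t_0}$ into the block of $y_0$. Dropping $t_0$ from $T$ is harmless, since only infinitely many levels of $T$ are needed. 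Then $\bar W\in\vwsamelong(W)$, and $\bar W\le_T W\oplus T$.

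A subtlety: $T$ must not destroy hyperimmunity. I would fix this by choosing $T$ in \Cref{scaOVW-prop30} to be computable from the constructed $W$. The finite-extension construction can record the levels $|v_n|$ into $W$ itself, for instance by taking $T$ to be the set of positions where a new variable first occurs at a construction stage. Alternatively, one can observe that the construction there produces $W\oplus T$ with each $h$ still hyperimmune relative to it. Under either choice each $h\in\vec h$ is $\bar W$-hyperimmune.

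\textbf{Step 2: pull back $g$ and apply $\IOIHJ$.} Define $\bar g : A^{<\omega}\to\ell^k$ by $\bar g(\sigma)=g(\bar W[\sigma])$. This is an arbitrary coloring, which is exactly the setting \Cref{scaOVW-prop3} allows. Apply \Cref{scaOVW-prop3} relative to $\bar W$: each $h\in\vec h$ is $\bar W$-hyperimmune, so we obtain an ordered $\omega$-variable word $V$ such that each $h$ is $V\oplus\bar W$-hyperimmune. Moreover, for infinitely many $n$ in between variable kinds of~$V$, $\wsamelong(V\uh_n)$ is $\bar g$-monochromatic.

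\textbf{Step 3: transfer back.} Put $W^*=\bar W[V]$. Then $W^*\in\vwsamelong(W)$ because $|V|=\omega$, and $W^*\le_T V\oplus\bar W$, so hyperimmunity is preserved. Take $n$ in between variable kinds of $V$ with $\wsamelong(V\uh_n)$ $\bar g$-monochromatic. Since each letter of $V$ corresponds to one block of $\bar W$, the prefix $W^*\uh_{t}$ with $t=t_{n}$ (after reindexing) equals $\bar W[V\uh_n]$. Here we use that $n$ is in between variables, so the substitution is well-defined on the prefix. Hence every $u\in\wsamelong(W^*\uh_t)$ has the form $\bar W[\sigma]$ with $\sigma\in\wsamelong(V\uh_n)$, so $g(u)=\bar g(\sigma)$ is constant. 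Infinitely many such $n$ give infinitely many $t\in T$.

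The main obstacle is the hyperimmunity bookkeeping in Step 1, namely ensuring $T$ is recoverable from $W$ without adding computational power. I would handle it by strengthening \Cref{scaOVW-prop30}'s construction as described above.
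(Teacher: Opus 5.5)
\textbf{There is a genuine gap, and it is in Step~1 — the point you flag yourself as the main obstacle.}

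The word $\bar W$ is computed from $W\oplus T$, and Step~2 needs every $h\in\vec h$ to be $\bar W$-hyperimmune. But \Cref{scaOVW-prop30} gives no computational control on $T$. Concretely, $\bar W$ computes the map sending $i$ to the first occurrence of $y_i$, and this value is at least the $i$th element of $T$. So if $T$ is sparse enough that its principal function dominates $h$, then $h$ is not $\bar W$-hyperimmune. Nothing in \Cref{scaOVW-prop30} excludes this: $T$ is just the set of levels at which a non-effective shrinking of the reservoir was carried out.

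Neither of your repairs is an argument.
\begin{itemize}
\item Every position where a new variable first occurs can be read off $W$, but which of those positions were construction stages cannot. So your first suggestion does not give $T\le_T W$.
\item The second suggestion is not a property of the construction in \Cref{scaOVW-prop30}, which only forces requirements about $\Psi^W$. Forcing requirements about $\Psi^{W\oplus T}$ means redoing that construction, and this is not routine. A level may only enter $T$ after the maximizing shrink at that level has been done. That shrink cannot be done retroactively at levels strictly inside a stem you committed to while searching for a convergent computation.
\end{itemize}

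\textbf{The missing idea: $T$ belongs in the coloring, not in the word.} You never need $T$ to compute $W^*$. Since \Cref{scaOVW-prop3} accepts arbitrary colorings, as you note in Step~2, this is what the paper does.
\begin{itemize}
\item Work relative to $W$ itself. Let $t_\sigma$ be the largest element of $T$ with $t_\sigma\le|W[\sigma]|$, and set $\h g(\sigma)=g(W[\sigma]\uhr t_\sigma)$.
\item \Cref{scaOVW-prop3} gives $U$ with each $h$ being $U\oplus W$-hyperimmune. Then $W^*=W[U]\le_T W\oplus U$ never consults $T$.
\item There are infinitely many $\h t$ for which $\wsamelong(U\uhr\h t)$ is monochromatic for $\h g$. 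For each such $\h t$, let $t$ be the largest element of $T$ with $t\le|W[A^{\h t}]|$. Then $\wsamelong(W^*\uhr t)$ is monochromatic for $g$.
\end{itemize}

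The truncation to $t_\sigma$ also repairs a second slip in your Step~3. The word $\bar W[V\uhr n]$ has length equal to the first occurrence of $y_n$, which exceeds $t_n$ whenever $W(t_n)$ is a constant. So in general $W^*\uhr t_n\neq\bar W[V\uhr n]$, and the level at which you get monochromaticity need not lie in $T$. In your version you would also have to thin $T$ so that every block contains a variable.
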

\begin{proof}
Let $t_\sigma$ be the largest number in $T$
such that $t_\sigma\leq |W[\sigma]|$, if it exists. Otherwise, $t_\sigma = 0$.
Let $\h g : A^{<\omega} \to \ell^k$ be defined by
$$
\h g(\sigma):= g(W[\sigma]\uhr t_\sigma).
$$
By strong preservation of $\omega$ hyperimmunity
for \wHJ\ (see  \Cref{scaOVW-prop3}),
there exists an ordered $\omega$-variable word $U$ with
each $h\in\vec{h}$ being $ U\oplus W $-hyperimmune such that
for infinitely many $t\in\NN$,
\begin{align}\nonumber
\wsamelong(U\uhr t)\text{ is monochromatic for $\h g$}
\end{align}
Let $W^\star = W[U]$. 
Clearly $W^*\leq_T W\oplus U$ so $h$ is $W^*$-hyperimmune.
It remains to find arbitrary large $t\in T$
such that
\begin{align}
\label{scaOVW-eq7}
\wsamelong(W^*\uhr t)
\text{ is monochromatic for $g$.}
\end{align}
Let $\h t\in\NN$ be such that there exists $t\in T$ large enough
such that
\begin{align}\label{scaOVW-eq6}
t\leq \big|W[A^{\h t}]\big|\text{ and }\wsamelong(U\uhr \h t)\text{ is monochromatic for $\h g$}
\end{align}
Fix such $\h t$ and  assume $t$
is the largest number in $T$ satisfying  $t\leq |W[A^{\h t}]|$.
By definition of $\h g$,
$$
\h g:A^{  \h t  }\ni\sigma\mapsto g(W[\sigma]\uhr t).
$$
Combine with (\ref{scaOVW-eq6})
we have
\begin{align}\nonumber
\wsamelong(W[U\uhr \h t])\uhr t  \text{ is monochromatic for $g$.}
\end{align}
Since $W^*  = W[U]$, one can equivalently say
$$
\wsamelong(W^*\uhr t)\text{ is monochromatic for $g$.}
$$
This verifies (\ref{scaOVW-eq7}). Thus, the proof of \Cref{scaOVW-claim3} is completed.
\end{proof}

 As long as the initial segment stay in the
 somewhat monochromatic-for-$g$ world,
 the $\QQ$-condition is always \progressable\
 for all $h \in\vec{h}$:

\begin{claim}
[Key Claim]
\label{scaOVW-claim0}
For every $\QQ$-condition $(v,U)\leq (\varepsilon, W^*)$,
every $h\in\vec{h}$,
$(v,U)$ is \progressable\ for $h$.
\end{claim}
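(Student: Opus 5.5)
We argue by contradiction, using the maximization property (\ref{scaOVW-eqmaximalnonprogressable}) to push non-progressability of $(v,U)$ back down to a condition of the form $(v',W\uh_{t}^\omega)$ with $t\in T$. There we can invoke that $g_i\in\wc_{h_i}(W)$.

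\textbf{Setup.} Fix a $\QQ$-condition $(v,U)\leq(\varepsilon,W^*)$ and $h=h_i\in\vec{h}$, and suppose $(v,U)$ is non-progressable for~$h$, witnessed by some functional~$\Psi$. Since $v\cdot U\in\OSub^\omega(W^*)$, the stem $v$ is an initial segment of some word of $\OSub^\omega(W^*)$. By \Cref{scaOVW-claim3}, we may pick $t\in T$ with $t\geq|v|$ such that $\wsamelong(W^*\uh_t)$ is $g$-monochromatic.

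\textbf{Step 1: extend $v$ to length $t$ inside the good level.} We want an extension $(v',U')\leq(v,U)$ with $|v'|=t$ and $v'\cdot U'\in\OSub^\omega(W^*)$, arranged so that the variable kinds of $v'$ come from those of $W^*\uh_t$. Two facts make this possible. First, every $t\in T$ is in between variables of~$W$. Second, $W^*\in\vwsamelong(W)$ has the same length structure, and we should choose $t$ in between variables of~$W^*$ as well; this should be possible after re-choosing $t$ from the infinitely many candidates in \Cref{scaOVW-claim3}. Then $\WSub^=(v')\subseteq\WSub^=(W^*\uh_t)$, so $\WSub^=(v')$ is $g$-monochromatic, hence $g_i$-monochromatic.

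To get $v'$, take the reservoir $U$ and extend $v$ by a prefix of $U$ instantiated up to position $t$, keeping the variable kinds. Level-homogeneity of $v'$ for~$f$ is needed for $(v',U')$ to be a $\QQ$-condition. I expect this to be the main obstacle. To handle it, I would substitute constants for the variables of $U$ appearing before~$t$, chosen by a Hales--Jewett-type choice (\Cref{thm:hales-jewett}, or \Cref{lem:graham-rothschild-tree} applied to the finitely many levels), so that the new levels are $f$-monochromatic. The constraint is that the variable kinds of $v$ must be kept, and the levels of $v'$ must be subsets of those of $W^*\uh_t$.

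An alternative is to use that every extension of a non-progressable condition is non-progressable. Non-progressability is inherited downward: if some extension could satisfy $\R^h_\Psi$, so could the original. So we may freely replace $(v,U)$ by any extension of length~$t$ that is a $\QQ$-condition.

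\textbf{Step 2: transfer to the $W$-reservoir.} With $|v'|=t$ and $v'\cdot U'\in\OSub^\omega(W)$, and $t$ in between variables, we get $U'\in\OSub^\omega(W\uh_t^\omega)$. Hence $(v',U')\leq(v',W\uh_t^\omega)$, and the latter is a $\QQ$-condition because $W$ preserves the hyperimmunities. Since $(v',U')$ is non-progressable for~$h$ and $W$ maximizes non-progressable initial segments at level $t\in T$, the condition $(v',W\uh_t^\omega)$ is non-progressable for~$h$.

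\textbf{Step 3: contradiction.} The word $v'$ is level-homogeneous for~$f$, $\WSub^=(v')$ is $g_i$-monochromatic, and $g_i$ is a $W$-witness for progress for~$h_i$. By definition of $W$-witness, $(v',W\uh_{|v'|}^\omega)$ is progressable for~$h$, which contradicts Step~2. Hence $(v,U)$ is progressable for every $h\in\vec{h}$.
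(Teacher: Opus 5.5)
Your architecture is the paper's, run contrapositively: move $(v,U)$ to a stem $v'$ of length $t\in T$ lying in a $g$-good level, note that $(v',U')\le(v',W\uh_t^\omega)$, and play maximization at level $t$ against $g_i\in\wc_{h_i}(W)$. Steps 2 and 3 are fine, and so is the downward inheritance of non-progressability. The problem is Step 1, the only constructive step, which is not established as written.

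First, you require $t$ to be in between variables of $W^*$ and say this can be arranged by re-choosing $t$. The claim producing $W^*$ gives no such information: the good $t$ there is the largest element of $T$ that is at most $|W[A^{\hat t}]|$, and nothing prevents a variable of $W^*$ from straddling it. The paper's proof explicitly does without this.

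Second, you treat level-homogeneity of $v'$ as the main obstacle, to be handled by an unspecified Hales--Jewett choice that makes ``the new levels'' $f$-monochromatic. If $v'$ really kept some of $U$'s variable kinds, this would be a real and possibly unsolvable problem. A new level of $v'$ consists of the words $v[u]\cdot c$, where $u$ ranges over all substitutions for the already fixed variables of $v$ and $c$ is a fixed constant block taken from $U$. Such a $c$ need not exist at all; this is exactly Case~1 in the proof of \Cref{scaOVW-lemext-P}. Hales--Jewett and the Graham--Rothschild tree choose a variable word \emph{in front of} a fixed suffix, not a constant block behind a fixed stem.

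The missing idea is to extend by constants only, as the paper does.
\begin{itemize}
\item Let $r$ be the number of variable kinds of $v$. Pick $\sigma\in A^{<\omega}$ with $|U[\sigma]|\geq t-|v|$.
\item Set $U_\sigma=U[\sigma\cdot x_rx_{r+1}\cdots]$, $v'=(v\cdot U_\sigma)\uh t=(v\cdot U[\sigma])\uh t$, and $U'=U_\sigma\uh_{t-|v|}^\omega$.
\item No variable of $U_\sigma$ occurs before position $t-|v|$, so $(v',U')\le(v,U)$ is a legitimate extension.
\item $v'$ has exactly the variable kinds of $v$, all completed before position $|v|$. Hence $\WSub^\star(v')=\WSub^\star(v)$, and level-homogeneity is inherited from $v$ for free.
\item Since $v\cdot U_\sigma\in\OSub^\omega(W^*)$, every element of $\WSub^=(v')$ is obtained from $W^*\uh t$ by assigning constants to its variables. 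A variable of $W^*$ straddling $t$ does no harm. So $\WSub^=(v')\subseteq\WSub^=(W^*\uh t)$ is $g$-monochromatic, with no condition on $t$ relative to the variables of $W^*$.
\end{itemize}
With this $v'$, your Steps 2 and 3 go through verbatim, and the argument becomes the paper's.
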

\begin{proof}
Without loss of generality, suppose $h$ is $h_0$.
By definition of a $\QQ$-extension, $v\cdot U\in \vwsamelong(W^*)$.
By \Cref{scaOVW-claim3}, there exists $t\in T$
with $t\geq |v|$
such that
\begin{align}\nonumber
\wsamelong((v\cdot U)\uhr t)
\text{
is monochromatic for   $g $.
}
\end{align}
Note that we do not require $t$ to be in between variables of~$U$.
Since $g$ is the product of $(g_i: i<k)$,
we have 
\begin{align}\nonumber
\wsamelong((v\cdot U)\uhr t)
\text{
is monochromatic for   $g_0 $.
}
\end{align}
In particular, choose
$\sigma\in A^{<\omega}$ long enough so that
$|v\cdot U[\sigma]|\geq t$,
we have
\begin{align}\nonumber
\wsamelong((v\cdot U[\sigma])\uhr t)
\text{
is monochromatic for  $g_0 $.
}
\end{align}
Combining with $g_0\in\wc_{h_0}(W)$, and since $(v,U)$ is $\QQ$-condition,
we have
\begin{align}\label{scaOVW-eq5}
\text{$\big((v\cdot U[\sigma])\uhr t, W\uhr_t^\omega\big)$
is a $\QQ$-condition\ \progressable\ for $h_0$.}
\end{align}
Say that the variable set of~$v$ is $\{ x_i : i < r \}$.
Let $$U_\sigma:= U[\sigma\cdot x_r x_{r+1}\cdots].$$
In particular, $U_\sigma$ is an ordered $\omega$-variable word over~$A$ with variable set $\{ x_i : i \geq r \}$ so $v \cdot U_\sigma$ is an ordered $\omega$-variable word over~$A$ with variable set $\{ x_i : i \in \NN \}$.
Note that $(v\cdot U_\sigma)\uhr t = (v\cdot U[\sigma])\uhr t$.

\begin{subclaim}\label{scaOVW-subclaim0}
$\big((v\cdot U_\sigma)\uhr t, U_\sigma\uhr_{t-|v|}^\omega\big)\leq
\big((v\cdot U_\sigma)\uhr t, W \uhr_t^\omega\big).$
\end{subclaim}
\begin{proof}
By \Cref{scaOVW-def-conditionextension} of $\QQ$-condition extension,
it suffices to show that
\begin{align}\label{scaOVW-eq28}
U_\sigma\uhr_{t-|v|}^\omega\in
\vwsamelong(W\uhr_t^\omega).
\end{align}
Because
$$v\cdot U\in \vwsamelong(W^*)\subseteq \vwsamelong(W).$$
Which means $$v\cdot U_\sigma
 \in \vwsamelong(W).$$
Which means
\begin{align}\nonumber
U_\sigma\uhr_{t-|v|}^\omega
&= (v\cdot U_\sigma)
\uhr_t^\omega\\ \nonumber
&\in \vwsamelong(W )\uhr_t^\omega \\ \nonumber
&= \vwsamelong(W\uhr_t^\omega).
\end{align}
Thus, verifying (\ref{scaOVW-eq28}) and we are done.

\end{proof}

But $W$ maximizes the set of non-progressable initial segments at level $t$
by (\ref{scaOVW-eqmaximalnonprogressable}),
therefore, combining with Subclaim \ref{scaOVW-subclaim0}
and (\ref{scaOVW-eq5}),
we have
\begin{align}\nonumber
\text{$\big((v\cdot U_\sigma)\uhr t, U_\sigma\uhr_{t-|v|}^\omega\big)$
is a  $\QQ$-condition \progressable\ for $h_0$.}
\end{align}
But $$\big((v\cdot U_\sigma)\uhr t, U_\sigma\uhr_{t-|v|}^\omega\big)\leq(v,U),$$
thus, $(v,U)$ is \progressable\ for $h_0$. Thus, we are done
proving Claim \ref{scaOVW-claim0}.

\end{proof}
By Claim \ref{scaOVW-claim0},
we can build a sequence of $\QQ$-conditions
$$(\varepsilon,W^*)\geq (v_0,W_0)\geq (v_1,W_1)\geq\cdots$$
so that every requirement is satisfied by some $\QQ$-condition in the sequence.
Therefore, $G:= \cup_n v_n$ is an ordered variable word
satisfying all requirements,
meaning each $h\in\vec{h}$ is not dominated by $\Psi^G$ for all Turing functionals $\Psi$.
In other words, each $h\in\vec{h}$ is $G$-hyperimmune.
Clearly $G$ is \sectionallymonochromatic\ for $f$
since it is the case for $v_n$ for each~$n \in \NN$.

To see $G$ contains infinitely many variable kinds, note that
for every $n\in\NN$,
there is such Turing functional $\Psi$ so that
$\Psi^G(m)\downarrow$ (for all $m$) iff $G$ contains more than $n$
variable kinds.
Clearly, for any $\QQ$-condition $(v,W)$,
it cannot be the case $\Psi^G$ is non total for all $G\in [v,W]$.
Thus, by definition of $(v,W)  $ satisfying $\mcal R^{h}_\Psi$,
suppose $(v_N,W_N)$ satisfies $\mcal R^h_\Psi$, then $\Psi^{v_N}(m)\downarrow$.
Which means $v_N $ contains more than $n$ many variable kinds.
Thus $G$ is an $\LOVW^0$-solution to~$f$ such that each~$h\in\vec{h}$ is $G$-hyperimmune.
This completes the proof of \Cref{sca-th1}.

\section{Carlson-Simpson Lemma in $\ACA_0$}\label[section]{sec:csl-aca}

Carlson and Simpson~\cite{carlson1984dual} used the Ordered Variable Word theorem as a pigeonhole principle
to prove inductively what is now known as the Carlson-Simpson Lemma.
As for Ramsey's theorem, the proof consists in reducing the dimension $n+1$ to the dimension~$n$ through the notion of \emph{pre-homogeneous} space, which is a sub-space over which the coloring does not depend on the last dimension. In the case of the Carlson-Simpson Lemma, one can obtain a pre-homogeneous unordered $\omega$-variable word by sequentially applying the Ordered Variable Word. The previous best known upper bound for $\OVW^0$ being any PA over $\emptyset'$, a pre-homogeneous $\omega$-variable word was computed using $\emptyset^{(\omega)}$ and the proof was formalizable over~$\ACA_0^+$. In our case, thanks to the low${}_2$ basis theorem for $\OVW^0$, pre-homogeneous $\omega$-variable words can be obtained over~$\ACA_0$.

The following proof is in the spirit of the original one~\cite{carlson1984dual} and uses the same definitions as~\cite[Section 5]{angles2023carlson}. Only the meta-analysis based on \Cref{ovw0-low2-formalized} differs.

\subsection{Notation}\label{sec:notation-csl}
Given two $\omega$-variable words $W$ and $\hat W$, and $m \in \NN$, we write $\hat W\leq_m  W$ iff $\hat W = W[V]$
for some $\omega$-variable word $V$ such that $x_0\cdots x_{m-1}\prec V$.

Given $\beta \leq \alpha \leq \omega$ and some $\alpha$-variable word~$w$, we write $w \uuh_\beta$ for the $\beta$-variable word obtained by cutting $w$ at the first occurrence of the variable kind~$x_\beta$ (or the $\beta$th variable kind if the variable set is different).

Recall that given $\beta \leq \alpha \leq \omega$, we write $A^{\alpha, \beta}$ and $A^{\leq\alpha, \beta}$ for the sets of all unordered $\beta$-variable word of length~$\alpha$, or at most~$\alpha$, respectively. One defines the unordered counterpart $\USub$ of $\OSub$ in a straightforward way. For instance, given $\gamma \leq \beta \leq \omega$ and an unordered $\beta$-variable word~$w$, we let $\USub^\gamma_{A}(w)$ be the set of all unordered $\gamma$-variable words generated by~$w$, that is,
$$
\USub^\gamma_{A}(w) =  \{ w[u] : u \in A^{\leq\beta, \gamma} \}
$$
The notations $\USub_A(w)$, $\USub^{=}_A(w)$ and $\USub^{\star}_A(w)$ are defined accordingly. 

\subsection{Inductive proof of $\CSL$}

\begin{definition}\label{def:csl-prehomogeneous}
Fix a finite alphabet~$A$ and some~$\ell \geq 1$.
An unordered $\omega$-variable word $W$ over $A$ is \emph{pre-homogeneous} for a coloring $f : A^{<\omega,n+1} \to \ell$ if for every $u \in \USub^{n,\star}_A(W)$, there is a color~$c < \ell$ such that for every~$v \in \USub^{n+1,\star}_A(W)$, if $v \uuh_n = u$, then $f(v) = c$.
\end{definition}

Every pre-homogeneous set~$W$ induces a coloring $\hat f : \USub^{n,\star}_A(W) \to \ell$ defined by $\hat f(u) = c$ for the witness color~$c$. Note that for any unordered $\omega$-variable word~$U \in \USub^\omega_A(W)$ such that $\USub^{n,\star}_A(U)$ is $\hat f$-monochromatic, $\USub^{n+1,\star}_A(U)$ is $f$-monochromatic.


The novelty will be to prove the existence of pre-homogeneous unordered $\omega$-variable words over~$\ACA_0$.
Such $\omega$-variable words can be obtained by iterating the following lemma, which corresponds to \cite[Lemma 5.3]{angles2023carlson}, except that the $\omega$-variable word is provably of low${}_2$ degree.

\begin{lemma}[$\ACA_0$]\label[lemma]{lem:csl-prehomogeneous-step}
Fix a set~$Z$, a finite alphabet~$A$ and $\ell \geq 1$.
Let $W$ be a $Z$-computable $\omega$-variable word, $f : A^{<\omega,n+1} \to \ell$ be a $Z$-computable coloring and $u\in A^{<\omega,n}$. There exists an $\omega$-variable word $\hat W\leq_{|u|+1}W$ and a color $c < \ell$ such that
\begin{itemize}
    \item[(1)] for every $v\in A^{<\omega,n+1}$ with $v \uuh_n = u$, we have $f(\hat W[v])=c$.
    \item[(2)] $\hat W$ is of low${}_2$ degree over~$Z$, that is, $(\hat W \oplus Z)'' \leq_T Z''$.
\end{itemize}
\end{lemma}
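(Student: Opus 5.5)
We reduce the statement to a single application of \Cref{ovw0-low2-formalized}. Let $m = |u|$, and write the variables of $u$ as $x_0,\dots,x_{n-1}$. Every $v \in A^{<\omega,n+1}$ with $v\uuh_n = u$ has the form $v = u \cdot x_n \cdot \sigma$, where $\sigma$ is a word over $A \sqcup \{x_0,\dots,x_n\}$. The idea is to look for $\hat W$ of the form $\hat W = W[x_0\cdots x_{m-1}\cdot V]$. Here $V$ is an $\omega$-variable word in the variables $x_m, x_{m+1},\dots$ (suitably renamed), so the condition $\hat W \leq_{m+1} W$ holds provided the first letter of $V$ is the variable $x_m$. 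Substituting $v$ into $\hat W$ only evaluates the first $n+1$ variable kinds of $\hat W$. Those lying beyond position $m$ receive values in the finite alphabet $B = A \sqcup \{x_0,\dots,x_n\}$ determined by $v$.

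Following Carlson--Simpson, we treat the tail as a word over the extended alphabet $B$. Define $g : B^{<\omega} \to \ell$ by $g(\tau) = f(W[u \cdot x_n \cdot \tau])$, restricted to those $\tau$ for which $u\cdot x_n\cdot\tau$ is an $(n+1)$-variable word (this holds automatically, since $x_n$ already occurs). The coloring $g$ is $Z$-computable. By \Cref{ovw0-low2-formalized}, applied to $g$ over the alphabet $B$, we obtain an ordered $\omega$-variable word $U$ over $B$ such that $\WSub^\star_B(U)$ is $g$-monochromatic for some color $c$ and $(U\oplus Z)''\leq_T Z''$.

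Now set $V = x_m \cdot U'$, where $U'$ is $U$ with its variables renamed $x_{m+1}, x_{m+2},\dots$ and its letters from $\{x_0,\dots,x_n\}$ reinterpreted. The step needing care is this bookkeeping: letters $x_j$ with $j<n$ appearing in $U$ must become, inside $\hat W$, occurrences of the variable that $u$ maps to $x_j$. This forces us to take $B$ to be $A$ together with the variable letters, and to define $g$ through the substitution $u$. Concretely, $V$ is the word $u$-compatible with $x_0\cdots x_{m-1}$ followed by $x_m$ and $U$, where each letter $x_j$ ($j\le n$) of $U$ is replaced by a fresh copy of the $m+\,$th-position variable pattern. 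Once this is set up, any $v$ with $v\uuh_n=u$ factors as $\hat W[v] = W[u\cdot x_n\cdot U[\tau]]$ for some $\tau\in B^{<\omega}$. Hence $f(\hat W[v]) = g(U[\tau]) = c$, which gives (1). Since $\hat W \leq_T W \oplus U \leq_T Z\oplus U$, we get $(\hat W\oplus Z)''\leq_T Z''$, which gives (2).

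The main obstacle is exactly this variable bookkeeping: the earlier variables $x_0,\dots,x_{n-1}$ must be able to reoccur after position $m$ (the words are unordered). We handle it by treating them as alphabet letters of $B$, and then verify that the resulting $\hat W$ is still a well-formed unordered $\omega$-variable word extending $x_0\cdots x_{m-1}x_m$ under $\leq_{m+1}$. The formalization in $\ACA_0$ follows directly from \Cref{ovw0-low2-formalized}, since everything else is a $Z\oplus U$-computable construction.
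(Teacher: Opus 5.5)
Your proposal follows the paper's proof: the same coloring $g(\tau)=f(W[u\cdot x_n\cdot\tau])$ on $(A\sqcup\{x_0,\dots,x_n\})^{<\omega}$, one application of \Cref{ovw0-low2-formalized}, $\hat W=W[x_0\cdots x_m\cdot U']$ with $m=|u|$, the computation $f(\hat W[v])=g(U[\tau])=c$, and lowness from $\hat W\leq_T W\oplus U$. Your \qt{concretely} sentence is garbled and should instead say: replace each letter $x_j$ ($j<n$) of $U$ by the variable $x_{p_j}$, for some $p_j<m$ with $u(p_j)=x_j$, and replace the letter $x_n$ by $x_m$, with no fresh variables; this gives $(x_0\cdots x_m\cdot U')[u\cdot x_n\cdot\tau]=u\cdot x_n\cdot U[\tau]$, and you were right to single the renaming out, since the paper's write-up $W[x_0\cdots x_n\cdot U]$ skips exactly this step.
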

\begin{proof}
Let $A_{n+1} = A \sqcup \{x_0, \dots, x_n\}$.
Let $g : A_{n+1}^{<\omega}\to \ell$ be defined as follows: for every $w \in A_{n+1}^{<\omega}$, $g(w) = f(W[u \cdot x_n \cdot w])$. Note that the variables $x_0, \dots, x_n$ are part of the alphabet~$A_{n+1}$, and therefore considered as constants from the viewpoint of $g$. In particular, $g$ is an instance of $\OVW^0$ for $\ell$-coloring of the alphabet~$A_{n+1}$.

By \Cref{ovw0-low2-formalized}, there is an (ordered) $\omega$-variable word~$U$ over $A_{n+1}$ such that the set $\WSub^{\star}_{A_{n+1}}(U) = \{ U[w] : w \in A_{n+1}^{<\omega} \}$ is $g$-monochromatic for some color~$c < \ell$, and $(U \oplus Z)'' \leq_T Z''$.
For simplicity, we can assume that the variable set of $U$ is $\{ x_{|u|+1}, x_{|u|+2}, \dots \}$. Then, $x_0 \cdots x_{|s|} \cdot U$ forms an unordered $\omega$-variable over~$A$. Indeed, $U$ may contain infinitely many occurrences of~$x_s$ for some $s \leq n$.

Let $\hat W = W[x_0 \cdots x_n \cdot U]$. By construction, $\hat W \leq_{|u|+1} W$. Moreover, $\hat W \leq_T U$, so $(\hat W \oplus Z)'' \leq_T Z''$. We claim that (1) holds. Fix some~$v \in A^{<\omega, n+1}$ with $v \uuh_n = u$. In particular, $u \cdot x_n\preceq v$.
Let $w \in A_{n+1}^{<\omega}$ be such that $v = u \cdot x_n \cdot w$. Then $f(\hat W[v]) = f(\hat W[u \cdot x_n \cdot w]) = f(W[u \cdot x_n \cdot U[w]]) = g(U[w]) = c$. This completes the proof of \Cref{lem:csl-prehomogeneous-step}.
\end{proof}

Iterating the previous lemma, we obtain the existence of a pre-homogeneous unordered $\omega$-variable word over~$\ACA_0$.

\begin{lemma}[$\ACA_0$]\label[lemma]{lem:csl-prehomogeneous}
Fix a finite alphabet~$A$ and $\ell \geq 1$.
For every coloring $f : A^{<\omega,n+1} \to \ell$, there is an unordered $\omega$-variable word~$W$ pre-homogeneous for~$f$.
\end{lemma}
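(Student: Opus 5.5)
We iterate \Cref{lem:csl-prehomogeneous-step} along an enumeration of $A^{<\omega,n}$, building a sequence of $\omega$-variable words $W_0 \geq W_1 \geq \cdots$ whose "frozen prefixes" grow. Enumerate $A^{<\omega,n} = \{u_0, u_1, \dots\}$ so that $|u_s|$ is non-decreasing; each length class is finite, so $|u_s| \to \infty$. Start with $W_0 = x_0x_1\cdots$. Given $W_s$, apply \Cref{lem:csl-prehomogeneous-step} with $Z = f \oplus W_s$, the coloring $f$ restricted to $\USub^{n+1,\star}_A(W_s)$ pulled back through $W_s$ (that is, $v \mapsto f(W_s[v])$), and the word $u_s$. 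This yields $W_{s+1} = W_s[V_s]$ with $x_0\cdots x_{|u_s|} \prec V_s$, and a color $c_s$. These satisfy $f(W_{s+1}[v]) = c_s$ for all $v$ with $v\uuh_n = u_s$, and $(W_{s+1}\oplus Z_s)'' \leq_T Z_s''$.

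\textbf{Limit word.} Since $W_{s+1} \leq_{|u_s|+1} W_s$, the first occurrence of $x_{|u_s|+1}$ in $W_{s+1}$ is at or after its position in $W_s$, and the prefix of $W_s$ before that point is preserved. As $|u_s|\to\infty$, these prefixes stabilize, so $W = \lim_s W_s \uuh_{|u_s|+1}$ is a well-defined unordered $\omega$-variable word. Moreover, $W \leq_{|u_s|+1} W_{s+1}$ for each $s$.

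\textbf{Pre-homogeneity.} Fix $u \in \USub^{n,\star}_A(W)$ and write $u = W[u']$ with $u' \in A^{<\omega,n}$, say $u' = u_s$. Any $v \in \USub^{n+1,\star}_A(W)$ with $v\uuh_n = u$ has the form $W[v']$ with $v'\uuh_n = u_s$. Since $W = W_{s+1}[V]$ with $x_0\cdots x_{|u_s|}\prec V$, we get $W[v'] = W_{s+1}[V[v']]$ with $V[v']\uuh_n = u_s$, hence $f(v) = c_s$. The substitution bookkeeping for unordered words, namely checking that $V[v']$ still cuts at $x_n$ to exactly $u_s$, is routine.

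\textbf{Formalization (main obstacle).} The main difficulty is carrying this out in $\ACA_0$: the sequence $(W_s)_s$ must exist as a set. Each step is low${}_2$ relative to the previous one, so by transitivity of low${}_2$-ness every $W_s \oplus f$ has double jump computable in $f''$. The key point is that the proof of \Cref{ovw0-low2-formalized} (via \Cref{thm:ovw0-weakly-low}) is uniform: from an $f''$-index for the current data, one effectively obtains an index for $W_{s+1}$ together with a reduction of $(W_{s+1}\oplus f)''$ to $f''$. Indeed, the $\emptyset'$-construction of the generic filter is uniform, and choosing a path through a tree computable in $Z_s'$ with a low-over-$Z_s'$ choice is uniform by the formalized low basis theorem. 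So the whole sequence of indices, and hence $(W_s)_s$, is $\Delta^0_1$ in $f''$, which exists in $\ACA_0$. I would verify the required uniformity claims by arithmetical induction on $s$, and then take $W$ as the $f''$-computable limit.
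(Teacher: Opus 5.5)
Your proposal is correct and follows the paper's proof. You iterate the one-step pre-homogeneity lemma along a length-nondecreasing enumeration of $A^{<\omega,n}$, pass to the limit using $W_m \leq_{|u_s|+1} W_s$ for $m \geq s$, read off pre-homogeneity from the colors $c_s$, and get the sequence in $\mathsf{ACA}_0$ because it is computable from $f''$. Your explicit treatment of uniformity in that last step just makes precise what the paper compresses into the remark that the sequence is arithmetic in $f$.
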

\begin{proof}
Let $u_0, u_1, \dots$ be an enumeration of $A^{<\omega, n}$ in shortlex order. In particular, $|u_n| \leq |u_{n+1}|$.
Apply successively \Cref{lem:csl-prehomogeneous-step} to obtain an sequence of unordered $\omega$-variable words $W_0, W_1, \dots$ with $W_0 = x_0x_1\cdots$, such that for every~$n \in \NN$,
\begin{itemize}
    \item[(1)] $W_{n+1} \leq_{|u_n|+1} W_n$;
    \item[(2)] there is a color~$c_n < \ell$ such that for every~$v \in A^{<\omega, n+1}$ with $v \uuh_n = u_n$, we have $f(W_{n+1}[t])=c_n$;
    \item[(3)] $W_n$ is low${}_2$ over~$f$, that is, $(W_n \oplus f)'' \leq_T f$.
\end{itemize}
By (3), the sequence $W_0, W_1, \dots$ is arithmetic in~$f$, so exists by the arithmetic comprehension axiom.
By (1) and the fact that the enumeration $(u_n)_{n \in \NN}$ is of non-decreasing length order, for every~$n \in \NN$ and every~$m \geq n$, $W_m \leq_{|u_n|+1} W_n$. It follows in particular that: $\lim_n W_n$ exists, call it $W$. In particular, for every~$n \in \NN$, $W \leq_{|u_n|+1} W_n$.
By (2) and the previous fact, $W$ is pre-homogeneous for~$f$.
\end{proof}

We are now ready to prove over $\ACA_0$ that the base case and the induction step hold for the statement $\forall n \CSL^n$. However, note that $\ACA_0$ does not prove $\Pi^1_2$-induction, so we cannot deduce that $\forall n \CSL^n$ holds over~$\ACA_0$. We shall see in \Cref{thm:acap-equiv} that the statement $\forall n \CSL^n$ is equivalent to $\ACA_0'$ over~$\RCA_0$, where $\ACA_0'$ is $\RCA_0$ augmented with the statement $\forall n \forall X \exists Y (Y = X^{(n)})$.

\begin{proposition}\label[proposition]{prop:aca0-csl-induction}
$\ACA_0 \vdash \CSL^0 \wedge \forall n(\CSL^n \to \CSL^{n+1})$.
\end{proposition}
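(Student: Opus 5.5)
The proof splits into the base case $\CSL^0$ and the induction step $\CSL^n \to \CSL^{n+1}$, both argued inside an arbitrary model of $\ACA_0$.

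\textbf{Base case.} Fix a coloring $f : A^{<\omega,0} \to \ell$. By \Cref{ovw0-low2-formalized}, there is an ordered $\omega$-variable word $W$ such that $\WSub^{\star}(W) = \{ W[u] : u \in A^{<\omega,0}\}$ is $f$-monochromatic. This theorem is provable in $\ACA_0$, and in fact we only need $\OVW^0$, which already holds over $\ACA_0$ by the Towsner-style proof of \Cref{sec:towsner-ovw0}.

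Every ordered $\omega$-variable word is in particular an unordered $\omega$-variable word. Indeed, each $x_i$ occurs, and the first occurrence of $x_i$ precedes that of $x_{i+1}$, because the last occurrence of $x_i$ already does. Moreover, for $u \in A^{<\omega,0}$, the word $W[u]$ has the same meaning in both settings. So $W$ witnesses $\CSL^0$ for $f$.

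\textbf{Induction step.} Assume $\CSL^n$ and let $f : A^{<\omega,n+1} \to \ell$. By \Cref{lem:csl-prehomogeneous}, there is an unordered $\omega$-variable word $W$ that is pre-homogeneous for $f$. This yields the induced coloring $\hat f$, defined on $\USub^{n,\star}_A(W)$ by $\hat f(u)$ = the witness color. Define $g : A^{<\omega,n} \to \ell$ by $g(u) = \hat f(W[u])$. This is $\Delta^0_1$ in $f \oplus W$, since $\hat f(W[u])$ can be computed as $f(W[u\cdot x_n])$. Apply $\CSL^n$ to $g$ to get an $\omega$-variable word $V$ such that $\{V[u] : u \in A^{<\omega,n}\}$ is $g$-monochromatic, say with color $c$. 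Set $U = W[V] \in \USub^\omega_A(W)$.

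We then check that $U$ witnesses $\CSL^{n+1}$ for $f$. Take any $v \in A^{<\omega,n+1}$. Then $U[v] = W[V[v]]$, and $(V[v])\uuh_n = V[v\uuh_n]$, so $(W[V[v]])\uuh_n = W[V[v\uuh_n]]$. By pre-homogeneity,
\[ f(U[v]) = \hat f(W[V[v\uuh_n]]) = g(V[v\uuh_n]) = c. \]

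\textbf{Expected obstacle.} The only delicate point is the substitution identity relating $\uuh_n$ to composition of variable words. One must verify that the first occurrence of $x_n$ in $V[v]$ corresponds exactly to cutting $v$ before its first $x_n$. This holds because the variables of $V$ appear in order of first occurrence, and each letter of $v$ is substituted into $V$ block by block. The existence of the pre-homogeneous word, which is the genuinely non-trivial part, is already provided by \Cref{lem:csl-prehomogeneous}.
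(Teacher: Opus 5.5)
Your proof is correct and follows essentially the same route as the paper's. The base case is the same: $\ACA_0 \vdash \OVW^0$, and $\CSL^0$ is a weakening of it. The induction step is the same: take a pre-homogeneous word $W$ from \Cref{lem:csl-prehomogeneous}, define $g(u) = \hat f(W[u])$, apply $\CSL^n$ to get $V$, and take $W[V]$. Your explicit verification that $\uuh_n$ commutes with substitution simply spells out the remark the paper makes after the definition of pre-homogeneity.
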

\begin{proof}
For the base case, Anglès d’Auriac, Mignoty, Liu and Patey~\cite{angles2023carlson} proved that $\ACA_0 \vdash \OVW^0$ (see also \Cref{sec:towsner-ovw0} for a simple proof). Since $\CSL^0$ is a weakening of~$\OVW^0$, $\ACA_0 \vdash \CSL^0$.
Suppose now that $\CSL^n$ holds for some~$n \in \NN$.
Let $f : A^{<\omega, n+1} \to \ell$ be a coloring.
By \Cref{lem:csl-prehomogeneous}, there is an unordered $\omega$-variable word~$W$ which is pre-homogeneous for~$f$.
Let $\hat f : \USub^{n,\star}_A(W) \to \ell$ be the coloring induced by~$W$ (see after \Cref{def:csl-prehomogeneous}), and let $g : A^{<\omega, n} \to \ell$ be defined by $g(u) = \hat f(W[u])$.
By $\CSL^n$, there is an unordered $\omega$-variable word~$U$ such that $\USub^{n,\star}_A(U)$ is $g$-monochromatic.
In particular, $\USub^{n,\star}_A(W[U])$ is $\hat f$-monochromatic, so $\USub^{n+1,\star}_A(W[U])$ is $f$-monochromatic.
So $\CSL^{n+1}$ holds.
\end{proof}

\subsection{Open and Combinatorial Dual Ramsey theorem}

Given $n, k \in \NN$, we write $\CSL^n(k)$ for the restriction of $\CSL^n$ to alphabets of size~$k$.
Dzhafarov, Flood, Solomon and Westrick~\cite{dzhafarov2021effectiveness} introduced a the \emph{Combinatorial Dual Ramsey theorem} ($\CDRT^n$) representing the combinatorial core of the Open Dual Ramsey theorem ($\ODRT^n$). Formulated in terms of unordered variable words theorems, $\CDRT^n$ corresponds to the statement $\CSL^{n-1}(0)$. They proved that $\RCA_0 \vdash \forall n (\ODRT^n \leftrightarrow \CDRT^n)$. Anglès d’Auriac, Mignoty, Liu and Patey~\cite{angles2023carlson} proved that the statements $\forall n \CDRT^n$ and $\forall n \CSL^n$ are equivalent over~$\RCA_0$. We give a slight improvement:

\begin{lemma}[$\RCA_0$]\label[lemma]{lem:csln-cdrtnp2}
For every~$n \in \NN$, $\CSL^n(1)$ implies both $\CDRT^{n+2}$ and $\ODRT^{n+2}$ over~$\RCA_0$.
\end{lemma}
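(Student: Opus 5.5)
By \cite{dzhafarov2021effectiveness}, $\RCA_0 \vdash \forall n(\ODRT^n \leftrightarrow \CDRT^n)$, so it suffices to prove $\CDRT^{n+2}$. In variable-word form this is $\CSL^{n+1}(0)$. So the plan is to show in $\RCA_0$ that $\CSL^n(1)$ implies $\CSL^{n+1}(0)$, by a direct computable translation between the two combinatorial spaces. The key observation concerns an $(n+1)$-variable word over the empty alphabet. It is just a word over $\{x_0,\dots,x_n\}$ whose first letter must be $x_0$, with the first occurrences of $x_0,\dots,x_n$ in order. Deleting the leading $x_0$ and renaming $x_0 \mapsto a$ and $x_{i+1} \mapsto x_i$ yields an $n$-variable word over the one-letter alphabet $\{a\}$. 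Write $\rho$ for this renaming and $\rho^{-1}$ for its inverse ($a \mapsto x_0$, $x_i \mapsto x_{i+1}$). Then $v \mapsto \rho(v')$, where $v = x_0 \cdot v'$, is a computable bijection between $\emptyset^{<\omega,n+1}$ and $\{a\}^{<\omega,n}$, and it extends in the same way to infinite words.

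Concretely, let $f : \emptyset^{<\omega,n+1} \to \ell$. I define the coloring $g : \{a\}^{<\omega,n} \to \ell$ by $g(u) = f(x_0 \cdot \rho^{-1}(u))$. By $\CSL^n(1)$ there is an $\omega$-variable word $W$ over $\{a\}$ such that $\{W[u] : u \in \{a\}^{<\omega,n}\}$ is $g$-monochromatic, say of color $c$. I then set $V = x_0 \cdot \rho^{-1}(W)$. This is an $\omega$-variable word over the empty alphabet: $x_0$ occurs first, and the variables $x_{i+1}$ inherit from $W$ the property that each occurs and their first occurrences are ordered. Moreover $V$ is $\Delta^0_1$ in $W$, so it exists in $\RCA_0$.

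It remains to verify that $f(V[v]) = c$ for every $v \in \emptyset^{<\omega,n+1}$. Write $v = x_0 \cdot v'$ and put $u = \rho(v')$, which lies in $\{a\}^{<\omega,n}$ and has length $|u| = |v|-1$. Substituting $v$ into $V$ replaces the leading $x_0$ of $V$ by $v(0) = x_0$. Each $a$ of $W$ has become $x_0$ in $V$, and is left as $x_0$. Each variable $x_{i+1}$ of $V$, coming from $x_i$ in $W$, is replaced by $v(i+1) = \rho^{-1}(u(i))$. The cut of $V[v]$ happens before the first occurrence of $x_{|v|}$ in $V$, which is exactly the first occurrence of $x_{|u|}$ in $W$, shifted by one position. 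Therefore $V[v] = x_0 \cdot \rho^{-1}(W[u])$, and so $f(V[v]) = g(W[u]) = c$.

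The only delicate point is this bookkeeping of the renaming $\rho$ together with the truncation convention in $w[u]$, including the degenerate case $v' = \epsilon$ (when $n=0$, or when $v$ is a prefix of $x_0 x_0 \cdots$). In that case $u$ consists only of $a$'s, and the identity $V[v] = x_0 \cdot \rho^{-1}(W[u])$ still holds. Everything else is a bounded-quantifier computation, so the argument formalizes in $\RCA_0$.
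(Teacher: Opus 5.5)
Your proposal is correct and follows the paper's proof essentially verbatim. Like the paper, you reduce via Dzhafarov--Flood--Solomon--Westrick to $\CSL^n(1) \to \CSL^{n+1}(0)$, translate the coloring by stripping the leading $x_0$ and renaming ($0 \mapsto x_0$, $x_i \mapsto x_{i+1}$), and pull the solution back as $x_0 \cdot \rho^{-1}(W)$; your explicit check that $V[v] = x_0 \cdot \rho^{-1}(W[u])$, truncation included, is just a more detailed version of the paper's one-line verification.
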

\begin{proof}
Fix~$n \in \NN$. By Dzhafarov, Flood, Solomon and Westrick~\cite[Theorem 3.9]{dzhafarov2021effectiveness}, $\CDRT^{n+2}$ and $\ODRT^{n+2}$ are equivalent over~$\RCA_0$. Furthermore, $\CDRT^{n+2}$ is $\CSL^{n+1}(0)$, so it suffices to prove $\RCA_0 \vdash \CSL^n(1) \to \CSL^{n+1}(0)$.

Let $f : \emptyset^{<\omega, n+1} \to \ell$ be an instance of $\CSL^{n+1}(0)$.
It induces an instance $g : \{0\}^{<\omega, n} \to \ell$ of $\CSL^n(1)$ defined by $g(u) = f(x_0 \cdot u[x_i \mapsto x_{i+1}, 0 \mapsto x_0])$, where $u[x_i \mapsto x_{i+1}, 0 \mapsto x_0]$ means that all the occurrences of $x_i$ are replaced with $x_{i+1}$ and all the occurrences of the unique alphabet symbol~$0$ are replaced with $x_0$. By $\CSL^n$, there is an unordered $\omega$-variable word~$U$ over $\{0\}$ such that $\USub^{n, \star}_{\{0\}}(U)$ is $g$-monochromatic for some color~$c < \ell$. Let $W = x_0 \cdot U[x_i \mapsto x_{i+1}, 0 \mapsto x_0]$. Then $W$ is an $\omega$-variable word over $\emptyset$ such that $\USub^{n+1, \star}_{\emptyset}(W)$ is $f$-monochromatic for color~$c$. Indeed, given $v \in \USub^{n+1, \star}_{\emptyset}(W)$, since the alphabet is empty, $v = x_0 \cdot v'$ for some $v' \in \{x_0\}^{<\omega, n}$
where the variable set is $\{x_1, x_2, \dots \}$. Then, letting $u = v'[x_0 \mapsto 0, x_{i+1} \mapsto x_i]$, we have $f(v) = g(u) = c$.
\end{proof}

We can now prove our main theorem:

\begin{repmaintheorem}{mthm:csl-aca}
For every~$n \in \omega$, $\ACA_0 \vdash \CSL^n$ and $\ACA_0 \vdash \ODRT^{n+2}$.
Moreover, a reversal holds for every~$n \geq 2$.
\end{repmaintheorem}
\begin{proof}
By external induction on~$n$, using \Cref{prop:aca0-csl-induction}, $\ACA_0 \vdash \CSL^n$ for every~$n \in \omega$.
By \Cref{lem:csln-cdrtnp2}, it follows that $\ACA_0 \vdash \ODRT^{n+2}$ for every~$n \in \omega$.
Conversely, Miller and Solomon~\cite[Theorem 2.3]{miller2004effectiveness} proved that $\RCA_0 \vdash \forall n(\ODRT^{n+1} \to \RT^n)$, hence that $\RCA_0 \vdash \ODRT^4 \to \ACA_0$. By \Cref{lem:csln-cdrtnp2}, $\RCA_0 \vdash \CSL^2 \to \ODRT^4$, so $\RCA_0 \vdash \CSL^2 \to \ACA_0$.
\end{proof}

We conclude this section by proving that the full statement $\forall n \CSL^n$ is equivalent to $\ACA_0'$ over~$\RCA_0$.
Ko{\l}odziejczyk (personal communication) noticed the following consequence of Yokoyama~\cite{yokoyama2023paris}. Note that $\ACA_0'$ is not strong enough to prove $\Pi^1_2$-induction, so we cannot replace $\ACA_0$ with $\ACA_0'$ in the following proposition.

\begin{proposition}\label[proposition]{prop:aca0p-induction}
Let $\Psf(n)$ be a $\Pi^1_2$-formula with free integer variable~$n$.
If $\ACA_0 \vdash \Psf(0) \wedge \forall n(\Psf(n) \to \Psf(n+1))$,
then $\ACA_0' \vdash \forall n \Psf(n)$.
\end{proposition}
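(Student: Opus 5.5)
The plan is to argue model-theoretically, using Yokoyama's characterization of $\ACA_0'$ as the $\Pi^1_2$-reflection principle for $\ACA_0$ recalled in the introduction. The key idea is that $\ACA_0'$ proves every $\Pi^1_2$ consequence of $\ACA_0$, and the hypothesis $\Psf(0)\wedge\forall n(\Psf(n)\to\Psf(n+1))$ is itself (equivalent to) a $\Pi^1_2$ sentence provable in $\ACA_0$. So inside $\ACA_0'$ we may assume it holds. What remains is to turn it into $\forall n\,\Psf(n)$ without having $\Pi^1_2$-induction.

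\textbf{Step 1.} First I would check the syntactic complexity. Write $\Psf(n)\equiv\forall X\exists Y\theta(n,X,Y)$ with $\theta$ arithmetic. Then $\forall n(\Psf(n)\to\Psf(n+1))$ is $\forall n\,[\exists X\forall Y\neg\theta(n,X,Y)\vee\forall X'\exists Y'\theta(n+1,X',Y')]$. This can be put in prenex $\Pi^1_2$ form $\forall n\forall X'\forall X\exists Y\exists Y'[\ldots]$. Together with $\Psf(0)$, the conjunction is $\Pi^1_2$.

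\textbf{Step 2.} I would not apply reflection to the induction hypothesis directly, since even knowing it is true we lack $\Pi^1_2$-induction. Instead I would apply reflection to a stronger $\Pi^1_2$ statement provable in $\ACA_0$: the uniform sentence $\forall m\,[\text{``}X^{(m)}\text{ exists''}\to\Psf(m)]$. This is reformulated as $\forall m\forall X\forall Z\,[Z=X^{(m)}\to\exists Y\theta(m,X,Y)]$, and since ``$Z=X^{(m)}$'' is arithmetic, it is $\Pi^1_2$.

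\textbf{Step 3.} The crux is showing $\ACA_0$ proves this sentence. Fix $m$, $X$ and $Z=X^{(m)}$. The existence of $Z$ makes all sets arithmetic in $Z$ available and, more importantly, lets us express ``$\Psf(k)$ holds relativized to sets computable from $Z$-jumps'' arithmetically. Using $\Sigma^1_1$/arithmetic induction over $Z$, we carry out the induction from $\Psf(0)$ to $\Psf(m)$. Concretely, let $\mathcal M_Z$ be the $\omega$-like model of $\ACA_0$ coded from $Z^{(\omega)}$-style hierarchies. Since $\ACA_0$ plus the coded existence of $X^{(m)}$ reduces induction up to $m$ to arithmetic induction, this carries through. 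I expect this to be the main obstacle: making rigorous that the existence of $X^{(m)}$ is exactly the amount of comprehension needed to run $m$ steps of the induction. If this fails, I would fall back on Yokoyama's actual result, namely that $\ACA_0'$ proves the uniform reflection for $\Pi^1_2$ formulas with a free number variable. Then $\forall n\,\mathrm{Prov}_{\ACA_0}(\ulcorner\Psf(\dot n)\urcorner)$ follows by formalized external induction, which is $\Sigma^0_1$-induction on proofs and available in $\RCA_0$. Reflection then gives $\forall n\,\Psf(n)$.

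\textbf{Step 4.} Finally, in $\ACA_0'$, $X^{(m)}$ exists for every $m$. Combining with the reflected sentence yields $\Psf(m)$ for all $m$.

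I favor the route in Step 3's fallback. It needs only that the proof of $\Psf(\underline n)$ in $\ACA_0$ is primitive-recursively constructible from $n$ by iterating the given proofs, which is verifiable in $\RCA_0$.
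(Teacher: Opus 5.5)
Your favored fallback is the paper's argument, which the paper credits to Ko{\l}odziejczyk. Formalize the external induction on proofs to get $\forall n\,\mathrm{Prov}_{\ACA_0}(\Psf(\dot n))$ inside $\ACA_0$ (indeed already in $\RCA_0$). Then apply Yokoyama's characterization of $\ACA_0'$ as $\Pi^1_2$-reflection for $\ACA_0$ to the $\Pi^1_2$ instances $\Psf(\dot n)$. That part is correct and complete on its own, and it should be the whole proof.

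Discard the primary route in Steps 1--4; it contains two errors.

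In Step 1, $\neg\Psf(n)$ begins with $\exists X$, so $\Psf(n)\to\Psf(n+1)$ is in general $\Pi^1_3$, not $\Pi^1_2$. In any case no reflection is needed to have it in $\ACA_0'$, since $\ACA_0'$ extends $\ACA_0$.

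More seriously, the Step~2 sentence is not provable in $\ACA_0$ in general, because the induction step may use $\Psf(n)$ at sets other than $X$. Take $\Psf(n) \equiv \forall X\exists Y\,(Y = X^{(2n)})$. Here $\ACA_0$ proves both $\Psf(0)$ and $\Psf(n)\to\Psf(n+1)$. Now take a nonstandard model of $\ACA_0'$ and a nonstandard $m$. The sets computable in that model from some $0^{(m+j)}$ with $j$ standard form a model of $\ACA_0$. It contains $0^{(m)}$ but not $0^{(2m)}$, since $0^{(2m)}$ would compute $0^{(m+j+1)}$. So your Step~2 sentence fails there. Replacing $2n$ by $2^n$ defeats any bound of the form $X^{(km)}$ too. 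The amount of comprehension needed to run $m$ steps depends on how the induction step uses $\Psf(n)$, and the reflection argument circumvents exactly this.
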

\begin{proof}[Proof by Ko{\l}odziejczyk]
Since provability over $\ACA_0$ is provable over~$\ACA_0$, $\ACA_0 \vdash \qt{\ACA_0 \vdash \Psf(0) \wedge \forall n(\Psf(n) \to \Psf(n+1))}$.
Therefore, by $\Sigma^0_1$-induction, $\ACA_0 \vdash \qt{\forall n \ACA_0 \vdash \Psf(n)}$.
By Yokoyama~\cite{yokoyama2023paris} (see also~\cite{pacheco2022determinacy}), $\ACA_0'$ is equivalent to the $\Pi^1_2$-reflection principle for $\ACA_0$, so $\ACA_0' \vdash \forall n \Psf(n)$.
\end{proof}

We now have all the necessary tools to prove the following equivalences:

\begin{theorem}\label[theorem]{thm:acap-equiv}
The following are equivalent over~$\RCA_0$:
\vspace{-8pt}
\begin{multicols}{3}
\begin{itemize}
    \item[(1)] $\ACA_0'$
    \item[(2)] $\forall n \CSL^n$
    \item[(3)] $\forall n \CDRT^n$
    \item[(4)] $\forall n \ODRT^n$
    \item[(5)] $\forall n \RT^n$
\end{itemize}
\end{multicols}
\end{theorem}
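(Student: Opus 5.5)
We prove the cycle $(1) \Rightarrow (2) \Rightarrow (3) \Leftrightarrow (4) \Rightarrow (5) \Rightarrow (1)$.

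\textbf{$(1) \Rightarrow (2)$.} This is the main step. Let $\Psf(n)$ be the $\Pi^1_2$-formula $\CSL^n$, which quantifies universally over alphabets, numbers of colors and colorings of $A^{<\omega,n}$. By \Cref{prop:aca0-csl-induction}, $\ACA_0 \vdash \CSL^0 \wedge \forall n(\CSL^n \to \CSL^{n+1})$, and this is a single theorem with $n$ as a free integer variable. \Cref{prop:aca0p-induction} then gives $\ACA_0' \vdash \forall n \CSL^n$. The only point to check is that \Cref{prop:aca0-csl-induction} is genuinely uniform in~$n$. Its ingredients are \Cref{lem:csl-prehomogeneous} and \Cref{lem:csl-prehomogeneous-step}, which are proved for arbitrary (possibly nonstandard) $n$, and \Cref{ovw0-low2-formalized}, which does not depend on $n$ at all. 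So the hypothesis of \Cref{prop:aca0p-induction} holds.

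\textbf{$(2) \Rightarrow (3)$.} By definition, $\CDRT^n$ is $\CSL^{n-1}(0)$. An instance over the empty alphabet is trivially also an instance of $\CSL^{n-1}$ restricted to $|A| = 0$, so $\CSL^{n-1}$ implies $\CDRT^n$. (If one insists on nonempty alphabets, we can instead use \Cref{lem:csln-cdrtnp2}, which gives $\CSL^n(1) \to \CDRT^{n+2}$, and handle small $n$ by monotonicity.) This argument is uniform in $n$ inside $\RCA_0$.

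\textbf{$(3) \Leftrightarrow (4)$.} Dzhafarov, Flood, Solomon and Westrick proved $\RCA_0 \vdash \forall n(\ODRT^n \leftrightarrow \CDRT^n)$, which gives both directions at once.

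\textbf{$(4) \Rightarrow (5)$.} Miller and Solomon proved $\RCA_0 \vdash \forall n(\ODRT^{n+1} \to \RT^n)$, so $\forall n \ODRT^n$ yields $\forall n \RT^n$.

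\textbf{$(5) \Rightarrow (1)$.} This is the classical result, due to Simpson and Jockusch, that $\forall n \RT^n$ implies $\ACA_0'$ over $\RCA_0$. The idea is to encode $X^{(n)}$, uniformly in $n$, into homogeneous sets for computable colorings of $[\NN]^{n+c}$, formalized with $\Sigma^0_1$-induction. First one gets $\ACA_0$ from $\RT^3$; then, given $n$ and $X$, one applies $\RT^{n+2}$ to a coloring built from iterated limit approximations to obtain a set computing $X^{(n)}$.

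The hard direction is $(1) \Rightarrow (2)$. Here the obstacle is purely formal: we cannot induct on $n$ inside $\ACA_0$, since that would require $\Pi^1_2$-induction. The reflection argument of Ko{\l}odziejczyk sidesteps this, and everything else is citation.
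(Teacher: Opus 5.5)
Your proposal is correct and follows the paper's proof essentially verbatim. As in the paper, you get $(1)\Rightarrow(2)$ from \Cref{prop:aca0-csl-induction} together with Ko{\l}odziejczyk's reflection argument (\Cref{prop:aca0p-induction}), and the other links from the same citations; the only differences are minor: you derive $(2)\Rightarrow(3)$ directly from $\CDRT^n = \CSL^{n-1}(0)$ instead of citing Angl\`es d'Auriac et al.\ (which suffices for your cycle), and the result you use for $(5)\Rightarrow(1)$, that $\forall n\,\RT^n$ is equivalent to $\ACA_0'$ over $\RCA_0$, is McAloon's theorem building on Jockusch's coding, which is what the paper cites, rather than a result of Simpson and Jockusch.
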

\begin{proof}
$(1) \to (2)$ is \Cref{prop:aca0-csl-induction} combined with \Cref{prop:aca0p-induction}.
$(2) \leftrightarrow (3)$ is Anglès d’Auriac, Mignoty, Liu and Patey~\cite[Theorem 6.3]{angles2023carlson}. $(3) \leftrightarrow (4)$ is Dzhafarov, Flood, Solomon and Westrick~\cite[Theorem 3.9]{dzhafarov2021effectiveness}. $(4) \to (5)$ is Miller and Solomon~\cite[Theorem 2.3]{miller2004effectiveness}.
$(1) \leftrightarrow (5)$ is McAloon~\cite{mcaloon1985paris} (see \cite[Theorem 6.27]{hirschfeldt2015slicing} or \cite[Theorem 8.1.6]{dzhafarov2022reverse}).
\end{proof}

\section{The weakness of $\CSL^1$}\label[section]{sec:weakness-csl1}

By \Cref{mthm:csl-aca}, $\CSL^n$ is equivalent to $\ACA_0$ over~$\RCA_0$ for every $n \geq 2$.
On the other hand, by \Cref{thm:ovw0-omega-hyp}, $\OVW^0$ (and a fortiori $\CSL^0$) admits preservation of $\omega$ hyperimmunities, and therefore does not even imply $\RT^2$ over~$\RCA_0$.
The goal of this section is to close the gap, and prove the following theorem:

\begin{theorem}\label[theorem]{thm:csl1-1-hyperimmunity}
$\CSL^1$ admits preservation of 1 hyperimmunity.
\end{theorem}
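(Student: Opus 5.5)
The plan follows the classical Seetapun/Cholak–Jockusch–Slaman route for $\RT^2_2$: decompose $\CSL^1$ into a cohesiveness-type principle followed by a stable part. The stable part will be handled by the strong preservation of 1 hyperimmunity of $\OVW^0$ (\Cref{sca-th1}), and the cohesive part by a direct preservation-of-hyperimmunities argument. Fix a computable coloring $f : A^{<\omega,1} \to \ell$ and a hyperimmune $h$.

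\textbf{Step 1: cohesive version.} We first introduce a cohesive principle, call it $\mathsf{C}\CSL^1$. Its solution is an unordered $\omega$-variable word $W$ such that, for every $u \in \USub^{0,\star}_A(W)$, the color $f(v)$ of $v \in \USub^{1,\star}_A(W)$ with $v\uuh_0 = u$ is eventually constant in $|v|$. By eventually constant we mean: constant along all such $v$ past some length, or at least with a limit color along the variable-word refinements. In the setting of \Cref{def:csl-prehomogeneous} this is a limit version of pre-homogeneity; the paper announced \qt{a cohesive version of $\CSL^1$}. We then show that $\mathsf{C}\CSL^1$ admits preservation of $\omega$ hyperimmunities. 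For the construction we handle the countably many $u$ one at a time, as in \Cref{lem:csl-prehomogeneous}, but with a Mathias-like forcing whose conditions are $(v,W)$ with $W$ of hyperimmune-free degree (the Scott class $\mathsf{HIF}$). At each step we need, for a fixed prefix $u$, to shrink $W$ so that the colors of extensions stabilize. This is an instance of $\OVW^0$ relative to $W$, via the coloring $g$ from the proof of \Cref{lem:csl-prehomogeneous-step}. Diagonalization against $\Psi^G$ dominating $h$ is done as in \Cref{lem:ovw0-omega-hyp-diag}, using that all reservoirs are hyperimmune-free and using a compactness/$\Pi^0_1$-class argument over the reservoir. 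Because the stems only need finitely many stabilization constraints at each stage, this is a standard construction.

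\textbf{Step 2: reduction.} Given a $\mathsf{C}\CSL^1$-solution $W$ that preserves hyperimmunity of $h$, define $\hat f : A^{<\omega} \to \ell$ by $\hat f(u) = \lim$ color of extensions of $W[u]$. Since $\hat f$ is only $\Delta^0_2(W)$, we are led to the \emph{strong} preservation of 1 hyperimmunity of $\OVW^0$ (\Cref{sca-th1}). Applied to the arbitrary coloring $\hat f$ relative to $W$, it gives an ordered $\omega$-variable word $U$ with $\WSub^\star(U)$ $\hat f$-monochromatic of color $c$ and $h$ hyperimmune relative to $U \oplus W$.

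\textbf{Step 3: thinning.} Inside $W[U]$, with $U \oplus W$ as oracle, we compute a solution. We build variable by variable, and each time we append a new variable block we go far enough that all $1$-variable words with the given prefix have stabilized to color $c$. Since only finitely many prefixes $u$ of bounded length occur, we can search computably until finding a block beyond which the color is $c$. With an eventually-constant notion, such a block exists but recognizing it requires the limit. To avoid this, I would strengthen the cohesive notion so that stabilization is witnessed by a $W$-computable modulus, which the construction in Step 1 can provide since each stage fixes the stabilization length explicitly. This yields a $\CSL^1$-solution computable in $U \oplus W$, which preserves the hyperimmunity of $h$.

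\textbf{Main obstacle.} I expect the main obstacle to be Step 1, namely formulating the cohesive statement so that Step 3 is effective, while still being able to force hyperimmunity preservation despite the unordered words, in which a variable may recur infinitely often. I would first try defining cohesiveness via nested reservoirs $W_{n+1} \leq_{|u_n|+1} W_n$ with $\OVW^0$ solutions chosen by \Cref{thm:ovw0-omega-hyp} relativized, taking $G$ as the limit and interleaving the diagonalization.
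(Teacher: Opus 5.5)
Your architecture is a legitimate route: a cohesive principle preserving $\omega$ hyperimmunities, then \Cref{sca-th1} applied to the $\Delta^0_2(W)$ limit coloring, then an effective thinning. It is the direct proof the paper says could be given. The paper instead passes through $\LCSL^1$ (via $\CCSL^1$ and strong preservation of $k$ hyperimmunities for $\LOVW^0$), then uses $\CSL^1 \leftrightarrow \LCSL^1 \wedge \RT^2$ and preservation of one hyperimmunity for $\RT^2$.

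Your Step~3 repair is false. Suppose stabilization came with a $W$-computable modulus. Then the limit coloring would be $W$-computable, and a $W$-computable thinning of $W$ would be pre-homogeneous. \Cref{thm:ovw0-omega-hyp}, applied to a $W$-computable instance, would then make $\CSL^1$ preserve two hyperimmunities. This contradicts the fact that $\CSL^1(1)$ implies $\RT^2_2$ (\Cref{lem:rt22-not-2-hyp}). The lengths \qt{fixed explicitly at each stage} are stem lengths along the generic sequence, and $G$ does not compute them.

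No modulus is needed. Build $\hat W \in \USub^\omega_A(W[U])$ greedily. At stage $j$, choose the next variable of $W[U]$ (instantiating the skipped ones by a constant) whose first occurrence $t$ satisfies: the finitely many 1-variable words of $\hat W$ of length exactly $t$ all have color $c$. This is a finite $f$-check. A witness exists because each of the finitely many prefixes has limit color $c$. Moreover, $t$ need not exceed the true thresholds, since longer lengths are checked at later stages. This is the paper's greedy step, there aimed at level-homogeneity.

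The real gap is Step~1, which is not a standard construction. To meet $\R^h_\Psi$ you must extend the stem by a block that may introduce several new variables at once. For a new prefix $u$ inside that block, the 1-variable words extending $u$ beyond the new stem vary with how the later stem variables are instantiated. Shrinking the reservoir by $\OVW^0$ (as in \Cref{lem:csl-prehomogeneous-step}) only removes dependence on the reservoir part. For instance, if $f(v)$ records whether the symbol following $v \uuh_0$ is the variable, a block in which two new variables first occur consecutively can never be stabilized.

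Adding one variable at a time and stabilizing in between does not help. Each stabilization is a non-effective shrinking of the reservoir that destroys the pre-chosen block, so the forcing question would no longer be $\Sigma^0_1(W)$.

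The missing idea is the stability tree $\Sc_{A,\ell}$ (\Cref{lem:stability-tree}). The forcing question asks that every node at some level $r$ of $\Sc_{A,\ell^{A^n}}$ generate a witness. The $\Sigma^0_1$-extension then first stabilizes the reservoir above $t_r$ simultaneously for all stems of that length. This is an $\OVW^0$-instance over $B_r$ with fresh constants, colored by functions $A^{t_r,1} \to \ell^{A^n}$. Only afterwards does it pick, via \Cref{lem:stability-tree}, a node on which full-length 1-variable words have colors depending only on their prefix.
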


\Cref{mthm:csl1-avoidance} is an immediate consequence of \Cref{thm:csl1-1-hyperimmunity}:

\begin{repmaintheorem}{mthm:csl1-avoidance}
Neither $\CSL^1$, nor $\ODRT^3$ implies $\ACA_0$ over~$\RCA_0$.
\end{repmaintheorem}
\begin{proof}
There exists an arithmetical hyperimmune function, so $\ACA_0$ does not admit preservation of 1 hyperimmunity.
By \Cref{thm:csl1-1-hyperimmunity} and \Cref{lem:pres-hyp-sep}, $\CSL^1$ does not imply $\ACA_0$ over~$\RCA_0$.
By \Cref{lem:csln-cdrtnp2}, $\RCA_0 \vdash \CSL^1 \to \ODRT^3$, so $\ODRT^3$ does not imply $\ACA_0$ over~$\RCA_0$ either.
\end{proof}

As mentioned, by Downey et al~\cite{downey2022relationships}, \Cref{thm:csl1-1-hyperimmunity} is equivalent to saying that $\CSL^1$ admits cone avoidance.
First, note that \Cref{thm:csl1-1-hyperimmunity} is optimal, in the following sense:

\begin{lemma}
$\CSL^1(1)$ does not admit preservation of 2 hyperimmunities.
\end{lemma}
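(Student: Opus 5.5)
The plan is to adapt the classical argument of \Cref{lem:rt22-not-2-hyp}: encode a limit-computable partition of $\NN$ into a computable instance of $\CSL^1(1)$, so that every solution computes an infinite subset of one half of the partition.

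\emph{The instance.} By \cite[Theorem 5.2]{jockusch1968semirecursive}, fix a $\Delta^0_2$ set $A$ such that both principal functions $p_A$ and $p_{\overline{A}}$ are hyperimmune. By Shoenfield's limit lemma, fix a computable $f : [\NN]^2 \to 2$ with $\lim_y f(x,y) = A(x)$ for every~$x$. Over the one-letter alphabet $\{0\}$, a word $u \in \{0\}^{<\omega,1}$ has a well-defined position $p(u)$ of its first occurrence of $x_0$ and a finite length $|u| > p(u)$. Define the computable coloring $g(u) = f(p(u), |u|)$. This is a computable $\CSL^1(1)$-instance.

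\emph{Analysing a solution.} Let $W$ be an unordered $\omega$-variable word over $\{0\}$ with $\{W[u] : u \in \{0\}^{<\omega,1}\}$ $g$-monochromatic, say for color~$c$. Let $s_j$ be the position of the first occurrence of $x_j$ in $W$; the sequence $s_0 < s_1 < \cdots$ is $W$-computable. For each $j < m$, take $u = 0^j x_0^{m-j}$, which has length $m$ and whose first $x_0$ is at position~$j$. Then $W[u]$ is $W$ cut before the first occurrence of $x_m$, with $x_0$ substituted for every $x_i$ ($j \le i < m$) and $0$ for every $x_i$ ($i<j$). Its first variable occurrence is therefore at position $s_j$, and its length is $s_m$. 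Hence $f(s_j, s_m) = c$ for all $j < m$. Letting $m \to \infty$ gives $A(s_j) = c$ for every~$j$. Consequently $H = \{ s_j : j \in \NN \}$ is an infinite $W$-computable subset of $A$ (if $c = 1$) or of $\overline{A}$ (if $c = 0$).

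\emph{Conclusion.} The principal function $p_H$ is $W$-computable and dominates $p_A$ or $p_{\overline{A}}$. So for every solution $W$, one of the two hyperimmune functions fails to be $W$-hyperimmune. This is exactly the failure of preservation of 2 hyperimmunities with $Z = \emptyset$.

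The only point needing care is the middle step: checking that the first-variable position of $W[u]$ is exactly $s_j$ and its length exactly $s_m$ under the paper's convention for $w[u]$ (cut before the first occurrence of $x_{|u|}$). I expect this to be a routine verification rather than a real obstacle.
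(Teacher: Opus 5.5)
Your proof is correct, but it takes a different route from the paper's.

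\textbf{The paper's route.} The paper argues by transfer. \Cref{lem:csln-cdrtnp2} gives $\RCA_0 \vdash \CSL^1(1) \to \ODRT^3$, and Miller and Solomon give $\ODRT^3 \to \RT^2$. Since $\RT^2_2$ fails preservation of 2 hyperimmunities (\Cref{lem:rt22-not-2-hyp}), so does $\CSL^1(1)$, by the remark after \Cref{lem:pres-hyp-sep} that preservation passes down along implications over $\omega$-models.

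\textbf{Your route.} You collapse this chain into a single computable encoding $g(u)=f(p(u),|u|)$. This is just $g=f\circ\pi$, where $\pi$ is the projection used later in \Cref{lem:csl-lcsl-rt}. Your verification of the key step is right. Every variable in $W\uh_{s_m}$ is some $x_i$ with $i<m$, and the earliest occurrence of any $x_i$ with $i\geq j$ is at $s_j<s_m$. Hence $W[0^jx_0^{m-j}]$ has its first $x_0$ at $s_j$ and length $s_m$.

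\textbf{Comparison.} Your argument is self-contained: it does not appeal to Miller--Solomon, the $\ODRT$/$\CDRT$ equivalence of Dzhafarov et al., or the $\omega$-model transfer principle. It also shows slightly more. Since $f(s_j,s_m)=c$ for all $j<m$, the $W$-computable set $\pi(W)=\{s_j : j\in\NN\}$ is itself $f$-homogeneous. So you get a uniform, instance-by-instance reduction of $\RT^2_2$ to $\CSL^1(1)$ over the one-letter alphabet, not merely an implication over $\omega$-models. From there, the argument of \Cref{lem:rt22-not-2-hyp} applies verbatim. The paper's proof is shorter and more modular, because it reuses implications the paper needs elsewhere anyway.
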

\begin{proof}
By \Cref{lem:csln-cdrtnp2}, $\RCA_0 \vdash \CSL^1(1) \to \ODRT^3$.
By Miller and Solomon~\cite[Theorem 2.3]{miller2004effectiveness}, $\RCA_0 \vdash \ODRT^3 \to \RT^2$,
and by \Cref{lem:rt22-not-2-hyp}, $\RT^2_2$ does not admit preservation of 2 hyperimmunities.
\end{proof}

\subsection{Level Carlson-Simpson lemma}

As in \Cref{scaOVW-sec3}, instead of directly proving that $\CSL^1$ admits preservation of 1 hyperimmunity, we shall actually consider a level version of the Carlson-Simpson lemma and prove that it admits preservation of $k$ hyperimmunities simultaneously. Note that one could have proven \Cref{thm:csl1-1-hyperimmunity} without this level version, but it actually gives more information about the relation between the combinatorial features of $\CSL^1$ and its computability-theoretic strength.

\begin{definition}
The \emph{projection} $\pi(w)$ of an unordered $n$-variable-word $w$ over an alphabet~$A$, is the set $\{ t_0 < \cdots t_n \}$ such that for $i < n$, $t_i$ is the position of the first occurrence of the variable~$x_i$ in~$w$, and $t_n = |w|$. Given a coloring $f : A^{<\omega, n} \to \ell$, an unordered $\omega$-variable word~$W$ is \emph{level-homogeneous} for~$f$ if for every $u_0, u_1 \in \USub^{n,\star}_A(W)$ such that $\pi(u_0) = \pi(u_1)$, we have $f(u_0) = f(u_1)$.
\end{definition}

This notion of level-homogeneity is a natural generalization of \Cref{def:ovw-level-homogeneous} for unordered variable words. It induces the following Level Carlson-Simpson lemma:

\begin{definition}
$\LCSL^n$ is the $\Pi^1_2$-problem whose instances are colorings $f : A^{<\omega,n} \to \ell$ for a finite alphabet~$A$ and some $\ell \geq 1$. A \emph{$\LCSL^n$-solution} to~$f$ is an unordered $\omega$-variable word~$W$ which is level-homogeneous for~$f$.
\end{definition}

It is useful to think of $\LCSL^n$ as a \qt{Ramsey-free} version of $\CSL^n$, in the sense that $\CSL^n$ is equivalent to $\RT^{n+1} \wedge \LCSL^n$ over~$\RCA_0$, and we shall see that $\LCSL^1$ does not imply $\RT^2$, even for 2-colorings. In what follows, given an unordered $\omega$-variable word~$W$ over~$A$, we write $\pi(W)$ for the set of all $t_i \in \NN$ such that $t_i$ is the position of the first occurrence of~$x_i$ in $W$.

\begin{lemma}\label[lemma]{lem:csl-lcsl-rt}
$\RCA_0 \vdash \forall n(\CSL^n \leftrightarrow \LCSL^n \wedge \RT^{n+1})$.
\end{lemma}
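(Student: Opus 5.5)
We prove both directions inside $\RCA_0$, for arbitrary $n$, using only finite combinatorial encodings and no induction beyond $\Sigma^0_1$.

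\emph{Forward direction.} Assume $\CSL^n$. Since an $f$-monochromatic solution is in particular level-homogeneous, $\CSL^n$ gives $\LCSL^n$ immediately. For $\RT^{n+1}$, fix $c : [\NN]^{n+1} \to \ell$. Use the alphabet $A = \{0\}$. Define $f : A^{<\omega,n} \to \ell$ by
\[
f(w) = c(\pi(w)),
\]
where $\pi(w) = \{t_0 < \dots < t_n\}$ has size $n+1$. To avoid the degenerate position $t_0 = 0$, shift all positions by one if needed. Apply $\CSL^n$ to get an unordered $\omega$-variable word $W$ such that $\USub^{n,\star}_A(W)$ is $f$-monochromatic. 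Let $H = \pi(W) \cup \{\text{suitable lengths}\}$, which is computable from $W$. Every $(n+1)$-subset $\{s_0 < \dots < s_n\}$ of $H$ should be realized as $\pi(W[u])$ for some $u \in A^{<\omega,n}$: merge the variables of $W$ so that new variable kinds start exactly at $s_0, \dots, s_{n-1}$, and cut at $s_n$. To make the cut at $s_n$ land exactly, take $H$ to be the set of first-occurrence positions $\pi(W)$ and cut just before the first occurrence of the next variable. Then $c$ is constant on $[H]^{n+1}$.

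\emph{Backward direction.} Assume $\LCSL^n$ and $\RT^{n+1}$, and let $f : A^{<\omega,n} \to \ell$. By $\LCSL^n$ obtain a level-homogeneous $W$. Define $c : [\pi(W) \cup \{\text{lengths}\}]^{n+1} \to \ell$ by $c(F) = f(u)$ for any $u \in \USub^{n,\star}_A(W)$ with $\pi(u) = F$. This is well defined by level-homogeneity, and $\Delta^0_1$ in $W \oplus f$ because a witness $u$ can be found effectively when one exists. Transfer $c$ to a coloring of $[\NN]^{n+1}$ via the order isomorphism with $\pi(W)$. Apply $\RT^{n+1}$ to get an infinite homogeneous $H \subseteq \NN$. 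Build $\hat W \in \USub^\omega_A(W)$ by merging the variable kinds of $W$ so that the first occurrences of the variables of $\hat W$ fall exactly at positions in $H$. Then every $u \in \USub^{n,\star}_A(\hat W)$ has $\pi(u) \subseteq H'$, where $H'$ is the image of $H$, so $f(u)$ is the homogeneous color.

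\emph{Main obstacle.} The hard part is the bookkeeping for the length coordinate $t_n = |u|$. We need that the possible values of $\pi(u)$ for $u \in \USub^{n,\star}_A(W)$ are exactly the $(n+1)$-subsets of the set $P$ of first-occurrence positions of variables of $W$. The lengths of elements of $\USub^{n,\star}_A(W)$ are cut points, and cutting happens before the first occurrence of a variable, so lengths also lie in $P$. This means the indexing for $c$ is clean, but it has to be checked carefully, including which positions are excluded so that $t_0$ is well defined. Once that is settled, both directions reduce to routine merging of variable kinds, and this merging is computable and uses only $\Sigma^0_1$-induction.
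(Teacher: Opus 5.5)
Your proposal is correct. The backward direction is the paper's argument. You color $[\pi(W)]^{n+1}$ by $F\mapsto f(u)$ for any $u\in\USub^{n,\star}_A(W)$ with $\pi(u)=F$, take an $\RT^{n+1}$-homogeneous set, and thin $W$ so that its first occurrences lie in that set. The paper sends discarded variables to a constant. Merging works equally well, provided you merge a discarded variable into an \emph{earlier} retained one and send the variables preceding the first retained one to a letter. Merging into a later variable would move that variable's first occurrence earlier.

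The forward direction takes a genuinely different route. The paper obtains $\RT^{n+1}$ through a chain of external results. It uses $\CSL^n(1)\to\CSL^{n+1}(0)=\CDRT^{n+2}\leftrightarrow\ODRT^{n+2}$ (\Cref{lem:csln-cdrtnp2}, relying on Dzhafarov--Flood--Solomon--Westrick), and then Miller--Solomon's $\ODRT^{n+2}\to\RT^{n+1}$. You instead feed $f=c\circ\pi$ directly to $\CSL^n(1)$ and read off $H=\pi(W)$. This works because of the realization fact you sketch. Write $\pi(W)=\{p_0<p_1<\cdots\}$ and fix $i_0<\cdots<i_n$. The word $u$ of length $i_n$ with $u(i_j)=x_j$ for $j<n$ and $u(i)=0$ otherwise satisfies $\pi(W[u])=\{p_{i_0},\dots,p_{i_n}\}$.

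Your route is self-contained and gives a direct, uniform computable reduction of $\RT^{n+1}$ to $\CSL^n(1)$. It also makes explicit the realization fact that the paper's backward direction uses tacitly when it defines $g$ on all of $[X]^{n+1}$. The paper's route is shorter on the page but depends on cited theorems. Your shift by one and the extra ``suitable lengths'' in $H$ are unnecessary: $c$ is defined on sets containing $0$, and all lengths of elements of $\USub^{n,\star}_A(W)$ already lie in $\pi(W)$.

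One edge case is missed by both your argument and the paper's. $\CSL^n$ allows $A=\emptyset$, and then for $n\ge 1$ only sets containing $p_0=0$ are realized as projections. In that case the backward direction should color $F\mapsto f(u)$ with $\pi(u)=\{0\}\cup F$ for $F\in[\pi(W)\setminus\{0\}]^n$, and apply $\RT^n$. It must thin $W$ by merging, since there are no constants.
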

\begin{proof}
Fix~$n$. Suppose first $\CSL^n$ holds. Then $\LCSL^n$ clearly also holds as it is a weakening of $\CSL^n$. Moreover, by \Cref{lem:csln-cdrtnp2} $\ODRT^{n+2}$ holds, hence by Miller and Solomon~\cite[Theorem 2.3]{miller2004effectiveness}, $\RT^{n+1}$ holds.
Suppose now that $\LCSL^n$ and $\RT^{n+1}$ both hold. Let $f : A^{<\omega, n} \to \ell$ be a coloring, for some alphabet~$A$ and some $\ell \geq 1$.
By $\LCSL^n$, there is an infinite $\omega$-variable word~$W$ such that for every~$u_0, u_1 \in \USub^{n,\star}_A(W)$, if $\pi(u_0) = \pi(u_1)$, then $f(u_0) = f(u_1)$. Let $X = \pi(W)$ and $g : [X]^{n+1} \to \ell$ be defined by $g(F) = f(u)$, for any $u \in \USub^{n,\star}_A(W)$ such that $\pi(u) = F$. Such a coloring is well-defined by choice of~$W$. By $\RT^{n+1}$, there is an infinite $g$-homogeneous subset~$Y \subseteq X$, for some color~$c < \ell$. Let $\h W$ be an unordered $\omega$-variable word in $\USub^\omega_A(W)$ such that $\pi(\h W) = Y$. Note that $\h W$ can be obtained from~$W$ by replacing any variable $x_n$ for $n \in Y - X$ with any fixed constant in~$A$. Then, for any $u \in \USub^{n,\star}_A(\h W)$, $\pi(u) \in [Y]^{n+1}$, so $f(u) = g(\pi(u)) = c$.
\end{proof}

We will prove the following theorem:

\begin{theorem}\label[theorem]{thm:lcsl1-k-hyperimmunity}
$\LCSL^1$ admits preservation of $k$ hyperimmunities for every~$k \in \NN$.
\end{theorem}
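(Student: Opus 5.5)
We follow the standard pattern relating strong preservation for dimension $n$ to preservation for dimension $n+1$: first construct, computably in the instance, a pre-homogeneous-like object, and then finish with a strong preservation result. Fix a computable $f : A^{<\omega,1} \to \ell$ and hyperimmune functions $h_0,\dots,h_{k-1}$. The plan has two stages. The first stage builds a cohesive-type solution: an unordered $\omega$-variable word $W$ such that, for every $u \in \USub^{0,\star}_A(W)$ (a constant word), the color of $f(W[u \cdot x_0 \cdot w])$, with $w$ ranging over extensions, stabilizes in a suitable level-homogeneous sense, while each $h_i$ remains $W$-hyperimmune. The second stage handles the remaining $\OVW^0$-like coloring on the next level, using \Cref{strong-hyp-lovw} (strong preservation of $k$ hyperimmunities for $\LOVW^0$) in place of an application of $\OVW^0$.

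Concretely, we would mimic the proof of \Cref{lem:csl-prehomogeneous-step} and \Cref{lem:csl-prehomogeneous} by a Mathias-style forcing with conditions $(v, W)$. Here $v$ is a finite unordered variable word whose $1$-variable subwords are already level-homogeneous for $f$ in the sense of $\pi$. The reservoir $W$ is an unordered $\omega$-variable word such that each $h_i$ is $W$-hyperimmune, and $W$ is also level-homogeneous for each relevant shifted coloring.

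To extend a condition, we fix the finitely many prefixes $u$ of the current stem. For each one we form the coloring $g_u(w) = f(v[u]\cdot x_0 \cdot W[w])$ on the alphabet $A \sqcup \{x_0\}$, as in \Cref{lem:csl-prehomogeneous-step}. We then apply \Cref{strong-hyp-lovw} to the product of the $g_u$. This is legitimate because the product is an arbitrary, not necessarily computable, instance, which is exactly why the strong version is needed. It yields a level-homogeneous $U$ preserving all $k$ hyperimmunities relative to $W$. Replacing $W$ by $W[U]$ makes every $1$-variable extension of the stem level-homogeneous, and level-homogeneity depends only on $\pi$. Crucially, we never apply the last pigeonhole step, which would break preservation of $2$ hyperimmunities.

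The diagonalization requirements $\R^h_\Psi$ would be handled as in \Cref{lem:ovw0-omega-hyp-diag} and \Cref{scaOVW-lemkey}. Given $\Psi$ and $h_i$, we consider the $\Pi^{0,W}_1$ classes of colorings $g$ of the next level under which every level-homogeneous extension makes $\Psi^G(n)$ diverge. If all these classes are empty, compactness gives a $W$-computable bound $b_n$, and hyperimmunity of $h_i$ relative to $W$ yields some $n$ with $b_n < h_i(n)$. Since the true coloring $f$ lies outside the class, this gives a finite extension forcing $\Psi^G(n) < h_i(n)$. Otherwise we pick a member $g$ of some nonempty class using the computably dominated basis theorem, and we use strong preservation for $\LOVW^0$ (or $\IOIHJ$ via \Cref{scaOVW-prop3}) on $g$ to shrink the reservoir so that divergence is forced.

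The main obstacle is this divergence case. Shrinking the reservoir to follow the fake coloring $g$ must still leave room to extend stems that are level-homogeneous for the real $f$. This is the issue the paper resolved for $\OVW^0$ with witnesses for progress and with reservoirs that maximize non-progressable initial segments (\Cref{scaOVW-prop30} and the Key Claim). I would first try to transplant that machinery directly to unordered $1$-variable words: define progressability for the $\LCSL^1$ conditions, show that non-progressable stems are not matching in the sense of \Cref{scaOVW-lemkey}, build a maximizing reservoir, and pass to a $W^*$ on which every extension is progressable. Once that is done, a sufficiently generic sequence gives an unordered $\omega$-variable word that is level-homogeneous for $f$, with each $h_i$ hyperimmune relative to it.
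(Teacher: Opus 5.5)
Your opening outline (a cohesive stage followed by \Cref{strong-hyp-lovw}) has the right shape, but the construction you then describe does not implement it. The argument is also left open at its hardest point: the divergence/progress case is deferred to a hoped-for transplant of the witness-for-progress machinery of \Cref{scaOVW-sec3}, which you do not carry out. More seriously, your conditions carry no invariant guaranteeing that a level-homogeneous continuation exists. Applying $\LOVW^0$ to the product of the $g_u$ only makes the color of a $1$-variable word depend on its constant prefix $u$ and on its length $L$, i.e.\ it yields colors $c(u,L)$. Level-homogeneity for $\LCSL^1$ demands that all words with the same $\pi$ agree. That means agreement across distinct prefixes of equal length, and across the different substitutions for the later stem variables. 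Those prefixes and substitutions live in the already fixed stem, so no later shrinking of the reservoir can repair a disagreement. And since the colors depend on $L$, no single finite coloring is left to homogenize afterwards. So the claim that passing to $W[U]$ makes every $1$-variable extension of the stem level-homogeneous does not follow.

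The missing idea is to decouple stabilization from homogenization, imposing no homogeneity constraint on stems during the forcing. The paper first proves \Cref{thm:ccsl1-omega-hyperimmunity} ($\CCSL^1$ admits preservation of $\omega$ hyperimmunities). Its conditions $(w,W)$ are $f$-stabilizing: for $v\in\USub^{1,\star}_A(w\cdot W)$ with $v\uuh_0\in\WSub^\star_A(w)$ and $|v|\geq|w|$, $f(v)$ depends only on $v\uuh_0$. This universal property is maintained through the stability tree (\Cref{lem:stability-tree}), a Hales--Jewett argument over the enlarged alphabets $B_{t_n}$ that handles precisely the tails inside the stem. It yields a $\Sigma^0_1(W)$ forcing question, so diagonalization is routine and no progress issue arises. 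The $\Sigma^0_1$-extension needs only ordinary preservation for $\OVW^0$ on a $W$-computable coloring. Your $g_u$ are also $W$-computable, so strong preservation is not what is needed there. Strong preservation for $\LOVW^0$ is used once, on the limit coloring $\hat f$, which is only $\Delta^0_2$ in $W$. Finally, $U\oplus W$ computes a $\hat W\in\USub^\omega_A(W[U])$ level-homogeneous for $f$ by greedily searching for each next first-occurrence position at which all words with a given $\pi$ receive the same color. This suffices because $\LCSL^1$ only constrains words with equal $\pi$, and such positions exist beyond the stabilization points.
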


Before proving \Cref{thm:lcsl1-k-hyperimmunity}, we assume it holds, and deduce \Cref{thm:csl1-1-hyperimmunity}:

\begin{proof}[Proof of \Cref{thm:csl1-1-hyperimmunity}]
By \Cref{thm:lcsl1-k-hyperimmunity}, $\LCSL^1$ admits preservation of 1 hyperimmunity.
By Patey~\cite[Theorem 23]{patey2017iterative}, so does $\RT^2$. It follows that $\LCSL^1 \wedge \RT^2$
admits preservation of 1 hyperimmunity. By \Cref{lem:csl-lcsl-rt}, $\LCSL^1 \wedge \RT^2$ implies $\CSL^1$ over~$\RCA_0$, and a fortiori over $\omega$-models, so by \Cref{lem:pres-hyp-sep}, $\CSL^1$ admits preservation of 1 hyperimmunity.
\end{proof}

\subsection{Cohesive Carlson-Simpson lemma}

In order to prove \Cref{thm:lcsl1-k-hyperimmunity}, we shall define a \emph{cohesive} version of $\CSL^1$.
Recall that given $\beta \leq \alpha \leq \omega$ and some $\alpha$-variable word~$w$, we write $w \uuh_\beta$ for the $\beta$-variable word obtained by cutting $w$ at the first occurrence of the variable kind~$x_\beta$ (or the $\beta$th variable kind if the variable set is different).

\begin{definition}
Fix~$A$ and $\ell \geq 1$. Let $W$ be an unordered $\omega$-variable word over~$A$.
A coloring $f : \USub^{1,\star}_A(W) \to \ell$ is \emph{stable}
if for every $u \in \WSub^{\star}_A(W)$, there is a threshold $t \in \NN$ and a color $c < \ell$ such that for every $v \in \USub^{1,\star}_A(W)$, if $v \uuh_0 = u$ and $|v| \geq t$, then $f(v) = c$.
\end{definition}

Note that any stable coloring $f : \USub^{1,\star}_A(W) \to \ell$ induces a coloring $\hat f : \WSub^{\star}_A(W) \to \ell$ defined by
$$\hat f(u) = \lim_{v \in \USub^{1,\star}_A(W), v \uuh_0 = u} f(v)$$
Such a coloring is called the \emph{limit coloring} of~$f$.
Let $\CCSL^1$, standing for \emph{Cohesive $\CSL^1$}, be the statement \qt{For every finite alphabet~$A$, every $\ell \geq 1$ and every coloring $f : A^{<\omega, 1} \to \ell$, there is an unordered $\omega$-variable word over which $f$ is stable}.

\begin{theorem}\label[theorem]{thm:ccsl1-omega-hyperimmunity}
$\CCSL^1$ admits preservation of $\omega$ hyperimmunities.
\end{theorem}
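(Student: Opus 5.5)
We plan to mimic the classical proof that $\mathsf{COH}$ admits preservation of $\omega$ hyperimmunities: a Mathias-style forcing whose reservoirs are of hyperimmune-free degree, with the stem growing one variable kind at a time and the reservoir refined by applications of $\OVW^0$.

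Fix a computable coloring $f : A^{<\omega,1} \to \ell$ and hyperimmune functions $\vec h = (h_i)_{i\in\NN}$; we work unrelativized. A condition will be a pair $(v, W)$ where $v$ is a finite unordered $m$-variable word and $W$ an unordered $\omega$-variable word of hyperimmune-free degree, with denotation the $\omega$-variable words $v\cdot W[U]$ for $U$ an $\omega$-variable word. The generic $G=\bigcup_s v_s$ must meet two kinds of requirements.

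\emph{Cohesiveness.} For every $u \in \WSub^\star(v\cdot W)$ whose last segment lies in $v$, we want a threshold beyond which $f(v\cdot W[x\cdot w])$ with $v\cdot W[x\cdot w]\uuh_0=u$ is constant. Given $u$, define $g_u : (A\sqcup\{x_0\})^{<\omega}\to\ell$, with $x_0$ treated as a letter, by $g_u(w) = f(\text{the }1\text{-variable word } u\cdot x_0\cdot W[w] \text{ suitably substituted})$, so that the variables of $W$ become free. Apply preservation of $\omega$ hyperimmunities of $\OVW^0$ (\Cref{thm:ovw0-omega-hyp}) relative to $W$ to get an $\OVW^0$-solution $U$ to $g_u$ preserving $\vec h$. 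Then take a hyperimmune-free-degree path over $W\oplus U$, using that $\mathsf{HIF}$ is a Scott class relative to a hyperimmune-preserving oracle; this is the computably dominated basis theorem relativized. The new reservoir is $W[U]$ after the first variable. Because $v$ is finite there are finitely many such $u$ at each stage, but new $u$'s arise as the stem grows. Hence, exactly as in the proof of \Cref{lem:csl-prehomogeneous}, each time we extend $v$ by one variable kind we handle the finitely many new $u$'s and freeze a further prefix.

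\emph{Diagonalization.} For $\R^h_\Psi$, given $(v,W)$ we ask whether for each $n$ there is a finite extension $\hat v$ of $v$ compatible with $W$ (i.e.\ $\hat v\preceq v\cdot W[w]$ for some finite $w$) with $\Psi^{\hat v}(n)\downarrow$. This question is $\Sigma^0_1(W)$, and no combinatorial matching constraint is needed since stability is only asymptotic. If some $n$ fails, we keep $(v,W)$; it already forces $\Psi^G(n)\uparrow$. Otherwise the least such computation gives a $W$-computable bound $b_n$, and hyperimmunity of $h$ relative to $W$ (guaranteed by $W$ being hyperimmune-free) yields $n$ with $b_n<h(n)$. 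We then extend the stem accordingly.

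\emph{Main obstacle.} The main difficulty is the cohesiveness step. First, the coloring $g_u$ must genuinely be an $\OVW^0$-instance computable from $W$, which requires treating the remaining unordered variables correctly; we will use the trick of \Cref{lem:csl-prehomogeneous-step}, absorbing $x_0$ into the alphabet. Second, the resulting reservoir must stay hyperimmune-free while preserving all of $\vec h$. For the latter, rather than requiring $W\in\mathsf{HIF}$, we may instead require only that every $h_i$ be $W$-hyperimmune. The diagonalization argument still works, since $b_n$ is $W$-computable, and then \Cref{thm:ovw0-omega-hyp} composes directly. Finally, $G$ is an $\omega$-variable word because every extension adds a variable, $f$ is stable on $\USub^{1,\star}(G)$ by the cohesiveness steps, and each $h_i$ is $G$-hyperimmune.
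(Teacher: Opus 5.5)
Your proof has a genuine gap in the diagonalization step. The sentence ``no combinatorial matching constraint is needed since stability is only asymptotic'' is false. Fix $u\in\WSub^\star_A(G)$ whose defining variable first occurs inside the stem. The $1$-variable words $v'$ with $v'\uuh_0=u$ include, at every large length, every substitution (by a letter of $A$ or by the variable) into the stem variables whose first occurrence lies after $u$. These words differ only inside the frozen finite stem, yet stability requires them all eventually to receive the same color. So a stem chosen without regard to $f$ can be dead.

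For instance, let $f(v)=1$ if $|v|\geq 2$ and $v(1)$ is the variable, and $f(v)=0$ otherwise. Let $\Psi^G(n)\downarrow=0$ exactly when $G(0),G(1)$ are two distinct variables. Starting from $(\epsilon,x_0x_1\cdots)$, your question answers positively for every $n$, and every stem it can return starts with $x_0x_1$. Then, for $u=\epsilon$, the words $G[x_0a\tau]$ (with $a\in A$) and $G[x_0x_0\tau]$ have colors $0$ and $1$ at every length. Hence no $G$ through this stem is stable, although stable solutions exist (e.g.\ $x_0x_0x_1x_2\cdots$).

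The same defect shows up in your cohesiveness step for the new $u$'s created in the middle of a diagonalization chunk. Your $g_u$ skips the stem segment between $u$ and the reservoir. Applying $\OVW^0$ to the reservoir only gives one constant color $c_\rho$ for each of the finitely many substitutions $\rho$ into that segment, and nothing forces the $c_\rho$ to coincide.

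What is missing is an invariant carried by the conditions, plus a way of extending stems that respects it. The paper requires conditions $(w,W)$ to be $f$-stabilizing: every $1$-variable word of length at least $|w|$ whose $\uuh_0$-prefix lies in $w$ has a color depending only on that prefix. The forcing question is then asked over a level $r$ of the stability tree $\Sc_{A,\ell^{A^n}}$: every node $v$ at level $r$ must admit some $v_1\in\USub^=_A(v)$ with $\psi(w\cdot W[v_1])$.

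In the positive case, the paper proceeds in three steps:
\begin{enumerate}
\item It applies \Cref{thm:ovw0-omega-hyp} to the reservoir beyond $t_r$. After this, the color of any extension through the new reservoir of a $1$-variable word of length $t_r$ is determined by a fixed function $\mu$.
\item \Cref{lem:stability-tree} then provides a node $\bar w$ at level $r$ on which $\mu$ depends only on the $\uuh_0$-prefix.
\item The positive answer supplies a witness $w_1\in\USub^=_A(\bar w)$, and the new stem $w\cdot W[w_1]$ keeps the condition $f$-stabilizing.
\end{enumerate}
Quantifying universally over the whole level is exactly what makes the question independent of the not-yet-known $\mu$. The price is that the negative case is no longer trivial: it needs the $\Pi^0_1$ class of bad nodes and the computably dominated basis theorem relative to $W$.

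Your replacement of $\mathsf{HIF}$ reservoirs by the requirement that each $h_i$ be $W$-hyperimmune is fine; that is what the paper does.
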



Once again, before proving \Cref{thm:ccsl1-omega-hyperimmunity}, let us suppose it holds, and deduce \Cref{thm:lcsl1-k-hyperimmunity}.

\begin{proof}[Proof of \Cref{thm:lcsl1-k-hyperimmunity}]
As usual, we prove the unrelativized version. Let $(h_i)_{i < k}$ be a family of hyperimmune functions for some~$k \in \NN$. Let $f : A^{<\omega, 1} \to \ell$ be a computable coloring, for some finite alphabet~$A$ and some~$\ell \geq 1$.
Since $\CCSL^1$ admits preservation of $\omega$ hyperimmunities (\Cref{thm:ccsl1-omega-hyperimmunity}), there is an unordered $\omega$-variable word~$W$ on which $f$ is stable, and such that each $h_i$ is $W$-hyperimmune. Let $\hat f : \WSub^{\star}_A(W) \to \ell$ be the limit coloring of~$f$.
Since $\LOVW^0$ admits strong preservation of $k$ hyperimmunities (\Cref{strong-hyp-lovw}), there is some ordered $\omega$-variable word~$U$ such that $W[U]$ is level-homogeneous for~$\hat f$, and such that each $h_i$ is $U \oplus W$-hyperimmune. By a greedy algorithm, $U \oplus W$ computes an unordered $\omega$-variable word $\hat W \in \USub^\omega_A(W[U])$ which is level-homogeneous for~$f$.
\end{proof}

The remainder of the section is devoted to the proof of \Cref{thm:ccsl1-omega-hyperimmunity}.

\subsection{Notion of forcing}


In what follows, fix a computable coloring $f : A^{<\omega, 1} \to \ell$ for a finite alphabet~$A$ and some $\ell \geq 1$. Let $\vec{h} = (h_i)_{i \in \NN}$ be a countable collection of hyperimmune functions.
For every~$n \in \NN$, let $A_n = A \sqcup \{ x_0, \cdots, x_{n-1} \}$.

\begin{remark}\label[remark]{rem:unordered-concatenate}
Fix~$n \in \NN$ and $\alpha \leq \omega$.
Given an unordered $n$-variable word~$w$ over~$A$ with variable set $\{ x_i : i < n\}$ and an unordered $\alpha$-variable $u$ over $A_n$ with variable set $\{ x_i : n \leq i < \alpha+n \}$, the concatenation $w \cdot u$ forms an unordered $(\alpha+n)$-variable word over~$A$ with variable set $\{ x_i : i < \alpha+n \}$.
In this case, any element of $\WSub^{=}_A(w \cdot u)$ is of the form 
$$w[v_0] \cdot u[v_1][x_i \mapsto v_0(i)]$$ 
for some $v_0 \in A^n$ and $v_1 \in A_n^{\alpha}$.
Here, we write $u[x_i \mapsto v_0(i)]$ for the word over~$A$ obtained from $u$ by replacing every occurrence of $x_i$ for $i < n$ with $v_0(i)$.
\end{remark}


We now define the notion of forcing for $\CCSL^1$:

\begin{definition}\label[definition]{def:ccsl1-forcing-condition}
An \emph{$\RR$-precondition} is a pair $(w, W)$ such that, for some~$n \in \NN$,
\begin{itemize}
    \item[(1)] $w$ is an unordered $n$-variable word over~$A$ with variable set $\{ x_i : i < n \}$;
    \item[(2)] $W$ is an unordered $\omega$-variable word over~$A_n = A \sqcup \{ x_i : i < n \}$, with variable set $\{ x_i : i \geq n \}$;
    \item[(3)] each $h \in \vec{h}$ is $W$-hyperimmune.
\end{itemize}
An $\RR$-precondition $(w, W)$ is \emph{$f$-stabilizing} if there is a function $\hat f : \WSub^{\star}_A(w) \to \ell$ such that for every $v \in \USub^{1,\star}_A(w \cdot W)$, if $v \uuh_0 \in \WSub^{\star}_A(w)$ and $|v| \geq |w|$, then $f(v) = \hat f(v \uuh_0)$.
An \emph{$\RR$-condition} is an $f$-stabilizing $\RR$-precondition.
\end{definition}

Note that $x_i$ for $i < n$ plays the role of a variable in~$w$, but of an alphabet symbol in~$W$. By \Cref{rem:unordered-concatenate}, $w \cdot W$ is seen as an unordered $\omega$-variable word over~$A$, in which case all occurrences of~$x_i$ in~$w \cdot W$ are now considered as variables.
Given an $\RR$-precondition $(w, W)$, $w$ is called the \emph{stem}, and $W$ is the \emph{reservoir}.
The set of $\RR$-conditions is non-empty, as witnessed by the pair $(\epsilon, W)$, where $W = x_0x_1x_2\cdots$

\begin{definition}
An $\RR$-precondition $(w', U)$ \emph{extends} $(w, W)$ (written $(w', U) \leq (w, W)$)
if $w \preceq w'$ and $w' \cdot U \in \USub^\omega_A(w \cdot W)$.
\end{definition}

It is useful to think of an $\RR$-precondition $(w, W)$ as the compact collection of its candidate solutions
$$
[w, W] = \left\{ U \in \USub_A(w \cdot W) : w \preceq U \right\}
$$
The following lemma states that, as expected, the class of candidate solutions is refined when considering extensions.

\begin{lemma}\label[lemma]{lem:ccsl0-forcing-approx}
Let $(w, W)$ be an $\RR$-precondition. For every $(w', U) \leq (w, W)$, $[w', U] \subseteq [w, W]$.
\end{lemma}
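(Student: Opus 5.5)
The plan is to unfold the definitions and reduce the claim to the transitivity of $\USub$. Fix $(w', U) \leq (w, W)$ and some $V \in [w', U]$. By definition, $V \in \USub_A(w' \cdot U)$ and $w' \preceq V$. We must show that $V \in \USub_A(w \cdot W)$ and $w \preceq V$.

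The prefix condition is immediate. Since $w \preceq w'$ by the definition of extension, and $w' \preceq V$, transitivity of the prefix order gives $w \preceq V$.

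For membership, the extension hypothesis gives $w' \cdot U \in \USub^\omega_A(w \cdot W)$. So there is an unordered $\omega$-variable word $Z$ over~$A$ with $w' \cdot U = (w \cdot W)[Z]$. Moreover, $V \in \USub_A(w' \cdot U)$ means $V = (w' \cdot U)[u]$ for some unordered $\gamma$-variable word $u$ of length at most $\omega$. Then
$$V = ((w \cdot W)[Z])[u] = (w \cdot W)[Z[u]],$$
and $Z[u]$ is an unordered $\gamma$-variable word over~$A$, so $V \in \USub_A(w \cdot W)$.

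The only step needing care is this associativity of substitution, $(X[Z])[u] = X[Z[u]]$, together with the truncation convention. The substitution $X[u']$ is cut before the first occurrence of $x_{|u'|}$. So we must check that cutting $Z[u]$ (whose length $|Z[u]|$ may be finite) produces the same cut point in $X$ as first substituting $Z$ and then cutting at $x_{|u|}$ of $X[Z]$. Both cuts occur at the first position in $X$ carrying a variable $x_j$ with $Z(j)$ containing $x_{|u|}$. This holds because the first-occurrence ordering of variables in an unordered variable word is preserved under substitution. This is routine, and I would verify it position by position, noting that it does not depend on whether words are ordered.

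Finally, note that the variable sets of the stems play no role here. The convention that the $x_i$ with $i<n$ act as constants in~$W$ only matters for the reading of $w \cdot W$ as a word over~$A$ (\Cref{rem:unordered-concatenate}), and we use that reading throughout.
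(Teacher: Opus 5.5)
Your proof is correct and follows the paper's argument. The paper also unfolds $[w',U]$, gets $w \preceq V$ from transitivity of $\preceq$, and gets membership from $w'\cdot U \in \USub^\omega_A(w\cdot W)$ via transitivity of $\USub$. The only difference is that you explicitly justify that transitivity through associativity of substitution, including the truncation convention, whereas the paper uses it without comment.
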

\begin{proof}
Let $v \in [w', U]$. First, $v \in \USub_{A}(w' \cdot U)$, and $w' \cdot U \in \USub^\omega_{A}(w \cdot W)$, so $v \in \USub_{A}(w \cdot W)$. Second, $w' \preceq v$, and $w \preceq w'$, so $w \preceq v$. It follows that $v \in [w, W]$.
\end{proof}

\subsection{Stability tree}

As in \Cref{sec:weakly-low}, a condition is defined as a precondition satisfying some universality property ($f$-matching for $\PP$-forcing and $f$-stabilizing for $\RR$-forcing). Given a condition, the difficulty is to find a precondition extension while maintaining this universality property. This is usually done by considering \qt{sufficiently many} extensions of the stem.
In \Cref{sec:weakly-low}, the appropriate combinatorial object is the Graham-Rothschild tree which, at every level, contains sufficiently many variable words to maintain the $f$-matching property. We now define another tree, called the \emph{stability tree}, which will play the same role for the $f$-stabilizing property.
\smallskip

For every~$n \in \NN$, let $B_n = A \sqcup \{ s_0, \cdots, s_{n-1} \}$, where $s_i$ are fresh constant symbols.
Let $(t_n)_{n \in \NN}$ be inductively defined by
$$t_0 = 0 \mbox{ ; } t_1 = 1 \mbox{ and for } n \geq 1 \mbox{, let } t_{n+1} = t_n + HJ(B_{t_n}, \ell^{\card A^{t_n,1}})$$

The \emph{stability tree for $\ell$-colorings of $A^{<\omega, 1}$} is the tree $\Sc_{A,\ell}$ whose level~$n$ consists of all unordered $n$-variable words~$w$ of length~$t_n$, such that for every~$s < n$, the first occurrence of the variable~$x_s$ appears within the positions $[t_s, t_{s+1})$. This tree is partially ordered by the prefix relation. Note that this tree is computably bounded, and every infinite path is an unordered $\omega$-variable word over~$A$. The fundamental property of this stability tree is the following:

\begin{lemma}[$\RCA_0$]\label[lemma]{lem:stability-tree}
Fix~$A$ and $\ell \geq 1$.
For every~$n \in \NN$ and every coloring $f : A^{t_n,1} \to \ell$, there is some unordered $n$-variable word~$w$ at level~$n$ in~$\Sc_{A,\ell}$ and some function $\hat f : \WSub^{\star}_A(w) \to \ell$ such that for every~$v \in \USub^{1,=}_{A}(w)$, $f(v) = \hat f(v \uuh_0)$.
\end{lemma}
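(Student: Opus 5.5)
Note first that the statement for $n=0$ is vacuous, because $\USub^{1,=}_A(w)$ is empty when $w=\epsilon$. For $n = 1$ it is immediate: take $w = x_0$, since every $v\in\USub^{1,=}_A(x_0)$ equals $x_0$ and $\hat f(\epsilon)=f(x_0)$ works. For $n\geq 1$ the idea is to build $w$ from the right end of the stability tree, one block $[t_s,t_{s+1})$ at a time, going from the last block down to the first. Each time we apply \Cref{thm:hales-jewett}, in the same way as the proof of \Cref{lem:graham-rothschild-tree}.

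The key observation is the following. Write $v\in\USub^{1,=}_A(w)$ as $v = w[\sigma]$ for some $\sigma$ of length $n$ over $A\sqcup\{x_0\}$ in which $x_0$ occurs. Then $v\uuh_0$ depends only on the prefix of $\sigma$ before the first occurrence of $x_0$. So we must make $f(w[\sigma])$ independent of what comes \emph{after} that first $x_0$. I will fix the blocks from right to left with a downward induction on $s = n-1,\dots,1$. Suppose the blocks $w_{s+1},\dots,w_{n-1}$ have already been chosen, where $w_j$ is a 1-variable word of length $t_{j+1}-t_j$ in the variable $x_j$. For any $\tau\in A^{t_s,1}$ (the part before block $s$, which already contains the first $x_0$), I need $f(\tau\cdot w_s[b_s]\cdots w_{n-1}[b_{n-1}])$ to be independent of the letters $b_j\in A\sqcup\{x_0\}$. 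Here $B_{t_s}$ enters: the extra constants $s_i$ stand for the possible substitutions, and letting them range as letters gives enough room to represent $x_0$ and the earlier variables. The number of colors is $\ell^{\card A^{t_s,1}}$, one color for each $\tau$ at once. Concretely, at stage $s$ I define a coloring $g_s$ on $B_{t_s}^{\,t_{s+1}-t_s}$. It sends a word $y$ to the tuple, indexed by $\tau\in A^{t_s,1}$, of the (already stabilized) value $f(\tau\cdot y[\text{letters}\mapsto\cdot]\cdot \text{later blocks})$. By \Cref{thm:hales-jewett} there is a 1-variable word $w_s$ of length $HJ(B_{t_s},\ell^{\card A^{t_s,1}})$ whose line is $g_s$-monochromatic. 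This makes the value independent of the letter substituted in block $s$, for every $\tau$ simultaneously. Block $0$ is just $x_0$ at position $0$, since $t_1=1$.

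Once all the blocks are fixed, set $w = x_0\cdot w_1\cdots w_{n-1}$, so that $x_s$ first occurs inside $[t_s,t_{s+1})$ and $w$ lies at level $n$. Define $\hat f(u)$ for $u\in\WSub^\star_A(w)$ to be $f(w[\sigma])$ for any $\sigma$ extending the prefix coding $u$ followed by $x_0$. Well-definedness comes from the monochromaticity of each block line, applied inductively over the blocks after the first $x_0$. This is a finite induction on the number of blocks, so it is a $\Delta^0_0$ argument and fine in $\RCA_0$.

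The main obstacle is the bookkeeping in the choice of $g_s$. I must check that the alphabet $B_{t_s}$, with its $t_s$ fresh symbols, really accounts for all substitutions in later positions that matter: the occurrences of $x_0$ and the constants, and the possible identifications with earlier variables in the unordered setting. I must also check that fixing blocks right to left interacts correctly with prefixes $\tau$ that contain the first $x_0$ at different positions. If the right-to-left order fails, I would instead go left to right and use shifts of $f$, as in \Cref{lem:graham-rothschild-tree}.
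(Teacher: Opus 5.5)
Your proposal is correct and is essentially the paper's proof. The paper's inductive step fixes the top block $w_n$ first, applying \Cref{thm:hales-jewett} over $B_{t_n}$ with $\ell^{\card A^{t_n,1}}$ colors to $g(y)(u) = f(u \cdot y[s_i \mapsto u(i)])$ for all $u \in A^{t_n,1}$ at once, and then applies the induction hypothesis to the induced coloring $h$ of $A^{t_n,1}$; this unrolls exactly to your right-to-left recursion, so no left-to-right fallback is needed.

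The bookkeeping you left open is settled there as you anticipated. The final word is $w \cdot w_n[s_i \mapsto w(i)]$ rather than a plain concatenation of blocks. When the block variable is instantiated by the variable of $v$, one substitutes the letter $s_j$ for some $j < t_n$ with $u(j)$ equal to that variable.
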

\begin{proof}
By $\Delta^0_0$-induction over~$n$.
The case $n = 0$ is vacuously true as $\USub^{1,=}_{A}(\epsilon) = \emptyset$.
The case $n = 1$ is trivially true as $\USub^{1,=}_{A}(x_0) = \{x_0\}$.
Suppose it holds for $n \geq 1$, we now show that it holds for~$n+1$.
Fix a coloring $f : A^{t_{n+1},1} \to \ell$, and let $b = HJ(B_{t_n}, \ell^{\card A^{t_n,1}})$.

Let $g : B_{t_n}^b \to \ell^{A^{t_n,1}}$ be defined by
$$
g(v) := A^{t_n,1} \ni u \mapsto f(u \cdot v[s_i \mapsto u(i)])
$$
In other words, the color of $g(v)$ is a function of type $A^{t_n,1} \to \ell$.
By choice of~$b = HJ(B_{t_n}, \ell^{\card A^{t_n,1}})$ applied to~$g$, there is some 1-variable word~$w_n \in B_{t_n}^{b,1}$ (with some fresh variable kind, say $\star$, to avoid name clash with~$x_0$) and some color $h : A^{t_n,1} \to \ell$ such that for every~$a \in B_{t_n}$, $g(w_n[a]) = h$.
In other words, for every $u \in A^{t_n,1}$ and every~$a \in B_{t_n}$, 
\begin{align}\label{stability-eq1}
    f(u \cdot (w_n[a])[s_i \mapsto u(i)]) = h(u).
\end{align}

By induction hypothesis applied to $h$, there is some unordered $n$-variable word~$w$ at level~$n$ in~$\Sc_{A,\ell}$ and some coloring $\hat h : \WSub^{\star}_A(w) \to \ell$ such that for every~$u \in \USub^{1,=}_{A}(w)$, $h(u) = \hat h(u \uuh_0)$.
Let $w' = w \cdot w_n[s_i \mapsto w(i)]$. First, note that $w'$ is an unordered $(n+1)$-variable word at level~$n+1$ in~$\Sc_{A,\ell}$. Let $\hat f : \WSub^{\star}_A(w') \to \ell$ be defined by $\hat f(u) = \hat h(u)$ if $|u| < t_n$, and $\hat f(u) = f(v)$ for the unique $v \in \USub^{1,=}_A(w')$ such that $v \uuh_0 = u$ otherwise.
\smallskip

\textbf{Claim 1}: For every~$v \in \USub^{1,=}_{A}(w)$, $f(v) = \hat f(v \uuh_0)$.
If $|v \uuh_0| \geq t_n$, then $v$ is the unique 1-variable word in $\USub^{1,=}_A(w')$ extending $v \uuh_0$, so by definition of~$\hat f$, $f(v) = \hat f(v \uuh_0)$. Suppose now that $|v \uuh_0| < t_n$. Let $u = v \uh_{t_n}$, that is, the initial segment of~$v$ of length~$t_n$ (which is different from $v \uuh_{t_n}$). In particular, $u \in A^{t_n,1}$ and $u \uuh_0 = v \uuh_0$. Say that the unique variable kind of~$v$ is $\star$. Since $u \in A^{t_n,1}$, there is at least one occurrence of~$\star$ in~$u$, say at some position~$j < t_n$. Let $v'$ be such that $v = u \cdot v'$.  Note that $v'$ is a word over the alphabet $B_{t_n} \sqcup \{\star\}$. Let $v''$ be the word over the alphabet~$B_{t_n}$ obtained from~$v'$ by replacing every occurrence of~$\star$ with the symbol $s_j$ from the alphabet $B_{t_n}$. We then have $v' = v''[s_i \mapsto u(i)]$.
In particular, $v'' \in \WSub^=_{B_{t_n}}(w_n)$, so there is some $a \in B_{t_n}$ such that $v'' = w_n[a]$. Then 
\begin{align*}
    f(v) &= f(u \cdot v')\\
        &= f(u \cdot v''[s_i \mapsto u(i)])\\
        &= f(u \cdot (w_n[a])[s_i \mapsto u(i)])\\
        &\overset{(\ref{stability-eq1})}= h(u) = \hat h(u \uuh_0) = \hat h(v \uuh_0)
\end{align*}
This completes the proof of Claim~1, and \Cref{lem:stability-tree}.
\end{proof}

\subsection{First-jump control}

We now turn to the definition of the forcing relation for $\Sigma^0_1$ and $\Pi^0_1$-formulas,
and of a forcing question with the good definitional properties.

\begin{definition}
Let $(w, W)$ be an $\RR$-precondition and let $\varphi(G) \equiv \exists x \psi(G \uh_x)$ be a $\Sigma^0_1$-formula.
\begin{itemize}
    \item[(1)] $(w, W) \Vdash \varphi(G)$ if $\psi(w)$ holds.
    \item[(2)] $(w, W) \Vdash \neg \varphi(G)$ if
    for every $\h v\in [v,W]$, every $n\in\omega$, $\neg\psi(\h v\uhr n)$ holds.
\end{itemize}
\end{definition}

As usual, it is easy to check that if $(w, W) \Vdash \varphi(G)$ for a $\Sigma^0_1$ or $\Pi^0_1$-formula $\varphi(G)$,
then for every $U \in [w, W]$, $\varphi(U)$ holds. On can also prove that the forcing relation for $\Sigma^0_1$-formulas and $\Pi^0_1$-formulas is closed under extensions.

\begin{definition}\label{csl1-avoidance-defforcingquestion}
Let $(w, W)$ be an $\RR$-precondition with variable kinds of $w$ being $\{x_i:i<n\}$;
and let $\varphi(G) \equiv \exists x \psi(G \uh_x)$ be a $\Sigma^0_1$-formula.
We define $(w, W) \qvdash \varphi(G)$ to hold if there is a level~$r \in \NN$ in the stability tree $\Sc_{A,\ell^{A^n}}$
for $\ell^{A^n}$-colorings of~$A^{<\omega, 1}$ such that for every node $v$ at level~$r$ in~$\Sc_{A,\ell^{A^n}}$,
there is some $v_1 \in \USub^=_A(v)$ such that $\psi(w \cdot W[v_1])$ holds.
\end{definition}

We now prove that the forcing question satisfies its abstract specifications, that is, if $(w, W) \qvdash \varphi(G)$ for some $\RR$-condition $(w, W)$, then there is an extension forcing $\varphi(G)$, and if $(w, W) \nqvdash \varphi(G)$, there is an extension forcing $\neg \varphi(G)$. As in \Cref{sec:ovw0-forcing-first-jump}, we start with the negative answer, which is the simplest case.

\begin{lemma}[$\Pi^0_1$-extension]\label[lemma]{lem:ccsl1-forcing-question-pi}
Let $(w, W)$ be an $\RR$-condition, and let $\varphi(G)$ be a $\Sigma^0_1$-formula.
If $(w, W) \nqvdash \varphi(G)$, then there is an $\RR$-condition $(w, U) \leq (w, W)$ such that $(w, U) \Vdash \neg \varphi(G)$.
\end{lemma}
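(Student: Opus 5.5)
We follow the proof of \Cref{caOVW-lemextPi1}, with the stability tree $\Sc_{A,\ell^{A^n}}$ in place of the Graham-Rothschild tree and the computably dominated basis theorem in place of the low basis theorem. Say the variables of $w$ are $\{x_i : i<n\}$, so that $W$ is an unordered $\omega$-variable word over $A_n$. Unfolding $(w,W) \nqvdash \varphi(G)$, for every level $r$ there is a node $v$ at level $r$ in $\Sc_{A,\ell^{A^n}}$ such that $\neg\psi(w\cdot W[v_1])$ holds for every $v_1 \in \USub^=_A(v)$.

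Let $Q$ be the class of infinite paths $V$ through $\Sc_{A,\ell^{A^n}}$ such that $\neg\psi(w \cdot W[v_1])$ holds for every $v_1 \in \USub^{\star}_A(V)$. Two facts drive the argument. Every element of $\USub_A^\star(V)$ with at least one variable is an initial segment of some element of $\USub^=_A(V\uh_{t_r})$ for large $r$. Also, $\psi$ is monotone, so $\neg\psi$ is closed under prefixes. Together these make the nodes witnessing the negative answer form an infinite subtree of $\Sc_{A,\ell^{A^n}}$. That subtree is computably bounded and $\Pi^{0,W}_1$. So $Q$ is a non-empty $\Pi^{0,W}_1$ class, by compactness.

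By the computably dominated basis theorem relativized to $W$, there is $V \in Q$ such that every $h\in\vec h$ remains $V\oplus W$-hyperimmune. This is the same step used in Case 2 of \Cref{scaOVW-lemkey}: each $V\oplus W$-computable function is dominated by a $W$-computable one. We then set $U = W[V]$, after renaming the variables of $V$ to $\{x_i : i \geq n\}$. The symbols $x_i$ with $i<n$ are constants of the alphabet $A_n$, so $V$ should be read as a word over $A_n$ with $A_n$-constants allowed. Since $\USub_A(V)\subseteq \USub_{A_n}(V)$, it is harmless to take $V$ over $A$. With this choice, $U$ is an unordered $\omega$-variable word over $A_n$, $w\cdot U \in \USub^\omega_A(w\cdot W)$, and each $h$ is $U$-hyperimmune. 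Hence $(w,U)$ is an $\RR$-precondition extending $(w,W)$.

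\emph{$f$-stabilizing is preserved.} Every $v \in \USub^{1,\star}_A(w\cdot U)$ with $v\uuh_0 \in \WSub^\star_A(w)$ and $|v|\ge|w|$ also lies in $\USub^{1,\star}_A(w\cdot W)$. So the same function $\hat f$ witnesses that $(w,U)$ is $f$-stabilizing, and $(w,U)$ is an $\RR$-condition.

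\emph{$(w,U)\Vdash\neg\varphi(G)$.} Take any $G\in[w,U]$ and any $m$. Then $G\uh_m$ is either a prefix of $w$, or of the form $w\cdot U[v_1]$ for some $v_1\in\USub^\star_A(V)$, or a prefix of such a word. In the second and third cases, $\neg\psi$ follows from the definition of $Q$ and prefix-closure of $\neg\psi$. The first case is handled by the same closure, applied to any longer word of the second kind.

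\emph{Main obstacle.} The main obstacle is the bookkeeping in the definition of $Q$. We must check that, because the variables $x_i$ with $i<n$ are constants in $W$, every element of $[w,U]$ really is covered by words of the form $w\cdot W[v_1]$ with $v_1 \in \USub_A^\star(V)$ ranging over refinements of stability-tree nodes. Once this is in place, the compactness and basis steps are routine.
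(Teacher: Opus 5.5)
Your argument is the paper's proof. You take the subtree of $\Sc_{A,\ell^{A^n}}$ consisting of the nodes that witness the negative answer, pick a path $V$ through it by the computably dominated basis theorem relative to $W$, and use the reservoir $W[V]$ with the stem unchanged. Your explicit check that the old $\hat f$ still witnesses $f$-stabilization fills in a detail the paper leaves implicit. (In the last step you mean $w\cdot W[v_1]$ rather than $w\cdot U[v_1]$.)

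However, the point you single out as the main obstacle cannot be checked for $[w,U]$ as literally defined. The paper's proof passes over it in the same way (\qt{for every $v_1 \in \USub^{\star}_A(U)$, $\psi(w \cdot W[v_1])$ does not hold. It follows that\dots}). Recall that $[w,U]=\{G\in\USub_A(w\cdot U) : w\preceq G\}$ and that unordered variable words may repeat variables. So when $n\geq 1$, the class $[w,U]$ contains $(w\cdot U)[x_0\cdots x_{n-1}x_0]$, in which the first variable of $U$ is replaced by the stem variable $x_0$. This word is not a prefix of any $w\cdot W[v_1]$ with $v_1\in\USub^\star_A(V)$, so $Q$ says nothing about it.

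The statement itself fails under this reading. Take $f$ constant, $w=x_0$, $W=x_1x_2\cdots$, and let $\psi(\sigma)$ express that $x_0$ occurs in $\sigma$ at some position $\geq 1$. The forcing question answers negatively, yet for every reservoir $U$ the word $(w\cdot U)[x_0x_0]\in[w,U]$ satisfies $\psi$.

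The remedy is to let $[w,U]$, and hence the forcing of $\neg\varphi(G)$, range only over words $w\cdot U[y]$ with $y$ an unordered variable word over $A$. Then variables of the reservoir are replaced by letters of $A$ or fresh variables, never by the $x_i$ with $i<n$. Under that reading:
\begin{itemize}
\item your covering argument is correct as written;
\item the restricted classes still shrink along the extensions built in both extension lemmas, since these only substitute such words into the reservoir;
\item the generic $G=\bigcup_s w_s$ still lies in every $[w_s,W_s]$, which is all that the proof of \Cref{thm:ccsl1-omega-hyperimmunity} uses.
\end{itemize}
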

\begin{proof}
Say $\varphi(G) \equiv \exists x \psi(G \uh_x)$ and that $(w, W)$ is an $\RR$-condition with variable kinds of $w$ being $\{x_i:i<n\}$.
Suppose $(w, W) \nqvdash \varphi(G)$ holds. Then, for every level~$r$ there is some node $v$ at level~$r$ in $\Sc_{A,\ell^{A^n}}$ such that for every $v_1 \in \USub^=_A(v)$, $\psi(w \cdot W[v_1])$ does not hold. Let $\Sc$ be the infinite sub-tree of~$\Sc_{A,\ell^{A^n}}$ of all such nodes. Note that $\Sc \leq_T f \oplus W$. By the computably dominated basis theorem (\cite[Theorem 2.4]{jockusch1972pi}), there is an infinite path~$U$ through~$\Sc$ such that each $h \in \vec{h}$ is $U \oplus W$-hyperimmune. Then for every $v_1 \in \USub^{\star}_A(U)$, $\psi(w \cdot W[v_1])$ does not hold. It follows that $(w, W[U])$ is a valid $\RR$-condition extending $(w, W)$ such that $(w, W[U]) \Vdash \neg \varphi(G)$.
\end{proof}

\begin{lemma}[$\Sigma^0_1$-extension]\label[lemma]{lem:ccsl1-forcing-question-sigma}
Let $(w, W)$ be an $\RR$-condition, and let $\varphi(G)$ be a $\Sigma^0_1$-formula.
If $(w, W) \qvdash \varphi(G)$, then there is an $\RR$-condition $(\h w, \h W) \leq (w, W)$ such that $(\h w, \h W) \Vdash \varphi(G)$.
\end{lemma}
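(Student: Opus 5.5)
We follow the $\Sigma^0_1$-extension lemma for $\PP$-forcing (\Cref{caOVW-lemextSigma1}), with the stability tree (\Cref{lem:stability-tree}) in place of the Graham-Rothschild tree. Let $w$ have variables $\{x_i : i<n\}$, and let $r$ be a level of $\Sc_{A,\ell^{A^n}}$ witnessing $(w,W)\qvdash\varphi(G)$. Thus for every node $v$ at level~$r$ we can pick $v_1(v)\in\USub^=_A(v)$ with $\psi(w\cdot W[v_1(v)])$. Here $v_1(v)$ is a word over $A_n\cup\{x_i: i\ge n\}$, obtained from $v$ by substituting letters of $A$ or variables for its variables, and $W[v_1(v)]$ is read as in \Cref{rem:unordered-concatenate}.

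The stem has to become a single $(n+m)$-variable word, so we cannot keep several stems as in $\PP$-forcing. The plan is to absorb the whole level-$r$ structure into the stem. Since $\Sc_{A,\ell^{A^n}}$ is universal for colorings of $A^{t_r,1}$, we use the $f$-stabilizing property of $(w,W)$ to define a coloring $g : A^{t_r,1}\to \ell^{A^n}$. Fix $u\in A^{t_r,1}$ and $\sigma\in A^n$. Take any $p\in A^{<\omega}$ and set $g(u)(\sigma) := f\big(w[\sigma]\cdot W[u\cdot p]\big)$, where the 1-variable word $w[\sigma]\cdot W[u\cdot p]$ has its variable occurring first inside $W[u]$. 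This value is independent of $p$, because its first variable occurrence lies past $|w|$, so the stabilizing clause for $v\uuh_0 \notin \WSub^\star_A(w)$ does not apply. We therefore adjust: we take $g(u)(\sigma)$ to be the color along the relevant $\uuh_0$-prefix, chosen so that the stabilized-color function $\hat f$ extends.

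By \Cref{lem:stability-tree}, there is a node $v$ at level $r$ and $\hat g:\WSub^\star_A(v)\to\ell^{A^n}$ with $g(u)=\hat g(u\uuh_0)$ for all $u\in\USub^{1,=}_A(v)$. We then set $\hat w := w\cdot W[v_1(v)]$, truncated to end at a position between variables, and $\hat W := W\uh^\omega_{t}$ for the corresponding $t$ (as in \Cref{caOVW-lemextSigma1}), with its variables renumbered. Then $\psi(\hat w)$ holds, so $(\hat w,\hat W)\Vdash\varphi(G)$. Since $\hat W$ is a tail of $W$, hyperimmunity is preserved, and $(\hat w,\hat W)\le(w,W)$ follows because $v_1(v)$ generates from $x_0\cdots$.

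The remaining step is $f$-stabilization of $(\hat w,\hat W)$. Take $v'\in\USub^{1,\star}_A(\hat w\cdot\hat W)$ with $v'\uuh_0\in\WSub^\star_A(\hat w)$ and $|v'|\ge|\hat w|$. There are two cases. If $v'\uuh_0$ has length below $|w|$, the old $\hat f$ works, since $v'\in\USub^{1,\star}_A(w\cdot W)$. Otherwise the first variable of $v'$ is introduced inside $W[v_1(v)]$, and we need $f(v')$ to depend only on $v'\uuh_0$. This is exactly what the stability-tree lemma supplies, once $g$ is defined so that the tail beyond level $t_r$ does not matter. I expect this to be the main obstacle. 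Making $g$ well defined requires the coloring to record, for each prefix $\sigma$ on $w$'s variables, the limiting color of extensions. The honest definition of $g$ probably needs to use the stabilizing function of $(w,W)$ in combination with a further shrinking of the reservoir, for example via a $\Pi^0_1$ class of reservoirs handled by the computably dominated basis theorem as in \Cref{lem:ccsl1-forcing-question-pi}. The real content is making sure the tail words after $\hat w$ do not change colors. That would first be tried using the indexing by $A^n$ in the color space $\ell^{A^n}$, which is why the stability tree is taken over $\ell^{A^n}$-colorings.
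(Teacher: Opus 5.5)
Your skeleton matches the paper's: a level $r$ from the forcing question, \Cref{lem:stability-tree} applied to an $\ell^{A^n}$-valued coloring of $A^{t_r,1}$, $\hat w = w\cdot W[v_1]$, and a case split at $|w|$ in the stabilization check. But the step you call the main obstacle is a genuine gap, and it is not only a matter of defining $g$: with $\hat W$ a tail of $W$, no choice of stem can work. The $f$-stabilizing property of $(w,W)$ only constrains 1-variable words whose variable first occurs inside $w$. So $f\big(w[\sigma]\cdot W[u\cdot p]\big)$ really depends on $p$, and your ``adjusted'' $g$ is not a definition. Here is a concrete failure. Take $f(v) = |v| \bmod 2$, the condition $(\epsilon, x_0x_1\cdots)$, and $\varphi(G)$ saying that $G$ contains a variable; the forcing question trivially holds. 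Let $\hat w$ be any stem with $k\geq 1$ variables and $\hat W = x_kx_{k+1}\cdots$ a renumbered tail. The words $(\hat w\cdot\hat W)[y^m]$ for $m\geq k$ lie in $\USub^{1,\star}_A(\hat w\cdot\hat W)$, have every length $\geq|\hat w|$, and all satisfy $v\uuh_0 = \hat w\uuh_0\in\WSub^\star_A(\hat w)$. So $f(v)$ is not a function of $v\uuh_0$, and $(\hat w,\hat W)$ is not an $\RR$-condition.

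The missing idea is to stabilize the tails \emph{before} invoking the stability tree, by shrinking the reservoir with a Ramsey-type principle. The paper proceeds as follows.
\begin{enumerate}
\item Set $\bar f(u')(u) = f\big((w\cdot W[u'])[x_i\mapsto u(i)]\big)$ for $u'\in A^{<\omega,1}$ and $u\in A^n$.
\item Let $B_r = A\sqcup\{s_i : i<t_r\}$, where the fresh constants $s_i$ stand for the symbols of a word in $A^{t_r,1}$. They are needed because in unordered words the tail may repeat earlier variables.
\item Color each tail $\sigma\in B_r^{<\omega}$ by the finite object $g(\sigma) = \big(v\mapsto \bar f(v\cdot\sigma[s_i\mapsto v(i)])\big)\in(A^{t_r,1}\to\ell^{A^n})$. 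This is a $W$-computable instance of $\OVW^0$, so \Cref{thm:ovw0-omega-hyp} yields $U$ with $\WSub^\star_{B_r}(U)$ $g$-monochromatic, say of color $\mu$, and every $h\in\vec{h}$ being $U\oplus W$-hyperimmune.
\item Only now is there a well-defined coloring $\mu$ of $A^{t_r,1}$ to feed into \Cref{lem:stability-tree}. This gives $\bar w$ at level $r$ and $\hat\mu$.
\item The forcing question then supplies $w_1\in\USub^=_A(\bar w)$ with $\psi(w\cdot W[w_1])$. Take $\hat w = w\cdot W[w_1]$ and $\hat W = (W\uuh_{t_r}^\omega)[U][s_i\mapsto w_1(i)]$.
\end{enumerate}
In your second case, write $v$ as $w[u_0]$ followed by the image of some $v_0\in\USub^{1,=}_A(w_1)$ and a tail from $\WSub^\star_{B_r}(U)$. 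Then $f(v) = \mu(v_0)(u_0) = \hat\mu(v_0\uuh_0)(u_0)$, which depends only on $v\uuh_0$. Your tentative substitute, a $\Pi^0_1$ class of reservoirs plus the computably dominated basis theorem, does not address this step. What is needed is that colors not depend on tails of \emph{any} length. That is a homogenization statement for which your sketch gives no argument, and the paper obtains it from $\OVW^0$.
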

\begin{proof}
Say $\varphi(G) \equiv \exists x \psi(G \uh_x)$ and that $(w, W)$ is an $\RR$-condition with variable kinds of $w$ being $\{x_i:i<n\}$.
Note that for every $u'\in A^{<\omega,1}$ with variable kind $\{y\}$,
$w\cdot W[u']$ is an unordered variable word (over $A$) with variable kinds $\{x_i:i<n\}\cup\{y\}$.
Thus, for any $u \in A^n$, $(w\cdot W[u'])[x_i \mapsto u(i)]$ is a 1-variable word over~$A$ with variable kind~$y$. 
Let $\bar f : A^{<\omega,1} \to \ell^{A^n}$
be defined for each~$u' \in A^{<\omega,1}$ by
\begin{align}\label{ccsl1-eq2}
\bar f(u') &:=  A^n \ni u \mapsto f\bigg((w \cdot W[u'])[x_i \mapsto u(i)]\bigg)
\end{align}
Note that that the colors of~$f$ are themselves colorings: $f(u')$ is a coloring of type $A^n \to \ell$. We shall therefore write $f(u')(u)$ for $f(u')$ applied to $u \in A^n$.
Also note that if $(w, W)$ is the initial $\RR$-condition $(\epsilon, x_0x_1\cdots)$, then $\bar f = f$. One can think of~$\bar f$ as the instance of $\CCSL^1$ obtained by transforming the $\RR$-condition $(w, W)$ into the canonical $\RR$-condition $(\epsilon, x_0x_1\cdots)$.

Suppose $(w, W) \qvdash \varphi(G)$ holds. Let $r$ be the level in $\Sc_{A,\ell^{A^n}}$ witnessing that it holds.
By definition, all the variable words at level~$r$ in $\Sc_{A,\ell^{A^n}}$ have length~$t_r$. 
We first stabilize $\bar f$ above 
level $t_r$ with respect to all 1-variable word of length $t_r$
\footnote{We actually only need to stabilize it with respect to a part of such variable words.
But it is notationally convenient to do this for all such variable words.}
via shrinking $W \uuh_{t_r}^\omega$.
To this end,
let $B_r = A \sqcup \{ s_i : i < t_r \}$ where $s_i$ are fresh constant symbols;
let $$g : B_r^{<\omega} \to (A^{t_r,1} \to \ell^{A^n})$$ be defined by
\begin{align}\label{ccsl1-eq3}
g(\sigma) &:= A^{t_r,1} \ni v \mapsto \bar f(v \cdot \sigma[s_i \mapsto v(i)])
\end{align}
Here, we see $A^{t_r,1} \to \ell^{A^n}$ as the space of $\ell^{A^n}$-colorings of $A^{t_r,1}$, and therefore $g$ maps any $\sigma$ to such a coloring. Note that $A^{t_r,1} \to \ell^{A^n}$ is finite as the domain and the codomain are finite.
One can therefore see $g$ as a $W$-computable instance of $\CSL^0$ (and therefore of $\OVW^0$). Since $\OVW^0$ admits preservation of $\omega$ hyperimmunities (\Cref{thm:ovw0-omega-hyp}), there is an (ordered, and a fortiori unordered) $\omega$-variable word~$U$
over alphabet $B_r$ such that
\begin{align}\label{ccsl1-eq6}
\text{$\WSub^{\star}_{B_r}(U)$ is $g$-monochromatic for some color~$\mu : A^{t_r,1} \to \ell^{A^n}$}
\end{align}
and such that each $h \in \vec{h}$ is $U \oplus W$-hyperimmune. Here, $\mu$ is a function, but it is seen as a color of~$g$. 

By the fundamental property of the stability tree (\Cref{lem:stability-tree}) applied to $\mu : A^{t_r,1} \to \ell^{A^n}$, there is some unordered $r$-variable word~$\bar w$ at level $r$ in~$\Sc_{A,\ell^{A^n}}$ and some function $\hat \mu : \WSub^\star_A(\bar w) \to \ell^{A^n}$ such that
\begin{align}\label{ccsl1-eq4}
   \text{for every $\hat v \in \USub^{1,=}_A(\bar w)$, $\mu(\hat v) = \hat \mu(\hat v \uuh_0)$}
\end{align}
By definition of the forcing question, there is some~$w_1 \in \USub^=_A(\bar w)$ such that $\psi(w \cdot W[w_1])$ holds. 
Since $w_1$ is generated by $\bar w$, $w_1$ also satisfy (\ref{ccsl1-eq4}). i.e.,
\begin{align}\label{ccsl1-eq40}
   \text{for every $\hat v \in \USub^{1,=}_A(w_1)$, $\mu(\hat v) = \hat \mu(\hat v \uuh_0)$}
\end{align}
Let $r_1 \leq r$ be the number of variable kinds of~$w_1$. We can assume without loss of generality that the variable set of~$w_1$ is $\{ x_i : n \leq i < n+r_1 \}$ and that the variable set of~$U$ is $\{ x_i : i \geq n+r_1 \}$.
Let $$\hat W = (W \uuh_{t_r}^\omega)[U][s_i \mapsto w_1(i)]$$ and $\hat w = w \cdot W[w_1]$.
\begin{claim}\label[claim]{ccsl1-claim1}
$(\hat w, \hat W)$ is an $\RR$-precondition extending $(w, W)$.
\end{claim}
\begin{proof}
Note that $\hat W \leq_T U \oplus W$, hence for every~$h \in \vec{h}$, $h$ is $\hat W$-hyperimmune.
Moreover, $\hat w = w \cdot W[w_1]$ is an unordered $(n+r_1)$-variable word over~$A$ with variable set $\{x_i : i < n+r_1 \}$ and $\hat W = (W \uuh_{t_r}^\omega)[U][s_i \mapsto w_1(i)]$ is an unordered $\omega$-variable word over~$A_{n+r_1} = A \sqcup \{ x_i : i < n + r_1 \}$ with variable set $\{ x_i : i \geq n+r_1 \}$. It follows that $(\hat w, \hat W)$ is an $\RR$-precondition.

We now show that $(\hat w, \hat W) \leq (w, W)$. We have
\begin{align*}
\hat w \cdot \hat W &= w \cdot W[w_1] \cdot (W \uuh_{t_r}^\omega)[U][s_i \mapsto w_1(i)]\\
 &= w \cdot W\big[w_1 \cdot U[s_i \mapsto w_1(i)]\big]
\end{align*}
It follows that $\hat w \cdot \hat W \in \USub^\omega_A(w \cdot W)$.
This proves \Cref{ccsl1-claim1}.
\end{proof}

Let $\hat f : \WSub^{\star}_A(w) \to \ell$ witness $f$-stabilization of $(w, W)$, that is, for every $v \in \USub^{1,\star}_A(w \cdot W)$, if $v \uuh_0 \in \WSub^{\star}_A(w)$ and $|v| \geq |w|$, then $f(v) = \hat f(v \uuh_0)$
Let $\hat f_1 : \WSub^\star_A(\hat w) \to \ell$ be defined for $u \in \WSub^\star_A(\hat w)$ by
\begin{align}\label{ccsl1-eq5}
\hat f_1(u) = \begin{cases}
    \hat f(u) & \mbox{if } |u| < |w|\\
    \hat \mu(u_1)(u_0) & \mbox{if } u_0\in A^n, u_1\in A^{<t_r}, u = w[u_0] \cdot  (W[x_i \mapsto u_0(i)])[u_1] 
\end{cases}
\end{align}
Note that any $u \in \WSub^\star_A(\hat w)$ falls into one of the two cases above.

\begin{claim}\label[claim]{ccsl1-claim2}
$(\hat w, \hat W)$ is $f$-stabilizing, with witness $\hat f_1$.
\end{claim}
\begin{proof}
Fix some $v \in \USub^{1,\star}_A(\hat w \cdot \hat W)$ such that $v \uuh_0 \in \WSub^{\star}_A(\hat w)$ and $|v| \geq |\hat w|$. Suppose first $|v \uuh_0| < |w|$. Then
$$
f(v) = \hat f(v \uuh_0)  \overset{(\ref{ccsl1-eq5})}= \hat f_1(v \uuh_0)
$$
The first equality holds since $\hat f$ witnesses $f$-stabilization of $(w, W)$.

Suppose now that $|v \uuh_0| \geq |w|$. Since $|v| \geq |\hat w|$, we can write $v$ of the form
$$
v = w[u_0] \cdot W[x_i \mapsto u_0(i)][v_0] \cdot (W \uuh_{t_r}^\omega)\big[\sigma[s_i \mapsto v_0(i)]\big]\big[x_i \mapsto u_0(i)\big]
$$
for some~$u_0 \in A^n$, $v_0 \in \USub^{1,=}(W[w_1])$ and $\sigma \in \WSub^{\star}_{B_r}(U)$.
We firstly show that $f(v)$ does not depend on $\sigma$.
This is due to $\WSub^{\star}_{B_r}(U)$ being monochromatic for $g$ (see (\ref{ccsl1-eq6})).
More precisely, by (\ref{ccsl1-eq6}), $g(\sigma) = \mu$, so
\begin{align*}
    \mu(v_0)(u_0)  &  \overset{(\ref{ccsl1-eq6})}= g(\sigma)(v_0)(u_0)\\
    & \overset{(\ref{ccsl1-eq3})}= \bar f(v_0 \cdot \sigma[s_i \mapsto v_0(i)])(u_0)\\
    & \overset{(\ref{ccsl1-eq2})}= f\bigg(w[u_0] \cdot W\big[v_0 \cdot \sigma[s_i \mapsto v_0(i)]\big][x_i \mapsto u_0(i)]\bigg)\\
    & = f(w[u_0] \cdot W[x_i \mapsto u_0(i)][v_0] \cdot (W \uuh_{t_r}^\omega)[\sigma[s_i \mapsto v_0(i)]][x_i \mapsto u_0(i)])\\
    & = f(v)
\end{align*}
i.e., $\mu$ captures $f$ on a prefix of $v$. Moreover, by choice of $w_1$ (and some segment of $v$ is generated by $w_1$), $\hat \mu$ (thus  $\hat f_1$) captures $\mu$ on the word prefix of $v$.
More precisely, we have
$$
v \uuh_0 = w[u_0] \cdot W[x_i \mapsto u_0(i)][v_0 \uuh_0]
$$
so
$$
\hat f_1(v \uuh_0) \overset{(\ref{ccsl1-eq5})}= \hat \mu(v_0 \uuh_0)(u_0) \overset{(\ref{ccsl1-eq40})}= \mu(v_0)(u_0)
$$
Putting all together, we  have $f(v) = \mu(v_0)(u_0) = \hat f_1(v \uuh_0)$.
This proves our claim.
\end{proof}

Last, note that, by choice of~$w_1$, $\psi(w \cdot W[w_1])$ holds, so $\psi(\hat w)$ holds, hence $(\hat w, \hat W) \Vdash \varphi(G)$. This completes the proof of \Cref{lem:ccsl1-forcing-question-sigma}.
\end{proof}

\subsection{Forcing requirements}

We need to force two families of requirements.
The first one is structural, and ensures that the resulting set will have infinitely many variable kinds.
For this, given~$t \in \NN$, consider the following $\Sigma_1^0$-formula $\varphi_t(G)$:
\begin{align}\label{ccsl1-defvarphi}
&\text{ $G$ contains at least~$t$ variable kinds.}
\end{align}

\begin{lemma}\label[lemma]{lem:ccsl1-inf-variables}
For every $\RR$-condition $(w, W)$ and $t \in \NN$, there is an $\RR$-condition $(\hat w, \hat W) \leq (w, W)$
such that $(\hat w, \hat W) \Vdash \varphi_t(G)$.
\end{lemma}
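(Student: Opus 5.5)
The natural route is through the forcing question for $\varphi_t(G)$ together with the two extension lemmas just proved. Given an $\RR$-condition $(w, W)$, with $n$ variable kinds in $w$, and $t \in \NN$, I first check that $(w, W) \qvdash \varphi_t(G)$ always holds. Once that is established, \Cref{lem:ccsl1-forcing-question-sigma} immediately yields an $\RR$-condition $(\hat w, \hat W) \leq (w, W)$ with $(\hat w, \hat W) \Vdash \varphi_t(G)$, which is exactly the statement. There is no negative case to treat: rather than using \Cref{lem:ccsl1-forcing-question-pi}, I simply show that the forcing question has a positive answer.

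To verify the forcing question, write $\varphi_t(G) \equiv \exists x\, \psi(G \uh_x)$, where $\psi(\sigma)$ says that $\sigma$ contains at least $t$ distinct variable kinds. This formula is monotone and $\Delta^0_0$. Take the level $r = t$, or $r = \max(t - n, 0)$, in the stability tree $\Sc_{A,\ell^{A^n}}$. Every node $v$ at level $r$ is an unordered $r$-variable word of length $t_r$, so I can take $v_1 = v$ itself, which lies in $\USub^=_A(v)$. Then $w \cdot W[v_1]$ is an unordered $(n+r)$-variable word: $w$ contributes $x_0, \dots, x_{n-1}$, and $W[v]$ contains the $r$ further kinds, because $W$ is an $\omega$-variable word over $A_n$ and substituting an $r$-variable word into it keeps $r$ fresh variable kinds. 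Hence $n + r \geq t$, so $\psi(w \cdot W[v_1])$ holds and $(w, W) \qvdash \varphi_t(G)$.

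The only delicate point is bookkeeping. I need $W[v]$ to genuinely contain $r$ distinct new variable kinds. This holds because $W$ has variable set $\{x_i : i \geq n\}$, each appearing, and $v$ uses each of its $r$ variables at least once, so each substituted kind occurs. I also need the convention that the word checked by $\psi$ is the full concatenation $w \cdot W[v_1]$ and not a truncation of it. In addition, I should observe that $\Sc_{A,\ell^{A^n}}$ has nodes at every level, which holds since it is nonempty at every level by construction. With these checks in place, the lemma follows directly from \Cref{lem:ccsl1-forcing-question-sigma}.
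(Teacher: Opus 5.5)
Your proposal is correct and follows the paper's proof: the paper also notes that $(w,W) \qvdash \varphi_t(G)$ holds directly from the definition of the forcing question and then applies \Cref{lem:ccsl1-forcing-question-sigma}. Your check that any level $r \geq t$ works with $v_1 = v$, since $w \cdot W[v]$ then has $n + r \geq t$ variable kinds, simply spells out the step the paper calls obvious.
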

\begin{proof}
By definition \ref{csl1-avoidance-defforcingquestion} of forcing question, we obviously have
  $(w, W) \qvdash \varphi_t(G)$. Then by the $\Sigma^0_1$-Extension \Cref{lem:ccsl1-forcing-question-sigma}, there is an $\RR$-condition $(\hat w, \hat W) \leq (w, W)$
such that $(\hat w, \hat W) \Vdash \varphi_t(G)$ and we are done.
\end{proof}

The second family of requirements concern preservation of $\omega$ hyperimmunities.
Given a function $h \in \vec{h}$ and some Turing functional $\Psi$, let $\R^h_\Psi$ be the requirement
\begin{quote}
Either $\Psi^G$ is non-total or $\Psi^G $ does not dominate $h$.
\end{quote}

\begin{lemma}\label[lemma]{lem:ccsl1-omega-hyp-diag}
Fix $h \in \vec{h}$ and a Turing functional~$\Psi$.
For every $\RR$-condition $(w, W)$, there is an $\RR$-condition $(\h w,\h W) \leq (w, W)$ such that
\begin{enumerate}[label=(\arabic*)]
\item either $(v, \h W)\forces \Psi^G(n)\uparrow$ for some~$n \in \NN$;
\item or $(v, \h W)\forces \Psi^G(n) < h(n)$ for some~$n \in \NN$.
\end{enumerate}
\end{lemma}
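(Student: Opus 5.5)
We follow the pattern of \Cref{lem:ovw0-omega-hyp-diag}, with the forcing question of \Cref{csl1-avoidance-defforcingquestion} in place of the $\PP$-forcing question. Here there is a single stem $w$, so the case split is simpler: we read the conclusion with $v$ standing for the stem $\h w$. Let $n$ be the number of variable kinds of $w$.

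\textbf{First case.} Suppose there is some $m \in \NN$ such that $(w, W) \nqvdash \Psi^G(m)\downarrow$. Then \Cref{lem:ccsl1-forcing-question-pi} yields an $\RR$-condition $(w, U) \leq (w, W)$ with $(w, U) \Vdash \Psi^G(m)\uparrow$. This gives item (1).

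\textbf{Second case.} Suppose instead that $(w, W) \qvdash \Psi^G(m)\downarrow$ holds for every $m$. Unfolding the definition, for each $m$ there is a level $r$ of the stability tree $\Sc_{A,\ell^{A^n}}$ with the following property: for every node $v$ at level $r$, there is some $v_1 \in \USub^=_A(v)$ such that $\Psi^{w \cdot W[v_1]}(m)\downarrow$. Since the stability tree is computable, each level is finite, and the search is $\Sigma^0_1(W)$, the function $b$ can be computed from $W$. We let $b(m)$ be one more than the maximum, over all nodes at the first such level found, of the least halting output $\Psi^{w\cdot W[v_1]}(m)$ among the witnesses $v_1$. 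Then $(w, W) \qvdash \Psi^G(m)\downarrow < b(m)$, where the $\Sigma^0_1$ formula $\Psi^G(m)\downarrow < b(m)$ is in normal form.

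Since $h$ is $W$-hyperimmune (item (3) of \Cref{def:ccsl1-forcing-condition}), $b$ does not dominate $h$, so there is some $m$ with $b(m) < h(m)$. Note this uses only $W$, not $f$, so hyperimmunity relative to $W$ suffices. Applying \Cref{lem:ccsl1-forcing-question-sigma} to the formula $\Psi^G(m)\downarrow < b(m)$ gives an $\RR$-condition $(\h w, \h W) \leq (w, W)$ forcing $\Psi^G(m) < b(m) < h(m)$. This gives item (2).

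The main point to check is that the forcing question for $\Sigma^0_1$-formulas does not depend on $f$. It depends only on $w$, $W$ and the computable stability tree, while the $f$-stabilization is handled inside \Cref{lem:ccsl1-forcing-question-sigma}. This is what makes the bound $b$ $W$-computable, so that hyperimmunity of $h$ relative to the reservoir alone is enough. Every other step is a direct invocation of \Cref{lem:ccsl1-forcing-question-pi} and \Cref{lem:ccsl1-forcing-question-sigma}, and those lemmas already preserve item (3) by using the computably dominated basis theorem and \Cref{thm:ovw0-omega-hyp}.
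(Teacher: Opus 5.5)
Your proposal is correct and follows the paper's proof. The paper makes the same case split on whether $(w,W)\qvdash \Psi^G(m)\downarrow$ holds for every $m$: in the negative case it applies \Cref{lem:ccsl1-forcing-question-pi}, and in the positive case it builds a $W$-computable bound $b$, uses $W$-hyperimmunity of $h$ to find $m$ with $b(m)<h(m)$, and applies \Cref{lem:ccsl1-forcing-question-sigma}. Your explicit construction of $b$ (well defined since $\psi$ is $\Delta^0_0$ and each level is finite), and your reading of the stray $v$ in the statement as the new stem $\h w$, only spell out what the paper leaves implicit.
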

\begin{proof}
Suppose first that there is some~$n \in \NN$ such that $(w, W) \nqvdash \Psi^G(n)\downarrow$.
Then by the $\Pi^0_1$-Extension \Cref{lem:ccsl1-forcing-question-pi}, there is an $\RR$-condition $(\hat w, \hat W) \leq (w, W)$ such that $(\hat w, \hat W) \Vdash \Phi^G(n)\uparrow$, and we are done.

Suppose now that for every~$n \in \NN$, $(w, W) \qvdash \Psi^G(n)\downarrow$.
Unfolding the definition of the forcing question, for every $n \in \NN$,
there exists some $b_n\in\NN$ such that $(\hat w, \hat W) \Vdash \Phi^G(n)\uparrow < b_n$.
Moreover, since the forcing question is $\Sigma^0_1(W)$, the sequence $(b_n)_{n\in\NN}$ can be chosen to be
$W$-computable. Since $h$ is $W$-hyperimmune, there is some $n\in\omega$ such that $b_n<h(n)$.
By the $\Sigma^0_1$-Extension \Cref{lem:ccsl1-forcing-question-sigma},
ther eis an $\RR$-condition $(\hat w, \hat W) \leq (w, W)$ such that $(\hat w, \hat W) \Vdash \Psi^G(n)\downarrow<b_n$.
Either case, we are done.
\end{proof}

We are now ready to prove \Cref{thm:ccsl1-omega-hyperimmunity}.

\subsection{Proof of \Cref{thm:ccsl1-omega-hyperimmunity}}

By \Cref{lem:ccsl1-inf-variables} and \Cref{lem:ccsl1-omega-hyp-diag},
there is an infinite decreasing sequence of $\RR$-conditions
$(w_0, W_0) \geq (w_1, W_1) \geq \cdots$
such that for every~$s \in \NN$, $(w_s, W_s) \Vdash \varphi_s(G)$, and for every~$h \in \vec{h}$ and every Turing functional~$\Psi$, there is some~$s \in \NN$ such that
\begin{itemize}
    \item[(1)] either $(w_s, W_s) \Vdash \Psi^G(n)\uparrow$ for some~$n \in \NN$;
    \item[(2)] or $(w_s, W_s) \Vdash \Psi^G(n)\downarrow < h(n)$ for some~$n \in \NN$.
\end{itemize}
Let $G = \bigcup_s w_s$. Note that $G \in \bigcap_s [w_s, W_s]$.
In particular, for every~$s \in \NN$, $\varphi_s(G)$ holds, so $G$ is an unordered $\omega$-variable word.
Moreover, by item (1-2), every $h \in \vec{h}$ is $G$-hyperimmune.
Last, by the $f$-stabilizing property of each $\RR$-condition, $f$ is stable on~$G$. This completes the proof of \Cref{thm:ccsl1-omega-hyperimmunity}.

\section{Applications and conclusion}\label[section]{sec:applications}

Besides the proof of the Dual Ramsey theorem, the main application of infinite variable word theorems comes from Structural Ramsey Theory, which studies partition theorems of mathematical structures. Given two structures~$\mathbf F, \mathbf G$, we write ${\mathbf G \choose \mathbf F}$ for the set of all embeddings from~$\mathbf F$ to~$\mathbf G$.
We write $\mathbf F\cong \mathbf G$ iff $\mathbf F$ is isomorphic to $\mathbf G$.
Given integers $k, \ell \in \omega$, structures $\mathbf A, \mathbf B$ and $\mathbf C$ we write $\mathbf C \longrightarrow (\mathbf A)^{\mathbf B}_{k,\ell}$ for the following statement:

\begin{definition}
$\mathbf C \longrightarrow (\mathbf A)^{\mathbf B}_{<\omega,\ell}$: For any $k \geq 1$ and any coloring $\chi : {\mathbf A \choose \mathbf B} \to k$, there is an embedding $g \in {\mathbf A \choose \mathbf C}$ such that ${g(\mathbf C) \choose \mathbf B}$ uses at most~$\ell$ many colors.
\end{definition}

For instance, Ramsey's theorem can be reformulated as 
$$(\NN, <) \to (\NN, <)^{\mathbf{n}}_{<\omega, 1}$$ 
for every $n \in \NN$, where $\mathbf{n}$ is the ordered set $(\{0, \dots, n-1\}, <)$.

\begin{definition}
An infinite structure $\mathbf A$ has the \emph{Ramsey property} if for every finite substructure $\mathbf B$, the relation $\mathbf A \to (\mathbf A)^{\mathbf B}_{<\omega,1}$ holds.
\end{definition}

All the infinite structures~$\mathbf A$ that we shall consider in this section have a unique sub-structure~$\mathbf 1$ of size~1, in which case any coloring of ${\mathbf A \choose \mathbf 1}$ corresponds to a coloring of the elements of $\mathbf A$. The Ramsey property of~$\mathbf A$ restricted to $\mathbf 1$ is called \emph{indivisibility} of~$\mathbf A$. 

\begin{definition}
An infinite structure $\mathbf A$ is \emph{indivisible} if $\mathbf A \to (\mathbf A)^{\mathbf 1}_{<\omega, 1}$ holds, where $\mathbf 1$ is its unique sub-structure of size~1. 
\end{definition}

Many infinite structures do not admit the Ramsey property, but still enjoy a weaker partition theorem:

\begin{definition}
Let $\mathbf A$ be an infinite structure.
A finite sub-structure $\mathbf B$ has \emph{big Ramsey degree}~$\ell \in \NN$ if $\ell$ is the least number such that   the relation $\mathbf A \to (\mathbf A)^{\mathbf B}_{<\omega, \ell}$ holds, if it exists. We say that $\mathbf A$ has \emph{finite big Ramsey degree} if every finite sub-structure admits a big Ramsey degree.
\end{definition}

Structural Ramsey theory gives a particular focus on Fraissé limits, such as the dense linear order with no endpoints $(\QQ, <)$, the Rado (or random) graph $\mathcal R$, and in our case, the universal $k$-clique-free Henson graph $\HH_k$.
Each of the previous examples are known to admit a finite big Ramsey degree.
Such partition theorems are usually proven by coding the mathematical structures into trees, and applying a tree partition theorem. For instance, Devlin~\cite{devlin1980some} and Sauer~\cite{sauer2006coloring} used Milliken's tree partition theorem to prove that $(\QQ, <)$ and $\mathcal R$ admit finite Big Ramsey degree, respectively. The exact big Ramsey degrees for the Rado graph were then characterized by Laflamme, Sauer and Vuksanovic~\cite{laflamme2006canonical} 

\subsection{Big Ramsey degree of the triangle-free Henson graph}

The big Ramsey degree of the Henson graphs were a longstanding open question.
In 1986, Komj\'{a}th and R\"{o}dl~\cite{komjath1986coloring} proved that~$\HH_3$ is indivisible
and in 1989, El-Zahar and Sauer~\cite{elzahar1989indivisibility} generalized it to all Henson graphs.
In 1998, Sauer~\cite{sauer1998edge} proved that the edges of~$\HH_3$ have big Ramsey degree~2.
In 2017, Dobrinen~\cite{dobrinen2020ramsey} proved that $\HH_3$ admits finite big Ramsey degree using the method of strong coding trees. She then generalized it to all Henson graphs~\cite{dobrinen2023ramsey} with an elaboration of the technique, and characterized~\cite{dobrinen2020ramseypart2,balko2024exact} the exact big Ramsey degrees of $\HH_3$.
Hubi\v{c}ka~\cite{hubivcka2020big} gave an arguably simpler proof of the finite big Ramsey degree of~$\HH_3$ using the Carlson-Simpson lemma. However, his proof is less precise than the coding tree method, in that it only yields upper bounds to the Ramsey degrees. More recently, in a collective work, Balko, Chodounsk\'y, Dobrinen, Hubi\v{c}ka, Kone\v{c}n\'y,  Vena and Zucker~\cite{balko2021exact} characterized the big Ramsey degrees of free amalgamation classes in finite binary
languages defined by finitely many forbidden irreducible substructures.

From a computability-theoretic and reverse-mathematical viewpoint, Anglès d’Auriac et al.~\cite{angles2023carlson} formalized the proof of Hubi\v{c}ka~\cite{hubivcka2020big} and noticed that for every~$n \in \NN$, $\RCA_0 + \CSL^{2^n+n-1}(1)$ proves that every sub-structure of~$\HH_3$ of size~$n$ admits finite big Ramsey degree, and deduced that over~$\ACA_0^+$, $\HH_3$ has finite big Ramsey degree for~$\HH_3$. Gill~\cite{gill2023note} proved that the indivisibility of $\HH_n$ is not computably true for $n \geq 3$. More recently, Cholak, Dobrinen and McCoy~\cite{cholak2026henson} proved that the indivisibility of~$\HH_3$ holds over~$\ACA_0$ and that the existence of big Ramsey degrees for sub-structures of~$\HH_3$ of size~2 implies $\ACA_0$. Finally, Cholak, Dobrinen, and Towsner~\cite{cholak2026hensonupper} proved that for every standard~$n, m$, the existence of big Ramsey degrees for sub-structures of~$\HH_m$ of size~$n$ holds over~$\ACA_0$ using a Milliken-style proof for coding trees. Note that their proof over~$\ACA_0$ yields exact bounds for finite big Ramsey degree.

We now combine the new bounds on the Carlson-Simpson lemma with the analysis of Anglès d’Auriac et al.~\cite{angles2023carlson} of the proof of Hubi\v{c}ka~\cite{hubivcka2020big} to obtain similar bounds as Cholak, Dobrinen, and Towsner.

\begin{definition}
Let $\GG = (\{0\}^{<\omega, 1}, E)$ be the graph where $E$ is the symmetric binary relation defined as follows:
for $v, w \in \{0\}^{<\omega, 1}$, $v E w$ if $|v| \neq |w|$ and, assuming $|v| < |w|$, the following holds:
\begin{itemize}
    \item[(1)] Passing number property: $w(|v|) = x_0$;
    \item[(2)] Triangle-freeness condition: there is no $i < |v|$ with $v(i) = w(i) = x_0$.
\end{itemize}
\end{definition}

Hubi\v{c}ka~\cite{hubivcka2020big} proved that for every unordered $\omega$-variable word $W$ over~$\{0\}$,
$(\USub^{1,\star}(W), E)$ is computably isomorphic to the triangle-free Henson graph~$\HH_3$. In particular, $\GG \cong \HH_3$. Moreover, the proof holds over~$\RCA_0$. Combined with \Cref{thm:csl1-1-hyperimmunity}, we obtain

\begin{theorem}
The indivisibility of~$\HH_3$ admits preservation of 1 hyperimmunity.
\end{theorem}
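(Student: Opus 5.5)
We reduce the indivisibility of $\HH_3$ to $\CSL^1(1)$, using that by \Cref{thm:csl1-1-hyperimmunity} $\CSL^1$ (hence its restriction to the one-letter alphabet) admits preservation of 1 hyperimmunity. As usual we argue in unrelativized form. Fix a computable copy of $\HH_3$, a computable coloring $\chi$ of its vertices into $k$ colors, and a hyperimmune function $h$. The goal is a solution, i.e.\ an embedding $g$ of $\HH_3$ into itself whose image is $\chi$-monochromatic, such that $h$ is $g$-hyperimmune.

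First, fix the computable isomorphism $\iota : \GG \to \HH_3$ given by Hubi\v{c}ka's result, since $\GG \cong \HH_3$ computably (the vertex set $\{0\}^{<\omega,1}$ is computable and the isomorphism construction is effective, holding over $\RCA_0$). Pull back the coloring to $f = \chi \circ \iota : \{0\}^{<\omega,1} \to k$, which is a computable instance of $\CSL^1(1)$. By \Cref{thm:csl1-1-hyperimmunity}, there is an unordered $\omega$-variable word $W$ over $\{0\}$ such that $\USub^{1,\star}(W) = \{ W[u] : u \in \{0\}^{<\omega,1}\}$ is $f$-monochromatic and $h$ is $W$-hyperimmune.

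Next, use the other half of Hubi\v{c}ka's theorem: $(\USub^{1,\star}(W), E)$ is isomorphic to $\HH_3$, via an isomorphism computable uniformly in $W$. Composing this isomorphism with the inclusion of $\USub^{1,\star}(W)$ into $\GG$ (an induced subgraph, so the inclusion is an embedding) and with $\iota$, we obtain a $W$-computable embedding $g$ of $\HH_3$ into $\HH_3$ whose image lies in $\iota(\USub^{1,\star}(W))$, hence is $\chi$-monochromatic. Since $g \leq_T W$, $h$ remains $g$-hyperimmune.

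The only point needing care is that the copy of $\HH_3$ inside $\USub^{1,\star}(W)$ is obtained $W$-uniformly. The back-and-forth / extension-property verification in Hubi\v{c}ka's argument must be effective relative to $W$ and not need extra oracles. We therefore check that the extension property of $\HH_3$ is witnessed computably in $\USub^{1,\star}(W)$: for finitely many vertices, a new vertex with a prescribed triangle-free neighbourhood is found by a search through $W[u]$ with $u$ long enough, using the passing number at appropriate variable positions. This gives a computable back-and-forth relative to $W$. Once this is confirmed, the reduction is a computable one and the theorem follows.
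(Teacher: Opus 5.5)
You follow the paper's route: pull the coloring back to an instance of $\CSL^1(1)$ via Hubi\v{c}ka's graph and apply \Cref{thm:csl1-1-hyperimmunity}. However, the step you single out as the only point needing care fails. Neither $\GG$ nor $(\USub^{1,\star}(W),E)$ is isomorphic to $\HH_3$, so no verification of the extension property and no back-and-forth can succeed.

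Take $a=x_0x_0$ and $b=0x_0$. They have the same length, so they are non-adjacent.
\begin{itemize}
\item The only shorter vertex is $x_0$, and it is not adjacent to $a$, since $x_0$ and $a$ share $x_0$ at position $0$.
\item Any longer $w$ adjacent to $a$ must satisfy $w(0)=w(1)=0$ and $w(2)=x_0$.
\item Every such $w$ is then also adjacent to $b$.
\end{itemize}
Hence $N(a)\subseteq N(b)$. In $\HH_3$, the extension property applied to the independent set $\{a\}$ and the extra vertex $b$ produces a vertex adjacent to $a$ but not to $b$. Your search breaks exactly when two prescribed vertices have the same length, because they then share the passing-number position. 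The surrounding text also phrases Hubi\v{c}ka's result as $\GG\cong\HH_3$, but only bi-embeddability holds. The same objection applies to your computable isomorphism $\iota$.

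The repair is to replace the isomorphisms by embeddings, which is all the argument needs.
\begin{itemize}
\item[(i)] $\GG$ is a computable triangle-free graph. A greedy forth-only construction using the extension property of $\HH_3$ gives a computable embedding $\iota:\GG\to\HH_3$. Pull $\chi$ back along $\iota$.
\item[(ii)] For any unordered $\omega$-variable word $W$ over $\{0\}$, the map $u\mapsto W[u]$ is a $W$-computable isomorphism of $\GG$ onto the induced subgraph on $\USub^{1,\star}(W)$. Let $p_m$ be the position of the first occurrence of $x_m$ in $W$. Then $|W[u]|=p_{|u|}$ and $W[v](p_{|u|})=v(|u|)$. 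Below $p_{|u|}$, $W$ contains only $0$ and the variables $x_j$ with $j<|u|$, each of which occurs there. So both clauses of $E$ transfer.
\item[(iii)] Hubi\v{c}ka's universality gives a computable embedding $e:\HH_3\to\GG$. Send the $n$th vertex to the word of length $2n+2$ that has $x_0$ at position $2n+1$, and at position $2m+2$ exactly when $m<n$ is an earlier neighbour. Triangle-freeness of $\HH_3$ guarantees clause (2).
\end{itemize}
Then $g=\iota\circ W[\cdot]\circ e$ is a $W$-computable self-embedding of $\HH_3$ with $\chi$-monochromatic image, so $h$ remains $g$-hyperimmune.
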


Since by Downey et al~\cite{downey2022relationships}, preservation of 1 hyperimmunity and cone avoidance coincide, it follows that the indivisibility of $\HH_3$ admits cone avoidance and does not imply $\ACA_0$ over $\omega$-models, answering a question of Gill~\cite{gill2023note} and Cholak, Dobrinen and McCoy~\cite[Question 48]{cholak2026henson}.
\smallskip

Combining \Cref{mthm:csl-aca} with Anglès d’Auriac et al.~\cite{angles2023carlson}, we obtain:

\begin{theorem}[\cite{cholak2026hensonupper}]
For every $n \in \omega$, $\ACA_0$ proves that every finite triangle-free graph of size~$n$ has finite big Ramsey degree.
Moreover, $\ACA_0'$ proves that $\HH_3$ admits finite big Ramsey degree.
\end{theorem}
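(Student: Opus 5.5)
Fix $n\in\omega$ and a finite triangle-free graph $\mathbf B$ of size~$n$. The first claim is a bookkeeping step: reduce to the Carlson--Simpson Lemma and cite \Cref{mthm:csl-aca}.

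Anglès d'Auriac et al.~\cite{angles2023carlson} formalized Hubi\v{c}ka's argument~\cite{hubivcka2020big}. From it one extracts, for each such $\mathbf B$, that $\RCA_0 + \CSL^{2^n+n-1}(1)$ proves $\HH_3 \to (\HH_3)^{\mathbf B}_{<\omega,\ell}$ for some explicit standard $\ell$ depending only on $\mathbf B$. The mechanism is as follows.
\begin{itemize}
    \item Work with the copy $\GG$ of~$\HH_3$. For every unordered $\omega$-variable word $W$ over $\{0\}$, the graph $(\USub^{1,\star}(W),E)$ is computably isomorphic to~$\HH_3$, provably in $\RCA_0$.
    \item A copy of $\mathbf B$ inside $\GG$ is coded by an element of some $\{0\}^{<\omega,m}$ with $m\le 2^n+n-1$, together with one of finitely many ``shapes''.
    \item A coloring of ${\GG\choose\mathbf B}$ therefore induces finitely many colorings of $\{0\}^{<\omega,m}$.
    \item Applying $\CSL^m(1)$ finitely often, successively refining $W$, yields a $W$ on which each shape is monochromatic.
    \item Hence at most (number of shapes) colors appear on the induced copy of $\HH_3$.
\end{itemize}

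Since $n$ is standard, so is $m = 2^n+n-1$. By \Cref{mthm:csl-aca}, $\ACA_0\vdash \CSL^m$, and in particular $\ACA_0\vdash\CSL^m(1)$. This gives the first claim for each fixed~$n$.

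For the \qt{moreover} part, the statement \qt{$\HH_3$ has finite big Ramsey degree} quantifies over all $n$ inside the theory. So I need the uniform version of the Anglès d'Auriac et al.\ reduction: over $\RCA_0$, $\forall n\,\CSL^n$ implies that every finite substructure of $\HH_3$ has a big Ramsey degree. Their formalization is uniform in $n$ (the coding of shapes is primitive recursive), so this should go through. By \Cref{thm:acap-equiv}, $\ACA_0'$ proves $\forall n\,\CSL^n$, which finishes the argument.

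The main obstacle is this uniformity. \Cref{mthm:csl-aca} alone only gives each $\CSL^n$ by external induction, and that is not enough to conclude the statement for all $n$ at once. So I must make sure I invoke the $\forall n$ form of the reduction from~\cite{angles2023carlson}, together with \Cref{thm:acap-equiv} (i.e.\ \Cref{prop:aca0-csl-induction} plus \Cref{prop:aca0p-induction}), rather than the per-$n$ result. I must also check that the finitely many nested applications of $\CSL^m$ within a fixed $n$ require only a standard, finite number of steps, so no $\Pi^1_2$-induction is needed in the first claim.
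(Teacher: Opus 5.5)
Your proposal is correct and follows the paper's route. The paper obtains the first claim by combining the Anglès d'Auriac et al.\ reduction to $\CSL^{2^n+n-1}(1)$ with \Cref{mthm:csl-aca} for standard $n$, and the \qt{moreover} part from the uniform form of that reduction together with $\ACA_0' \vdash \forall n\,\CSL^n$ (\Cref{thm:acap-equiv}); your point that the per-$n$ result alone cannot yield the uniform statement, because $\ACA_0$ lacks $\Pi^1_2$-induction, is exactly why the paper passes to $\ACA_0'$ for $\HH_3$.
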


\subsection{Summary diagram}

We conclude with a summary diagram (\Cref{fig:summary}):

\begin{figure}[htbp]
\begin{center}
\begin{tikzpicture}[x=2.3cm, y=1.5cm, 
	node/.style={minimum size=2em},
	impl/.style={draw,very thick,-latex},
	strict/.style={draw, thick, -latex, double distance=2pt},
	nonimpl/.style={draw, very thick, dotted, -latex},
    equiv/.style={draw, very thick, latex-latex}
]
	
	\node (ACAp) at (0, 0) {$\ACA_0'$};
	\node (ACA) at (0, -1) {$\ACA_0$};
	\node (WKL) at (0, -4) {$\WKL_0$};

    \node (RT) at (1, 0) {$\RT$};
    \node (RTn) at (1, -1) {$\RT^{n+1}$};
    \node (RT2) at (1, -3) {$\RT^2$};
    \node (RT1) at (1, -5) {$\RT^1$};

    \node (A) at (5, -1) {for $n \geq 2$};

    \node (ODRT) at (2, 0) {$\ODRT$};
    \node (ODRTn) at (2, -1) {$\ODRT^{n+2}$};
    \node (ODRT3) at (2, -2.5) {$\ODRT^3$};
    \node (ODRT2) at (2, -4.5) {$\ODRT^2$};

    \node (CSL) at (4, 0) {$\CSL$};
    \node (CSLn) at (4, -1) {$\CSL^n$};
    \node (CSL1) at (4, -2) {$\CSL^1$};

    \node (LCSL1+RT2) at (5,-2) {$\LCSL^1 + \RT^2$};
    \node (LCSL1) at (4,-3) {$\LCSL^1$};
    \node (CSL0) at (4,-4) {$\CSL^0$};

    \node (OVW0) at (3,-3.5) {$\OVW^0$};

    \node (LCSL0+RT1) at (5,-4) {$\LCSL^0 + \RT^1$};

	\draw[strict] (ACAp) -- (ACA);
    \draw[strict] (ACA) --  (WKL);

    \draw[impl] (ODRT3) -- (RT2);
    \draw[impl] (CSL1) -- (ODRT3);
    \draw[impl] (OVW0) -- (CSL0);
    \draw[impl] (LCSL1) -- (CSL0);
    \draw[impl] (CSL0) -- (ODRT2);
    \draw[impl] (ODRT2) -- (RT1);

    \draw[strict] (CSL) --  (CSLn);
    \draw[strict] (CSL1) --  (LCSL1);
    \draw[strict] (RT) --  (RTn);
    \draw[strict] (RTn) --  (RT2);
    \draw[strict] (RT2) --  (RT1);
    \draw[strict] (ODRT) --  (ODRTn);
    \draw[strict] (ODRTn) --  (ODRT3);
    \draw[strict] (ODRT3) --  (ODRT2);

    \draw[equiv] (ACAp) -- (RT);
    \draw[equiv] (RT) -- (ODRT);
    \draw[equiv] (ODRT) -- (CSL);

    \draw[equiv] (ACA) -- (RTn);
    \draw[equiv] (RTn) -- (ODRTn);
    \draw[equiv] (ODRTn) -- (CSLn);

    \draw[equiv] (LCSL1+RT2) -- (CSL1);
    \draw[equiv] (LCSL0+RT1) -- (CSL0);

    \draw[strict] (ACA) --  (WKL);
    \draw[strict] (CSLn) --  (CSL1);
    
    \draw[nonimpl] (RT2) -- node[pos=0.5, sloped] {/} (WKL);
    \draw[nonimpl] (LCSL1) -- node[pos=0.5, sloped] {/} (RT2);
    \draw[nonimpl] (OVW0) -- node[pos=0.25, sloped] {/} (RT2);
    \draw[nonimpl] (WKL) -- node[pos=0.5, sloped] {/} (RT1);
\end{tikzpicture}

\caption{\label{fig:summary} Summary diagram of implications and separations over~$\RCA_0$. A double arrow denotes a strict implication, and a dotted arrow is a non-implication. Some arrows which can be deduced by the transitive closure of the implication are missing.}
\end{center}

\end{figure}

\begin{center}
\textbf{Acknowledgement}
\end{center}

The authors are thankful to Natasha Dobrinen, Leszek Ko\l odziejczyk and Keita Yokoyama for insightful comments and discussions.

\bibliographystyle{plain}
\bibliography{biblio}

\end{document}